\theoremstyle{plain}
\newtheorem{thm}{Theorem}[section]
\newtheorem{cor}[thm]{Corollary}
\newtheorem{pro}[thm]{Proposition}
\newtheorem{lem}[thm]{Lemma}
\newtheorem{mthm}{Theorem}
\newtheorem{mthmprime}{Theorem}
\newtheorem{mthmsecond}{Theorem}
\theoremstyle{definition}
\newtheorem{que}[thm]{Question}
\newtheorem{eg}[thm]{Example}
\newtheorem{rem}[thm]{Remark}
\numberwithin{equation}{section}   
\newtheorem*{eg*}{Example}
\newcommand{\e}{\varepsilon}
\newcommand{\cU}{\mathcal{U}}
\newcommand{\cV}{\mathcal{V}}
\newcommand{\ii}{{\mathsf{i}}}
\newcommand{\fr}{\partial}
\newcommand{\set}[1]{\left\{#1\right\}}
\newcommand{\norm}[1]{{\left\Vert#1\right\Vert}}
\newcommand{\abs}[1]{\left\vert#1\right\vert}
\newcommand{\pu}{{\mathbb{P}^1}}
\newcommand{\cX}{\mathcal{X}}
\newcommand{\rest}[1]{ \arrowvert_{#1}}
\newcommand{\tendvers}{\underset{n\to\infty}{\longrightarrow}}
\newcommand{\unsur}[1]{\frac{1}{#1}}
\newcommand{\cst}{\mathrm{C}^\mathrm{st}}
\newcommand{\lrpar}[1]{\left(#1\right)}
\newcommand{\inv}{^{-1}}
\DeclareMathOperator{\supp}{Supp}
\DeclareMathOperator{\Int}{Int}
\DeclareMathOperator{\mult}{mult}
\DeclareMathOperator{\dist}{d_{\mathbb H}}
\DeclareMathOperator{\Lim}{Lim}
\newcommand{\C}{\mathbf{C}}
\newcommand{\R}{\mathbf{R}}
\newcommand{\Q}{\mathbf{Q}}
\newcommand{\Z}{\mathbf{Z}}
\newcommand{\N}{\mathbf{N}}
\newcommand{\Qbar}{{\overline{\Q}}}
\newcommand{\bfk}{{\mathbf{k}}}
\newcommand{\bfA}{{\mathbf{A}}}
\newcommand{\Hyp}{\mathbb{H}}
\renewcommand{\P}{\mathbb{P}}
\newcommand{\NS}{{\mathrm{NS}}}
\newcommand{\Pic}{{\mathrm{Pic}}}
\newcommand{\Free}{{\mathrm{Free}}}
\newcommand{\Fr}{{\mathrm{fr}}}
\DeclareMathOperator{\Jac}{{Jac}}
\DeclareMathOperator{\Cur}{{Cur}}
\DeclareMathOperator{\Vect}{Vect}
\DeclareMathOperator{\Gal}{Gal}
\newcommand{\id}{{\rm id}}
\newcommand{\Aut}{\mathsf{Aut}}
 \newcommand{\PGL}{{\sf{PGL}}}
\renewcommand{\O}{{\sf{O}}}
\newcommand{\End}{{\sf{End}}}
\newcommand{\GL}{{\sf{GL}}}
\newcommand{\vol}{{\sf{vol}}}
\newcommand{\Kah}{{\mathrm{Kah}}}
\newcommand{\X}{\mathcal{X}}
\begin{document}
%
%
%
%
\setlength{\baselineskip}{0.53cm}        
%
%
\title[Finite orbits for groups of automorphisms of projective surfaces]
{Finite orbits for large groups of automorphisms of projective surfaces}
\date{\today}

\author{Serge Cantat}
\address{Serge Cantat, IRMAR, campus de Beaulieu,
b\^atiments 22-23
263 avenue du G\'en\'eral Leclerc, CS 74205
35042  RENNES C\'edex}
\email{serge.cantat@univ-rennes1.fr}

\author{Romain Dujardin}
\address{ Sorbonne Universit\'e, CNRS, Laboratoire de Probabilit\'es, Statistique  et Mod\'elisation  (LPSM), F-75005 Paris, France}
\email{romain.dujardin@sorbonne-universite.fr}

\begin{abstract}
We study finite orbits for non-elementary groups of automorphisms of compact projective surfaces. 
In particular we prove that if the surface and the group are defined over a number field $\mathbf{k}$
and the group contains parabolic elements, then the set of finite orbits is not 
Zariski dense, except in  certain very rigid situations, known as Kummer examples. Related results are also established when 
$\mathbf{k} = \mathbf{C}$. An application is given  to the description of  ``canonical vector heights'' associated to such automorphism groups.
\end{abstract}

 \maketitle

\setcounter{tocdepth}{1}
\tableofcontents


\section{Introduction}

\subsection{Setting} 
Let $X$ be a complex projective surface, and denote by $\Aut(X)$ its group of automorphisms. 
The group $\Aut(X)$ acts on the N\'eron-Severi group $\NS(X;\Z)$ (resp. on the cohomology group $H^2(X;\Z)$); 
this gives a linear representation 
\begin{equation}
f\mapsto f^*
\end{equation}
from $\Aut(X)$ to $\GL(\NS(X;\Z))$ (resp.   $\GL(H^2(X;\Z))$). 
By definition, a  subgroup $\Gamma$ of $\Aut(X)$ is {\bf{non-elementary}} if  its image $\Gamma^*\subset \GL(\NS(X;\Z))$ (resp. 
$\subset \GL(H^2(X;\Z))$) contains a free group of rank $\geq 2$; equivalently, $\Gamma^*$ does not contain any abelian subgroup
of finite index (see~\cite{stiffness} for details and examples).

Our purpose is to study the existence and abundance of  finite (or ``periodic'') orbits under such non-elementary group actions.   
Several possible scenarios  can be imagined:
\begin{enumerate}
\item[(a)]  a large --that is Zariski dense or dense--  set of finite orbits; 
\item[(b)] finitely many finite orbits; 
\item[(c)] no finite orbit at all. 
\end{enumerate}

 For a cyclic  
group generated by a single automorphism, 
the situation is well understood: in many cases the set of periodic points is large 
(see \cite{Cantat:Milnor} for an introduction to this 
topic, and \cite{Xie:Duke} for the case of birational transformations). 
On the other hand, for non-elementary groups, we expect the existence of a dense set of periodic points
to be a rare phenomenon; this expectation will be confirmed by our results. 
  
In fact, the only examples we know for situation~(a) are given by abelian surfaces and their siblings, Kummer surfaces. 
Here, by {\bf{Kummer surface}} we mean  a smooth surface $X$ which is a (non necessarily minimal) 
 desingularization of the quotient 
$A/G$ of an abelian surface $A=\C^2/\Lambda$ 
by a finite group $G\subset \Aut(A)$. For instance, if $G$ is generated 
by the involution $(x,y)\mapsto (-x,-y)$ on $A$, we find the so-called classical Kummer 
surfaces and their blow-ups 
(see~\cite{BHPVDV}). Given a subgroup $\Gamma\subset \Aut(X)$, we say that the pair $(X,\Gamma)$ is 
a {\bf{Kummer group}}  
if $X$ is a Kummer surface and $\Gamma$ comes from a subgroup of $\Aut(A)$ which normalizes $G$; precise 
definitions are given in  \S \ref{par:Kummer}. 
If $\Gamma$ is a group of automorphisms of an abelian surface $A$ fixing  the origin $0\in A$, 
then all torsion points are $\Gamma$-periodic. This implies that most Kummer groups have a dense set of 
finite orbits (see Proposition \ref{pro:Kummer_finite_orbits}). 

\subsection{Main results}
We first  illustrate property~(c) in the family of Wehler surfaces that is, smooth surfaces 
$X\subset \P^1\times \P^1\times \P^1$ defined by a polynomial equation of degree $(2,2,2)$. 
Such an  $X$ is a K3 surface and generically its automorphism group is generated by 
three involutions, each of them swapping one coordinate on $X$. 
We focus on these examples because they occupy a central position in the
dynamical study of surface automorphisms, both from the ergodic and arithmetic 
points of view (see e.g.~\cite{Silverman:1991, Kawaguchi:Crelle, McMullen:Crelle}).  

\begin{mthm}\label{mthm:wehler}
For a very general Wehler surface $X$, every orbit under $\Aut(X)$ is Zariski 
dense. In particular there is no finite orbit under the action of $\Aut(X)$.
\end{mthm}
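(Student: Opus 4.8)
The plan is to exploit the standard "very general member of a family" philosophy: a property that holds for a member of an algebraic family outside a countable union of proper subvarieties holds for a very general member. Thus it suffices to prove two things: (i) a \emph{generic} Wehler surface $X$ has $\Aut(X)$ generated by the three involutions $\sigma_1,\sigma_2,\sigma_3$ (projections onto pairs of factors are $2$-to-$1$, producing the covering involutions), and this group is non-elementary; (ii) for each positive integer $d$, the locus of $X$ in the $(2,2,2)$-parameter space carrying a $\Gamma$-periodic orbit of size $\le d$ is contained in a proper Zariski-closed subset. Granting (ii), the $X$ avoiding \emph{all} these countably many subvarieties — a very general $X$ — has no finite orbit; to upgrade "no finite orbit" to "every orbit Zariski dense" I would also need, for each proper subvariety $Z\subset X$ defined over the function field, that the locus of $X$ for which some $\Gamma$-orbit is contained in $Z$ is again not Zariski dense in parameter space. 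This is where the argument genuinely uses that $\Gamma$ is non-elementary: an invariant proper Zariski-closed subset would be an invariant curve or finite set, and a non-elementary group cannot preserve a curve (its action on $\NS$ would then be reducible or virtually abelian, contradicting the free-group condition). So the only genuinely new input is the periodic-orbit count in (ii).

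For (ii) the key step is a \emph{specialization / Galois} argument combined with an \emph{adelic equidistribution} or height comparison. First reduce to the case where $X$ and $\Gamma$ are defined over a number field: a very general $X$ in the complex family can be specialized, and a finite $\Gamma$-orbit specializes to a finite $\Gamma$-orbit, so it is enough to rule out finite orbits (and orbits in proper subvarieties) for Wehler surfaces over $\Qbar$ outside a proper subvariety of the parameter space — and here one may as well let the parameters themselves be generic, i.e. work over a function field $\bfk = \Qbar(t_1,\dots,t_N)$ and use that a finite orbit over $\Qbar(\mathbf{t})$ spreads out to a finite orbit over a dense set of specializations. Now invoke the arithmetic theory attached to a single parabolic or loxodromic automorphism: each $\sigma_i$ (or a suitable composition with positive entropy) carries canonical heights $\hat h^\pm_{\sigma}$ built from the eigenvectors $\theta^\pm_\sigma$ of $\sigma^*$ on $\NS(X)\otimes\R$, and a point with finite $\Gamma$-orbit has bounded height for \emph{every} element of $\Gamma$, hence height zero for the canonical heights of all of them simultaneously. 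The non-elementary hypothesis forces the cone spanned by the vectors $\theta^\pm_\gamma$, $\gamma\in\Gamma$, to have non-empty interior inside the nef cone — indeed to contain an ample class — so a point of canonical height $0$ for all $\gamma$ has ordinary (ample) height $0$ and is therefore one of finitely many points. Counting: the number of such points is bounded in terms of the degree data, uniformly in the parameter, which gives the desired proper-subvariety statement once one notes that a genuinely varying family of periodic orbits would force an identity among the $(2,2,2)$-coefficients.

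Concretely I would organize the write-up as: (1) recall the three covering involutions and cite the genericity of $\Aut(X)=\langle\sigma_1,\sigma_2,\sigma_3\rangle$ together with non-elementarity (this is classical, going back to Wehler and to Cantat's work on K3 surfaces); (2) prove the cone statement — the closed convex cone generated by $\{\theta^+_\gamma : \gamma\in\Gamma\}$ in $\NS(X;\R)$ has non-empty interior, using the ping-pong dynamics of $\Gamma^*$ on the hyperbolic space associated to the Minkowski form on $\NS(X;\R)$; (3) deduce that the only points of canonical height zero for all of $\Gamma$ are the (finitely many) points of height zero for an ample class, i.e. lie in a fixed finite subscheme; (4) run the spreading-out argument to conclude that over a very general parameter this finite subscheme is empty, and similarly that no proper subvariety can be $\Gamma$-invariant, using non-elementarity to exclude invariant curves.

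The main obstacle I anticipate is step (3)–(4): making the height argument genuinely \emph{uniform} in the parameter $\mathbf{t}$, so that "finitely many periodic points" becomes "a Zariski-closed proper condition on $\mathbf{t}$" rather than merely a fiberwise statement. The clean way around this is the function-field formulation — work over $\bfk=\Qbar(\mathbf{t})$ from the start, where a finite orbit literally produces a $\bfk$-point of canonical height zero for all $\gamma$, hence (by the cone argument) a $\bfk$-point of height zero for an ample class, hence a point defined over $\Qbar$ itself; but a $\Qbar$-point cannot lie on the very general Wehler surface, contradiction. The remaining care is that the parabolic elements in $\Gamma$ (whose $\sigma^*$ is not loxodromic) need the \emph{canonical vector height} / nef-class machinery rather than the naive loxodromic canonical height — but the excerpt's framework of canonical vector heights is exactly designed for this, so I would invoke it directly.
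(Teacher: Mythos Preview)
Your approach is entirely different from the paper's, which is a direct and elementary parameter count: one normalizes a putative periodic point to lie at fixed coordinates in $(\P^1)^3$, writes down the constraints this imposes on the $(2,2,2)$-equation, and checks by hand (using the behaviour of the Halphen translation on the $(2,2)$-curves through the point) that the remaining family of surfaces has dimension $\leq 16 < 17 = \dim(W_0/G)$. No heights, no equidistribution, no number fields.

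More importantly, your argument has a genuine gap at the step you call ``the cone argument''. You assert that a point with $\hat h^\pm_\gamma(x)=0$ for all loxodromic $\gamma$ has \emph{height zero} for an ample class. But the canonical heights $\hat h^+_\gamma$ are Weil heights for the isotropic classes $\theta^+_\gamma$, so a positive combination $\sum \alpha_i \hat h^+_{\gamma_i}$ is only a Weil height for $L=\sum\alpha_i\theta^+_{\gamma_i}$ \emph{up to a bounded error}: you obtain $h_L(x)=O(1)$, not $h_L(x)=0$. The $O(1)$ constants for different $\gamma$ have no reason to be compatible, and asking that they be compatible is exactly asking for a canonical vector height --- which the paper proves (Theorem~\ref{mthm:canonical_vector_height}) does \emph{not} exist on a non-Kummer surface. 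So this step is circular: it assumes the structure whose non-existence is the content of the deeper results.

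The function-field variant does not rescue this. Over $\bfk=\overline{\Q}(\mathbf t)$ you would still only conclude that a $\Gamma$-periodic $x$ has \emph{bounded} height for an ample $L$, and Northcott fails over function fields (e.g.\ all $\overline{\Q}$-points of $\P^1_{\overline{\Q}(t)}$ have height $0$), so there is no contradiction. Even the full strength of the paper's Theorem~\ref{mthm:main} plus specialization (this is Theorem~\ref{thm:wehler_complex}) only yields \emph{finitely many} finite orbits for every smooth Wehler surface; passing from ``finitely many'' to ``none for very general $X$'' genuinely requires showing that the incidence variety $\mathcal Z_d\to W_0$ is not dominant, and your sentence ``a genuinely varying family of periodic orbits would force an identity among the $(2,2,2)$-coefficients'' is not an argument for this.

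A smaller point: your exclusion of invariant curves (``a non-elementary group cannot preserve a curve since its action on $\NS$ would be reducible'') is incorrect as stated --- non-elementary groups can and do preserve curves whose class lies in $\Pi_\Gamma^\perp$. The correct reason for Wehler surfaces is that an invariant curve would force a parabolic composition $\sigma_i\sigma_j$ to have finite order on some fiber of $\pi_k$, contradicting parabolicity (Lemma~\ref{lem:wehler_invariant_curves}).
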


Unfortunately,  from the nature of its proof, this theorem
has an obvious limitation: it does not allow to single out 
any explicit example satisfying Property~(c).

Our main result  concerns  Property~(b). To state it, recall that there are three types of automorphisms, 
characterized by the behavior of the linear endomorphism $f^*$ (see~\cite{Cantat:Milnor}). 
If $f^*$ has finite order, then $f$ is {\bf{elliptic}}. Otherwise, $f$ is either parabolic or loxodromic: it is {\bf{parabolic}} if $f^*$ 
has infinite order, but none of its eigenvalues has modulus $>1$; it is {\bf{loxodromic}} if some eigenvalue 
$\lambda(f)$ of $f^*$ has modulus $\vert \lambda(f)\vert >1$ (in that case $\lambda(f)$ is unique and 
$\lambda(f)\in (1,+\infty)$). A non-elementary group of automorphisms contains 
a non-abelian free group all of whose non-trivial elements are   loxodromic, 
and a group containing both loxodromic and parabolic elements is automatically non-elementary. 

\begin{mthm}\label{mthm:main}
Let $X$ be a smooth projective surface, defined over some number field~$\bfk$. Let 
$\Gamma$ be a subgroup of $\Aut(X)$, also defined over $\bfk$,   containing both parabolic 
and loxodromic automorphisms. If the set of finite
orbits of $\Gamma$ is Zariski dense in $X$, then $(X,\Gamma)$ is a Kummer group. \end{mthm}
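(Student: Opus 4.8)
The plan is to exploit the parabolic elements to produce invariant fibrations, then use arithmetic equidistribution to control finite orbits fiber by fiber, and finally recognize the rigidity forced by a second parabolic as the Kummer condition. Concretely, let $g\in\Gamma$ be parabolic. Then $g^*$ is unipotent (after passing to a power), and there is a unique $g$-invariant elliptic fibration $\pi_g\colon X\to B_g$ with $g$ acting on almost every fiber as a translation; this is the classical picture for parabolic automorphisms of surfaces (Gizatullin, Cantat). The finite orbits of $g$ are contained in the union of the fibers on which $g$ acts with finite order, which is a countable union; but since $\Gamma$ is non-elementary and contains loxodromic elements, the set of $\Gamma$-invariant curves is finite, so no fiber of $\pi_g$ is $\Gamma$-invariant unless it lies in this finite set. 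The key idea: if the finite orbits of $\Gamma$ are Zariski dense, then for a Zariski dense set of fibers $F$ of $\pi_g$, the intersection with $\mathrm{Fin}(\Gamma)$ is infinite inside $F$; hence $g$ restricted to $F$ has infinite order but a dense set of periodic points forces $F$ to be a curve of genus $\le 1$ on which $g|_F$ is a translation by a torsion point — i.e.\ every point of $\mathrm{Fin}(\Gamma)\cap F$ is a torsion point of the elliptic curve $F$ (once an origin is chosen).

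The second step is to **bring in a transverse structure.** Since $\Gamma$ is non-elementary, one can find another parabolic $h\in\Gamma$ whose invariant fibration $\pi_h$ is distinct from $\pi_g$ (this uses the ping-pong/Tits alternative structure of $\Gamma^*$ acting on the hyperbolic space $\mathbb{H}(\mathrm{NS}(X;\R))$: distinct parabolic fixed points on the boundary give distinct fibrations). The two fibrations $\pi_g,\pi_h$ together give a finite map $X\dashrightarrow B_g\times B_h$, and the generic fiber of $\pi_g$ is multisectional for $\pi_h$. Now the torsion condition from step one holds for both fibrations simultaneously: there is a group structure in the $g$-direction and one in the $h$-direction for which the same Zariski-dense set of points is torsion. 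This is exactly the configuration that forces $X$ to carry a group structure — the mechanism of the Manin–Mumford / André–Oort type results and, in the function-field / geometric setting, the argument that two "torsion" structures sharing a Zariski-dense common locus must coincide with translates of a single abelian group. The point defined over $\bfk$ lets us invoke an arithmetic equidistribution theorem: the canonical height $\hat h_g$ attached to $\pi_g$ (Call–Silverman) vanishes on $\mathrm{Fin}(\Gamma)$, and likewise $\hat h_h$; by the Zhang/Yuan equidistribution of small points, the two canonical measures coincide, and one deduces that the dynamical pair is "isotrivial" in the precise sense that $X$ is a quotient of an abelian surface.

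The third step is to **upgrade "$X$ is a quotient of an abelian surface" to "$(X,\Gamma)$ is a Kummer group."** Once $X$ is dominated by an abelian surface $A=\C^2/\Lambda$ with $X$ a desingularization of $A/G$, one must show $\Gamma$ itself lifts to automorphisms of $A$ normalizing $G$. This is done by transporting: the two translation-fibrations on $X$ pull back to the two linear foliations on $A$, and the loxodromic elements of $\Gamma$, preserving both foliations' structure, must be induced by linear (affine) maps of $\C^2$; the parabolic elements correspondingly come from unipotent affine maps. One checks that the subgroup of $\Aut(A)$ so obtained normalizes $G$ because it descends to $X$. This last verification is largely bookkeeping once the abelian structure is in hand.

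**The main obstacle** is the heart of step two: proving that the existence of \emph{two} distinct invariant fibrations, each with a Zariski-dense common set of "torsion" (canonical-height-zero) points, forces the Kummer structure. Producing the invariant fibration and the canonical height from a single parabolic is standard; the rigidity statement — that two such structures cannot coexist on a surface of general type or on a non-Kummer K3 — is where the real work lies, and it is precisely here that one must deploy the arithmetic hypothesis (number field $\bfk$) together with equidistribution, since over $\C$ the analogous statement can fail or require extra input (which is why the excerpt says only "related results" hold when $\bfk=\C$). A secondary subtlety is the dichotomy in step one: a priori a fiber $F$ could be a \emph{singular} fiber, or $g|_F$ could have finite order, and one must show these exceptional fibers are too few to carry a Zariski-dense set of finite orbits — this follows from finiteness of $\Gamma$-periodic curves plus the fact that multiple fibers and reducible fibers of an elliptic fibration form a proper Zariski-closed subset of the base.
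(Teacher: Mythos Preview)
Your proposal has a genuine gap at its core (your step two), and a confusion already in step one.

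In step one you claim that for a Zariski-dense set of fibers $F$ of $\pi_g$, the set $\mathrm{Fin}(\Gamma)\cap F$ is infinite. This does not follow: a Zariski-dense countable set can meet each curve in only finitely many points. What is true is that if $x\in\mathrm{Fin}(\Gamma)$ lies on a smooth fiber $F$, then $g\rest{F}$ is translation by a torsion point (since the $g$-orbit of $x$ is finite), hence every point of $F$ is $g$-periodic. But $g$-periodic is not $\Gamma$-periodic, so this does not manufacture further points of $\mathrm{Fin}(\Gamma)$ on $F$, and your ``torsion in two directions'' picture is not substantiated.

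The decisive gap is step two. You invoke a ``Call--Silverman canonical height $\hat h_g$ attached to $\pi_g$'' and assert that Yuan/Zhang equidistribution, applied to the pair $\hat h_g,\hat h_h$, forces $X$ to be a quotient of an abelian surface. No such rigidity theorem exists: equidistribution of small points says that Galois orbits of height-zero points converge to a canonical measure; it says nothing directly about the geometry of $X$. The paper's argument is structurally different and requires several deep inputs you omit. One constructs \emph{stationary} heights $\hat h_L$ associated to finitely supported probability measures $\nu$ on $\Gamma$ (Kawaguchi), with $P_\nu^*L=\alpha L$ for a big and nef $L$ pulled back from an ample class on the contraction $X_0$, and $\alpha>1$; Yuan's theorem then shows that Galois orbits of the $\Gamma$-periodic points equidistribute toward $S_\nu\wedge S_\nu$. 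Varying $\nu$ so that it degenerates to $\tfrac12(\delta_f+\delta_{f^{-1}})$, one proves that all maximal-entropy measures $\mu_f$ of loxodromic $f\in\Gamma$ coincide with a single $\Gamma$-invariant measure $\mu$. A separate argument using the parabolic dynamics shows $\supp(\mu)=X$. Then one invokes the classification of $\Gamma$-invariant measures from \cite{cantat_groupes,invariant} to conclude $\mu$ is smooth, and finally the theorem of Cantat--Dupont, which characterizes loxodromic automorphisms whose maximal-entropy measure is absolutely continuous as Kummer examples. Only then does one upgrade to ``$(X,\Gamma)$ is a Kummer group'' (your step three, which is roughly correct in spirit and corresponds to Theorem~\ref{thm:Kummer_for_groups}).

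In short: your Manin--Mumford/Andr\'e--Oort analogy is suggestive, but there is no known direct route from ``two fibration-heights vanish on a common Zariski-dense set'' to ``$X$ is Kummer''. The actual mechanism is arithmetic equidistribution feeding into measure rigidity and the Cantat--Dupont classification, neither of which appears in your outline.
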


When $\Gamma$ is non-elementary  there is a maximal $\Gamma$-invariant curve $D_\Gamma$; more precisely,
either $\Gamma$ does not preserve any curve, or there exists a unique, 
maximal, $\Gamma$-invariant  Zariski closed subset of pure dimension $1$. 
This curve $D_\Gamma$ can be contracted to yield a (singular) complex analytic surface $X_0$ and a $\Gamma$-equivariant 
birational morphism 
\begin{equation}
\pi_0\colon X\to X_0.
\end{equation} Thus if $(X,\Gamma)$ is not a Kummer group,   property~(b) holds on $X_0$.  
It turns out  that
{when $\Gamma$ contains a parabolic automorphism, \emph{$X_0$ is projective}} (see Proposition~\ref{pro:contraction}). Another result, which plays an important role in the proof of Theorem~\ref{mthm:main} is the following Theorem~\ref{mthm:invariant_curve_loxodromic}:
 {\emph{any non-elementary subgroup 
$\Gamma\subset \Aut(X)$ contains a loxodromic element whose maximal periodic curve is equal to $D_\Gamma$}} (see Section~\ref{sec:invariant_curves} for the precise statement).

Let us stress that even if $X$ and $\Gamma$ are defined over $\bfk$, 
Theorem~\ref{mthm:main} concerns orbits of $\Gamma$ in $X(\C)$. In this respect it is very different in spirit from the results 
of \cite{Silverman:1991}   or \cite{Kawaguchi:AJM}, in which  finiteness results are obtained  for the number of periodic orbits 
of elementary groups acting on    $X(\bfk')$, where $\bfk'$ is a fixed finite extension of $\bfk$,  
 which ultimately  rely  on Northcott's theorem.

Under the assumptions of Theorem~\ref{mthm:main}, we obtain the following corollaries (see Corollaries~\ref{cor:finite_orbits_finite}, and~\ref{cor:DMM} and Proposition~\ref{pro:Kummer_finite_orbits} for details):
\emph{ 
\begin{enumerate}[--]
\item If $\Gamma$ does not preserve any algebraic curve and $X$ is not an abelian surface,  then $\Gamma$ admits at most finitely many finite orbits.  
\item If $C$ is an irreducible  curve  containing  infinitely many $\Gamma$-periodic points, then 
either $C$ is $\Gamma$-periodic or  
 $(X, \Gamma)$ is a Kummer group and $C$ comes from a translate of  an abelian subvariety. 
In particular if $C$ has genus   $\geq 2$, it  contains at most finitely many $\Gamma$-periodic points.
 \item If $\Gamma$ has a Zariski dense set of finite orbits, then its finite orbits are dense in $X(\C)$ for the Euclidean topology; furthermore 
if $f_1$ and $f_2$ are two loxodromic automorphisms in $\Gamma$,
 their periodic points coincide, except for at most finitely many of them which are located on $\Gamma$-invariant curves.
\end{enumerate}
}
As we shall see in~Remark~\ref{rem:questions_Kawaguchi}, 
the last statement provides 
a partial answer to a  question of Kawaguchi.

\subsection{Proof strategy and extension to complex coefficients}  
Let us say a few words about the proof of Theorem \ref{mthm:main} (a more detailed outline is given in \S \ref{par:strategy_5}). 
Given  two ``typical''  loxodromic elements $f,g$ in $\Gamma$,  
intuition suggests  that $\mathrm{Per}(f)\cap \mathrm{Per}(g)$ cannot be Zariski dense 
unless some ``special'' phenomenon  happens. This situation has been referred to as an \emph{unlikely intersection} 
problem in the algebraic dynamics  literature.  Previous work on this topic 
suggests to handle this problem using  methods from arithmetic geometry (see e.g. \cite{baker-demarco, dujardin-favre}).  
In this respect a key idea would be  to use arithmetic equidistribution (see~\cite{Yuan:Inventiones, Berman-Boucksom:Inventiones})
to derive an equality $\mu_f=\mu_g$ between the measures  of 
maximal entropy of $f$ and~$g$.   
Unfortunately we do not know how to infer rigidity results directly from this equality, so the proof of Theorem~\ref{mthm:main} 
is not based on this sole argument. 
To reach a concluson, we make use of the dynamics of the whole group $\Gamma$,
in particular of the classification of $\Gamma$-invariant measures (see \cite{cantat_groupes, invariant}), together with the classification 
of loxodromic automorphisms $f$ whose  measure of maximal entropy $\mu_f$ is absolutely continuous with respect to the Lebesgue measure (see \cite{cantat-dupont,Filip-Tosatti}). 
The existence of parabolic elements in $\Gamma$ is required at three important stages, 
including the arithmetic step; in particular we are not able to prove Theorem~\ref{mthm:main} 
without assuming that $\Gamma$ contains parabolic elements (see \S \ref{subs:open} for a more precise discussion).  

Even if arithmetic methods lie at the core of the proof of Theorem \ref{mthm:main}, 
it is  natural to expect that the assumption  that  
 $X$ and $\Gamma$ be defined over a number field is superfluous. We are indeed able  to get rid of it 
when $\Gamma$ has no invariant curve.

\begin{mthm}\label{mthm:KtoC}
Let $X$ be a compact K\"ahler surface which is not a torus. Let 
$\Gamma$ be a   subgroup of $\Aut(X)$ which contains a parabolic element and     
does not preserve any algebraic curve.    Then $\Gamma$  admits only finitely many periodic points. 
\end{mthm}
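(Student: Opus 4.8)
The plan is to deduce Theorem~\ref{mthm:KtoC} from the number-field case, Theorem~\ref{mthm:main}, via a specialization/spreading-out argument. The guiding principle is that the datum $(X,\Gamma)$ together with the parabolic element is, up to enlarging $\Gamma$ to a suitable finitely generated subgroup, ``rigid'' enough to be defined over a field of finite transcendence degree, and then one can specialize to $\Qbar$. More precisely, I would first reduce to the case where $\Gamma$ is finitely generated: the maximal invariant curve $D_\Gamma$ exists for any non-elementary $\Gamma$, and by Theorem~\ref{mthm:invariant_curve_loxodromic} together with the hypothesis, I can find loxodromic $f_1,f_2$ and a parabolic $h$ in $\Gamma$ such that already the group $\Gamma_0=\langle f_1,f_2,h\rangle$ is non-elementary, contains a parabolic element, and has no invariant curve (since $\Gamma$ has none, one checks this persists after possibly adding finitely many generators). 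Since $\fix$ and periodicity are monotone under passing to subgroups — $\mathrm{Per}(\Gamma)\subset\mathrm{Per}(\Gamma_0)$ — it suffices to bound $\mathrm{Per}(\Gamma_0)$. Moreover, since the set of periodic points of a \emph{single} loxodromic automorphism $f_1$ is infinite (it contains the support of $\mu_{f_1}$ and is in fact Zariski dense), a finiteness statement is genuinely about the intersection with the orbits of the remaining generators; but I do not need this refinement, only that $\Gamma_0$ is finitely generated.

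Next, the spreading-out step. The pair $(X,\Gamma_0)$ is defined by finitely many equations and finitely many automorphisms with coefficients in some finitely generated $\Q$-algebra $R\subset\C$; enlarging $R$ by inverting finitely many elements, I get a smooth affine scheme $S=\mathrm{Spec}\,R$ over $\Q$ of finite type, a smooth projective family $\mathcal{X}\to S$ with $\mathcal{X}_{\eta_{\mathbf C}}=X$ for the chosen embedding $R\hookrightarrow\C$, and automorphisms $F_1,F_2,H$ of $\mathcal{X}/S$ specializing to our generators. The subtle points to check are that (i) the relevant dynamical features are preserved under specialization to a ``very general'' closed point $s\in S(\Qbar)$: the action on $\NS$ is locally constant in the family (after a finite étale base change so that $\NS$ is constant and the classes of invariant curves are tracked), hence $\Gamma_0$ specializes to a non-elementary group, and $H_s$ remains parabolic (its action on $\NS$ is unchanged); (ii) the absence of an invariant curve is preserved — here one uses that an invariant curve would have bounded degree (controlled by the parabolic element's invariant fibration, cf.\ Proposition~\ref{pro:contraction} and the discussion of $D_\Gamma$), so the locus in $S$ where an invariant curve of bounded degree appears is a proper closed (or at worst countable union of proper closed) subset, which we avoid; and (iii) $(X,\Gamma_0)$ is not a torus and $\mathcal X_s$ is not a torus for $s$ outside a proper closed subset (the torus locus is closed in the relevant moduli). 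Then Theorem~\ref{mthm:main} applies to $(\mathcal X_s,\Gamma_{0,s})$ over $\Qbar$ — note the Kummer alternative is vacuous because a Kummer group \emph{does} preserve the exceptional curves of the desingularization unless $X=X_0$ is the smooth quotient, and in the latter case the $\Gamma$-action still does not preserve a curve but the surface carries an invariant holomorphic structure that, combined with ``not a torus,'' one must argue away; in any event having \emph{no} invariant curve and not being a torus rules out Kummer, so $\mathrm{Per}(\Gamma_{0,s})$ is finite.

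Finally, the descent of finiteness from the special fibre back to $X$. The key point is that periodic points of $\Gamma_0$ on $\mathcal X$ form a closed subscheme $\mathrm{Per}\subset\mathcal X$, cut out by the conditions $\{F^{n}(x)=x\}$ for words $F$ in the generators; more usefully, for each $N$ the set $\mathrm{Per}_{\le N}$ of points with $\Gamma_0$-orbit of cardinality $\le N$ is closed in $\mathcal X$ (it is a finite union of loci of the form ``$x$ is fixed by a finite-index subgroup of small index,'' each defined by finitely many algebraic equations). If $\mathrm{Per}(\Gamma_0)$ on $X=\mathcal X_{\eta_{\mathbf C}}$ were infinite, then either (a) some $\mathrm{Per}_{\le N}$ on $X$ is infinite, or (b) the orbit sizes are unbounded. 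Case (a): $\mathrm{Per}_{\le N}$ is a closed subscheme of $\mathcal X$, so if its generic fibre is infinite (hence positive-dimensional, or zero-dimensional of unbounded degree), then by constructibility its fibre over a very general $s$ is also infinite — contradicting finiteness on $\mathcal X_s$. Case (b) is the main obstacle and requires the extra input that for a non-elementary group with a parabolic element and no invariant curve, periodic points have \emph{uniformly bounded orbit size}; this is exactly the content one extracts from the proof of the number-field theorem, where finite orbits, being Zariski non-dense, are shown to lie in a fixed subvariety with controlled geometry, so I would either quote the relevant uniform bound from the body of the paper (the same mechanism producing $D_\Gamma$ and the contraction $\pi_0$) or, alternatively, restructure the argument to work at the level of the finitely many orbit-closures at once. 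I expect the uniform-boundedness issue in case (b) — making the specialization argument see \emph{all} periodic points simultaneously rather than one orbit size at a time — to be the crux; everything else is standard spreading-out bookkeeping, modulo carefully checking that non-elementariness, parabolicity, non-torus, and absence of invariant curves all persist under specialization to a very general $\Qbar$-point.
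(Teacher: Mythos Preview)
Your overall strategy --- reduce to a finitely generated subgroup, spread out over a $\overline{\Q}$-variety, check that the dynamical hypotheses persist at a closed $\overline{\Q}$-point, and invoke the number-field theorem there --- matches the paper's. But the crux you isolate in case~(b) is a genuine gap, and the fix you hope for does not exist: there is no uniform bound on orbit sizes (already the periodic points of a single loxodromic $f$ have arbitrarily large period), and nothing in the paper provides one. Note also that your case~(a) is vacuous: each $\mathrm{Per}_{\le N}$ is Zariski closed and $\Gamma_0$-invariant, so if it were infinite it would contain a $\Gamma_0$-invariant curve, contradicting the hypothesis. Thus you are \emph{always} in case~(b), and this is where the real work lies.

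The paper resolves~(b) by a completely different mechanism. It first fixes a single loxodromic $f\in\Gamma$ with no invariant curve (Theorem~\ref{mthm:invariant_curve_loxodromic}), so that every $\Gamma$-periodic point is an \emph{isolated} periodic point of $f$. After spreading out, one forms the families $\mathcal Z=\bigcup_d\mathcal Z_d$ of $\Gamma$-periodic points and $\mathcal P_n=\fix(f^n)$, and sets $\mathcal Y_n=\mathcal Z\cap\mathcal P_n$. At the generic parameter~$b$, infinitely many $\Gamma$-periodic points give $\sum_{x\in\mathcal Y_{n,b}}\mult(x,\mathcal Y_{n,b})\to\infty$ along a sequence of~$n$'s. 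Upper semicontinuity of fibre length (Nakayama's lemma) transfers this to the special parameter~$t$: $\sum_{x\in\mathcal Y_{n,t}}\mult(x,\mathcal Y_{n,t})\to\infty$. But if $\mathcal Z_t$ were finite, all its points would be fixed by $f_t^{n_0}$ for some~$n_0$; since $f_t$ also has no invariant curve (shown via the degree bound of Proposition~\ref{pro:degree_invariant_curve} and Bishop's compactness theorem, and only in a \emph{Euclidean} neighbourhood of~$b$, not a Zariski-open set), these are isolated fixed points, and the theorem of Shub and Sullivan bounds $\mult(x,\fix(f_t^n))$ independently of~$n$. This bounds the sum, a contradiction. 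The Shub--Sullivan/Nakayama pair is the missing idea; it replaces your hoped-for uniform orbit bound entirely. A secondary point: persistence of parabolicity under specialization is not automatic from the $\NS$-action alone and requires a separate argument (carried out case by case in~\S\ref{par:parabolic_semi-continuity}) to rule out degeneration to elliptic elements of unbounded order.
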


The proof of Theorem~\ref{mthm:KtoC} is based on specialization arguments, 
inspired notably  by the approach of \cite{dujardin-favre} (see Section \ref{sec:KtoC}). 
It applies, for instance, to the action of $\Gamma=\Aut(X)$ on any unnodal Enriques surface $X$, and
to the foldings of euclidean pentagons with generic side lengths (see  \cite[\S 3]{stiffness} for details on these examples). 
  
\smallskip

We  conclude this introduction by explaining  two further  applications of Theorem~\ref{mthm:main}. 

\subsection{Canonical vector heights}\label{par:canonical_vector_height}  
Theorem~\ref{mthm:main}   will be applied to answer a question of Baragar on the existence of certain 
canonical heights (see~\cite{Baragar:2004, Baragar_vanLuijk:2007, Kawaguchi:2013}). 

Let $X$ be a projective surface, defined over a number field $\bfk$. Denote by $\Pic(X)$ the Picard group 
of $X_{\overline{\Q}}$. The Weil height machine provides, for every line bundle $L$ on $X$, a height function $h_L\colon X(\Qbar)\to \R$, 
defined up to a bounded error $O(1)$. This construction is additive, $h_{aL+bL'}=ah_L+bh_{L'}+O(1)$ for all pairs 
$(L,L')\in \Pic(X )^2$ and all coefficients $(a,b)\in \Z^2$. When $L={\mathcal{O}}_X(1)$ for some embedding $X\subset \P^N_\bfk$, then 
$h_L$ coincides with the usual logarithmic Weil height.

If $f$ is a regular endomorphism of $X$ defined over $\bfk$ 
and $L$ is an ample line bundle such that $f^*L=L^{\otimes d}$ for some integer $d>1$, then $h_L\circ f = dh_L+O(1)$.   Tate's renormalization trick
\begin{equation}
\hat{h}_L(x):=\lim_{n\to +\infty} \frac{1}{d^n} h_L(f^n(x))
\end{equation}
provides a canonical height for $f$ and $L$ that is, a function $\hat{h}_L\colon X(\Qbar)\to \R_+$ such that $\hat{h}_L=h_L+O(1)$ and $\hat{h}_L\circ f= d\hat{h}_L$ exactly, with 
no error term. This construction was extended to loxodromic automorphisms of projective surfaces by 
Silverman, Call, and Kawaguchi
(see~\cite{Silverman:1991, Call-Silverman:1993, Kawaguchi:AJM}): in this case one obtains a pair of canonical  
heights $\hat h_f^\pm$ satisfying $\hat h_f^\pm\circ f^{\pm 1} = \lambda(f)^\pm  \hat h_f^\pm$. 
(Note that here $\hat h_f^+$ and $\hat h_f^-$ are Weil heights associated to  $\R$-divisors.)

If  $\Gamma$ is an infinite subgroup of $\Aut(X)$, also defined over $\bfk$, it is natural to ask
whether a   $\Gamma$-equivariant family of heights can be constructed.  Specifically, one 
looks for a family of representatives $\hat{h}_L$ of the Weil height functions, i.e. $\hat{h}_L=h_L+O(1)$ for every $L$
in $\Pic(X;\R):=\Pic(X)\otimes_\Z\R$, 
depending  linearly on  $L$, and satisfying the exact relation 
 \begin{equation}\label{eq:canonical_vector_height}
\hat{h}_L(f(x))=\hat{h}_{f^*L}(x) \quad \quad (\forall x\in X(\overline{\Q}))
\end{equation} 
for every pair $(f,L)\in \Gamma\times \Pic(X;\R)$ (see \S \ref{sec:canonical_vector_height} for details).  
 A prototypical example is given by the Néron-Tate  height, when $\Gamma$ is the group of automorphisms  of an abelian surface
preserving the origin. 
Such objects were named {\bf{canonical vector heights}}~(\footnote{The name ``vector height'' comes from the following viewpoint. Assume $\Pic(X;\R)=\NS(X;\R)$, and fix a basis 
$L_i$ of  $\Pic(X;\R)$. For  a point $x$ in $X(\overline{\Q})$, consider the vector $(h_{L_i}(x))\in \R^\rho$, where $\rho=\dim_\R\Pic(X;\R)$.
Then, the equivariance property \eqref{eq:canonical_vector_height} can be phrased in terms of this vector,  hence the terminology.}) 
by Baragar in~\cite{Baragar:Canad2003}. He proved their existence when $X$ is a K3 surface with Picard number $2$,  in 
which case $\Aut(X)$ is virtually cyclic. He also  gave evidence for their  non-existence  on certain 
Wehler surfaces (see \cite{Baragar_vanLuijk:2007}). In~\cite{Kawaguchi:2013} Kawaguchi obtained a complete proof of
this non-existence for an explicit family of Wehler surfaces; his argument relies on the study of $\Gamma$-periodic orbits.     
  
Extending Kawaguchi's methods and using Theorem \ref{mthm:main},  we completely solve    the 
existence problem  of canonical vector  heights for non-elementary groups with parabolic elements:
\emph{let   $X$ be  a smooth projective surface and   $\Gamma$ be a non-elementary subgroup of $\Aut(X)$ containing parabolic elements, 
both defined over  a  number field~$\bfk$;
if $(X,\Gamma)$ possess a canonical vector height, then  $X$ is an abelian surface and $\Gamma$ has a finite orbit} (see Theorem~\ref{mthm:canonical_vector_height} in Section~\ref{sec:canonical_vector_height}).  The second
assertion implies that, after conjugation by a translation, a finite index subgroup of $\Gamma$ preserves the neutral element of the 
abelian surface $X$, in particular  the Néron-Tate height provides a canonical vector height, and we explain how all possible canonical 
vector heights are derived from the Néron-Tate height (see Theorems~\ref{mthm:prime} and~\ref{mthm:second}  below  for
    precise statements). 

\subsection{Stationary measures}  
Another  application, which was 
our primary source of motivation for this work,  concerns the classification of invariant and 
stationary measures.  

Assume that $X$ and $\Gamma$ are defined over $\R$; in particular, 
$\Gamma$ acts on the real part $X(\R)$ of $X$, which we assume here to be non-empty.
The group $\Gamma$ permutes the connected components 
of $X(\R)$ and, choosing one component $X(\R)_i$ in each $\Gamma$-orbit, the surface 
$X(\R)$ splits into a finite, disjoint union of  real analytic and $\Gamma$-invariant surfaces 
\begin{equation}
X(\R) = \bigsqcup_{i\in I} \Gamma(X(\R)_i).
\end{equation}
For simplicity, as in Theorem~\ref{mthm:KtoC},  suppose that 
$X$ is not abelian,  
$\Gamma$ contains both loxodromic and parabolic elements, and $\Gamma$ does not preserve any algebraic curve $D\subset X$. 
Let $\nu$ be a probability measure on $\Aut(X)$, whose support is a finite set generating~$\Gamma$. 
Using the results of \cite{cantat_groupes, invariant, stiffness} we infer  that: {\emph{the set of  $\nu$-stationary probability measures on $X(\R)$
coincides with that of $\Gamma$-invariant probability measures 
and is a finite dimensional simplex, whose extremal points are given by: 
\begin{itemize}
\item finitely many real analytic area forms $\omega_i$, one for each orbit 
$\Gamma(X(\R)_i)$, with  support equal to $\Gamma(X(\R)_i)$;
\item the uniform counting measures on the (finitely many) finite orbits of $\Gamma$.
\end{itemize}}}
 
 \begin{eg*}  Suppose $X$ is a real K3 surface, with $X(\R)\neq \emptyset$. Then, 
$X(\R)$ is orientable, and there is a non-vanishing section  $\Omega$ of the canonical bundle $K_X$ which induces a positive area form $\Omega_\R$ on 
$X(\R)$ (see~\cite{invariant} for instance). The area forms mentioned in the first item are the restrictions of ${\Omega_{\R}}$
to the surfaces $\Gamma(X(\R)_i)$, up to some normalization factors.  
So, if $X$ is a very general real Wehler surface with $X(\R)\neq \emptyset$, and if $\nu$ generates $\Aut(X)$,
Theorem~\ref{mthm:wehler}  implies that the only $\nu$-stationary measures are convex combinations of restrictions of the natural area
 form $\Omega_\R$ to the components of $X(\R)$ (this result was announced in \cite{stiffness}). \end{eg*}

\subsection{Organization of the paper}  
We start  by proving Theorem~\ref{mthm:wehler} in Section~\ref{sec:wehler}, which  is  
  independent of  the rest of the paper. In Section~\ref{sec:invariant_curves} we study invariant curves for loxodromic automorphisms and non-elementary groups. In particular we obtain an effective bound 
for the degree of a  curve  invariant under a loxodromic automorphism (see Proposition~\ref{pro:degree_invariant_curve}) and prove Theorem~\ref{mthm:invariant_curve_loxodromic}. 
In Section~\ref{sec:tori} we  briefly discuss the case of  tori and  review the Kummer construction. 
The core of the paper is Section~\ref{par:finite_orbits}, in which we develop the  
arithmetic method outlined above  and establish Theorem \ref{mthm:main}.
Section~\ref{sec:corollaries} is devoted to consequences of Theorem \ref{mthm:main}, and related comments.
We prove Theorem~\ref{mthm:KtoC} in Section~\ref{sec:KtoC}.   
 Finally, canonical vector heights are discussed in Section~\ref{sec:canonical_vector_height}, where we solve 
 Baragar's problem. Some open problems and 
 possible extensions of our results are discussed in \S\S \ref{subs:open} and~\ref{subs:discussion}. 
 
\subsection*{Acknowledgement} We warmly thank Pascal Autissier, Antoine Chambert-Loir, Marc Hindry, Mattias Jonsson, 
and Junyi Xie  for useful discussions or comments. 

\section{Very general Wehler surfaces}\label{sec:wehler}

Consider the family of Wehler surfaces
described in Section~3.1 of \cite{stiffness} (or in~\cite{Baragar:2004, Cantat:Acta, Kawaguchi:2013, McMullen:Crelle}). In this section  we prove Theorem \ref{mthm:wehler}. Recall the statement:
\begin{thm}\label{thm:no_finite_orbit_wehler}
If $X\subset \P^1\times \P^1\times \P^1$ is a very general Wehler surface, 
then $\Aut(X)$ does not preserve any non-empty, 
proper, and Zariski closed subset of $X$. 
\end{thm}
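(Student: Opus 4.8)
The plan is to prove that for a very general Wehler surface $X$, the only Zariski closed $\Aut(X)$-invariant subsets are $\emptyset$ and $X$ itself. The argument will proceed by ruling out invariant subsets of dimension $0$ and dimension $1$ separately, using a countability/specialization argument to pass from ``generic'' to ``very general''.

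First I would recall the structure of $\Aut(X)$ for a generic Wehler surface: the three projections $X\to \P^1\times\P^1$ (forgetting one factor) are degree-two maps, giving three involutions $\sigma_1,\sigma_2,\sigma_3$, and for $X$ outside a countable union of proper subvarieties of the parameter space, $\Aut(X)$ is exactly the group $\langle \sigma_1,\sigma_2,\sigma_3\rangle$, which is non-elementary (indeed a free product $\Z/2 * \Z/2 * \Z/2$ up to finite index in its linear action). Next, for the dimension-$1$ part: a $\Gamma$-invariant curve would, by the results of Section~\ref{sec:invariant_curves} (in particular the effective degree bound of Proposition~\ref{pro:degree_invariant_curve} applied to a loxodromic element such as $\sigma_i\sigma_j$), have bounded degree in $\P^1\times\P^1\times\P^1$; hence it lives in a bounded-dimensional family, and the condition that a given $(2,2,2)$ surface contains such a curve invariant under all three involutions is a Zariski closed condition on the parameter space. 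One then needs to exhibit a single Wehler surface with no such invariant curve to conclude that the very general one has none — this can be done either by an explicit example or by a dimension count showing the ``bad'' locus is proper.

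For the dimension-$0$ part — finite orbits — the cleanest route is again to bound things: a finite orbit of $\Gamma=\langle\sigma_1,\sigma_2,\sigma_3\rangle$ of a given cardinality $N$ is cut out by equations of controlled degree, so for each $N$ the locus of parameters admitting a finite orbit of size $\le N$ is Zariski closed in the parameter space $\mathbb{P}(H^0(\P^1\times\P^1\times\P^1,\mathcal{O}(2,2,2)))$. Taking the countable union over $N$, it suffices to produce, for each $N$, one Wehler surface with no finite orbit of size $\le N$; alternatively, and more robustly, one shows the full locus of parameters with \emph{some} finite orbit is contained in a countable union of proper subvarieties. Here I would invoke an explicit witness: over a suitable field (e.g.\ using a surface defined over $\Q$ or a $p$-adic argument à la \cite{dujardin-favre}, or the arithmetic input that a fixed Wehler surface over a number field has a controlled set of periodic points), one finds a single member of the family for which $\langle\sigma_1,\sigma_2,\sigma_3\rangle$ has no finite orbit. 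Combining the dimension-$1$ and dimension-$0$ exclusions, the very general Wehler surface has no proper nonempty invariant Zariski closed subset, and every orbit is Zariski dense.

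\textbf{Main obstacle.} The delicate point is not the countable-union formalism but the production of explicit witnesses — both a Wehler surface with no $\Gamma$-invariant curve and, for every $N$, one with no finite orbit of size $\le N$ (or a single surface handling all $N$ at once). The first is reasonably accessible via the effective degree bounds of Section~\ref{sec:invariant_curves} together with a genericity computation. The second is the real difficulty: ruling out \emph{all} finite orbits on a single concrete surface seems to require an arithmetic or $p$-adic input rather than a naive dimension count, since a priori finite orbits could exist for infinitely many values of $N$ on every member of a Zariski-dense set of parameters; this is presumably why the theorem is stated for very general (rather than explicit) $X$, and matches the remark in the introduction that the proof ``does not allow to single out any explicit example.''
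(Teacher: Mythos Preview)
Your framework for the dimension-$0$ part is correct (the countable union of loci $\mathcal{Z}_d$ of surfaces carrying a $\Gamma$-fixed point of level $d$), but your proposed method for showing each $\mathcal{Z}_d$ is proper---producing explicit witnesses, possibly via arithmetic or $p$-adic input---is not what the paper does, and your own ``Main obstacle'' paragraph correctly diagnoses that this route is hard. The paper sidesteps witnesses entirely by a direct \emph{parameter count}: assuming $\mathcal{Z}_d\to W_0$ surjects, one chooses a local holomorphic section $X\mapsto s_X$ of $\Gamma_d$-fixed points, normalizes via the $\PGL_2(\C)^3$-action to put $s_X$ and a few of its images under the $\sigma_i$ at standard positions, and then exploits the dynamics of the parabolic elements $\sigma_i\sigma_j$ on the elliptic fibers through $s_X$. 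The key input (Lemmas~\ref{lem:singular} and~\ref{lem:transversality_wehler_curve}) is that the translation induced by $\sigma_i\sigma_j$ on a $(2,2)$-curve varies nontrivially with the curve's coefficients; requiring it to be periodic (as it must be, since $s_X$ is fixed) imposes a nontrivial algebraic relation. Doing this for the three fibrations yields three independent constraints, dropping the dimension to at most $16<17=\dim(W_0/G)$. A separate case analysis handles the degenerate situations where $s_X$ is a ramification point of some projection. No witness, no arithmetic, no $p$-adics.

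For the dimension-$1$ part, your detour through the degree bounds of Section~\ref{sec:invariant_curves} is unnecessary and still leaves you needing a witness. The paper's Lemma~\ref{lem:wehler_invariant_curves} gives a direct two-line argument valid for \emph{every} $X\in W_0$ (not just very general ones): if $C$ were $\Gamma$-invariant and $\pi_1\colon C\to\P^1$ dominant, then the parabolic element $\sigma_2\sigma_3$ would act on each fiber of $\pi_1$ by a translation fixing the finite set $F\cap C$, hence would have finite order, a contradiction.
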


Here, by very general, we mean that this property holds in the 
complement of a set of countably many hypersurfaces in the space of 
surfaces of degree $(2,2,2)$ in $\P^1\times \P^1\times \P^1$.
The proof follows from an elementary but rather tedious  parameter counting argument. 
As we shall see in \S \ref{par:example_thin_wehler}, such a statement does not hold 
if we replace $\Aut(X)$ by a thin non-elementary subgroup.

\subsection{Notation and preliminaries}\label{par:notation_wehler}

We   use the notation of   \cite[\S 3.1]{stiffness}:
$M=\P^1\times \P^1\times \P^1$,  with affine coordinates $(x,y,z)$ (denoted $(x_1,x_2,x_3)$ in~\cite{stiffness}), 
$\pi_1$, $\pi_2$, and $\pi_3$ are the projections on the first, second, and third  factors, and 
$\pi_{ij}$ is the projection $(\pi_i,\pi_j)$ onto $\P^1\times \P^1$. 
Then $L_i=\pi_i^*({\mathcal O}(1))$, $L=L_1^2\otimes L_2^2\otimes L_3^2$, and $X\subset M$ is a member of  the linear system $\vert L \vert$. 
In the affine coordinates $(x,y,z)$, $X$ is defined by a polynomial  equation of 
degree $(2,2,2)$, which we write
\begin{equation}\label{eq:general_X}
P(x,y,z) = A_{222} x^2y^2z^2+ A_{221} x^2y^2z + \cdots + A_{100}x + A_{010}y   +  A_{001}z + A_{000}. 
\end{equation}
We thus  see that $ H^0(M,L) $ is of dimension 27 
and since the  equation $\set{P=0}$ is 
defined up to multiplication by a complex scalar, the family of Wehler surfaces $X$ is $26$-dimensional. 
Modulo  the action of $G  = \PGL(2, \C)^3$ they form an irreducible family   of dimension $17$.

It was shown in \cite[Prop. 3.1]{stiffness} that there exists a Zariski open set 
 $W_0\subset \vert L\vert$   of surfaces $X\in \vert L\vert$ such that 
\begin{enumerate}
\item[(i)] $X$ is a smooth K3 surface;
\item[(ii)] each of the three projections $(\pi_{ij})_X\colon X\to \P^1\times \P^1$ is a finite map, that is, 
$X$ does not contain any 
fiber of $\pi_{ij}\colon M\to \P^1\times \P^1$.
\end{enumerate} 
From now on, we suppose that $X$ belongs to$W_0$. 
 Let $i$, $j$, $k$ be three indices with $\{i,j,k\}=\{1,2,3\}$. Denote by $\sigma_i\colon X\to X$ the involutive automorphism of $X$
that permutes the points in the fibers of the $2$-to-$1$ branched covering  
$(\pi_{jk})_X\colon X\to \P^1\times \P^1$. 
By~\cite[Lem. 3.2]{stiffness}, the three involutions $\sigma_i$ generate a 
non-elementary subgroup of $\Aut(X)$.  
This subgroup is isomorphic to $\Z/2\Z\star \Z/2\Z\star \Z/2\Z$, it preserves  the subspace
of $\NS(X;\Z)$ generated by the Chern classes of the $L_i$, and its action on this subspace is 
given by the matrices in Equation~(3.4) of~\cite{stiffness}. Then  $f_{ij}=\sigma_i\circ \sigma_j$ is a parabolic automorphism
of $X$, preserving the genus $1$ fibration $\pi_k:X\to \pu$.  Moreover, if $X$ is very general
the   $L_i$ generate  $\NS(X;\Z)$ (see \cite[Prop. 3.3]{stiffness}).

\subsection{Invariant curves}\label{par:invariant_curves_wehler}

\begin{pro}
If $X\in W_0$, $\Aut(X)$   does not preserve any algebraic curve. 
\end{pro}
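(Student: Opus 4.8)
The plan is to show that for $X \in W_0$, no algebraic curve can be invariant under the full group $\langle \sigma_1, \sigma_2, \sigma_3\rangle$, and hence none under $\Aut(X)$. First I would recall that $f_{ij} = \sigma_i \circ \sigma_j$ is parabolic, preserving the elliptic fibration $\pi_k \colon X \to \pu$, so its dynamical behavior is fibered: $f_{ij}$ acts on each smooth fiber (an elliptic curve) by a translation, and its invariant curves are constrained to lie in finitely many fibers (the singular fibers) plus possibly some multisections behaving well under the translation. The key point is that a curve invariant under the whole group must simultaneously be invariant under all three parabolic maps $f_{12}$, $f_{23}$, $f_{13}$, which preserve three \emph{distinct} genus $1$ fibrations $\pi_3$, $\pi_1$, $\pi_2$.

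The main steps I envisage are as follows. Suppose $C \subset X$ is an irreducible curve invariant under $\Aut(X) \supseteq \langle \sigma_1,\sigma_2,\sigma_3\rangle$. Since there are only finitely many irreducible invariant curves (a curve invariant under a non-elementary group lies in the maximal invariant curve $D_\Gamma$, which has finitely many components), the group permutes them, so after passing to a finite-index subgroup we may assume $C$ itself is invariant under each $\sigma_i$ and each $f_{ij}$. Now I would argue that a $\pi_k$-fiberwise-translation $f_{ij}$ of infinite order can only preserve an irreducible curve $C$ if either (a) $C$ is contained in a fiber of $\pi_k$, or (b) $C$ surjects onto the base $\pu$ and the restriction of the translation to $C$ has finite order — but an infinite-order parabolic acts with infinite order on a general fiber, forcing $C$ to meet the general fiber in a single point, i.e. $C$ is a section; one then checks via the explicit matrices on $\NS(X;\Z)$ (Equation (3.4) of \cite{stiffness}) that $f_{ij}^*$ has infinite order on $\NS$, so no section can be fixed because sections of the distinct fibrations have cohomology classes moved by the unipotent part. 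Combining: $C$ must be contained in a fiber of $\pi_1$, of $\pi_2$, and of $\pi_3$ simultaneously. But a curve contained in a common fiber of two of the three projections $\pi_k$ would have to be a point (the fibers of $\pi_1$ and $\pi_2$ on $X$ meet properly since $(\pi_{12})_X$ is finite), a contradiction. Hence no such $C$ exists.

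Alternatively, and perhaps more cleanly, I would use a cohomological argument: an $\Aut(X)$-invariant curve $C$ has a class $[C] \in \NS(X;\Z)$ fixed (up to the finite permutation action, hence fixed by a finite-index subgroup) by $\langle \sigma_1^*, \sigma_2^*, \sigma_3^*\rangle$. For very general $X$, $\NS(X;\Z)$ is generated by $L_1, L_2, L_3$ and the action of the group on this rank-$3$ lattice is given explicitly; one checks this representation has no nonzero invariant vector (the only candidates would be fixed by all three involutions, and a direct computation with the matrices in (3.4) of \cite{stiffness} shows the common fixed subspace is trivial, or at least contains no class of an effective curve). Then for general $X$ with $\NS = \Z L_1 \oplus \Z L_2 \oplus \Z L_3$ there is no invariant curve class at all, and a curve with non-invariant class cannot be periodic; a specialization/semicontinuity argument extends this to all $X \in W_0$ if needed, though the statement as phrased for $W_0$ may require handling the extra Néron--Severi classes that appear on special members — here one falls back on the fibered dynamical argument above.

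The main obstacle I anticipate is the case where $C$ is a horizontal curve (multisection) for one of the fibrations on which $f_{ij}$ restricts to a finite-order translation: ruling this out requires knowing that the translation has infinite order on \emph{every} fiber it preserves, which is where one genuinely needs the input that $f_{ij}^*$ acts with infinite order on $\NS(X;\Z)$ (equivalently that the fibration has a section of infinite order, or that the parabolic is not "isotrivial-torsion"). The cleanest route is to invoke that $\langle\sigma_i\rangle$ acts on the span of the $L_i$ by the matrices of (3.4), note $f_{ij}^* = \sigma_i^*\sigma_j^*$ is unipotent of infinite order there, and conclude that the only invariant curves of $f_{ij}$ are vertical (contained in fibers of $\pi_k$); intersecting the vertical-curve conditions for all three fibrations then forces a contradiction as above. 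I would double-check that finitely many components of $D_\Gamma$ is legitimate to invoke here — it follows from the general theory of invariant curves for non-elementary groups recalled in Section~\ref{sec:invariant_curves}, so no circularity arises.
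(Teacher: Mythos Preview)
Your approach contains the right idea and is close to the paper's, but you take a detour and make one genuine misstep. The paper's proof (via Lemma~\ref{lem:wehler_invariant_curves}) is a single paragraph: if $C$ is a $\langle\sigma_1,\sigma_2,\sigma_3\rangle$-invariant curve, it cannot lie in fibers of all three $\pi_k$ (else it would be contained in a fiber of some $\pi_{jk}$, which is a finite map for $X\in W_0$), so say $\pi_1\colon C\to\pu$ is dominant; then on a general fiber $F$ of $\pi_1$, the parabolic $f_{23}=\sigma_2\sigma_3$ acts as a translation preserving the finite non-empty set $C\cap F$; a translation on an elliptic curve with a non-empty finite invariant set has finite order, so $f_{23}$ has finite order on every smooth fiber of $\pi_1$, hence globally, contradicting parabolicity. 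Only one fibration is used, and there is no need to pass to a finite-index subgroup, to invoke $D_\Gamma$ from Section~\ref{sec:invariant_curves}, or to analyze $\NS$.

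Your step ``an infinite-order parabolic acts with infinite order on a general fiber, forcing $C$ to meet the general fiber in a single point, i.e.\ $C$ is a section'' is an error: an infinite-order translation on an elliptic curve preserves \emph{no} non-empty finite subset, so the correct conclusion from your own premises is that case~(b) is directly impossible, not that $C$ must be a section. Once this is fixed, your first argument collapses to the paper's. Your cohomological alternative only handles very general $X$ and is unnecessary here.
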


This is a direct consequence of the  considerations of the previous paragraph, together with the following 
more precise result. 

\begin{lem}\label{lem:wehler_invariant_curves} 
Let $X$ be a smooth Wehler surface. Assume that 
the three involutions $\sigma_i$  induce a faithful action of the group $\Z/2\Z\star \Z/ 2\Z \star \Z/2\Z$. 
Then the group generated by the $\sigma_i$ does not preserve any curve. 
  \end{lem}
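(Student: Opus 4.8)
The plan is to analyze each involution $\sigma_i$ separately and understand which curves it can preserve, then combine the three constraints. First I would recall that $\sigma_i$ is the deck transformation of the $2$-to-$1$ cover $(\pi_{jk})_X\colon X\to \P^1\times\P^1$, so its fixed locus is the ramification curve $R_i\subset X$, and a curve $C\subset X$ is $\sigma_i$-invariant if and only if $\pi_{jk}(C)$ pulls back (as a reduced curve) to $C$ or to $C\cup\sigma_i(C)$; in particular the $\sigma_i$-invariant curves are exactly the preimages $(\pi_{jk})_X^{-1}(B)$ of curves $B\subset\P^1\times\P^1$, together with possibly some components of the ramification curve. The key numerical input is the action of $\langle\sigma_1,\sigma_2,\sigma_3\rangle$ on the rank-$3$ sublattice $V\subset\NS(X;\Z)$ spanned by the classes $\ell_i=c_1(L_i)$, given by the explicit matrices from \cite[(3.4)]{stiffness}: one checks that this representation $\Z/2\star\Z/2\star\Z/2\to \GL(V)$ is faithful and that its image contains loxodromic elements (e.g. $\sigma_i\sigma_j\sigma_k$), so in particular $\langle\sigma_i\rangle$ acts on $\NS(X;\R)$ with no nonzero invariant class other than multiples of $\ell_i+\ell_j$ — no wait, I need the invariant subspace for the whole group, which is trivial.

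Next I would argue by contradiction: suppose $D$ is a nonempty curve preserved by $G=\langle\sigma_1,\sigma_2,\sigma_3\rangle$; replacing $D$ by the union of the $G$-orbit of one of its components we may assume $D$ is reduced and $G$-invariant, and its class $[D]\in\NS(X;\R)$ is then $G^*$-invariant. Since the $G$-action on $\NS(X;\R)$ restricted to $V$ has no nonzero fixed vector (this is a finite check on the matrices: the only vectors fixed by all three matrices in (3.4) form the zero subspace) and — using that $X$ is merely assumed to be a smooth Wehler surface in $W_0$, so $\NS(X;\Z)$ need not equal $V$ — I instead use that $[D]$ has nonnegative intersection with the nef classes $\ell_i$ and is itself a nonzero effective class; the self-intersection form is negative definite on the orthogonal complement of the $G$-invariant part, forcing constraints. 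The cleaner route: a $G$-invariant effective $\R$-divisor class is in particular invariant under the loxodromic element $h=\sigma_1\sigma_2\sigma_3$, and a loxodromic automorphism has a unique (up to scaling) invariant nef class $\theta^+$ with $(\theta^+)^2=0$ and $\theta^+\cdot[D]>0$ for any nonzero effective $D$ unless $[D]$ is proportional to $\theta^+$; but $\theta^+$ is also not fixed by $\sigma_1$ (since $h$ and $\sigma_1 h\sigma_1$ have different attracting fixed points on the boundary of the hyperbolic space, as $\sigma_1$ does not commute with $h$), contradiction. So no $G$-invariant curve exists.

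The main obstacle I expect is the passage from ``$[D]$ is $G^*$-invariant'' to a contradiction when $\NS(X;\Z)$ is strictly larger than $V$ (the hypothesis is only $X\in W_0$, not ``$X$ very general''), because then the invariant part of $\NS(X;\R)$ under $G^*$ could a priori be larger and I cannot simply quote that $G^*$ has no invariant vectors in the full Néron–Severi group. The way around this is to not work cohomologically at all but geometrically: show directly that a curve invariant under the parabolic $f_{12}=\sigma_1\sigma_2$ must be contained in fibers of the elliptic fibration $\pi_3$ (a parabolic automorphism preserves a genus-one fibration and its periodic curves are sums of fiber components and sections-like curves), hence is a union of finitely many $\pi_3$-fibers or lies in the $f_{12}$-invariant part; running the same argument with $f_{23}=\sigma_2\sigma_3$ forces the curve into fibers of $\pi_1$ as well, and a curve that is simultaneously a union of $\pi_3$-fibers and of $\pi_1$-fibers must be empty because $\pi_{13}$ is finite (property (ii) of $W_0$). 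I would then only need the faithfulness hypothesis to ensure $f_{12}$ and $f_{23}$ genuinely have infinite order (hence are honestly parabolic, not elliptic), which is exactly what ``$\Z/2\star\Z/2\star\Z/2$ acts faithfully'' gives. Assembling: any $G$-invariant curve is $f_{12}$- and $f_{23}$-invariant, lands in both a finite union of $\pi_3$-fibers and a finite union of $\pi_1$-fibers, so is empty.
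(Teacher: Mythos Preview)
Your final argument is correct and is essentially the contrapositive of the paper's proof: the core step in both is that $f_{jk}=\sigma_j\sigma_k$ acts as a translation on each fiber of $\pi_i$, so if it preserved a multisection it would have finite order, contradicting faithfulness. The paper phrases this as ``some $\pi_i|_C$ must be dominant, hence the corresponding $f_{jk}$ has finite order'', and finishes with the automatic observation that no curve can lie in fibers of all three $\pi_i$; this is slightly cleaner than your two-constraints-plus-finiteness-of-$\pi_{13}$ ending (and note your parenthetical about ``sections-like curves'' is a slip---the translation argument already rules out all multisections).
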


\begin{proof} Assume that  $C$ is an invariant curve. Since no curve can be contained simultaneously in 
 fibers of $\pi_1$,   $\pi_2$ and $\pi_3$,
without loss of generality, we may suppose that 
 $\pi_1\colon C\to \P^1(\C)$ is dominant. Then the  
automorphism $f_{23}=\sigma_2\circ \sigma_3$ has finite order: indeed, on a general fiber 
$F$ of $\pi_1$, it acts as a translation that preserves the non-empty  finite set $F\cap C$. 
This contradicts the fact that $f_{23}$ is parabolic and finishes the proof. 
\end{proof}

Thus, to prove Theorem~\ref{thm:no_finite_orbit_wehler}, 
we are left to prove the non-existence of  periodic orbits, which is the purpose of  the following paragraphs.

\subsection{Elliptic curves}\label{par:wehler_curves}
Here we study (2,2) curves in dimension 2. We keep notation as in \S \ref{par:notation_wehler}. Let us 
consider the line bundles $L_i=\pi_i^*({\mathcal O}(1))$ on $ \P^1\times \P^1$
and set $L=L_1^2\otimes L_2^2$.  Fix (affine) coordinates $(x,y)$ on $\P^1\times \P^1$, with  $x$ and $y$ in $\C \cup \{\infty\}$.
A curve $C\subset \P^1\times \P^1$ in the linear system $\vert L\vert$ 
is given by an equation of degree (2,2) in $(x,y)$. Assume that $C$ contains 
the points $(0,0)$, $(\infty, 0)$, and $(0,\infty)$ and that it is smooth at the origin,  with a 
tangent line given by  $x+y=0$. 
Then its equation reduces to the form
\begin{equation}\label{eq:wehler_curves}
\alpha x^2y^2 + \beta x^2y+ \gamma xy^2+\delta xy + \varepsilon (x+y)=0
\end{equation}
for some complex numbers $\alpha$, $\beta$, $\gamma$, $\delta$, and $\varepsilon $, with $\e\neq 0$. Denote this curve by $C_{(\alpha,\beta,\gamma,\delta,\e)}$. 
For a general choice of these parameters, $C$ is a smooth curve of genus $1$. 
We will need the following more precise result. 
  
\begin{lem}\label{lem:singular}
Fix  $(\beta, \gamma, \delta, \e)$ with $\e\neq 0$. Then for general $\alpha$, $C_{(\alpha,\beta,\gamma,\delta,\e)}$ is smooth.
\end{lem}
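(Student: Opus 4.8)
The plan is to recognize the one-parameter family $\bigl(C_{(\alpha,\beta,\gamma,\delta,\e)}\bigr)_\alpha$, with $(\beta,\gamma,\delta,\e)$ fixed and $\e\neq0$, as a pencil inside the linear system $\vert L\vert$ on $\P^1\times\P^1$, and to apply Bertini's theorem. Writing $x=x_1/x_0$ and $y=y_1/y_0$ in bihomogeneous coordinates, Equation~\eqref{eq:wehler_curves} homogenizes to $\alpha\,Q_0+Q_1=0$ where $Q_0=x_1^2y_1^2$ and
\[
Q_1=\beta\,x_1^2y_1y_0+\gamma\,x_1x_0y_1^2+\delta\,x_1x_0y_1y_0+\e\bigl(x_1x_0y_0^2+x_0^2y_1y_0\bigr),
\]
and these two sections are not proportional since $\e\neq0$. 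The members with $\alpha\in\C$ are exactly the curves $C_{(\alpha,\beta,\gamma,\delta,\e)}$; the remaining member $Q_0=0$ is a union of double lines and will be irrelevant. First I would compute the base locus $\{Q_0=Q_1=0\}$: since $Q_0$ vanishes precisely on the two lines $\{x=0\}$ and $\{y=0\}$, and since $Q_1$ restricts to $\e\,x_0^2y_1y_0$ on $\{x_1=0\}$ and to $\e\,x_1x_0y_0^2$ on $\{y_1=0\}$, the base locus is the reduced set of three points $\{(0,0),(0,\infty),(\infty,0)\}$, namely the three points through which every $C_{(\alpha,\beta,\gamma,\delta,\e)}$ passes.

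Next I would check that \emph{every} member of the pencil is smooth at each of these three base points, using only $\e\neq0$; this is the one place where Bertini gives no information. At $(0,0)$ the affine equation \eqref{eq:wehler_curves} has linear part $\e(x+y)$, so every $C_\alpha$ is smooth there. In the chart $(u,y)=(1/x,y)$ around $(\infty,0)$, multiplying \eqref{eq:wehler_curves} by $u^2$ yields $\alpha y^2+\beta y+\gamma uy^2+\delta uy+\e u+\e u^2y$, whose linear part $\e u+\beta y$ is nonzero; and symmetrically, in the chart $(x,v)=(x,1/y)$ around $(0,\infty)$ one gets linear part $\gamma x+\e v\neq0$. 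Hence all three base points are smooth points of every member of the pencil, for all $\alpha$.

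Finally, Bertini's theorem (we are in characteristic $0$) says that the generic member of the pencil is smooth away from the base locus; combined with the previous step, the generic member is smooth everywhere. Since the pencil is parametrized by $\P^1$, "generic" means "outside a finite subset", and since every $C_{(\alpha,\beta,\gamma,\delta,\e)}$ with $\alpha\in\C$ is such a member, this is exactly the assertion that $C_{(\alpha,\beta,\gamma,\delta,\e)}$ is smooth for all $\alpha$ outside a finite set. (Alternatively one can bypass Bertini: the set of $\alpha$ for which $C_\alpha$ is singular is Zariski closed in $\C$, being the zero set of a discriminant, so it would suffice to exhibit one smooth member by directly solving the singularity equations; but the Bertini route is cleaner.) I do not anticipate any genuine obstacle here: the only real content is the smoothness check at the three base points, and the rest is bookkeeping, mostly keeping track of the two charts at infinity.
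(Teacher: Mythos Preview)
Your proof is correct and takes a genuinely different route from the paper's. The paper argues by direct computation: it first checks smoothness at infinity for $\alpha\neq 0$, then for the affine part it views Equation~\eqref{eq:wehler_curves} as quadratic in $x$ with discriminant $\Delta_x(y)=ay^4+by^3+cy^2+dy+e$, observes that a singularity forces a multiple root of $\Delta_x$, and computes that the discriminant of $\Delta_x$ (a polynomial in $a,b,c,d,e$) has leading term $27b^4e^2$ in $b$, where only $b=2\gamma\delta-4\alpha\e$ depends on $\alpha$ and $e=\e^2\neq 0$; hence this discriminant is a nonconstant polynomial in $\alpha$.

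Your Bertini argument is cleaner and avoids all of this algebra: once the three base points are identified and checked to be smooth on every member (which you do correctly, using only $\e\neq 0$), the theorem does the rest. What the paper's computation buys is an explicit polynomial relation $P(\alpha,\beta,\gamma,\delta,\e)=0$ cutting out the singular locus, together with the observation that $P$ genuinely involves $\alpha$; this is exactly what is invoked later in \S\ref{par:no_finite_orbit_wehler_step_1}. Your argument yields the same consequence nonconstructively---the singular locus is Zariski closed in the parameter space and contains no full fiber over $(\beta,\gamma,\delta,\e)$ with $\e\neq 0$---so its defining ideal must involve $\alpha$ nontrivially. Either route suffices for the application.
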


\begin{proof}
An easy explicit calculation shows that 
the points of $C$ on $\set{\infty}\times \P^1$ and $\P^1\times \set{\infty}$ are
smooth unless $\alpha = \beta =\gamma = 0$. So for $\alpha\neq 0$, $C$ has no singular point
at infinity. Now, viewing the equation \eqref{eq:wehler_curves} as a quadratic equation in 
$x$ depending on the variable $y$, we can consider its discriminant 
$\Delta_x = \Delta_x(y)$, 
which is a polynomial of degree 4 in $y$ that detects fibers $\C\times \set{y}$ 
intersecting $C$ at a single point (for those values $y$ for which $C\cap \C\times \set{y}$
 is contained in $\C^2$, that is the polynomial in $x$ is of degree 2). 
 It is easy to check that if $(x,y)$ is a singular point of $C$, then  $y$ must be a multiple root 
 of $\Delta_x$.
 Hence     if $y\mapsto \Delta_x(y)$  only has simple roots, 
$C$ is smooth in $\C^2$. Thus  it is enough to check that if $(\beta, \gamma, \delta, \e)$ is an 
arbitrary 4-tuple such that $\e\neq 0$, $\Delta_x$ has only simple roots for general $\alpha$. 

A simple calculation shows that  $\Delta_x(y)   = ay^4 +by^3+cy^2+dy+e$, where  only $b$ depends on 
$\alpha$, with  $b(\alpha) = 2\gamma\delta - 4\alpha\e$, and $e=\e^2\neq 0$. Now 
the discriminant of $\Delta_x$, 
as a degree $4$ polynomial in $y$, is a polynomial expression  in $(a,b,c,d,e)$, and as a polynomial 
in $b$ it has a unique leading term $27b^4e^2$. So, $(\beta, \gamma, \delta, \e)$ being fixed, 
with $\e\neq 0$, this discriminant depends non-trivially on $\alpha$; for a general $\alpha$, 
this discriminant is not zero
thus $\Delta_x$  has four distinct roots, so that $C$ is smooth, as was to be proved.  
\end{proof}

As for Wehler surfaces, there are two involutions $\sigma_1$ and $\sigma_2$ on $C$, respectively
permuting the points in the fibers of the projections $(\pi_2)_{\vert C}\colon C\to \P^1$ and $(\pi_1)_{\vert C}\colon C\to \P^1$,
that is, $\sigma_i$ changes the $i$-th coordinate, while keeping the other ones  unchanged.
The composition $f=\sigma_1\circ \sigma_2$ is a translation on $C$ mapping  $(0,\infty)$ to $(\infty, 0)$;
in particular, $f$ is not the identity. 

\begin{lem}\label{lem:transversality_wehler_curve}
Fix  $(\beta, \gamma, \delta, \e)$ with $\e\neq 0$ and assume 
 that the curve  $C_{(0,\beta,\gamma,\delta,\e)}$ is 
smooth. Then the dynamics of the translation $f$ on $C_{(\alpha,\beta,\gamma,\delta,\e)}$  varies non-trivially 
with $\alpha$:  it is periodic for a countable dense set of $\alpha$'s, 
and non-periodic for the other parameters. 
\end{lem}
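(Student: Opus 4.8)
The statement asserts that the translation $f = \sigma_1 \circ \sigma_2$ on the genus-$1$ curve $C_{(\alpha,\beta,\gamma,\delta,\e)}$ has an order (finite or infinite) that genuinely depends on $\alpha$, with periodicity occurring on a countable dense set and non-periodicity otherwise. The plan is to encode the dynamics of $f$ via the position of a single point in the group law of the elliptic curve and show this position moves non-constantly with $\alpha$.

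\medskip

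\emph{Step 1: Reformulate periodicity via the group law.}
Fix $(\beta,\gamma,\delta,\e)$ with $\e\neq 0$ so that $C_0 := C_{(0,\beta,\gamma,\delta,\e)}$ is smooth; by Lemma~\ref{lem:singular}, $C_\alpha := C_{(\alpha,\beta,\gamma,\delta,\e)}$ is also smooth for all $\alpha$ outside a finite (in fact, at most a proper Zariski closed) set $S$, so for $\alpha\notin S$ we have a genuine elliptic curve. Choose the origin of the group law at $O := (0,0)$ (which lies on every $C_\alpha$ and is a smooth point by hypothesis). As recalled in the text, $f$ is a translation on $C_\alpha$ sending $(0,\infty)$ to $(\infty,0)$; hence $f = \mathrm{translation}\ \mathrm{by}\ \tau(\alpha)$ where $\tau(\alpha) = (\infty,0) - (0,\infty)$ in the group $C_\alpha$ (differences taken in the group law with origin $O$). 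Then $f$ is periodic on $C_\alpha$ if and only if $\tau(\alpha)$ is a torsion point of $C_\alpha$, and has order exactly $n$ iff $\tau(\alpha)$ has order $n$.

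\medskip

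\emph{Step 2: Family picture and the torsion-section argument.}
Work over the base $B = \mathbf{A}^1 \setminus S$ (the $\alpha$-line minus bad parameters), with the relative curve $\mathcal{C} \to B$ whose fiber over $\alpha$ is $C_\alpha$, equipped with the section $O$. The three points $(0,0)$, $(\infty,0)$, $(0,\infty)$ extend to sections, and $\tau\colon B \to \mathcal{C}$, $\tau(\alpha) = (\infty,0)-(0,\infty)$, is a section of the relative group scheme $\mathcal{C}^{\mathrm{sm}} \to B$. The dichotomy now follows from a standard rigidity principle: either $\tau$ is a torsion section — in which case there is a fixed $n$ with $n\cdot\tau \equiv O$, so $f$ has order dividing $n$ for \emph{every} $\alpha\in B$ — or $\tau$ is non-torsion, in which case for each fixed $n$ the locus $\{n\cdot\tau = O\}$ is a proper closed subset of $B$, hence finite; the periodic parameters are then $\bigcup_n \{\alpha : n\cdot\tau(\alpha)=O\}$, a countable set. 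To see this countable set is dense (and infinite) one can invoke that the $j$-invariant $j(\alpha)$ is non-constant on $B$ (it is visibly a non-constant rational function of $\alpha$ once one writes down the Weierstrass form, since $e = \e^2 \neq 0$ stays fixed while $b(\alpha) = 2\gamma\delta - 4\alpha\e$ varies linearly, as computed in the proof of Lemma~\ref{lem:singular}), so the family is non-isotrivial; over a non-isotrivial family a non-torsion section meets the torsion locus on a dense subset of $B(\mathbf{C})$ — alternatively, since the complex multiplier / rotation number of $f$ varies continuously and non-constantly with $\alpha$, it takes rational values $p/q$ on a dense set.

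\medskip

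\emph{Step 3: Exclude the torsion-section alternative.}
This is the crux. We must show $\tau$ is \emph{not} a torsion section, equivalently that $f$ is not periodic of some fixed order for all $\alpha$. I expect this to be the main obstacle, and I would handle it by a specialization: exhibit \emph{one} value $\alpha_0\in B$ (or a limiting degeneration) at which $f$ on $C_{\alpha_0}$ provably has infinite order, or two values $\alpha_1,\alpha_2$ at which $f$ has different finite orders. A clean route: as $\alpha\to\infty$ (or along a suitable degeneration of the $(\beta,\gamma,\delta,\e)$-independent part), the curve $C_\alpha$ degenerates to a nodal cubic, the generalized Jacobian becomes $\mathbf{G}_m$, and the translation $f$ specializes to multiplication by some $\lambda \in \mathbf{C}^*$; one then checks by an explicit computation — chasing $(0,\infty)\mapsto(\infty,0)$ through the normalization — that this $\lambda$ is not a root of unity for generic $(\beta,\gamma,\delta,\e)$. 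Since the monodromy / order of a torsion section is locally constant, a single such specialization with $f$ of infinite order forces $\tau$ to be non-torsion on all of $B$. Combining Steps 1–3 gives exactly the dichotomy in the statement: periodic for a countable dense set of $\alpha$, non-periodic otherwise.

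\medskip

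The delicate points to watch: (i) ensuring $O=(0,0)$ is genuinely a smooth point of every $C_\alpha$ in the relevant range, so the group law is well-defined in the family (this is built into the hypotheses via smoothness of $C_0$ and Lemma~\ref{lem:singular}); (ii) making the degeneration in Step 3 explicit enough to certify the non-root-of-unity multiplier — this is where a short but genuine computation is unavoidable, though it is of the same elementary flavor as the discriminant computation already carried out for Lemma~\ref{lem:singular}.
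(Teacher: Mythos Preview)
Your framework is sound and parallels the paper's argument, recast in the language of torsion sections rather than the paper's analytic approach via uniformization. The real gap is Step~3, which you flag as the crux but leave incomplete. Your proposed route --- degenerate at $\alpha \to \infty$ to a nodal curve and compute a $\mathbf{G}_m$-multiplier --- would work in principle but is more laborious than needed, and you would still have to control the dependence on $(\beta,\gamma,\delta,\e)$. The paper's solution is much cleaner and uses the hypothesis directly: specialize at $\alpha = 0$. From the equation one sees that $f^2 = \id$ on $C_\alpha$ if and only if $(\infty,\infty)\in C_\alpha$, i.e.\ if and only if $\alpha = 0$. So $f$ has exact order $2$ at $\alpha = 0$ and order $\neq 2$ for $\alpha$ nearby. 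In your language this instantly excludes the torsion-section alternative, since a torsion section over a connected base of smooth genus-$1$ fibers has \emph{constant} exact order (the $n$-torsion being finite \'etale). The specialization you were looking for is sitting at $\alpha = 0$, which is precisely why the lemma assumes $C_0$ smooth.

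Your density argument in Step~2 also hides a genuine issue. The assertion ``a non-torsion section over a non-isotrivial family meets the torsion locus densely'' is not free: one must exclude the possibility that the Betti coordinates $(a,b)$ of the section (with $t = a + b\tau$) are constant and irrational, in which case the torsion locus is empty. The paper handles this directly. Writing $C_\alpha \simeq \C/(\Z + \Z\tau(\alpha))$ and $f(z) = z + t(\alpha)$ with $t, \tau$ holomorphic in $\alpha$, if one of $a, b$ were constant then the other would be holomorphic and real-valued, hence constant as well --- but then $f$ would have period $2$ for all $\alpha$, contradicting the $\alpha = 0$ computation. Once $b$ is non-constant, a short Jacobian computation (cf.\ \cite{cantat_groupes}) shows the Betti map $\alpha \mapsto (a,b)$ is generically a submersion onto $\R^2$, so it hits $\Q^2$ on a dense set. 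This is the content behind your black box, and it rests on the $\alpha = 0$ specialization rather than on non-isotriviality alone.
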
  

\begin{proof}
For $\alpha$ in the complement of a finite set, $C_\alpha:=C_{(\alpha,\beta,\gamma,\delta,\e)}$ is a smooth 
curve of genus~$1$, and $f$ acts as a translation on $C_\alpha$. Let us analyze the orbit of $(0, \infty)$.
Denote by $u, v\in \C\cup \{\infty\}$ the complex numbers such that $\sigma_2(\infty,0)=(\infty,v)$ and  $(u,v)=\sigma_1(\infty,v)=f^2(0,\infty)$. 
The translation $f$ is periodic of period $2$ if and only if $(u,v)=(0,\infty)$, if and only if
 $(\infty,\infty)$ is a point of $C_\alpha$, if and only if $\alpha=0$. 
 Hence,
   $f$ is periodic of period $2$ on 
$C_0$ but after perturbation it is not of period $2$ anymore.  For small 
$\alpha$ we can write $C=\C/\Lambda_\alpha$
for some lattice $\Lambda_\alpha=\Z+\Z\tau(C_\alpha)$ and $f(z)=z+t(C_\alpha)$, 
with $t(C_\alpha)$ and $\tau(C_\alpha)$  depending holomorphically on 
the parameters $\alpha$. If we further decompose $t(C_\alpha)=a(C_\alpha)+b(C_\alpha)\tau(C_\alpha)$, 
 where $a$ and $b$ are two real analytic 
functions with values in~$\R$, then both $a$ and $b$ must be non constant. Indeed if one of them  were constant, then 
the other one would be a non-constant real holomorphic function, which is impossible (see \cite[Prop. 2.2]{cantat_groupes} for a similar argument). The result follows. 
\end{proof}

\subsection{Proof of Theorem~\ref{thm:no_finite_orbit_wehler}}  

\subsubsection{From finite orbits to fixed points}\label{par:Zd}
Let us  form the universal family ${\mathcal X}\subset W_0\times M$, where $W_0\subset \vert L\vert$
is the open set defined in Section~\ref{par:notation_wehler}: the fiber of the projection 
${\mathcal X}\to W_0$ above  $X\in W_0$ is precisely the surface $X\subset M$.

The group $\Z/2\Z\star \Z/2\Z\star\Z/2\Z$ acts by automorphisms on ${\mathcal X}$, preserving each fiber of ${\mathcal X}\to W_0$: 
the generators of the first, second and third $\Z/2\Z$ factors  give rise to  involutions
${\widetilde{\sigma}}_1$, ${\widetilde{\sigma}}_2$ and ${\widetilde{\sigma}}_3$ which, when restricted to a fiber $X$,  
correspond to the automorphisms $\sigma_i\in \Aut(X)$. 
These involutions ${\widetilde{\sigma}}_i$  
extend to birational 
involutions of the Zariski closure ${\overline{\mathcal X}}\subset \vert L \vert \times M$.

\begin{rem} If $X\in \vert L\vert$ is smooth and contains
a  fiber $V=\{(x_0,y_0)\}\times \P^1\subset X$ of $\pi_{12}$, the curve $V$ is contained in the indeterminacy locus of ${\tilde{\sigma}}_3$  
(one may consult~\cite{cantat-oguiso} for further results: see Theorem 3.3 and the proof of its third and fourth assertions).
\end{rem}

Consider the group $ \Z/2\Z\star \Z/2\Z\star\Z/2\Z$ acting on  ${\mathcal X}$. Its   
restriction     to the fiber $X$ gives a subgroup $\Gamma$ of $\Aut(X)$. Let $d$ be a positive integer.
There are only finitely many homomorphisms from 
$ \Z/2\Z\star \Z/2\Z\star\Z/2\Z$ to  groups of order $\leq d!$, and  the intersection 
 of the kernels of these homomorphisms is a normal subgroup of     finite index. Denote by $\Gamma_d$ the corresponding subgroup of $\Aut(X)$. 
If  $\Gamma $ has an orbit of cardinality  $\leq d$ on some surface $X$, 
then this orbit is fixed pointwise by 
$\Gamma_{ d}$. Let us introduce the subvariety
\begin{equation}
{\mathcal{Z}}_d  = \set{(X,x)\; ; \; x\in X \; {\text{ and }} \forall f \in \Gamma_{d}\ ,  f(x)=x   }\subset  {\mathcal X}.
\end{equation}
Since ${\mathcal X}\to W_0$ is proper, from  this discussion we get:

\begin{lem}\label{lem:transversality_Zd}
The following properties are equivalent: 
\begin{itemize}
\item[(1)] for a very general surface $X\in \vert L\vert$, every orbit of $\Gamma $  in $X$ is infinite. 
\item[(2)] for every $d\geq 1$, the projection ${\mathcal{Z}}_d \to W_0$ is not surjective; 
\end{itemize}
\end{lem}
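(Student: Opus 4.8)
The plan is to prove the equivalence of (1) and (2) in Lemma~\ref{lem:transversality_Zd} by combining the preceding discussion with the properness of the family $\mathcal{X}\to W_0$ and a Baire category argument. First I would unwind the definitions: for a fixed surface $X\in W_0$, the group $\Gamma\cong \Z/2\Z\star\Z/2\Z\star\Z/2\Z$ has a finite orbit if and only if it has an orbit of cardinality $\leq d$ for some $d\geq 1$, and in that case, as noted just before the lemma, that orbit is fixed pointwise by the finite-index subgroup $\Gamma_d$ (whose construction as the intersection of kernels of all homomorphisms to groups of order $\leq d!$ does not depend on $X$, since it is defined at the level of the abstract group acting on the whole of $\mathcal{X}$). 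Consequently, $X$ admits a finite orbit of size $\leq d$ precisely when the fiber of $\mathcal{Z}_d\to W_0$ over $X$ is non-empty. So the statement ``$X$ has a finite $\Gamma$-orbit'' is equivalent to ``$X$ lies in the image of $\mathcal{Z}_d\to W_0$ for some $d$''.

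Next I would address the topological bookkeeping. Since $\mathcal{X}\subset W_0\times M$ is closed in $W_0\times M$ and $M$ is projective, the projection $\mathcal{X}\to W_0$ is proper; the subvariety $\mathcal{Z}_d$ is Zariski closed in $\mathcal{X}$ (it is cut out by the equations $f(x)=x$ for the finitely many generators $f$ of $\Gamma_d$, or even just for a finite generating set), hence $\mathcal{Z}_d\to W_0$ is also proper, and in particular its image $E_d:=\mathrm{pr}(\mathcal{Z}_d)\subset W_0$ is Zariski closed. Now, if (2) holds, each $E_d$ is a proper Zariski closed subset of the irreducible variety $W_0$, so its complement is Zariski open and dense; the set of $X$ with no finite orbit is $W_0\setminus\bigcup_{d\geq 1}E_d$, a countable intersection of dense Zariski open sets, which is exactly the ``very general'' locus. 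This gives (2)$\Rightarrow$(1). Conversely, if (2) fails, then $E_d=W_0$ for some $d$, meaning \emph{every} $X\in W_0$ has a $\Gamma$-orbit of size $\leq d$; in particular the very general surface does, contradicting (1). This gives (1)$\Rightarrow$(2), completing the equivalence.

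The only point that needs a little care — and which I expect to be the main (mild) obstacle — is the passage from ``$\mathcal{Z}_d\to W_0$ is not surjective'' to ``its image is a proper Zariski \emph{closed} subset'': this is where properness of $\mathcal{X}\to W_0$ is essential, since the image of a mere morphism of varieties is only constructible, not closed, and a countable union of proper constructible sets need not miss a very general point unless we know each is contained in a proper closed set. Here properness guarantees $E_d$ is closed, and then irreducibility of $W_0$ (as a Zariski open subset of the irreducible linear system $\vert L\vert$) ensures $E_d\subsetneq W_0$ forces $E_d$ to be nowhere dense, so that $\bigcup_d E_d$ is a countable union of nowhere dense closed sets and its complement is ``very general'' in the precise sense stated after Theorem~\ref{thm:no_finite_orbit_wehler}. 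With these observations in place the proof is essentially a formality, and I would write it up in a few lines. The substantive work — actually verifying that (2) holds, i.e. the transversality/parameter-count showing $\mathcal{Z}_d\to W_0$ is non-surjective for every $d$ — is deferred to the subsequent paragraphs, using Lemmas~\ref{lem:singular} and~\ref{lem:transversality_wehler_curve}.
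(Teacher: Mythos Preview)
Your proposal is correct and follows exactly the approach the paper has in mind: the paper itself does not write out a proof but simply states that the lemma follows ``from this discussion'' together with the properness of $\mathcal{X}\to W_0$, and you have faithfully unpacked that into the standard argument (image of $\mathcal{Z}_d$ is closed by properness, hence each $E_d$ is a proper closed subset, and the very general locus is the complement of the countable union). One minor quibble: your sentence ``$X$ admits a finite orbit of size $\leq d$ precisely when the fiber of $\mathcal{Z}_d\to W_0$ over $X$ is non-empty'' is slightly imprecise in the converse direction (a $\Gamma_d$-fixed point has finite $\Gamma$-orbit of size at most $[\Gamma:\Gamma_d]$, not necessarily $\leq d$), but you immediately state the correct global equivalence and the argument is unaffected.
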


\subsubsection{Preparation}
According to Lemma~\ref{lem:transversality_Zd}, to prove Theorem~\ref{thm:no_finite_orbit_wehler} 
it suffices to show that the projection of ${\mathcal{Z}}_d\subset {\mathcal{X}}$ onto $W_0$ is a proper subset for every $d\geq 1$. 
So, let us 
assume that there is an integer $d$ for which ${\mathcal{Z}}_d$ surjects onto $W_0$ and seek for a contradiction.
Pick a small open subset $U\subset W_0$ for the Euclidean topology, over 
 which one can choose  a holomorphic 
 section $s\colon X\mapsto s_X$ of ${\mathcal X}\to W_0$ such that $s_X$ is fixed by $\Gamma_{d}$; equivalently, 
the image of $s$ is contained in ${\mathcal{Z}}_d$.

The group $G= \PGL_2(\C)\times \PGL_2(\C)\times \PGL_2(\C)$ acts on $M$ and on $\vert L \vert$, preserving $W_0$. 
Recall that modulo the action of this group, the space of Wehler surfaces is irreducible and of dimension $17$. 

\subsubsection{Case 1}\label{par:no_finite_orbit_wehler_step_1} 
Let us first  assume that we can find $U$ such that $s_X$ is   fixed neither by ${\tilde{\sigma}}_1$,    ${\tilde{\sigma}}_2$, nor  
${\tilde{\sigma}}_3$. As in Lemma \ref{lem:singular} this implies that for each pair of indices $i\neq j$, the fiber $C$ of $(\pi_i)_X\colon X\to \P^1$ through $s_X$ is 
smooth near $s_X$ and $s_X\in C$ is not a ramification point of the projection $(\pi_j)_{\vert C} \colon C\to\P^1$. 

As in Section~\ref{par:notation_wehler}, fix coordinates $(x,y,z)$ on $M=(\P^1)^3$ with $x$, $y$, and $z$ in $\C\cup \{\infty\}$. 
Modulo the action of $G$, we may assume that for every $X$ in $U$,
\begin{itemize}
\item[(a)] the point $s_X$ is the point $(0,0,0)$ in $(\P^1)^3$;
\item[(b)] $X$ contains $(\infty, 0, 0)$, $(0,\infty, 0)$, and $(0,0,\infty)$; 
\item[(c)] the tangent plane to $X$ at the origin is given by the equation $x+y+z=0$;
\item[(d)] the coefficients of $x^2y^2z^2$ and $x$, $y$ and $z$ in the equation of $X$ are all  
 equal to the same complex number.  
\end{itemize}
Note that (a) can be achieved by a single translation, (b) can be obtained by transformations 
of the form $(x,y, z) \mapsto (\frac{x}{x-\alpha}, \frac{y}{y-\beta}, \frac{z}{z - \gamma}) $, 
(c)  is achieved by the action of diagonal maps (note that by our assumption, the tangent plane to $X$ at the origin $s_X=(0,0,0)$ 
cannot be one of the coordinate planes), and then we obtain (d) by the action of homotheties. 
After such a conjugation, the equation of $X$ is of the form 
\begin{align} \label{eq:19}
A x^2y^2z^2 &+ B x^2 y^2 z+ B' x^2 y z^2+ B'' x y^2z^2 + C x^2y z  +   C' x y^2 z+ C'' xyz^2 \\
\notag &+   Dx^2y^2+D'x^2z^2+D''y^2z^2+   E xyz    \\
\notag &+ F x^2 y + F' x^2 z + F'' x y^2  + F'''  y^2 z + F^{iv} x z^2 + F^{v} yz^2   \\
 \notag &+ G xy + G' xz + G'' yz+    A(x+y+z)=0.
\end{align}
Since this equation is defined up to multiplication by an element  of $\C^*$, we are left with 19 parameters. 

The automorphism $f_{12}=\sigma_1\circ \sigma_2$ preserves the genus 
$1$ fibration $(\pi_3)_{\vert X}\colon X\to \P^1$. 
The fiber of $(\pi_3)_{\vert X}$ through $(0,0,0)$
is a curve $C\subset \P^1\times \P^1$ given by the equation 
\begin{equation}\label{eq:5}
Dx^2 y^2+ F x^2 y +  F'' x y^2 + G xy + A(x+y)=0.
\end{equation}
Two cases need to be considered, depending on the smoothness of this curve.  
\begin{itemize}
\item if  this curve is singular, by Lemma~\ref{lem:singular} the coefficients in Equation~\eqref{eq:5} satisfy a non-trivial relation of the form
$P_3(D, F, F'', G, A) = 0$;
\item if it is smooth, consider an iterate $f_{12}^m$ of $f_{12}$ in $\Gamma_d$, with $1\leq m\leq d!$; then $f_{12}^m$ is 
a translation of the genus $1$ curve $C$ that fixes $s_X$, so that it fixes $C$ pointwise. 
From Lemma~\ref{lem:transversality_wehler_curve}, the coefficients  in  Equation~\eqref{eq:5}  satisfy a relation of the form 
$Q_3(D, F, F'', G, A) = 0$. 
\end{itemize}
In both cases we get a relation of the form 
$R_3(D, F, F'', G, A) = 0$  (with $R_3 = P_3$ or $Q_3$) 
that depends non-trivially on the first factor. 
Similarly, 
 looking at the dynamics of $f_{23}=\sigma_2\circ \sigma_3$ and $f_{31}=\sigma_3\circ \sigma_1$, 
 we obtain two further relations of the form $R_1(D'', F'', F^v, G'', A) = 0$ and 
 $R_2(D', F',F^{iv}, G', A) = 0$.

We claim that the subset defined by these 3 constraints is of codimension 3: indeed if we look at the 
subvariety cut out by the equations $R_i = 0$, $i=1, 2, 3$ and slice it by a 3-plane corresponding to 
the coordinates $D$, $D'$ and $D''$, then by Lemmas~\ref{lem:singular} and~\ref{lem:transversality_wehler_curve} and the independence of variables, 
this slice is reduced to  a point. 
This shows that the image of the section $X\mapsto s_X$ is at most $16$-dimensional, which 
contradicts the fact that $W_0/G$ is of pure dimension $17$.   Thus 
 our hypothesis on ${\mathcal{Z}}_d$ cannot be true and Case~1 does not hold.  

\subsubsection{Case 2} 

If Case~1 does not hold,    
every point $(X,(x,y,z))$ of ${\mathcal{Z}}_d$ has the property: {\emph{$(x,y,z)\in X$ is a  ramification point for at least one of the three projections $(\pi_i)_{\vert X}$}}.
Equivalently, every point of the finite orbit $F=\Gamma_d(s_X)\subset X$  is fixed 
by at least one of the three 
involutions $\sigma_i$. This case is simpler, since a direct count of parameters will lead to a contradiction.

\smallskip

$\bullet$ If a point of $F$ were a ramification point of each $(\pi_i)_{\vert X}$, this point would be a singularity of $X$, 
and $X$ would not be in $W_0$. So, each point of $F$ is a ramification point for at least one and at most $2$ of the projections. 

\smallskip

$\bullet$ Now, assume that {\emph{every point of $F$ is a ramification point for exactly $2$ of the projections}}. Choose a local section 
$s_X$ of $ {\mathcal{Z}}_d$ above a small open set $U\subset W_0$ (for the Euclidean topology), 
as in \S \ref{par:no_finite_orbit_wehler_step_1}. 
Permuting the coordinates and using a translation in $G$, we assume that 
$s_X=(0,0,0)$ and  $s_X$ is fixed by $\sigma_2$ and $\sigma_3$. 
After this normalization, with notation as in Equation~\eqref{eq:general_X},
 we have $A_{010}= A_{001} = A_{000}=0$. 
Let $s_X'=\sigma_1(s_X)$; 
this point is not equal to $s_X$ because 
otherwise $X$ would be singular at~$s_X$. So, we may use a transformation 
of the form $x\mapsto \frac{x}{x-\alpha}$ in $G$
 to assume that $s_X'=(\infty, 0,0)$ (i.e. $A_{200} = 0$). Now by our assumption, this second point 
must be fixed by $\sigma_2$ and $\sigma_3$, which imposes two more constraints ($A_{201} =  A_{210} =0$). 
Now, consider the curve $C_1\subset X$ defined by the equation $x=0$. Using
elements of $G$ acting on $y$ and $z$ by $y\mapsto \frac{y}{y-\beta}$ and $z\mapsto \frac{z}{z-\gamma}$, we may assume that
$(0,\infty,\infty)$ is on $C_1$ and is a ramification point for $(\pi_2)_{\vert C_1}$. 
With such a choice, the coefficients of 
$y^2z^2$ and $y^2z$ vanish. 
At this stage we did not use the diagonal action of  $(\C^*)^3$, which  
stabilizes
$(0,0,0)$, $(\infty, 0,0)$, and $(0, \infty, \infty)$. With this  
we can  impose for instance 
the same non-zero  coefficients for the terms $xy$, $yz$, and $zx$, 
 so we end up with $17$ coefficients,  hence at most $16$ free parameters.  
Again this  contradicts the fact that $\dim(W_0)=17$.

\smallskip

$\bullet$ Now, assume that {\emph{one of the points of the finite orbit $F$ is fixed by $\sigma_3$ but not by $\sigma_1$ and $\sigma_2$}}. The analysis is similar to that of the previous case.  
We may choose this point to be $s_X$, and using the group $G$, 
we can arrange  that  $s_X=(0,0,0)$, $\sigma_1(s_X)=(\infty, 0,0)$, and $\sigma_2(s_X)=(0,\infty,0)$; with the notation from Equation~\eqref{eq:general_X}, this means $A_{000} = A_{200}  = A_{020} = 0$. In addition  $A_{001} = 0$  because $(0,0,0)$ is fixed by $\sigma_3$. 
By our hypothesis, $(\infty, 0,0)$ is fixed by $\sigma_2$ or $\sigma_3$ (or both). 
This implies that at least one of $A_{210}$ or $A_{201}$ vanishes.
 Likewise $A_{120}A_{021} = 0$.  
Now consider the curve $C_2\subset X$ given by $y=0$.  Given the 
constraints  already listed, the equation of $C_2$ 
can be written as 
\begin{equation}
\alpha x^2z^2 + \beta x^2 z+\gamma xz^2+ \delta xz + \e z^2 + \iota x = 0.
\end{equation}
There are $4$ ramification points for $(\pi_1)_{\vert C_2}$, counting with 
multiplicities, and none of them satisifies $z=0$. So using $z\mapsto \frac{z}{z-\gamma}$ and $x\to \lambda x$ 
we may put one of them at $(1,0,\infty)$. This imposes $\alpha + \gamma + \e=0$ and $\beta + \delta=0$.
 
Finally, we may still use the subgroup 
$\{\mathrm{Id}\}\times \C^*\times \C^*\subset G$, which fixes the four points $(0,0,0)$, $(\infty, 0, 0)$, 
$(0, \infty, 0)$ and $(1, 0,\infty)$, to assume that the non-zero coefficients 
in front of $yz$, $xz$, and $z^2$ are equal. In conclusion, under our assumption 
we have found at least  $10$  independent linear constraints on the 
coefficients of the Wehler surface so again 
at most $16$ free parameters remain.

\smallskip

So, in all cases we get a contradiction, and   the proof of Theorem~\ref{thm:no_finite_orbit_wehler} is complete.

\subsection{Examples} \label{par:example_thin_wehler} 

\subsubsection{} Consider the subgroup $H$ of $\Z/2\Z\star \Z/2\Z\star\Z/2\Z$ generated by $f_{23}^m$ and $f_{31}^m$, 
for some large positive integer $m$ (as above, $f_{23}=\sigma_2\circ \sigma_3$, $f_{31}=\sigma_3\circ \sigma_1$). 
The automorphism $f_{23}$ preserves the fibers of the projection $(\pi_1)_{\vert X}$ and its
periodic points form a dense set of fibers (see~\cite{cantat_groupes, invariant} or~\S \ref{par:parabolic} below). The intersection number between 
a fiber of $(\pi_1)_{\vert X}$ and a fiber of $(\pi_2)_{\vert X}$ is equal to $2$. So, if $m$ is big enough, $f_{23}^m$ and $f_{31}^m$
share a common fixed point (in fact $\simeq c m^4$ common fixed points, for some $c>0$ as $m$ goes to $+\infty$). 
If $X\in W_0$, $\langle f_{23}^m, f_{31}^m\rangle$ is non-elementary because the class $c_1\in \NS(X;\Z)$ of the invariant fibration of $f_{23}$ 
is not fixed by $f_{31}$, and vice-versa (see also Lemma~\ref{lem:ping-pong_parabolic} below). 
Taking a surface $X\in W_0$ that is defined over $\Q$, we get in particular:

\begin{pro}\label{pro:number_fixed_points}
For every integer $N\geq 0$, there is a smooth Wehler surface $X$ defined over $\Q$ and a non-elementary subgroup $\Gamma$ of $\Aut(X_\Q)$ with 
at least $N$ fixed points. 
\end{pro}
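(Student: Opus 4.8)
The plan is to build, for any prescribed $N$, an explicit Wehler surface over $\Q$ together with a non-elementary subgroup sharing at least $N$ fixed points, by exploiting parabolic automorphisms and their abundance of periodic fibers. First I would fix attention on the parabolic automorphisms $f_{23}=\sigma_2\rond\sigma_3$ and $f_{31}=\sigma_3\rond\sigma_1$ preserving the genus $1$ fibrations $\pi_1$ and $\pi_2$ respectively. The key input, recalled in the excerpt and to be used as a black box from \S\ref{par:parabolic}, is that the set of $f_{23}$-periodic fibers of $(\pi_1)_{\vert X}$ is dense (in fact Zariski dense among fibers), and likewise for $f_{31}$ and $(\pi_2)_{\vert X}$; moreover on a periodic fiber $f_{23}$ acts as a finite-order translation. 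Replacing $f_{23}$ by a fixed power $f_{23}^m$ (and $f_{31}$ by $f_{31}^m$) we can arrange that many of these periodic fibers are pointwise fixed: on a fiber $F$ where $f_{23}$ has order dividing $m$, the automorphism $f_{23}^m$ restricts to the identity.

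Next I would count intersections. A fiber $F_1$ of $(\pi_1)_{\vert X}$ and a fiber $F_2$ of $(\pi_2)_{\vert X}$ meet in $F_1\cdot F_2=2$ points on $X$ (since $X\in W_0$ the projections are finite of the appropriate degree). If $F_1$ is pointwise fixed by $f_{23}^m$ and $F_2$ is pointwise fixed by $f_{31}^m$, then each of the (at least one) intersection points of $F_1$ and $F_2$ is fixed by the whole group $\langle f_{23}^m, f_{31}^m\rangle$. Choosing $m$ large, there are $\asymp c\,m^4$ such pairs $(F_1,F_2)$ as $m\to\infty$ — roughly $m^2$ periodic fibers in each direction, a constant fraction of which have the translation order dividing $m$ — hence $\gtrsim c\,m^4$ common fixed points. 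In particular, for $m$ large enough this number exceeds $N$. One then picks any $X\in W_0$ defined over $\Q$ (the conditions defining $W_0$ are Zariski open and manifestly satisfiable over $\Q$), so that $\sigma_1,\sigma_2,\sigma_3$, hence $f_{23}^m$ and $f_{31}^m$, are all defined over $\Q$, and $X$ together with $\Gamma=\langle f_{23}^m, f_{31}^m\rangle$ is the desired example.

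It remains to check that $\Gamma$ is non-elementary, which is where I expect the only genuine subtlety. The point is that the class $c_1\in\NS(X;\Z)$ of the $f_{23}$-invariant fibration $\pi_1$ is not $f_{31}^*$-invariant, and symmetrically the class of the $\pi_2$-fibration is not $f_{23}^*$-invariant; this follows from the explicit matrices for the $\sigma_i^*$ recorded in Equation~(3.4) of \cite{stiffness}. Two parabolic isometries of hyperbolic space with distinct fixed points on the boundary generate, after passing to suitable powers, a non-abelian free group acting as a ping-pong pair — this is the content of Lemma~\ref{lem:ping-pong_parabolic} cited in the excerpt — so $\Gamma^*$ contains a free group of rank $2$ and $\Gamma$ is non-elementary. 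The main obstacle, then, is not the fixed-point count (which is soft, given the density of periodic fibers) but making sure the powers $f_{23}^m, f_{31}^m$ used to force common fixed points are simultaneously large enough for the ping-pong argument; this is harmless, since one is free to enlarge $m$, but it is the one place where the two requirements must be reconciled. Finally I would remark that the asymptotic $\simeq c\,m^4$ is not needed for the statement — any single large $m$ with more than $N$ common fixed points suffices — so the bookkeeping can be kept to a minimum.
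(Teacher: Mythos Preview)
Your proposal is correct and follows essentially the same argument as the paper: take $\Gamma=\langle f_{23}^m,f_{31}^m\rangle$ for large $m$, use the density of periodic fibers and the intersection number $2$ between fibers of $\pi_1$ and $\pi_2$ to produce many common fixed points, and verify non-elementarity via the distinct boundary fixed points of the two parabolics. One minor remark: the non-elementary property does not actually require $m$ large --- two parabolic automorphisms with distinct invariant fibrations already suffice (see \S\ref{subsub:non_elementary}) --- so the ``reconciliation'' you worry about is not needed.
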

 
\begin{rem} 
If $X\in W_0$ and $m\geq 1$,  {\emph{the group $\langle f_{23}^m, f_{31}^m\rangle$ has infinite index in $\Aut(X)$}}. Indeed,  
the index of $\langle (\sigma_2\circ\sigma_3)^m, 
(\sigma_3\circ\sigma_1)^m\rangle$  in $\Z/2\Z\star \Z/2\Z\star\Z/2\Z$ is infinite. 
\end{rem}

\subsubsection{}  Let us construct smooth Wehler surfaces for which the subgroup $\Gamma$
of $\Aut(X)$ generated by the three involutions $\sigma_i$ has a finite orbit. Using affine coordinates $(x,y,z)$ 
for $(\P^1)^3$, set  $V=\{(\varepsilon_1, \varepsilon_2,\varepsilon_3)\; ; \; \varepsilon_i  \in \set{0, \infty} \forall i=1,2,3\}$. 
 Observe that if $V$ is contained in a Wehler surface $X$, then $V$ is an orbit of $\Gamma$ of size $8$. Now, writing  the 
equation of $X$ as in~\eqref{eq:general_X},   $V\subset X$ if and only if $A_{ijk}=0$ for all triples $(i, j, k)\in \{0,2\}^3$. The corresponding linear system has no base points, except from 
the points of $V$. If $(A_{100}, A_{010},A_{001})\neq (0,0,0)$, the surface is smooth at the base point $(0,0,0)$. The smoothness 
at the other 7  points of $V$  is determined by   similar constraints on the coefficients $A_{ijk}$: for instance   
the smoothness  at the point $(\infty, 0,0)$  is equivalent to $(A_{210}, A_{201}, A_{100})\neq (0,0,0)$. 
 Thus, the theorem of Bertini shows that a general member of the family of Wehler surfaces containing $V$ is smooth, and 
 the result follows.

\section{Non-elementary groups and invariant curves}\label{sec:invariant_curves}

The main purpose of this section is to establish the following:

\begin{mthm}\label{mthm:invariant_curve_loxodromic}
Let $X$ be a compact K\"ahler surface  and let $\Gamma$ be a subgroup of $\Aut(X)$ containing a loxodromic element. Then there exists a loxodromic element 
$f$ in $\Gamma$ such that every $f$-periodic curve is $\Gamma$-periodic. 
\end{mthm}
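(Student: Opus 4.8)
The plan is to exploit the discreteness of the action of $\Gamma$ on $\NS(X;\Z)$ together with the basic structure theory of loxodromic automorphisms acting on the hyperbolic space $\mathbb{H}_{\NS}$ associated to the lattice $(\NS(X;\Z),\cdot)$ (the Minkowski-type quadratic form of signature $(1,\rho-1)$). First I would recall that for a loxodromic $f$, the linear map $f^*$ has a unique eigenline $\R\theta_f^+$ in the forward light cone with eigenvalue $\lambda(f)>1$ and another one $\R\theta_f^-$ with eigenvalue $\lambda(f)^{-1}$; any $f$-periodic curve $C$ has class $[C]$ orthogonal to the $f^*$-invariant plane $\langle\theta_f^+,\theta_f^-\rangle$, since $[C]$ is fixed up to finite order by $(f^*)^N$ and hence lies in the sum of eigenspaces for eigenvalues of modulus $1$. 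Thus the set of classes of $f$-periodic curves is contained in the negative-definite lattice $N_f := \langle\theta_f^+,\theta_f^-\rangle^\perp \cap \NS(X;\Z)$, and in particular there are only finitely many such classes of bounded self-intersection; more importantly, the whole span of periodic curves of $f$ is determined by the pair of boundary points $\{\theta_f^+,\theta_f^-\}\in\partial\mathbb{H}_{\NS}$ together with the negative part of $\NS$.

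The key reduction is then: a curve $C$ invariant under a loxodromic $f\in\Gamma$ fails to be $\Gamma$-periodic precisely when its orbit $\Gamma\cdot[C]$ in $\NS(X;\Z)$ is infinite, i.e. $[C]$ is not fixed by a finite-index subgroup of $\Gamma^*$. Since $\Gamma$ is non-elementary (it contains a loxodromic element and we may restrict to the subgroup it generates), $\Gamma^*$ contains a non-abelian free subgroup acting strongly irreducibly on $\NS(X;\R)$ — more precisely, by the ping-pong / Tits alternative picture there are loxodromic elements whose attracting/repelling boundary points are "in general position". I would make this quantitative as follows. Let $f_0\in\Gamma$ be any loxodromic element and let $E_{f_0}\subset\NS(X;\R)$ be the subspace spanned by the classes of all $f_0$-periodic curves (a negative-definite, hence bounded-dimensional, subspace not containing isotropic or positive vectors). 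If every $f_0$-periodic curve is already $\Gamma$-periodic, we are done. Otherwise, pick $g\in\Gamma$ loxodromic in "general position" relative to $f_0$ — concretely, with $\{\theta_g^+,\theta_g^-\}$ disjoint from the (finite) boundary data attached to $f_0$ and with $g^*$ not preserving $E_{f_0}$; such $g$ exists because the set of loxodromic elements whose axis endpoints avoid a given finite set, and whose action does not stabilize a fixed finite-dimensional negative subspace, is co-small in a non-elementary group (here is where one invokes the abundance of loxodromic elements in $\Gamma$, e.g. via the ping-pong construction underlying the definition of non-elementary). Then the composite $f := g^N f_0 g^{-N}$ for suitable large $N$ is loxodromic, and its periodic-curve subspace is $E_f = g^N_*(E_{f_0})$.

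The mechanism to conclude is a commuting/intersection argument: a curve $C$ is $f$-periodic \emph{and} $f_0$-periodic iff $[C]\in E_{f_0}\cap E_f$, and by choosing $g$ and $N$ as above we can force $E_{f_0}\cap E_f$ to be "small", while simultaneously any $f$-periodic curve not $\Gamma$-periodic would have to persist under this conjugation process and thereby generate an infinite orbit inside a \emph{fixed} negative-definite sublattice of $\NS$ — impossible, since a negative-definite lattice has only finitely many vectors of each norm and $\Gamma^*$ preserves the norm. Making this airtight is the main obstacle: one must (i) nail down the precise dichotomy "$f$-periodic curve $\Rightarrow$ either $\Gamma$-periodic or its class has infinite $\Gamma$-orbit", which follows from the fact that the subgroup of $\Gamma$ fixing $[C]$ is either finite-index (giving $\Gamma$-periodicity after passing to the $\Gamma$-orbit of $C$, which is finite) or of infinite index; and (ii) show that one can choose a single loxodromic $f$ for which the (finitely many) classes of $f$-periodic curves are all $\Gamma^*$-fixed up to finite index. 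For (ii) the cleanest route is to take $f$ whose axis endpoints in $\partial\mathbb{H}_{\NS}$ are generic for the $\Gamma^*$-action: then the plane $\langle\theta_f^+,\theta_f^-\rangle$ meets every proper $\Gamma^*$-periodic subspace trivially off a bounded set, and the orthogonal negative lattice $N_f$ is "rotated around" by $\Gamma^*$, forcing any $f$-periodic curve class to have finite $\Gamma$-orbit — hence to be $\Gamma$-periodic. I expect the delicate point to be the genericity statement for the axis of $f$: it requires knowing that $\Gamma^*$ acts on $\mathbb{H}_{\NS}$ with limit set not contained in any proper totally geodesic subspace, which is exactly the content of non-elementarity, combined with the observation that there are only countably many $\Gamma$-periodic subspaces and only countably many periodic curve classes to avoid, so a Baire/genericity argument selects a good loxodromic $f$.
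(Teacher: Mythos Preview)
Your proposal has the right geometric picture in places (the classes of $f$-periodic curves lie in $\langle\theta_f^+,\theta_f^-\rangle^\perp$; a curve is $\Gamma$-periodic iff its class is in $\Pi_\Gamma^\perp$), but there are two genuine gaps.

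First, a minor one: containing a loxodromic element does not make $\Gamma$ non-elementary; the cyclic group generated by a single loxodromic is elementary. The paper treats the elementary case separately (using that some $f^\Z$ has finite index in $\Gamma$, so $f$-periodic and $\Gamma$-periodic coincide), and you need to as well.

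Second, and this is the real issue: your Baire/genericity argument does not close. You correctly observe that for each class $c\notin\Pi_\Gamma^\perp$ the condition $\theta_f^\pm\in c^\perp$ cuts out a proper hyperplane section of $\Lim(\Gamma)$, and that there are only countably many curve classes. But the set of pairs $(\theta_f^+,\theta_f^-)$ realized by actual loxodromic elements of $\Gamma$ is itself only countable, so Baire category in $\Lim(\Gamma)^2$ tells you nothing about elements of $\Gamma$. And if you instead pick a ``generic'' limit pair $(\theta^+,\theta^-)$ avoiding all the hyperplanes and then approximate it by some $(\theta_f^+,\theta_f^-)$, there is no reason the approximating pair also avoids the hyperplanes --- the set of hyperplanes to avoid is dense. (Your conjugation idea $g^Nf_0g^{-N}$ runs into the same wall: conjugates of $f_0$ have periodic curves which are just the $g^N$-images of those of $f_0$, so you have not produced a new element with fewer non-$\Gamma$-periodic curves.) The claim that a non-$\Gamma$-periodic class would have ``an infinite orbit inside a fixed negative-definite sublattice'' is also not right: the $\Gamma$-translates $g^*[C]$ land in the varying spaces $N_{gfg^{-1}}$, not in a single $N_f$.

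What the paper supplies, and what is missing from your outline, is a \emph{local finiteness} statement: there is an explicit upper bound on the degree of any $f$-periodic curve depending only on $\langle\theta_f^+\vert\theta_f^-\rangle$ (Lemma~\ref{lem:control_periodic_curves_strong}). Consequently, once $\theta_f^+$ and $\theta_f^-$ are confined to fixed small neighbourhoods $U,U'$ in $\partial\Hyp_X$, the set of all classes arising as $f$-periodic curves for \emph{any} $f\in A(U,U')$ is finite. This turns your countable collection of hyperplanes into a finite one locally, and now the contradiction works: if every loxodromic had a non-$\Gamma$-periodic curve, then $\Lim(\Gamma)$ would be locally, hence globally, contained in a finite union of hyperplanes $c^\perp\cap\Pi_\Gamma$, contradicting strong irreducibility. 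The degree bound is the missing idea.
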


 Along the way, some results of independent interest will be obtained: Proposition \ref{pro:degree_invariant_curve}, 
 which will be used in \S \ref{sec:KtoC}, gives an effective bound for  the   degree of a periodic curve under a loxodromic automorphism; Proposition \ref{pro:contraction}
provides a  singular  model of $(X, \Gamma)$ without $\Gamma$-periodic curves, and discusses ampleness properties
of some line bundles: this will be crucial for the  study the dynamical  heights    in \S \ref{par:finite_orbits}.

\subsection{Preliminaries}\label{par:basics_on_par_and_lox}

Let $X$ be a compact K\"ahler surface. By the Hodge index theorem, the intersection form 
$\langle \cdot \vert \cdot \rangle$ is non-degenerate and of 
signature $(1, h^{1,1}(X)-1)$ on $H^{1,1}(X;\R)$. Fix a K\"ahler form  $\kappa_0$ on $X$, with $\int_X \kappa_0\wedge \kappa_0=1$, denote its class by $[\kappa_0]$, and define the positive cone 
in $H^{1,1}(X;\R)$ to be the set 
\begin{equation}
{\mathrm{Pos}}(X)=\{ u\in H^{1,1}(X;\R)\; ; \; \langle u\vert u\rangle >0 \, \; {\text{and}} \, \; \langle [\kappa_0] \vert u\rangle >0\}.
\end{equation} 
Equivalently, ${\mathrm{Pos}}(X)$ is the connected component of $\{ u\in H^{1,1}(X;\R)\; ; \; \langle u\vert u\rangle >0\}$ 
containing K\"ahler forms; in particular, its definition does not depend on $\kappa_0$. This cone ${\mathrm{Pos}}(X)$
contains one of the two connected components, denoted $\Hyp_X$, of the hyperboloid $\{ u\in H^{1,1}(X;\R)\; ; \; \langle u\vert u\rangle =1 \}$;
we can identify $\Hyp_X$ with its projection $\P(\Hyp_X)$ in the projective space $\P(H^{1,1}(X;\R))$, and 
in doing so 
we get $\Hyp_X\simeq \P(\Hyp_X)= \P({\mathrm{Pos}}(X))$. Via this identification, the Hilbert metric on $\Hyp_X$ coincides with the 
hyperbolic metric induced by the intersection form (see~\cite[\S 2]{stiffness}), and the boundary $\partial \Hyp_X$ is identified to the 
projection of the isotropic cone in $\P(H^{1,1}(X;\R))$.

An automorphism of $X$ has a {\bf{type}} (elliptic, parabolic or loxodromic) according to the type 
of its induced action on $\Hyp_X$. 
Given a subgroup $\Gamma\leq \Aut(X)$, 
 we denote by $\Gamma_{\mathrm{par}}$ (resp. $\Gamma_{\mathrm{lox}}$) 
the set of parabolic (resp. loxodromic) automorphisms in $\Gamma$.   

\subsubsection{Parabolic automorphisms (see \cite{cantat_groupes, Cantat:Milnor, invariant})}\label{par:parabolic}
If  $g$ is parabolic, it permutes the fibers of a genus $1$ fibration $\pi_g\colon X\to B_g$, and induces an automorphism $\overline{g}$ of the 
curve $B_g$. The induced automorphism $\overline{g}$ has finite order, 
except maybe when $X$ is a torus $\C^2/\Lambda$ (see \cite[Prop. 3.6]{Cantat-Favre}).

If $\overline{g}$ is the identity, 
then $g$ preserves each fiber of $\pi_g$, acting as a translation on each smooth fiber. If $U_0$ is a disk in $B_g$ that does not contain 
any critical value of $\pi_g$, the universal cover of $\pi_g^{-1}(U_0)$ is holomorphically equivalent to $U_0\times \C$, with its 
fundamental group $\Z^2$ acting by $(x,y)\in U_0\times \C\mapsto (x,y+a+b\tau(x))$ for every $(a,b)  \in \Z^2$, where $\tau\colon U_0\to \C$ is a  holomorphic function 
 taking its values in the upper half plane. 
In these coordinates $g$ lifts to a diffeomorphism $\tilde{g}(x,y)=(x,y+t(x))$ for some  holomorphic function $t\colon U_0\to \C$. 
The $m$-th iterate $g^m$ fixes pointwise a fiber $\{x\}\times \C/(\Z\oplus \Z\tau(x))$ if and only if $mt(x)\in \Z\oplus \Z \tau(x)$.
The union of such fibers, for all $m\geq 1$, form a dense subset of $X$.  This comes from the fact that ``$t$ varies independently from
$\tau$'', a property which  implies also that the differential of $g^m$ at a fixed point is, except for finitely many fibers, a $2\times 2$ upper triangular
matrix with $1$'s on the diagonal and a non-trivial lower left coefficient. We refer to \cite{cantat_groupes, Cantat:Milnor, invariant}, and to the proof of Theorem~\ref{thm:dense_active_saddles} for
a slightly different viewpoint on this property, using real-analytic coordinates.

The induced action $g^*$ on $H^{1,1}(X;\R)$ admits a simple description:
if $F$ is any fiber of $\pi_g$, its class $[F]\in H^{1,1}(X;\R)$ is fixed by $g^*$, the ray $\R_+[F]$ is contained in the isotropic cone, and $\frac{1}{n^2}(g^n)^*w$ 
converges towards a positive multiple of $[F]$ for every $w\in {\mathrm{Pos}}(X)$. In particular the class 
$[F]$ is nef. Regarding the induced action on $\Hyp_X$, 
$\P([F])$ is the unique fixed point of the parabolic map
$g^*$ on $\Hyp_X\cup \partial \Hyp_X$ (see~\cite{cantat_groupes, Cantat:Milnor, invariant}).
 
 Recall that a linear endomorphism of a vector space is unipotent if  all its eigenvalues $\alpha\in \C$
are equal to $1$; it is {\bf{virtually unipotent}} if some of is positive iterates is unipotent. 
Since the topological entropy of $g$ vanishes, Lemma 2.6 of \cite{Cantat-Romaskevich} gives the following(\footnote{This can 
also be proved directly. For instance, on $\NS(X;\R)$ this follows from the fact that the intersection form is negative definite on $[F]^\perp/\R[F]$ and the lattice
$\NS(;\Z)$ is $g^*$-invariant.}).  

\begin{lem}\label{lem:parabolic_unipotent} Let $X$ be a complex projective surface. If $g\in \Aut(X)$ is parabolic, then $g^*$ is virtually unipotent, both on $\NS(X;\R)$ and on $H^2(X;\R)$. \end{lem}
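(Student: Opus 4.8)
\textbf{Proof plan for Lemma~\ref{lem:parabolic_unipotent}.} The plan is to deduce virtual unipotence of $g^*$ from two facts: the vanishing of topological entropy of a parabolic automorphism (equivalently, $g^*$ has spectral radius $1$ on $H^{1,1}(X;\R)$, hence on $H^2(X;\R)$ by compatibility with Hodge structure and Poincar\'e duality), together with the fact that $g^*$ preserves an integral lattice ($H^2(X;\Z)$, respectively $\NS(X;\Z)$). A linear automorphism of a finite-dimensional real vector space that preserves a lattice and all of whose eigenvalues have modulus $1$ must have all eigenvalues roots of unity: indeed the characteristic polynomial is monic with integer coefficients, its roots lie on the unit circle, so by Kronecker's theorem they are roots of unity. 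Taking a common power $N$ so that every eigenvalue $\alpha$ satisfies $\alpha^N=1$, the operator $(g^*)^N$ is unipotent, which is exactly the assertion.

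So the first step is to record that a parabolic automorphism has vanishing topological entropy — this is standard (the entropy equals $\log\rho(f^*)$ by Gromov--Yomdin, and parabolicity means no eigenvalue of modulus $>1$), and the excerpt already invokes it. The second step is to invoke Lemma~2.6 of \cite{Cantat-Romaskevich}, which packages precisely the implication ``zero entropy $+$ lattice-preserving $\Rightarrow$ virtually unipotent'' on $H^2$; restricting to the $g^*$-invariant sublattice $\NS(X;\Z)\subset H^2(X;\Z)$ gives the statement on $\NS(X;\R)$ as well. Alternatively, as indicated in the footnote, on $\NS(X;\R)$ one argues directly: if $g$ is parabolic with invariant fiber class $[F]$, then $[F]$ spans a $g^*$-fixed isotropic line, the intersection form is negative definite on the quotient $[F]^\perp/\R[F]$, so $g^*$ acts on this quotient as an element of a compact orthogonal group preserving a lattice, hence with finite order; combined with the fixed vector $[F]$ and the fact that $\R[F]$ is isotropic, a short computation with the induced filtration $0\subset \R[F]\subset [F]^\perp\subset \NS(X;\R)$ shows $(g^*)^N$ is unipotent for suitable $N$.

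The only mild subtlety — hardly an obstacle — is passing between $\NS(X;\R)$ and $H^2(X;\R)$: one must note that $g^*$ preserves the Hodge decomposition and acts trivially (up to finite order, by the same Kronecker argument applied to the induced action on $H^{2,0}\oplus H^{0,2}$, which is one-dimensional over $\C$ on each summand) on the transcendental part, so virtual unipotence on $H^{1,1}(X;\R)$ upgrades to virtual unipotence on all of $H^2(X;\R)$. Since everything reduces to the cited Lemma~2.6 plus elementary linear algebra over $\Z$, there is no real difficulty; the proof is essentially a one-line citation with the direct argument relegated to the footnote.
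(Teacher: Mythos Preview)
Your proposal is correct and follows exactly the paper's approach: the paper simply cites Lemma~2.6 of \cite{Cantat-Romaskevich} together with the vanishing of topological entropy, and relegates to a footnote the direct argument via the filtration $\R[F]\subset [F]^\perp\subset \NS(X;\R)$ --- both of which you reproduce. One small imprecision: your parenthetical that $H^{2,0}$ is ``one-dimensional over $\C$'' is not true for a general projective surface, but this is harmless since the Kronecker argument applies directly to the full lattice $H^2(X;\Z)$ once you know the spectral radius of $g^*$ on $H^2(X;\R)$ equals $1$ (which follows from unitarity of $g^*$ on $H^{2,0}$ with respect to $\int\alpha\wedge\overline{\beta}$, combined with spectral radius $1$ on $H^{1,1}$).
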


\subsubsection{Loxodromic automorphisms (see~\cite{Cantat:Milnor})} \label{subs:loxodromic}
The dynamics of a loxodromic automorphism $f$ is much richer. 
The isolated periodic points of $f$ of period $m$ equidistribute towards
a probability measure $\mu_f$ as $m$ goes to $+\infty$, the topological entropy of $f$ is 
positive, and $\mu_f$ is the unique ergodic, 
$f$-invariant probability measure of maximal entropy.  

We denote by $\lambda(f)$ the spectral radius of the  induced  automorphism $f^*$ on $H^{1,1}(X)$, which is larger than 1. 
Then   $\lambda(f)$ and $1/\lambda(f)$ are eigenvalues of $f^*$ 
 with  multiplicity~$1$, with respective nef eigenvectors $\theta_f^+$ and 
 $\theta_f^-$  which are isotropic and 
 generate an $f^*$-invariant plane $\Pi_f\subset H^{1,1}(X;\R)$. 
 Their projectivizations are the two fixed points on $\fr \Hyp_X$ of the 
 induced loxodromic isometry    of $\Hyp_X$. The remaining eigenvalues have modulus~$1$. 
We normalize the eigenvectors $\theta^{\pm}_f$ by imposing
\begin{equation}
\langle \theta_f^+\vert [\kappa_0] \rangle= \langle \theta_f^-\vert [\kappa_0] \rangle= 1
\end{equation}
where $\kappa_0$ is the K\"ahler form introduced 
at the beginning of \S \ref{par:basics_on_par_and_lox} (recall that  $\langle [\kappa_0]\vert [\kappa_0]\rangle=1$).
We set $m_f=\unsur{2} ({\theta_f^++\theta_f^-})$. With such a choice,  $\langle{m_f\vert m_f}\rangle  = 
\unsur{2} \langle\theta_f^+\vert \theta_f^-\rangle >0$.

\begin{figure}[h]
\begin{center}
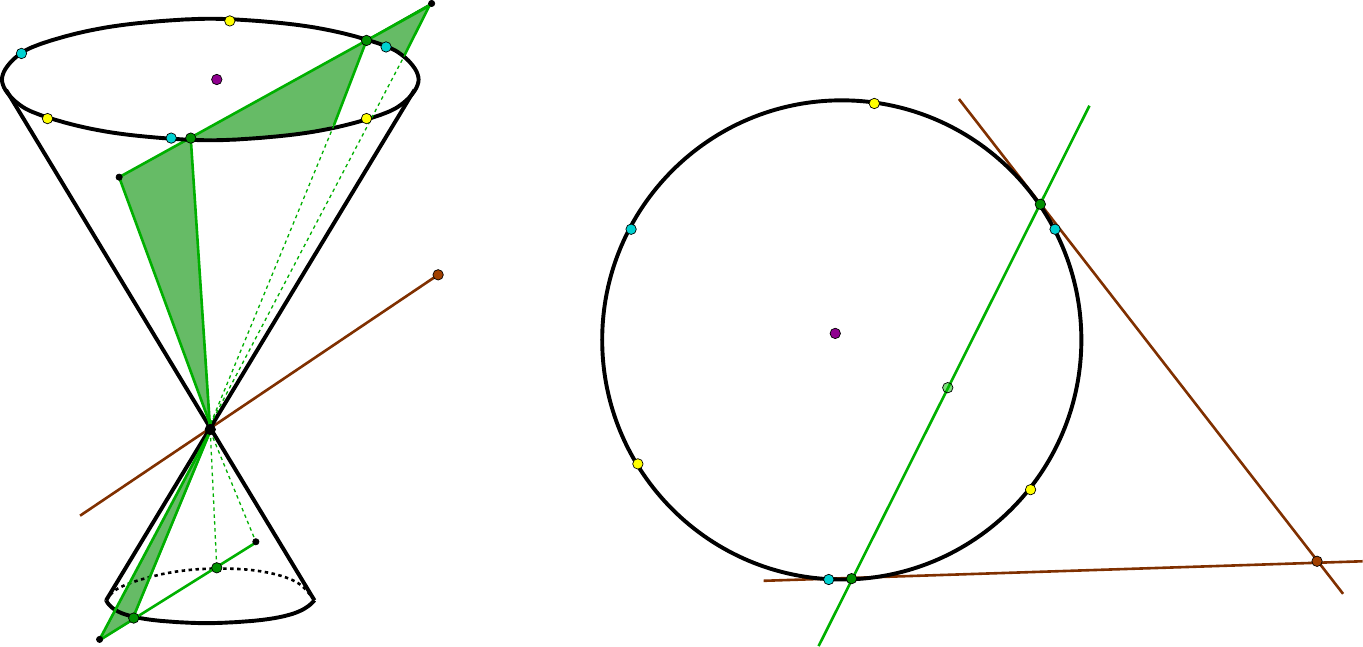
\end{center}
\caption{{\small{On the left is a picture of the Néron-Severi group of $X$ in case $\rho(X)=3$. The green plane is $\Pi_f$, it intersects 
the isotropic cone along the two lines $\R\theta_f^+$ and $\R\theta_f^-$; the brown line is its orthogonal complement $\Pi_f^\perp$, the magenta point is
$[\kappa_0]$. If $f$ preserves a curve $E$, its class is on $\Pi_f^\perp$. On the right is a projective view of the same picture, but now the two brown lines
are the projectivization of the planes $(\theta_f^+)^\perp$ and $(\theta_f^-)^\perp$.}}}
\end{figure}

\begin{rem}\label{rem:visual_angle}
Denote by ${\mathrm{Ang}}_{\kappa_0}(\theta^+_f, \theta^-_f)$ the visual angle between the boundary points $\P(\theta^+_f)$ and $\P(\theta^-_f)$, 
as seen from $[\kappa_0]$ (or $\P([\kappa_0])$). Then 
\begin{equation}\label{eq:sinus}
\langle{m_f\vert m_f}\rangle =\left(\sin\left(\frac{1}{2}{\mathrm{Ang}}_{\kappa_0}(\theta^+_f, \theta^-_f)\right)\right)^2
=\frac{1}{2}\left(1-\cos({\mathrm{Ang}}_{\kappa_0}(\theta^+_f, \theta^-_f)) \right),
\end{equation}
so in particular, $0 < \langle m_f\vert m_f\rangle  \leq 1$, and the right hand inequality is an equality if and only if $m_f= [\kappa_0]$
(\footnote{This can be obtained 
 from   elementary Euclidean geometry in the hyperplane $\langle\cdot \vert [\kappa_0]\rangle =1$ by fixing coordinates in which 
the quadratic form associated to the intersection product expresses as 
$x_0^2- x_1^2 -\ldots - x_n^2$ and $[\kappa_0]  = (1, 0, \ldots 0)$.}). 
 In $\Hyp_X$, the geodesic joining 
 $\P(\theta^-_f)$ and $\P(\theta^+_f)$ is the 
curve $\mathsf{Ax}(f)$ parametrized by $s\theta^+_f+t\theta^-_f$ with $s\in \R_+^*$ and $st=\langle \theta^+_f\vert \theta^-_f\rangle^{-1}$. 
The projection of $[\kappa_0]$ on $\mathsf{Ax}(f)$  is $ \frac{\sqrt{2}}{\langle \theta^+_f\vert \theta^-_f\rangle^{1/2}}m_f$ 
and 
\begin{equation}\label{eq:cosh}
\cosh(\dist([\kappa_0], \mathsf{Ax}(f)))=\frac{\sqrt{2}}{\langle \theta^+_f\vert \theta^-_f\rangle^{1/2}}
\end{equation} (see \cite[Lem. 6.3]{blanc-cantat}). \end{rem}

\subsubsection{Non-elementary subgroups of $\Aut(X)$}\label{subsub:non_elementary}
In this paragraph we collect a few   facts 
on non-elementary  groups of automorphisms, and refer the reader to~\cite[\S 2.3]{stiffness} for details. 
By definition a subgroup $\Gamma\subset \Aut(X)$ is   non-elementary if it acts on 
$\Hyp_X$ as a non-elementary group of  isometries or, equivalently, 
if it contains a non-abelian free group, all of whose elements $f\neq \id$ are loxodromic.
Such a group $\Gamma\subset \Aut(X)$ 
preserves a unique subspace $\Pi_\Gamma
\subset H^{1,1}(X;\R)$ on which: (i) $\Gamma$ acts strongly irreducibly and (ii) 
the intersection form induces a Minkowski form. Moreover, $\Pi_\Gamma=\Pi_{\Gamma_0}$
for any finite index subgroup of $\Gamma$.

Various sufficient conditions on a subgroup $\Gamma$ imply  that it is non-elementary:
\begin{itemize}
\item $\Gamma$ contains a pair of loxodromic elements $(f,g)$ with $\set{\theta_f^+, \theta_f^-}\cap
\set{\theta_g^+, \theta_g^-} = \emptyset$;
\item $\Gamma$ contains  two parabolic elements associated to different fibrations;
\item $\Gamma$ contains a parabolic and a loxodromic element. 
\end{itemize}
 
If $\Aut(X)$ contains a non-elementary group $\Gamma$, then
$X$ is automatically projective and $\Pi_\Gamma$
is contained in the N\'eron-Severi group $\NS(X;\R)$ (see~\cite[\S 3.6]{stiffness}). 
If in addition $\Gamma$ contains a parabolic element, then
$\Pi_\Gamma$ is defined over $\Q$ with respect to the lattice 
$\NS(X;\Z)$ (see~\cite[Lem.~2.9]{stiffness}). 

The limit set $\Lim(\Gamma)\subset \fr\Hyp_X$ 
is the closure of the set of fixed points of loxodromic elements in 
$\P(\Pi_\Gamma)$, or equivalently the smallest closed invariant subset in $\fr\Hyp_X$. 
The following lemma is well-known (see \cite[Lem. 3.24]{kapovich}): 

\begin{lem} \label{lem:density_limit_set}
If $\Gamma$ is non-elementary, $\set{(\P(\theta^+_f), \P(\theta^-_f))\; ; \;  f\in \Gamma_{\mathrm{lox}}}$   is dense in $\Lim(\Gamma)^2$. 
\end{lem}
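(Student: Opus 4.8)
\textbf{Proof plan for Lemma~\ref{lem:density_limit_set}.}
The statement asserts that the set of pairs of fixed points $(\P(\theta^+_f),\P(\theta^-_f))$ coming from loxodromic elements $f\in\Gamma_{\mathrm{lox}}$ is dense in $\Lim(\Gamma)\times\Lim(\Gamma)$. The plan is to argue entirely inside the $\mathrm{CAT}(-1)$ space $\Hyp_X$ on which $\Gamma$ acts as a non-elementary isometry group, so that this becomes the standard fact for discrete (or even non-discrete) non-elementary isometry groups of a hyperbolic space, as in \cite[Lem.~3.24]{kapovich}. First I would recall that for a loxodromic isometry $f$ of $\Hyp_X$, the attracting fixed point $\P(\theta^+_f)$ and the repelling one $\P(\theta^-_f)$ are characterized dynamically: for any compact $K\subset\Hyp_X\cup\partial\Hyp_X$ not containing $\P(\theta^-_f)$, the iterates $f^n|_K$ converge uniformly to the constant $\P(\theta^+_f)$ (North–South dynamics), and symmetrically for $f^{-n}$. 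This is exactly the picture described in \S\ref{subs:loxodromic} via $\frac1{n^2}$ or rather $\lambda(f)^{-n}$ normalizations of $(f^n)^*$.

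The key step is a ping-pong/conjugation argument. Fix two pairs of points $(a^+,a^-)$ and $(b^+,b^-)$ in $\Lim(\Gamma)^2$ that we wish to approximate, together with small neighborhoods. By definition of $\Lim(\Gamma)$ as the closure of the set of loxodromic fixed points, choose loxodromic elements $g,h\in\Gamma_{\mathrm{lox}}$ whose attracting fixed points $\P(\theta^+_g)$, $\P(\theta^+_h)$ are as close as we like to $a^+$ and $b^+$ respectively; the non-elementary hypothesis lets us moreover arrange (after replacing $g$ or $h$ by conjugates $\gamma g\gamma^{-1}$ with $\gamma\in\Gamma$, which moves the fixed point pair by the isometry $\gamma$, using that the $\Gamma$-orbit of any limit point is dense in $\Lim(\Gamma)$) that the four points $\P(\theta^\pm_g)$, $\P(\theta^\pm_h)$ are pairwise distinct. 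Then one forms the composition $f_{m,n}=g^m h^{-n}$ for large $m,n$. Using North–South dynamics: $h^{-n}$ pushes a neighborhood $U^+$ of $a^+$ (which avoids $\P(\theta^-_h)$ once the approximations are good) close to $\P(\theta^-_h)$, which lies near $b^-$; then $g^m$ pushes that set close to $\P(\theta^+_g)$, near $a^+$. A standard fixed-point argument (the Brouwer fixed point theorem applied to $f_{m,n}$ acting on a small closed ball around $a^+$ in $\partial\Hyp_X$, which it maps into itself) shows $f_{m,n}$ is loxodromic with attracting fixed point in that ball, hence near $a^+$; symmetrically, considering $f_{m,n}^{-1}=h^n g^{-m}$ shows its repelling fixed point is near $b^-$, provided one has also arranged $\P(\theta^+_h)$ near $b^+$ and uses it suitably — concretely it is cleaner to take $f_{m,n}=g^m h^{n}$ with $h$ chosen so that $\P(\theta^+_h)$ approximates $b^+$ and $\P(\theta^-_h)$ approximates $b^-$, which again is possible after conjugating $h$ within $\Gamma$ since both coordinates of limit-point pairs can be moved by the dense orbit action. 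Tracking both $f_{m,n}$ and $f_{m,n}^{-1}$ then pins $(\P(\theta^+_{f_{m,n}}),\P(\theta^-_{f_{m,n}}))$ inside the prescribed neighborhood of $(a^+,b^-)$, and relabeling gives density in $\Lim(\Gamma)^2$.

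The main obstacle, and the only point requiring genuine care, is making the conjugation step legitimate: one needs that for every limit point $\xi\in\Lim(\Gamma)$ and every loxodromic $g$, the pair $(\gamma\cdot\P(\theta^+_g),\gamma\cdot\P(\theta^-_g))$ ranges over a set dense in $\{\xi\}\times\Lim(\Gamma)$ as $\gamma$ varies — i.e.\ that $\Gamma$ acts minimally on $\Lim(\Gamma)$ and, more, that one can independently approximate both coordinates. Minimality of the $\Gamma$-action on $\Lim(\Gamma)$ is standard (it is the smallest closed invariant set, and any orbit closure is closed invariant and nonempty, hence equals $\Lim(\Gamma)$); the independence of the two coordinates follows from a two-step application of minimality together with the North–South dynamics above, exactly as in the classical Kleinian/Gromov-hyperbolic setting. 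Since all of this is purely a statement about the isometric action on the Minkowski space $(\Pi_\Gamma,\langle\cdot\vert\cdot\rangle)$ — equivalently on $\Hyp_X$ — it is covered verbatim by \cite[Lem.~3.24]{kapovich}, and I would simply cite that reference after recording the North–South dynamics of loxodromic $f\in\Aut(X)$ which was already set up in \S\ref{subs:loxodromic}.
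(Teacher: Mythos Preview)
Your proposal is correct and matches the paper's approach: the paper does not give any proof at all, simply stating that the lemma is well-known and citing \cite[Lem.~3.24]{kapovich}, exactly as you conclude. Your sketch of the underlying ping-pong argument (North--South dynamics for loxodromics, minimality of the $\Gamma$-action on $\Lim(\Gamma)$, then composing high powers $g^m h^n$ to place the attracting and repelling fixed points independently) is the standard one behind that reference; the notational wobble with the two pairs $(a^\pm),(b^\pm)$ and the sign of the exponent on $h$ is cosmetic and does not affect the validity of the argument.
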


\subsection{Periodic curves of loxodromic automorphisms}\label{par:periodic_curves_of_loxodromic}
Our purpose in this paragraph is to bound the degrees of the periodic curves of a
loxodromic automorphism.

\begin{lem} \label{lem:lorenzian} Let $e$ be an element of $H^{1,1}(X;\R)$ 
such that $e$ is orthogonal to $m_f$  and $\langle [\kappa_0]\vert e\rangle = 1$. Then $\langle e \vert e \rangle < 0$ and
\[
-\langle e \vert e \rangle \geq  \frac{\langle{m_f\vert m_f}\rangle}{1- \langle{m_f\vert m_f}\rangle}=\left(\tan\left(\frac{1}{2}{\mathrm{Ang}}_{\kappa_0}(\theta^+_f,\theta^-_f)\right)\right)^2. 
\]
\end{lem}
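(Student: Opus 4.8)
The plan is to reduce the statement to elementary Euclidean geometry inside the negative definite orthogonal complement of $[\kappa_0]$. Since the intersection form has signature $(1,h^{1,1}(X)-1)$ and $\langle [\kappa_0]\vert [\kappa_0]\rangle=1$, there is an orthogonal direct sum $H^{1,1}(X;\R)=\R[\kappa_0]\oplus [\kappa_0]^\perp$ on which the form is negative definite on the second summand; write $q(u,v):=-\langle u\vert v\rangle$ for the resulting Euclidean inner product on $[\kappa_0]^\perp$. The normalization $\langle \theta_f^+\vert [\kappa_0]\rangle=\langle \theta_f^-\vert [\kappa_0]\rangle=1$ gives $\langle m_f\vert [\kappa_0]\rangle=1$, hence $m_f=[\kappa_0]+m_f^0$ with $m_f^0\in [\kappa_0]^\perp$; likewise the hypothesis $\langle e\vert [\kappa_0]\rangle=1$ gives $e=[\kappa_0]+e^0$ with $e^0\in [\kappa_0]^\perp$.

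Next I would record three identities obtained by expanding in this splitting. Set $\mu:=\langle m_f\vert m_f\rangle$, which lies in $(0,1]$ by Remark~\ref{rem:visual_angle}. Then $\mu=1+\langle m_f^0\vert m_f^0\rangle$, i.e. $q(m_f^0,m_f^0)=1-\mu$; the orthogonality $\langle e\vert m_f\rangle=0$ expands to $1+\langle e^0\vert m_f^0\rangle=0$, i.e. $q(e^0,m_f^0)=1$; and $\langle e\vert e\rangle=1+\langle e^0\vert e^0\rangle=1-q(e^0,e^0)$. The only substantial step is then a single application of the Cauchy--Schwarz inequality for $q$: from $q(e^0,m_f^0)=1$ and $q(m_f^0,m_f^0)=1-\mu$ we get $1\le q(e^0,e^0)(1-\mu)$, hence $q(e^0,e^0)\ge \frac{1}{1-\mu}$, and therefore $-\langle e\vert e\rangle=q(e^0,e^0)-1\ge \frac{1}{1-\mu}-1=\frac{\mu}{1-\mu}=\frac{\langle m_f\vert m_f\rangle}{1-\langle m_f\vert m_f\rangle}$. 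Since $\mu>0$, this bound is positive, so in particular $\langle e\vert e\rangle<0$.

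There is essentially no obstacle; the one point requiring care is the degenerate case $\mu=1$, that is $m_f=[\kappa_0]$ (the equality case in Remark~\ref{rem:visual_angle}). There $m_f^0=0$, so the relation $q(e^0,m_f^0)=1$ is impossible and no vector $e$ satisfying the hypotheses exists, which makes the asserted inequality vacuous (its right-hand side being $+\infty$); in all other cases $1-\mu>0$ and the computation above applies verbatim. Finally the trigonometric reformulation is immediate from \eqref{eq:sinus}: writing $\theta={\mathrm{Ang}}_{\kappa_0}(\theta^+_f,\theta^-_f)$, one has $\mu=\sin^2(\theta/2)$ and $1-\mu=\cos^2(\theta/2)$, whence $\frac{\mu}{1-\mu}=\tan^2(\theta/2)$. (One could alternatively phrase the inequality in terms of the hyperbolic distance from $\P([\kappa_0])$ to the axis of $f$, cf. \eqref{eq:cosh}, but the linear-algebra route above is the most direct.)
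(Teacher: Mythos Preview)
Your proof is correct and follows essentially the same approach as the paper: decompose $m_f$ and $e$ as $[\kappa_0]$ plus a component in $[\kappa_0]^\perp$, use the orthogonality $\langle e\vert m_f\rangle=0$ to get $q(e^0,m_f^0)=1$, and apply Cauchy--Schwarz on the negative-definite complement. Your treatment is slightly more detailed (you spell out the degenerate case $\mu=1$ and the trigonometric identity), but the argument is identical in substance.
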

Note that under the assumption of the lemma 
$m_f$ cannot be equal to $[\kappa_0]$,  so $0 < \langle{m_f\vert m_f}\rangle <1$ by 
  Remark~\ref{rem:visual_angle}.  
  
\begin{proof}
Write $m_f=[\kappa_0]+v$ and $e=[\kappa_0]+w$ where $v$ and $w$ are in the orthogonal complement $[\kappa_0]^\perp$. 
Then, $\langle e \vert m_f\rangle=0$, so $\langle v\vert w\rangle=-1$, and the Cauchy-Schwarz inequality gives 
$ 1 \leq (-\langle v\vert v\rangle)(-\langle w\vert w\rangle)$ because the intersection form is negative definite on $[\kappa_0]^\perp$. 
This inequality is equivalent to $1\leq (1-\langle m_f\vert m_f\rangle)(1-\langle e\vert e\rangle)$ and the result follows. 
\end{proof}

If $C\subset X$ is a curve,   define its {\bf{degree}} (with respect to $\kappa_0$) to be:  
\begin{equation}
\deg (C) = \int_C \kappa_0  =
\langle [C] \vert [\kappa_0] \rangle,
\end{equation}
and similarly define the degree of an automorphism $g\in \Aut(X)$ by:
\begin{equation}
\deg (g) = \int_X \kappa_0\wedge g^*\kappa_0  =\langle [\kappa_0]\vert  g^*[\kappa_0]\rangle.
\end{equation}
In the following lemma, $K_X$ denotes the canonical bundle of $X$:
\begin{lem}\label{lem:control_periodic_curves_strong}
Let $c_X\geq 0$ be a constant such that $\langle K_X\vert \cdot \rangle \leq c_X \langle [ \kappa_0]\vert \cdot \rangle $ on the effective cone.
If   $f\in \Aut(X)$ is  loxodromic  and 
$E$ is a reduced, connected, and  $f$-periodic curve, then      
$$\langle{\theta_f^+\vert \theta_f^-}\rangle \deg(E)\leq  2(1+ c_X).$$
If $E$ is not  connected, then $E$ has at most $\rho(X)-2$ connected components, thus 
$$\langle{\theta_f^+\vert \theta_f^-}\rangle \deg(E)\leq  2(\rho(X)-2)(1+c_X)\leq 2(b_2(X)-2)(1+c_X)$$
where $\rho(X)$ is the Picard number of $X$ and $b_2(X)$ is its second Betti number.
\end{lem}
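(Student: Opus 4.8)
The plan is to compare a lower bound for $-\langle[E]\vert[E]\rangle$ coming from the geometry of the $f^*$-invariant plane $\Pi_f$ with an upper bound coming from adjunction and the constant $c_X$. First I would observe that the class of an $f$-periodic reduced curve $E$ is orthogonal to $\Pi_f$: if $f^m(E)=E$ with $m\geq 1$, then $(f^m)^*[E]=[E]$ while $(f^m)^*\theta_f^{\pm}=\lambda(f)^{\pm m}\theta_f^{\pm}$, and since $(f^m)^*$ preserves the intersection form and $\lambda(f)^{\pm m}\neq 1$ this forces $\langle[E]\vert\theta_f^+\rangle=\langle[E]\vert\theta_f^-\rangle=0$; in particular $\langle[E]\vert m_f\rangle=0$.

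Assume in addition that $E$ is connected, and set $D=\deg(E)=\langle[E]\vert[\kappa_0]\rangle$, which is $>0$ since $\kappa_0$ is Kähler and $E\neq\emptyset$. Then $e=[E]/D$ satisfies $\langle[\kappa_0]\vert e\rangle=1$ and $e\perp m_f$; in particular $m_f\neq[\kappa_0]$, so Lemma~\ref{lem:lorenzian} applies and, writing $t=\langle\theta_f^+\vert\theta_f^-\rangle=2\langle m_f\vert m_f\rangle\in(0,2)$, it yields
\[
-\langle[E]\vert[E]\rangle\;\geq\;D^2\,\frac{t}{2-t}.
\]
On the other hand the adjunction formula $2p_a(E)-2=\langle[E]\vert[E]\rangle+\langle K_X\vert[E]\rangle$, combined with $p_a(E)=h^1(\mathcal{O}_E)\geq 0$ (valid because $E$ is connected and reduced) and with $\langle K_X\vert[E]\rangle\leq c_X D$ (as $[E]$ is effective), gives $-\langle[E]\vert[E]\rangle\leq 2+c_X D$. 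Comparing the two estimates yields $t\,(D^2+c_X D+2)\leq 2\,(c_X D+2)$, hence
\[
t\deg(E)\;\leq\;\frac{2D\,(c_X D+2)}{D^2+c_X D+2}\;\leq\;2(1+c_X),
\]
the last inequality being equivalent to $(D-1)^2+1+c_X(1+c_X)D+2c_X\geq 0$, which obviously holds. This settles the connected case.

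If $E$ is not connected, I would bound its number of connected components by the number of its irreducible components, each of which is periodic under a power of $f$ (a suitable power of $f^m$ fixes each of the finitely many irreducible components). As above their classes lie in $\NS(X;\R)\cap\Pi_f^{\perp}$, a subspace on which the intersection form is negative definite because $\Pi_f$ carries the positive direction $m_f$. Now distinct irreducible curves whose classes lie in a negative definite subspace are linearly independent: a nontrivial relation among them, split into its positive and negative parts, would produce a nonzero effective class $N$ in that subspace with $\langle N\vert N\rangle\geq 0$ (distinct irreducible curves meet non-negatively), which is impossible. Hence there are at most $\dim\big(\NS(X;\R)\cap\Pi_f^{\perp}\big)=\rho(X)-2\leq b_2(X)-2$ such components, and summing the connected-case inequality over the connected components gives the last two bounds.

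The computation is elementary throughout, so there is no serious obstacle; the only points requiring care are checking that the normalizations of $\theta_f^{\pm}$, $m_f$ and $[\kappa_0]$ are arranged so that Lemma~\ref{lem:lorenzian} applies verbatim (notably that $m_f\neq[\kappa_0]$, hence $0<\langle m_f\vert m_f\rangle<1$), and the linear-independence step for the components, which rests on negative-definiteness of $\Pi_f^{\perp}$ together with effectivity of the components.
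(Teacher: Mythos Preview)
Your proof is correct and follows essentially the same route as the paper: adjunction plus Lemma~\ref{lem:lorenzian} for the connected case, then a dimension count in $\Pi_f^\perp\cap\NS(X;\R)$ for the component bound. The only noteworthy difference is in the second part: the paper observes directly that the \emph{connected} components $E_i$ are pairwise disjoint, hence their classes are pairwise orthogonal nonzero vectors in the negative-definite space $(\theta_f^+)^\perp\cap(\theta_f^-)^\perp$, giving the bound $\rho(X)-2$ immediately; your detour through irreducible components and the Zariski-type linear-independence argument works, but is slightly less direct.
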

If $E$ is $f$-invariant, then $[E]$ is orthogonal to $\Pi_f$ for the intersection form,
so the Hodge index theorem implies that  $[E]^2<0$. Thus, if $E$ is irreducible, it is 
determined by its class $[E]$, and Lemma~\ref{lem:control_periodic_curves_strong} shows that 
$f$ has only finitely many irreducible periodic curves; this finiteness result strengthens 
\cite[Prop.~B]{Kawaguchi:AJM} (see also~\cite{Cantat:Acta} and\cite[Prop. 4.1]{Cantat:Milnor}). We shall denote by $D_f$ the union 
of these irreducible $f$-periodic curves.

\begin{eg} We can take $c_X=0$ when $X$ is a K3, Enriques, or abelian surface.  
\end{eg}

\begin{proof}[Proof of Lemma~\ref{lem:control_periodic_curves_strong}] Assume first  that $E$ is connected.
Set $e= \frac{[E]}{\deg(E)}$ so that 
$\langle e\vert [\kappa_0] \rangle= 1$.   Since 
$E$ is reduced and connected, its  arithmetic genus  $\langle K_X+ E\vert E\rangle +2$ is non-negative
 (see~\cite[\S II.11]{BHPVDV}), so
\begin{equation}\label{eq:inters1}
- \langle{E\vert E}\rangle \leq  2+ \langle K_X \vert E\rangle \leq 2+ c_X\deg(E).
\end{equation}
On the other hand  Lemma \ref{lem:lorenzian} implies
\begin{equation}\label{eq:inters2}
- \langle{E\vert E}  \rangle = -\deg(E) ^2\langle{e\vert e} \rangle  \geq 
 \frac{\langle m_f\vert m_f\rangle}{1-\langle m_f\vert m_f\rangle}\deg(E)^2.
\end{equation}
Putting these two inequalities together we get
\begin{equation}\label{eq:inters3}
\deg(E)^2 \leq 
\frac{1- \langle m_f\vert m_f\rangle}{\langle m_f\vert m_f\rangle} \lrpar{ 2+ c_X \deg(E)}.
\end{equation} 
Solving for the corresponding quadratic equation in $\deg(E)$, and applying the inequality 
 $  t(1-t)\leq 1/4$ with $t=  \langle m_f\vert m_f\rangle$ finally gives 
\begin{equation}\label{eq:degaE}
 \langle m_f\vert m_f\rangle \deg(E)   \leq (1-\langle m_f\vert m_f\rangle)c_X + 1/ \sqrt{2} \leq c_X+1. 
\end{equation}
 
For the second assertion, write $E$ as a union of disjoint connected components $E_i$. The classes
$[E_i]$ are pairwise orthogonal, and are contained in  $(\theta^+_f)^\perp\cap (\theta^-_f)^\perp$, 
a subspace of codimension $2$ in the Néron-Severi group of $X$. This implies that there are
at most $\rho(X)-2$ connected components. 
\end{proof}

\begin{pro}\label{pro:degree_invariant_curve}  
Let $X$ be a compact K\"ahler surface endowed with a reference K\"ahler form $\kappa_0$ such that $\int \kappa_0^2=1$. 
If $f\in \Aut(X)$ is  loxodromic and $E$ is an $f$-invariant curve, then  
$$\deg(E)\leq 2^{54}(\rho(X)-2)(1+c_X) \deg(f)^{56}, $$ 
where the degrees are relative to $\kappa_0$ and  $c_X$ is as in Lemma \ref{lem:control_periodic_curves_strong}. 
\end{pro}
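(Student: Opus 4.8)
The plan is to bound $\deg(E)$ by combining Lemma~\ref{lem:control_periodic_curves_strong} with a quantitative lower bound for $\langle \theta_f^+\vert\theta_f^-\rangle$ in terms of $\deg(f)$; the latter is the real content, since Lemma~\ref{lem:control_periodic_curves_strong} already gives $\deg(E)\leq 2(\rho(X)-2)(1+c_X)\langle\theta_f^+\vert\theta_f^-\rangle^{-1}$ once $E$ is replaced by the union of its irreducible periodic components (a curve invariant under $f$ has all components periodic under a bounded iterate, hence under $f^k$ for $k\le (\rho(X))!$ say, and one must pass back to $f$ — but since invariance is assumed, every component is already $f$-periodic). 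So the whole problem reduces to: \emph{why is $\langle\theta_f^+\vert\theta_f^-\rangle$ not too small when $\deg(f)$ is bounded?}

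The key idea for that lower bound is hyperbolic geometry on $\Hyp_X$, using the formulas recorded in Remark~\ref{rem:visual_angle}. We have $\cosh(\dist([\kappa_0],\mathsf{Ax}(f))) = \sqrt{2}\,\langle\theta_f^+\vert\theta_f^-\rangle^{-1/2}$, so $\langle\theta_f^+\vert\theta_f^-\rangle$ is small exactly when the axis $\mathsf{Ax}(f)$ is far from the basepoint $[\kappa_0]$. On the other hand, $\deg(f) = \langle[\kappa_0]\vert f^*[\kappa_0]\rangle = \cosh(\dist([\kappa_0], f^*[\kappa_0]))$ controls the displacement of the basepoint. The point $[\kappa_0]$ and its image $f^*[\kappa_0]$ both lie within distance roughly $\dist([\kappa_0],\mathsf{Ax}(f))$ of the axis, and along the axis $f^*$ acts by translation length $\ell(f) = \log\lambda(f) \geq \log\lambda_1 =: \delta_0 > 0$, where $\lambda_1$ is Lehmer's number or any universal lower bound for dynamical degrees of loxodromic surface automorphisms (this is where a universal constant enters). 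A standard comparison in the hyperbolic plane — projecting $[\kappa_0]$ and $f^*[\kappa_0]$ orthogonally onto $\mathsf{Ax}(f)$ and using the hyperbolic law of cosines / the formula $\cosh\dist(p,fp) \geq \cosh(2\,\dist(p,\mathsf{Ax}(f)))\cdot\sinh^2(\ell/2)/\text{something}$ — yields an inequality of the shape
\[
\cosh(\dist([\kappa_0],\mathsf{Ax}(f))) \leq C(\delta_0)\cdot \deg(f)
\]
for an explicit $C(\delta_0)$, possibly after noting $\dist([\kappa_0],f^*[\kappa_0]) \leq 2\dist([\kappa_0],\mathsf{Ax}(f)) + \ell(f)$ and $\ell(f)$ itself is bounded below but can a priori be large — in which case one instead uses that large $\ell(f)$ forces $\lambda(f)$ large, hence $\deg(f)$ large, again giving the bound. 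Translating back via the Remark's formula gives $\langle\theta_f^+\vert\theta_f^-\rangle \geq c(\delta_0)\deg(f)^{-2}$, or with a cruder argument $\geq c\deg(f)^{-(\text{const})}$; the exponents $54, 56$ in the statement strongly suggest the authors allow themselves a wasteful chain of estimates (e.g. iterating bounds or using $\lambda(f)\le \deg(f)$ crudely several times), so I would not fight for the optimal power.

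Assembling: take $E$ invariant under $f$, decompose into its $\leq \rho(X)-2$ irreducible $f$-periodic components, apply Lemma~\ref{lem:control_periodic_curves_strong} to get $\deg(E) \leq 2(\rho(X)-2)(1+c_X)\langle\theta_f^+\vert\theta_f^-\rangle^{-1}$, then substitute the lower bound $\langle\theta_f^+\vert\theta_f^-\rangle \gtrsim \deg(f)^{-56+\text{O}(1)}$ to conclude $\deg(E) \leq 2^{54}(\rho(X)-2)(1+c_X)\deg(f)^{56}$. The main obstacle is the geometric estimate in the second paragraph: one must carefully bound $\dist([\kappa_0],\mathsf{Ax}(f))$ by $\dist([\kappa_0], f^*[\kappa_0])$ and the translation length, handling both the regime of small and large translation length, and keeping every constant explicit and universal — this is where the non-trivial input (a universal gap $\lambda(f) \geq \lambda_1$ for loxodromic automorphisms of surfaces, due to McMullen, and elementary but fiddly hyperbolic trigonometry) is used. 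Everything else is bookkeeping.
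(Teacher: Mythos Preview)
Your approach is correct and matches the paper's: reduce to Lemma~\ref{lem:control_periodic_curves_strong}, then bound $\langle\theta_f^+\vert\theta_f^-\rangle^{-1}$ via the hyperbolic distance from $[\kappa_0]$ to $\mathsf{Ax}(f)$ using Remark~\ref{rem:visual_angle}. For the key estimate the paper simply quotes Lemma~4.8 of \cite{blanc-cantat}, which gives $\dist([\kappa_0],\mathsf{Ax}(f))\leq 28\,\dist([\kappa_0],f^*[\kappa_0])$ (and whose proof does rest on a spectral gap for the translation length, as you guessed); the constants $54=2\cdot27$ and $56=2\cdot28$ then fall out of $\cosh(28x)\leq 2^{27}\cosh(x)^{28}$ together with squaring in formula~\eqref{eq:cosh}.
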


\begin{proof}
As in Remark~\ref{rem:visual_angle}, denote by $\dist$ the hyperbolic distance on $\Hyp_X$  and let $\mathsf{Ax}(f)$ be the axis of the 
loxodromic isometry  $f^*$. Lemma 4.8 in \cite{blanc-cantat} implies 
 that(\footnote{It was stated for birational transformations of $\P^2$ in \cite{blanc-cantat} but the estimate holds 
in our setting with the same proof (actually an easier one since here we work in a finite dimensional hyperbolic space).}) $$\dist([\kappa_0], \mathsf{Ax}(f))\leq 28 \dist ([\kappa_0], f^*[\kappa_0]) = 28 \cosh\inv(\deg(f)).$$
Then, using the  formula~\eqref{eq:cosh} for the   
 distance $\dist([\kappa_0], \mathsf{Ax}(f))$ together with the elementary  inequality $\cosh(kx)\leq 2^{k-1} \cosh (x)^k$, we obtain
\begin{equation}
\frac2{\langle \theta_f^+\vert  \theta_f^-\rangle}   
=  \cosh (\dist([\kappa_0], \mathsf{Ax}(f))) ^2 
\leq 2^{54} (\deg(f))^{56}.
\end{equation} 
The result now follows  from Lemma~\ref{lem:control_periodic_curves_strong}.
\end{proof}

\subsection{$\Gamma$-periodic curves, singular models and ampleness} \label{par:periodic_curves_ampleness}
Denote by $\Pi_\Gamma^\perp$ the orthogonal complement of $\Pi_\Gamma$ 
with respect to the intersection form.

\begin{lem}\label{lem:carac_invariant_curves}
 Let $\Gamma\subset \Aut(X)$ be a non-elementary subgroup. 
 \begin{enumerate}[{\em (i)}]
 \item A  curve $C\subset X$ is $\Gamma$-periodic if and only if $[C]\in \Pi_\Gamma^\perp$.  
 \item If   $\Gamma$ contains a parabolic element,  and $C$ is irreducible, then $C$ is $\Gamma$-periodic  if and only 
if $C$ is contained in a fiber of $\pi_g$ for every $g\in \Gamma_{\mathrm{par}}$.
 \end{enumerate}
\end{lem}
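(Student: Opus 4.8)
The plan is to treat the two statements in turn, extracting both from the structural facts about non-elementary groups recalled in \S\ref{subsub:non_elementary} and \S\ref{par:parabolic}.

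For assertion (i), I would argue as follows. Suppose first that $C$ is $\Gamma$-periodic, so that a finite-index subgroup $\Gamma_0\leq\Gamma$ fixes each irreducible component of $C$; replacing $C$ by one such component I may assume $C$ is irreducible and $\Gamma_0$-invariant. Then the class $[C]$ spans a $\Gamma_0$-invariant line in $H^{1,1}(X;\R)$. Since $\Gamma_0$ is still non-elementary it acts strongly irreducibly on $\Pi_{\Gamma_0}=\Pi_\Gamma$ (by the last sentence of the first paragraph of \S\ref{subsub:non_elementary}), so this line cannot lie in $\Pi_\Gamma$; as the intersection form is non-degenerate on $\Pi_\Gamma$, one decomposes $H^{1,1}=\Pi_\Gamma\oplus\Pi_\Gamma^\perp$ equivariantly, and a $\Gamma_0$-invariant line not contained in $\Pi_\Gamma$ — more precisely, the projection of $[C]$ to $\Pi_\Gamma$ is a $\Gamma_0$-invariant vector in a space with no invariant vector — must have zero projection, i.e. $[C]\in\Pi_\Gamma^\perp$. (Here I use that $\Pi_\Gamma$ carries a Minkowski form on which $\Gamma_0$ acts with no fixed nonzero vector, which follows from strong irreducibility together with the fact that a loxodromic isometry has no fixed vector.) Conversely, suppose $[C]\in\Pi_\Gamma^\perp$. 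Pick $f\in\Gamma_{\mathrm{lox}}$; then $\Pi_f\subset\Pi_\Gamma$, hence $[C]\perp\Pi_f$, so by the Hodge index remark just before Lemma~\ref{lem:control_periodic_curves_strong} the class $[C]$ has negative self-intersection and, more to the point, every irreducible component $C_i$ of $C$ satisfies $[C_i]\perp\Pi_f$, so each $C_i$ is an $f$-periodic curve, i.e. $C_i\subset D_f$. By Lemma~\ref{lem:density_limit_set} and Lemma~\ref{lem:control_periodic_curves_strong} there is a loxodromic $g\in\Gamma$ whose pair of fixed points is as close as we like to another pair in the limit set, so that $\Pi_f$ and $\Pi_g$ together span $\Pi_\Gamma$; then $[C_i]\perp\Pi_\Gamma$, the set $D_f$ is finite, and $\Gamma$ permutes the finitely many curves whose class lies in $\Pi_\Gamma^\perp\cap\{\text{effective, negative self-intersection}\}$. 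Hence $C$ is $\Gamma$-periodic. The cleanest way to run the converse is: each $g\in\Gamma_{\mathrm{lox}}$ preserves $D_g$, which is finite (Lemma~\ref{lem:control_periodic_curves_strong}); a curve with class in $\Pi_\Gamma^\perp$ is orthogonal to $\Pi_g$ for \emph{every} loxodromic $g$, hence lies in $D_g$ for every such $g$; taking two loxodromic elements generating a non-elementary subgroup, the union $D_f\cup D_g$ is a finite $\Gamma$-invariant set of curves containing $C$, so the $\Gamma$-orbit of each component of $C$ is finite.

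For assertion (ii), assume $\Gamma$ contains a parabolic $g_0$, and let $C$ be irreducible. If $C$ is $\Gamma$-periodic and $g\in\Gamma_{\mathrm{par}}$ is parabolic with invariant fibration $\pi_g\colon X\to B_g$ and fiber class $[F_g]$, then by part (i) $[C]\in\Pi_\Gamma^\perp$; since $g\in\Gamma$ and $[F_g]\in\Pi_\Gamma$ (the ray $\R_+[F_g]$ is the attracting fixed point of the parabolic isometry $g^*$ on $\overline{\Hyp_X}$, which lies in $\overline{\Pi_\Gamma}$ as $g^*$ preserves $\Pi_\Gamma$), we get $\langle[C]\,\vert\,[F_g]\rangle=0$. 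But $[F_g]$ is nef and a fiber of a genus-$1$ fibration, so an irreducible curve $C$ with $\langle[C]\,\vert\,[F_g]\rangle=0$ must be contained in a fiber of $\pi_g$ (an irreducible curve not contained in a fiber dominates $B_g$ and meets a general fiber positively). Conversely, suppose $C$ is contained in a fiber of $\pi_g$ for every parabolic $g\in\Gamma$. Applying this to $g_0$ and to a conjugate $hg_0h^{-1}$ with $h\in\Gamma_{\mathrm{lox}}$ chosen so that the two fibrations $\pi_{g_0}$ and $h(\pi_{g_0})$ are distinct (possible because $\Gamma$ is non-elementary: $h^*[F_{g_0}]\neq[F_{g_0}]$ for a suitable $h$, since $[F_{g_0}]$ generates an isotropic ray and the only way all of $\Gamma$ fixes it is if $\Gamma$ is elementary), we get $\langle[C]\,\vert\,[F_{g_0}]\rangle=\langle[C]\,\vert\,h^*[F_{g_0}]\rangle=0$. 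Iterating, $[C]$ is orthogonal to the $\Gamma$-orbit of $[F_{g_0}]$, hence to the subspace it spans, which — being a nonzero $\Gamma$-invariant subspace of $H^{1,1}$ meeting $\Pi_\Gamma$ nontrivially — contains $\Pi_\Gamma$ by strong irreducibility. So $[C]\in\Pi_\Gamma^\perp$ and part~(i) gives that $C$ is $\Gamma$-periodic.

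\textbf{Main obstacle.} The delicate point is the linear-algebra bookkeeping around $\Pi_\Gamma$: knowing precisely that $\Gamma$ has no nonzero invariant vector in $\Pi_\Gamma$, that $H^{1,1}(X;\R)=\Pi_\Gamma\oplus\Pi_\Gamma^\perp$ as a $\Gamma$-module with the intersection form non-degenerate on each factor, and that any nonzero $\Gamma$-invariant subspace either contains $\Pi_\Gamma$ or is contained in $\Pi_\Gamma^\perp$. These are exactly the properties packaged in \cite[\S2.3]{stiffness}, so the real work is to cite them in the right form; granting them, both directions of both assertions are short. A secondary point to handle carefully is passing between ``$\Gamma$-periodic'' (finite orbit of curves) and ``invariant class in $\Pi_\Gamma^\perp$'': the bridge is the finiteness of $D_f$ for loxodromic $f$ from Lemma~\ref{lem:control_periodic_curves_strong}, together with the density statement Lemma~\ref{lem:density_limit_set} used to produce enough loxodromic elements whose invariant planes span $\Pi_\Gamma$.
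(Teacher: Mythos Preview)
Your forward direction of (i) and both directions of (ii) are essentially correct and close to the paper's arguments. The problem is in your converse of (i).

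You claim that ``$D_f\cup D_g$ is a finite $\Gamma$-invariant set of curves.'' This is false: $D_f$ is $f$-invariant, not $\Gamma$-invariant, and for a general $h\in\Gamma$ one has $h(D_f)=D_{hfh^{-1}}$, which need not be contained in $D_f\cup D_g$. You also assert without proof that $[C_i]\perp\Pi_f$ implies $C_i\subset D_f$; this is true, but it requires exactly the argument you are trying to avoid (namely that $f^*$ restricted to $\Pi_f^\perp$ is an isometry of a negative definite space, so the $(f^*)^\Z$-orbit of the integral class $[C_i]$ is bounded and integral, hence finite). Your route can be repaired: once you know $C_i\in D_f$, observe that for any $h\in\Gamma$ the curve $h(C_i)$ again has class in $\Pi_\Gamma^\perp\subset\Pi_f^\perp$, hence also lies in $D_f$; thus $\Gamma(C_i)\subset D_f$ is finite. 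But this detour through a single loxodromic $f$ is unnecessary.

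The paper's argument is shorter and more direct: the intersection form is negative definite on $\Pi_\Gamma^\perp$, so $\Gamma$ acts there through a subgroup of a compact orthogonal group. Hence the $\Gamma^*$-orbit of any integral class $[C]\in\Pi_\Gamma^\perp$ lies on a sphere in a Euclidean space and consists of integral points, so it is finite. Since an irreducible curve with negative self-intersection is determined by its class, the $\Gamma$-orbit of $C$ itself is finite. This single observation replaces your invocation of Lemma~\ref{lem:control_periodic_curves_strong}, Lemma~\ref{lem:density_limit_set}, and the problematic $D_f\cup D_g$ step.

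For the converse of (ii), your argument via part~(i) is correct and arguably cleaner than the paper's. The paper instead argues geometrically: the set $S$ of irreducible curves lying in a fiber of $\pi_g$ for every $g\in\Gamma_{\mathrm{par}}$ is finite (because two distinct genus-one fibrations share no smooth fiber, so $S$ is contained in the finitely many components of singular fibers of any one $\pi_{g_1}$) and is $\Gamma$-invariant (because $\Gamma_{\mathrm{par}}$ is stable under conjugation). Both approaches are fine; yours reuses (i), the paper's is self-contained.
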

 
 \begin{proof} For (i), we   note that since the intersection form is negative definite on   
 $\Pi_\Gamma^\perp$, $\Gamma$ acts on this space as a group of Euclidean isometries. Thus, 
 if $c\in \Pi_\Gamma^\perp$ is
an integral class, then $\Gamma^*(c)$ is a finite set. Since $\Pi_\Gamma$ is generated by nef classes, 
$[C]$ belongs to $\Pi_\Gamma^\perp$ if and only if each of its irreducible components does, so it
  is enough to prove the result for an irreducible curve. 
 Now an  irreducible curve $C$ with 
negative self-intersection  is uniquely determined by its class $[C]$; so if $[C]$ is 
contained in $\Pi_\Gamma^\perp$,  we conclude that $C$ is $\Gamma$-periodic. 
Conversely, if $C$ is $\Gamma$-periodic, a finite index subgroup $\Gamma'\subset \Gamma$ preserves~$C$. 
If $f\in \Gamma'_{\mathrm{lox}}$, then $\langle  \theta^+_f \,\vert\, [C]\rangle =0$ because $f$ preserves the intersection form. 
But ${\mathrm{Vect}}( \theta^+_f , f\in \Gamma'_{\mathrm{lox}})$ is 
a $\Gamma'$-invariant subspace of $\Pi_\Gamma$, hence by strong irreducibility it coincides with $\Pi_\Gamma$ (see~\S \ref{subsub:non_elementary}). 
So, $[C]\in \Pi_\Gamma^\perp$, and in particular $[C]^2 <0$.

Let us prove the second assertion. 
If $[C]\in \Pi_\Gamma^\perp$ and $g\in \Gamma_{\mathrm{par}}$, $[C]$ intersects trivially the class $[F]$ of the general fiber of $\pi_g$; this implies
that $C$ is contained in a fiber of $\pi_g$, and is a component of a singular fiber since $[C]^2<0$. Now, denote by $S$ the set of irreducible curves which are a 
component of $\pi_g$ for all $g\in \Gamma_{\mathrm{par}}$; it remains to prove that each $C\in S$ is $\Gamma$-periodic. 
Since $\Gamma$ is non-elementary, $\Gamma_{\mathrm{par}}$ contains two elements $g_1$ and $g_2$ with distinct fixed points
on the boundary of $\Hyp_X$; these fixed points are respectively given by the classes $[F_1]$ and $[F_2]$ of any smooth fiber of $\pi_{g_1}$ and $\pi_{g_2}$; hence, 
$\pi_{g_1}$ and $\pi_{g_2}$ can not share any smooth fiber. This shows that elements of $S$ are contained in singular fibers of $\pi_{g_1}$, and in particular $S$ is finite.
Moreover, $S$ is $\Gamma$-invariant, because $\Gamma_{\mathrm{par}}$ is invariant under conjugacy, thus every $C\in S$ is a $\Gamma$-periodic curve.
\end{proof}

The following proposition shows  that examples as in 
 \cite[\S 11]{cantat-dupont} do not appear for non-elementary groups {{containing parabolic automorphisms}}.

\begin{pro}\label{pro:contraction} Let $\Gamma\subset \Aut(X)$ be a non-elementary subgroup containing parabolic automorphisms. 
There is a birational morphism $\pi_0\colon X\to X_0$ onto a normal projective surface $X_0$ and a homomorphism
$\tau\colon \Gamma\to \Aut(X_0)$
 such that 
\begin{enumerate}[\em (1)]
\item $\pi_0$ contracts all $\Gamma$-periodic curves and only them;
\item $\pi_0$ is equivariant: $\pi_0\circ f= \tau(f)\circ \pi_0$ for every $f\in \Gamma$;
\item there is an ample line bundle $A$ on $X_0$ such that $\pi_0^*A$ is a big and nef line bundle, whose
 class  belongs to $\Pi_\Gamma$.
 \end{enumerate}
\end{pro}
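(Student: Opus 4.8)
The plan is to build $X_0$ by contracting the finitely many $\Gamma$-periodic curves, then to propagate the group action and finally to produce the ample line bundle on $X_0$. First I would let $D_\Gamma$ denote the union of all irreducible $\Gamma$-periodic curves. By Lemma~\ref{lem:carac_invariant_curves}, each such curve $C$ satisfies $[C]\in\Pi_\Gamma^\perp$ and hence $[C]^2<0$; moreover, for a parabolic $g\in\Gamma_{\mathrm{par}}$ these curves are all components of singular fibers of $\pi_g$, so there are only finitely many of them, and the intersection form is negative definite on the subspace of $\NS(X;\R)$ they span. By the Grauert--Mumford contractibility criterion, the connected components of $D_\Gamma$ can be contracted: there is a birational morphism $\pi_0\colon X\to X_0$ onto a normal projective (a priori only normal analytic) surface contracting exactly these curves. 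This gives~(1).

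Next, for~(2), I would observe that any $f\in\Gamma$ permutes the irreducible $\Gamma$-periodic curves (since $\Pi_\Gamma^\perp$ is $\Gamma$-invariant), hence preserves $D_\Gamma$ and its partition into connected components. Since contraction of a fixed negative-definite configuration of curves is canonical, $f$ descends to a biregular (in the analytic category) automorphism $\tau(f)$ of $X_0$ with $\pi_0\circ f=\tau(f)\circ\pi_0$, and $\tau$ is visibly a homomorphism.

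The heart of the matter is~(3), which simultaneously shows $X_0$ is projective (not merely a normal analytic surface) and produces the polarization. Here I would use the parabolic element and the rationality statement recalled in \S\ref{subsub:non_elementary}: since $\Gamma$ contains a parabolic element, $\Pi_\Gamma$ is defined over $\Q$ with respect to $\NS(X;\Z)$. The $\Gamma$-invariant cone $\Pi_\Gamma\cap\overline{\mathrm{Pos}}(X)$ contains the nef classes $\theta_f^\pm$ of loxodromic elements and, by Lemma~\ref{lem:density_limit_set}, these fixed points are dense in the limit set; averaging or taking a suitable $\Q$-point in the interior of the cone spanned by the $\theta_f^{\pm}$ produces a class $\alpha\in\Pi_\Gamma\cap\NS(X;\Q)$ with $\langle\alpha\vert\alpha\rangle>0$ which is nef (being a limit of nef classes it lies in the closed nef cone, and it lies in $\mathrm{Pos}(X)$). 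Since $\alpha\in\Pi_\Gamma$, it is orthogonal to every $\Gamma$-periodic curve; since it has positive self-intersection it is big, so after clearing denominators $L:=\pi_0^*(\text{line bundle with class }\alpha)$ is big and nef with $L\cdot C=0$ exactly for the curves $C$ contracted by $\pi_0$. By the base-point-free theorem (or directly by the Nakai--Moishezon/Zariski contraction criterion applied to the semiample class, using that $\alpha^\perp$ on the effective cone consists precisely of the contracted curves), $\alpha$ is the pullback of an ample class on $X_0$; hence $X_0$ is projective and~(3) holds.

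The main obstacle is step~(3): one must ensure the class $\alpha$ chosen in $\Pi_\Gamma$ is genuinely nef and not merely in the closure of $\mathrm{Pos}(X)$, and that $\alpha^\perp$ meets the effective cone in exactly the contracted curves (so that descent to an \emph{ample} bundle is legitimate). The first point follows because $\Pi_\Gamma$ is spanned by the nef classes $\theta_f^\pm$ and the nef cone is closed and convex; the second is exactly Lemma~\ref{lem:carac_invariant_curves}(i), which identifies $\Pi_\Gamma^\perp\cap\{\text{effective}\}$ with the span of the $\Gamma$-periodic curves. The passage from "big, nef, with the right null locus" to "pullback of ample" is then standard contraction theory for surfaces. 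I would also remark that once $\pi_0^*A$ is ample off the contracted locus and $X_0$ is normal, $A$ is automatically ample on $X_0$ by the Nakai criterion on the normal surface.
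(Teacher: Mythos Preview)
Your approach differs from the paper's and step~(3) contains a genuine gap. You build $\alpha\in\Pi_\Gamma\cap\NS(X;\Q)$ as a big nef class from positive combinations of the $\theta_f^\pm$; this part is fine. But the passage from ``big, nef, with null locus equal to the $\Gamma$-periodic curves'' to ``pullback of an ample class on $X_0$'' is not justified. Big and nef does not imply semi-ample on surfaces (Zariski's example on $\P^2$ blown up at twelve general points of a cubic). The base-point-free theorem requires $a\alpha-K_X$ to be nef and big for some $a>0$; this is automatic when $K_X$ is numerically trivial, but for rational $X$ it fails whenever some $\Gamma$-periodic curve $C$ has $C^2\le -3$, since adjunction gives $K_X\cdot C=-2-C^2>0$ and then $(a\alpha-K_X)\cdot C=-K_X\cdot C<0$ for every $a$. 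Such curves do occur (e.g.\ in the rational Kummer examples of \S\ref{par:Kummer_classification}). Invoking Nakai--Moishezon on $X_0$ is circular: Grauert gives only a normal analytic space and Artin only an algebraic space, so projectivity of $X_0$ is exactly what remains to be shown.

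The paper sidesteps this entirely by exploiting the parabolic hypothesis more directly. For each parabolic $g_i\in\Gamma$ the class $[L_{g_i}]$ of a fiber of $\pi_{g_i}$ is not merely nef but \emph{base-point-free} (it is the pullback of a point class on the base, see Example~\ref{eg:parabolic_second_example}), and lies on the limit set hence in $\Pi_\Gamma$. Choosing finitely many $g_i$ as in Lemma~\ref{lem:carac_invariant_curves}(ii), the sum $M=\sum_i L_{g_i}$ is semi-ample, big, nef, with $[M]\in\Pi_\Gamma$. The Iitaka morphism $\Phi_{|kM|}\colon X\to X_0\subset\P^N$ is then a birational morphism onto a normal projective variety; it contracts exactly the curves orthogonal to every $L_{g_i}$, which are precisely the $\Gamma$-periodic ones, and the ample bundle $A$ on $X_0$ with $\pi_0^*A=\Fr(M)\cdot M$ comes for free. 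The point you miss is that among the boundary classes in $\Lim(\Gamma)$, the parabolic fixed classes are semi-ample while the loxodromic $\theta_f^\pm$ need not be; using the former makes projectivity automatic.
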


Before starting the proof, recall that a line bundle $M$ on $X$ is semi-ample if and only if 
$M^{\otimes m}$ is globally generated (or equivalently base-point free) 
for some $m>0$ (see~\cite[\S 2.1.B]{Lazarsfeld:Book1}). 
Set 
\[
\Free(X;M)=\{ m \in \N \; \vert\; mM \; \; {\text{is base point free}}\}.
\]
(here we use the additive notation    $mM$ for  the line bundle $M^{\otimes m}$.)
This defines  a semi-group in $\N$, and we denote by $\Fr(M)$ the largest natural number such that every 
element of $\Free(X;M)$ is a multiple of $\Fr(M)$. Given $k$ in $\Free(X;M)$, the line bundle $kM$ 
determines a  morphism 
\[
\Phi_{kM}\colon X\to X_{kM} \subset \P(H^0(X,kM)^{\vee}),
\]
onto a projective (possibly singular) normal variety $X_{kM}$. According to Theorem~2.1.27 in \cite{Lazarsfeld:Book1}, there is an algebraic 
fibre space $\Phi\colon X\to Y$
such that 
\begin{enumerate}
\item $Y$ is a normal projective variety (see Example~2.1.15 in~\cite{Lazarsfeld:Book1});
\item $X_{kM}=Y$ and $\Phi_{kM}=\Phi$ for sufficiently large elements $k$ of $\Free(X;M)$;
\item there is an ample line bundle $A$ on $Y$ such that $\Phi^*A=\Fr(M)M$.
\end{enumerate}
Note that conversely the pull-back of a base point free line bundle by a morphism is base point free. 

\begin{eg}\label{eg:parabolic_second_example}
To each $g\in \Aut(X)_{\mathrm{par}}$ corresponds a semi-ample line bundle $L_g$ such that (i) the members of $\vert L_g\vert$ are given by 
the fibers of $\pi_g$ and (ii)  $\pi_g\colon X\to B_g$ 
coincides with the fibration $\Phi\colon X\to Y$ determined by $L_g$. The ray $\R_+[L_g]\subset H^{1,1}(X;\R)$ determines the unique 
fixed point of $g^*$ in $\partial \Hyp_X$, and $L_g$ is nef (see Section~\ref{par:basics_on_par_and_lox}).
\end{eg}

\begin{proof}[Proof of Proposition~\ref{pro:contraction}] 
By Lemma \ref{lem:carac_invariant_curves} we can 
fix  a finite number of parabolic elements $g_i\in \Gamma$, $1\leq i\leq k$,
such that the set of irreducible and $\Gamma$-periodic curves $C\subset X$ is 
exactly the set of irreducible curves 
which are contained in fibers of $\pi_{g_i}$ for $i= 1, \ldots, k$. The line bundle $M=\sum_i L_{g_i}$ 
is semi-ample,  it is nef because the 
$L_{g_i}$ are  and  it is big because $M^2 >0$, finally  its class belongs to 
$\Pi_\Gamma$ because the classes $[L_{g_i}]$ belong to the limit set of $\Gamma$ 
(see~\cite[\S 2.3.6]{stiffness}). Since $M$ is big, 
the fibration $\Phi = \Phi_{kM}\colon X\to Y$ defined, as above, by the sufficiently large multiples of $M$  is
a birational morphism (a generically finite fibration is a birational morphism since its fibers are, by definition, connected). 
By construction $\Phi$ contracts exactly the periodic curves of $\Gamma$. So, setting $\pi_0=\Phi$ and $X_0=Y$,
we obtain a birational morphism that contracts all periodic curves, and only them. Since $\Gamma$ permutes these curves, 
it induces a group of automorphisms on $X_0$. Moreover, we know that there is an ample line bundle $A$ on $X_0$ such that $\pi_0^*A=\Fr(M)M$; this
proves the third assertion.
\end{proof}

\begin{rem}\label{rem:contraction}
In the proof of Proposition~\ref{pro:contraction}, one may add extra parabolic automorphisms $g_j\in \Gamma_{\mathrm{par}}$, say with $k+1\leq j\leq \ell$, and 
replace $M$ by $\sum_{i=1}^\ell m_i L_i$ for any choice of integers $m_i>0$, while getting the same 
conclusion. After multiplication by $\Q_+^*$, the classes constructed in this way
form a dense subset   
of the convex cone  
\begin{equation}
\left\{\sum_{i=1}^\ell \alpha_i c_1(L_{g_i})\; ; \; \ell \geq 1, \; g_i\in \Gamma_{\mathrm{par}}, \; {\text{and }} \; \alpha_i \in \R_+^* \; {\text{ for all }} \; i\right\}.
\end{equation}
This cone is $\Gamma$-invariant, its closure is the smallest convex cone whose projectivization 
contains the limit set $\Lim(\Gamma)$, and it spans $\Pi_\Gamma$ because 
$\Pi_\Gamma$ is the smallest vector space containing $\Lim(\Gamma)$. 
Thus, the classes of the form  $\alpha c_1(\pi_0^*A)$, where $A$ runs over the set of ample line bundles on 
$X_0$ and $\alpha$ runs over  $\Q_+^*$, is a dense subset of  this cone.  
 \end{rem}

\subsection{Proof of Theorem~\ref{mthm:invariant_curve_loxodromic}} 

Let us first deal with the case where   
$\Gamma$ is elementary. By~\cite[Theorem 3.2]{Cantat:Milnor} there is a loxodromic element $f\in \Gamma$ 
such that $(f^*)^\Z$ has finite index in $\Gamma^*$. If $\Aut(X)^0$ is non-trivial, 
then $X$ is a torus and then $f$ has no invariant curve (see~\cite[Remark 3.3]{Cantat:Milnor} and \cite{Cantat-Favre}). Otherwise, the kernel of the 
homomorphism $\Gamma\to \Gamma^*$ is finite, $f^\Z$ has finite index in $\Gamma$, and therefore  a curve is $\Gamma$-periodic if and only if
it is $f$-periodic, so we are done when $\Gamma$ is elementary.
 
When $\Gamma$ is non-elementary, Theorem~\ref{mthm:invariant_curve_loxodromic}
 is covered by the following 
more precise statement (recall that $X$ is automatically projective in this case \cite[Thm. 3.17]{stiffness}).

\begin{pro} \label{pro:invariant_curve_loxodromic_precised}
Let $X$ be a complex projective surface and $\Gamma$ be a  non-elementary 
subgroup of $\Aut(X)$. Then there exists a loxodromic element 
$f$ in $\Gamma$ such that every $f$-periodic curve is $\Gamma$-periodic.
 If in addition $\Gamma$ contains a 
parabolic element, $f$ can be chosen of the form $h \circ g$, where $g$ and $h$ are   parabolic and unipotent. 
\end{pro}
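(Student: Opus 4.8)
\textbf{Proof plan for Proposition~\ref{pro:invariant_curve_loxodromic_precised}.}
The plan is to produce a loxodromic $f\in\Gamma$ whose attracting/repelling eigenrays $\theta_f^\pm$ are ``generic enough'' inside $\Pi_\Gamma$ that the orthogonal complement $(\theta_f^+)^\perp\cap(\theta_f^-)^\perp$ inside $\NS(X;\R)$ meets $\Pi_\Gamma$ only in $\{0\}$; then by Lemma~\ref{lem:carac_invariant_curves}(i) the classes of all $f$-periodic curves lie in $(\Pi_f)^\perp\subset(\theta_f^+)^\perp\cap(\theta_f^-)^\perp$, and by our genericity this forces those classes into $\Pi_\Gamma^\perp$, i.e. the curves are $\Gamma$-periodic. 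So the real content is a genericity/density argument for the position of $\Pi_f$ relative to $\Pi_\Gamma$. First I would recall that any $f$-periodic curve $E$ has $[E]$ orthogonal to the whole invariant plane $\Pi_f=\Vect(\theta_f^+,\theta_f^-)$ (stated right after Lemma~\ref{lem:control_periodic_curves_strong}), and that by Lemma~\ref{lem:control_periodic_curves_strong} there are only finitely many such irreducible $E$, with a uniform bound on $\langle\theta_f^+\vert\theta_f^-\rangle\deg(E)$ once $\deg(f)$ is controlled.

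The mechanism I would use to pin down $\Pi_f$ is the classical ping-pong / north–south dynamics for non-elementary isometry groups of $\Hyp_X$: by Lemma~\ref{lem:density_limit_set} the pairs $(\P(\theta_f^+),\P(\theta_f^-))$ for $f\in\Gamma_{\mathrm{lox}}$ are dense in $\Lim(\Gamma)^2$, and $\Lim(\Gamma)$ spans $\P(\Pi_\Gamma)$ (it is not contained in any proper projective subspace, since $\Pi_\Gamma$ is the smallest invariant subspace and $\Gamma$ acts strongly irreducibly on it). Hence one can pick $f$ so that $\Pi_f$ is spanned by two isotropic lines in ``general position'' inside $\Pi_\Gamma$; I would then argue that for such $f$, $(\theta_f^+)^\perp\cap(\theta_f^-)^\perp\cap\Pi_\Gamma=\{0\}$. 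Concretely: the restriction of the intersection form to $\Pi_\Gamma$ is Minkowski, $\Pi_f$ is a hyperbolic (signature $(1,1)$) plane for it, so $\Pi_f^{\perp_{\Pi_\Gamma}}$ is negative definite; but $\Pi_f^{\perp_{\Pi_\Gamma}}$ is also $(\theta_f^+)^\perp\cap(\theta_f^-)^\perp\cap\Pi_\Gamma$, and if it were nonzero it would be spanned by negative classes — yet those classes are fixed up to finite orbit only if they sit in $\Pi_\Gamma^\perp$, contradiction; so I need to eliminate this possibility by moving $f$. The clean way: I want $\Pi_f$ to contain a vector of $\Pi_\Gamma$ not orthogonal to a chosen basis of $\Pi_\Gamma$, and since the set of admissible $\Pi_f$ is dense in the space of hyperbolic planes of $\Pi_\Gamma$, a Baire/finiteness argument (there are only countably many proper subspaces that could arise as $\Pi_\Gamma\cap$ something defined over $\NS(X;\Z)$, or: $\rho(X)$ linear conditions to avoid) lets me choose $f$ with $\Pi_f^{\perp_{\Pi_\Gamma}}=\{0\}$ when $\dim\Pi_\Gamma=2$, and in general with $\Pi_f^{\perp_{\Pi_\Gamma}}$ containing no nonzero class of $\NS(X;\Z)\cap\Pi_\Gamma^\perp$... — actually since $f$-periodic classes automatically land in $\Pi_f^\perp$, the cleanest statement to prove is: \emph{for a suitable loxodromic $f$, $\Pi_f^\perp\cap\NS(X;\Z)=\Pi_\Gamma^\perp\cap\NS(X;\Z)$}, which suffices via Lemma~\ref{lem:carac_invariant_curves}(i).

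For the refinement when $\Gamma$ contains a parabolic element, I would invoke the standard construction (cf.\ Lemma~\ref{lem:ping-pong_parabolic}, referenced in the excerpt) producing loxodromic elements as products $h\circ g$ of two parabolics associated to distinct fibrations: if $g,g'\in\Gamma_{\mathrm{par}}$ have distinct fixed points $\P[F_g]\neq\P[F_{g'}]$ on $\partial\Hyp_X$, then conjugating and taking high powers $g^{N}$, $g'^{N}$ are ping-pong partners, and a product such as $h\circ g$ with $h=g'^N g'' g'^{-N}$ (or simply $g'^N\circ g^N$ after passing to unipotent iterates, using Lemma~\ref{lem:parabolic_unipotent} to replace $g,g'$ by unipotent powers) is loxodromic. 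The point is that the fixed points $\theta_{hg}^\pm$ of such products, as $g,g',h$ and the exponents vary, are still dense in $\Lim(\Gamma)^2$ (the parabolic fixed points are dense in $\Lim(\Gamma)$ and ping-pong with varying large exponents sweeps out a dense set of attracting/repelling pairs), so the genericity argument of the previous paragraph applies verbatim to this restricted family. Thus I can additionally demand $f=h\circ g$ with $g,h$ parabolic and, after replacing by iterates, unipotent.

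\textbf{Main obstacle.} The delicate point is the genericity step: showing that one can choose $f$ (and in the parabolic case, of the special form $h\circ g$) so that $\Pi_f^\perp$ contains no ``spurious'' integral class outside $\Pi_\Gamma^\perp$. The density of eigenray-pairs in $\Lim(\Gamma)^2$ (Lemma~\ref{lem:density_limit_set}) gives density of the planes $\Pi_f$ among hyperbolic planes of $\Pi_\Gamma$, but I must combine this with the fact that there are only \emph{countably many} candidate obstructing classes (integral classes in $\NS(X;\Z)$, or the finitely many negative curve classes of bounded degree from Lemma~\ref{lem:control_periodic_curves_strong}) — so avoiding all of them is a countable intersection of dense open conditions, hence nonempty by Baire. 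Making ``dense open'' precise for the map $f\mapsto\Pi_f$, and checking that the parabolic-product family is rich enough to retain this density, is where the actual work lies; the linear-algebra consequence (negative-definiteness of $\Pi_f^{\perp_{\Pi_\Gamma}}$ via Hodge index, strong irreducibility forcing $\Vect(\theta_f^+:f)=\Pi_\Gamma$) is routine given the setup in \S\ref{subsub:non_elementary}.
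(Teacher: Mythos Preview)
Your setup is correct: an $f$-periodic curve has class in $\Pi_f^\perp$, and by Lemma~\ref{lem:carac_invariant_curves}(i) you want it in $\Pi_\Gamma^\perp$, so the task is to find $f\in S$ (with $S=\Gamma_{\mathrm{lox}}$, or the set of products of two unipotent parabolics) for which no periodic-curve class lies in $\Pi_f^\perp\setminus\Pi_\Gamma^\perp$. The density input (Lemma~\ref{lem:density_limit_set}) and the parabolic ping-pong construction are the right tools, and your treatment of the second assertion is essentially the paper's.

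The gap is the Baire step, which you flag yourself but do not resolve. For each integral $c\notin\Pi_\Gamma^\perp$ the bad locus $\{(\theta^+,\theta^-)\in\Lim(\Gamma)^2:\theta^\pm\in c^\perp\}$ is closed with empty interior, so the good set is a dense $G_\delta$ in $\Lim(\Gamma)^2$. But you are not choosing a point of $\Lim(\Gamma)^2$; you are choosing $f$ from the \emph{countable} set $S$, and the countable dense set $\{(\theta_f^+,\theta_f^-):f\in S\}$ can miss a dense $G_\delta$ entirely (rationals vs.\ irrationals). Your parenthetical about ``the finitely many negative curve classes of bounded degree from Lemma~\ref{lem:control_periodic_curves_strong}'' is the right thread, but the bound there depends on $\langle\theta_f^+\vert\theta_f^-\rangle$, hence on $f$, so you cannot fix a finite list of bad $c$'s \emph{a priori}.

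The paper breaks this circularity by a contradiction argument with a localization step. Suppose every $f\in S$ has a periodic curve $C(f)$ with $c(f):=[C(f)]\notin\Pi_\Gamma^\perp$. Fix small disjoint open sets $U,U'\subset\partial\Hyp_X$ meeting $\Lim(\Gamma)$; for $f\in A(U,U')$ the pairing $\langle\theta_f^+\vert\theta_f^-\rangle$ is bounded below, so Lemma~\ref{lem:control_periodic_curves_strong} uniformly bounds $\deg C(f)$ and hence $D(U,U'):=\{c(f):f\in A(U,U')\cap S\}$ is \emph{finite}. Now for any $x\in\Lim(\Gamma)\cap U$, pick $f_n\in A(U,U')\cap S$ with $\P(\theta_{f_n}^+)\to x$; passing to a subsequence with $c(f_n)\equiv c_\infty\in D(U,U')$ and using $\langle c(f_n)\vert\theta_{f_n}^+\rangle=0$ gives $x\in c_\infty^\perp$. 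Thus $\Lim(\Gamma)$ is locally, and by compactness globally, contained in a finite union of proper hyperplanes of $\P(\Pi_\Gamma)$, contradicting strong irreducibility. The point you were missing is to \emph{localize first}: fixing $U,U'$ converts the degree estimate into a finite list of hyperplanes, so plain density (no Baire) is enough.
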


\begin{proof}
Consider a subset $S\subset \Gamma_{\mathrm{lox}}$ such that $\set{(\P(\theta^+_f), \P(\theta^-_f))\; ; \;  f\in S}$
is dense in $\Lim(\Gamma)^2$, as in Lemma~\ref{lem:density_limit_set}. Let us exhibit an 
$f\in S$ such that every $\Gamma$-periodic curve is $f$-periodic. By contradiction, we 
assume    that every  $f\in S$ admits  at least one   irreducible periodic curve $C(f)$ which is not $\Gamma$-periodic, and 
we set  $c(f)=[C(f)]$. 
By Lemma \ref{lem:carac_invariant_curves}, $c(f)$ does not belong to  $\Pi_\Gamma^\perp$, thus $u\mapsto \langle c(f)\vert u\rangle$ is 
a non-trivial linear form on $\Pi_\Gamma$.
Since the class of any periodic curve is orthogonal to $\Pi_f$, $\langle c(f)\vert \theta^+_f\rangle   = \langle c(f)\vert \theta^-_f \rangle = 0$.

Let $U$ and $U'$ be  open subsets of 
$\partial \Hyp_X$ intersecting $\Lim(\Gamma)$ non trivially, and 
such that $\overline U\cap \overline U' = \emptyset$; let $x$ be an element of $U\cap \Lim(\Gamma)$.
Define 
\[
A(U,U')=\{ f\in \Aut(X) \; ; \; f \; {\text{is loxodromic}},  \; \P(\theta^+_f)\in U \; {\text{and}} \; \P(\theta^-_f)\in U'\} .
\]
and 
\begin{equation}
D(U,U')=\{ c(f)\; ; \; f \in A(U,U')\cap S\}. 
\end{equation}
By Lemma~\ref{lem:control_periodic_curves_strong}, $D(U,U')$ is a finite set. 
From our assumption on $S$, there is a sequence $(f_n)$ of elements in 
$A(U,U')\cap S$ such that  $x=\lim_n(\P(\theta^+_{f_n}))$. Extracting a subsequence 
if necessary we may assume that $c(f_n)$ is constant, equal to some $c_\infty\in D(U,U')$, and we infer that $x$ is contained in  
$c_\infty^\perp$. As a consequence, the limit set  $\Lim(\Gamma)\subset \P(\Pi_\Gamma)$
is locally contained in the finite union of hyperplanes $\P(c^\perp\cap \Pi_\Gamma)$, for $c\in D(U,U')$. By compactness,  $\Lim(\Gamma)$ 
is contained in a finite  union of hyperplanes, which contradicts the strong irreducibility of the action of $\Gamma$ on $\Pi_\Gamma$. 

Now, to prove the first assertion of the proposition, we simply put $S  = \Gamma_{\mathrm{lox}}$, 
which   satisfies the desired density property by Lemma \ref{lem:density_limit_set}. For the second assertion, 
we let $S$ be the set of loxodromic elements of the form $h \circ g$, 
where $g$ and $h$ are   parabolic and unipotent. To check the density property, we first observe that 
the set of fixed points of parabolic elements is dense in $\Lim(\Gamma)$: indeed it is enough to consider the conjugates of a single parabolic transformation. Then, applying  the next lemma together with 
Lemma~\ref{lem:parabolic_unipotent}   finishes the proof.  
\end{proof}

\begin{lem}\label{lem:ping-pong_parabolic}
Let $h$ and $h'$ be two parabolic elements of $\Aut(X)$ with distinct fixed points 
$u$ and $u'$ in $\partial \Hyp_X$. 
Let $U$ and $U'$ be small, disjoint neighborhoods of $u$ and $u'$, respectively, in $\P(\Pi_\Gamma)$. 
Then if $N\in \Z$ is large enough,   $f_N:=h^N\circ (h')^N$ is a loxodromic automorphism 
such that $\P(\theta^+_{f_N})\in U$ and  $\P(\theta^-_{f_N})\in U'$.
\end{lem}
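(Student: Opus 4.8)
The plan is to realize $f_N = h^N \circ (h')^N$ as a loxodromic isometry of $\Hyp_X$ via a classical ping-pong argument on the projectivized positive cone, using the fact that $h$ and $h'$, being parabolic, attract towards their respective fixed boundary points under iteration. First I would recall from \S\ref{par:parabolic} that a parabolic $h$ fixes a unique point $u = \P([F_h]) \in \partial\Hyp_X$, that $[F_h]$ spans an isotropic ray, and that $\frac{1}{n^2}(h^n)^* w \to c\,[F_h]$ for every $w$ in the positive cone, with $c>0$. Passing to the projective picture $\P(\Pos(X))$, this says that for any compact set $K \subset \overline{\P(\Pos(X))}$ avoiding the (unique) fixed point, $h^n(K)$ converges uniformly to $u$ as $n \to \infty$; the same holds for $(h^{-1})^n$ since $h^{-1}$ is also parabolic with the same fixed point, so in fact $h^n(K) \to u$ as $|n| \to \infty$ for $K$ compact avoiding $u$ (one must be slightly careful: a parabolic fixes exactly one boundary point and its north-south-type dynamics degenerate to a single attracting/repelling point, so $h^{\pm n}$ both push compacta towards $u$). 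Likewise $(h')^{\pm n}$ push compacta avoiding $u'$ towards $u'$.

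Next I would set up the ping-pong domains. Choose disjoint closed neighborhoods $\overline{U} \supset U$ of $u$ and $\overline{U'} \supset U'$ of $u'$ in $\P(\Pi_\Gamma)$, small enough that $\overline{U} \cap \overline{U'} = \emptyset$ and $u' \notin \overline{U}$, $u \notin \overline{U'}$. By the convergence above, there is $N_0$ such that for all $N \geq N_0$ we have $h^N(\overline{U'}) \subset U$ and $(h')^N(\overline{U}) \subset U'$. Then $f_N = h^N \circ (h')^N$ maps $\overline{U}$ into $U$: indeed $(h')^N(\overline{U}) \subset U' \subset \overline{U'}$, hence $h^N((h')^N(\overline{U})) \subset h^N(\overline{U'}) \subset U$. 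So $f_N(\overline{U}) \subset U \subset \overline{U}$, and $f_N$ is a contraction of $\overline{U}$ into its interior; since $\overline{U} \cap \P(\Hyp_X)$ is a compact subset of a proper geodesic space (complete, with the Hilbert/hyperbolic metric from the intersection form) which $f_N$ maps strictly inside itself, $f_N$ cannot be elliptic (an elliptic fixes a point in $\Hyp_X$, but a fixed point would have to lie in $\bigcap_k f_N^k(\overline U)$, a single point of $\partial \Hyp_X$) nor parabolic (a parabolic has its fixed point on the boundary but does not contract a whole neighborhood on the \emph{same} side — more carefully, $f_N^k(\overline U)$ is a nested sequence of compacta with a single accumulation point $\theta^+$ on $\partial\Hyp_X$, and an analogous argument with $f_N^{-1} = (h')^{-N}\circ h^{-N}$ contracting $\overline{U'}$ into itself produces a second fixed point $\theta^- \in U'$, distinct from $\theta^+$; an isometry of $\Hyp_X$ with two distinct fixed points on the boundary is loxodromic). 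Thus $f_N$ is loxodromic with $\P(\theta^+_{f_N}) \in U$ and $\P(\theta^-_{f_N}) \in U'$, which is the claim. Finally, since $h, h' \in \Gamma \leq \Aut(X)$ and $\Pi_\Gamma$ is $\Gamma$-invariant with the intersection form Minkowski on it, the whole argument takes place inside $\P(\Pi_\Gamma)$ and $f_N \in \Gamma$ genuinely.

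The main obstacle, and the point needing the most care, is the precise dynamical statement for a parabolic acting on $\partial\Hyp_X$: unlike a loxodromic (which has genuine north–south dynamics with an attracting and a repelling fixed point), a parabolic has a \emph{single} fixed boundary point towards which \emph{all} compacta in $\overline{\Hyp_X} \setminus \{\text{nothing}\}$ — in fact all of $\overline{\Hyp_X}$ minus... actually all compacta, period — are attracted under $h^{\pm n}$. I would extract this cleanly from the convergence $\frac{1}{n^2}(h^n)^* w \to c[F_h]$ (which holds for all $w \in \Pos(X)$, hence uniformly on compacta of $\P(\Pos(X))$ by a standard normal-families / cone-compactness argument, as in \cite{cantat_groupes, Cantat:Milnor, invariant}) and the fact that $h$ restricted to $\NS(X;\R)$ is virtually unipotent (Lemma~\ref{lem:parabolic_unipotent}), so no nontrivial invariant subspace other than those through $[F_h]$ interferes. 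Once this uniform convergence is in hand the ping-pong is entirely routine; the estimate "$N$ large enough" is exactly the threshold $N_0$ above.
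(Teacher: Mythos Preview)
Your proposal is correct and uses the same ping-pong argument as the paper: for $N$ large the projective action of $f_N$ contracts one neighborhood into itself and $f_N^{-1}$ the other, yielding two distinct boundary fixed points and hence loxodromy. Just be mindful that the action on $\NS(X;\R)$ is by pullback, so $f_N^* = (h'^*)^N\circ (h^*)^N$ reverses the composition order when passing from $\Aut(X)$ to $\GL(\NS(X;\R))$—this only swaps which of $U,U'$ receives $\theta^+$ versus $\theta^-$, which is immaterial for the statement and its applications.
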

\begin{proof} Let  us denote by $\P(h^*)$ the linear projective transformation induced by $h^*$ on $\P(\NS(X;\R))$.
Since $U$ does not contain $u'$, $\P (h'^*)^N(U)\subset U'$ if  $\vert N\vert$ is large enough; similarly $\P (h^*)^N(U')\subset U$. So for  
$f_N=h^N\circ (h')^N$, $\P (f_N^*)$ maps $U'$  stricly inside itself and likewise 
 $\P ((f_N\inv)^*)$  maps $U$ strictly inside itself. This implies 
that $f_N$ is loxodromic, with its $\alpha$-limit and $\omega$-limit points in $U'$ and $U$ respectively. 
\end{proof}

\section{Complex tori and Kummer examples}\label{sec:tori}

This section gathers some facts on automorphism groups of complex tori. We also   introduce and study the 
notion of   Kummer group. Part of this  material is well-known, we provide the details for completeness. 

\subsection{Finite orbits on tori} \label{par:abelian}
Consider a compact complex torus $A=\C^k/\Lambda$. Each automorphism $f$ of $A$ is an affine transformation $f(z)=L_f(z)+t_f$, where 
$z\mapsto z+t_f$ is the translation part and $L_f$ is a linear automorphism, induced by a linear transformation of 
$\C^k$ that preserves~$\Lambda$. Let $\Gamma$ be a  subgroup of $\Aut(A)$.

\smallskip

{\noindent \bf{Warning.}}  By definition, 
compact tori and abelian varieties come equipped with their group structure,
in particular with their neutral  element, or ``origin''. On the other hand, an automorphism $f$ with a non-trivial translation part $t_f$
does not preserve this group  structure. If $x\in A$
is fixed by $\Gamma$, conjugating  $\Gamma$ by the translation $z\mapsto z+x$ we may assume
 that $\Gamma$ fixes the neutral 
element of $A$ and acts by linear isomorphisms. Alternatively, we can  transport the group structure 
by using  this translation  and put the neutral 
element at $x$: this   changes the group structure without changing the underlying complex manifold. We will frequently  do this 
operation in the following,  without always specifying the change in the group structure of $A$. 

\smallskip

Suppose that the orbit $\Gamma(x)\subset  A$ is finite, of cardinality $m$,  and 
consider the stabilizer $\Gamma_0\subset \Gamma$ of $x$; its index divides $m !$. Conjugating $\Gamma$ by $z\mapsto z+x$, 
as explained above, all elements $f\in \Gamma_0$ are linear. 
In that case, every torsion point has a finite $\Gamma_0$-orbit, hence also a finite $\Gamma$-orbit; in particular, 
finite orbits of $\Gamma$ form a  dense subset of $A$ for the Euclidean topology.  
The next proposition summarizes this discussion. 

\begin{pro}\label{pro:finite_orbits_torus}
Let $A$ be a compact complex torus, and let $\Gamma$ be a subgroup of $\Aut(A)$. If $\Gamma$ has a finite 
orbit, then its finite orbits  form a dense subset of $A$. More precisely   if   a periodic 
point of $\Gamma$ is chosen as the origin of $A$ for its group law, then all torsion points of 
$A$ are periodic points of $\Gamma$. \end{pro}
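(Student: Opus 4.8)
The plan is to make rigorous the discussion preceding the statement. First I would pick a finite orbit $\Gamma(x) \subset A$, of cardinality $m$, and let $\Gamma_0 \leq \Gamma$ be the stabilizer of $x$; by orbit--stabilizer, $[\Gamma : \Gamma_0]$ divides $m!$, so $\Gamma_0$ has finite index in $\Gamma$. Conjugating $\Gamma$ by the translation $z \mapsto z+x$ --- equivalently, declaring $x$ to be the origin of the group law on $A$, as in the Warning above --- I may assume $x = 0$ and that every element of $\Gamma_0$ fixes the origin. A holomorphic automorphism of $A = \C^k/\Lambda$ fixing the origin lifts to a $\C$-linear automorphism of $\C^k$ preserving $\Lambda$ (the standard rigidity of holomorphic maps between tori); hence every $f \in \Gamma_0$ is of the form $f(z) = L_f(z)$ with $L_f(\Lambda) = \Lambda$.

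Next I would observe that such a linear automorphism preserves, for every integer $n \geq 1$, the finite subgroup $A[n] = (\tfrac1n\Lambda)/\Lambda$ of $n$-torsion points, since $L_f(\tfrac1n\Lambda) = \tfrac1n L_f(\Lambda) = \tfrac1n\Lambda$. Consequently the $\Gamma_0$-orbit of any torsion point $y \in A[n]$ is contained in $A[n]$, hence finite. As $\Gamma_0$ has finite index, writing $\Gamma = \bigsqcup_{i=1}^r \gamma_i \Gamma_0$ gives $\Gamma(y) = \bigcup_{i=1}^r \gamma_i(\Gamma_0(y))$, a finite union of finite sets, so $y$ is $\Gamma$-periodic. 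This yields the second, more precise assertion.

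Finally, the density statement is immediate: the set of torsion points $\bigcup_{n\geq 1} A[n] = (\Lambda\otimes\Q)/\Lambda$ is dense in $A$ for the Euclidean topology, and every torsion point is $\Gamma$-periodic by the previous paragraph; translating back to the original origin, the finite orbits of $\Gamma$ form a dense subset of $A$.

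I do not expect a genuine obstacle here. The only points deserving care are the passage from ``$f$ fixes the origin'' to ``$f$ is linear'', i.e.\ the rigidity of holomorphic self-maps of complex tori, and the bookkeeping with the change of origin and group structure flagged in the Warning above; both are routine.
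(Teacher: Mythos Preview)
Your proposal is correct and follows exactly the argument the paper gives: the paper does not provide a separate proof, stating instead that the proposition ``summarizes this discussion'', and your write-up is simply that discussion made explicit (stabilizer of finite index, conjugate to make the periodic point the origin so the stabilizer acts linearly, linear maps preserve $n$-torsion, hence torsion points are $\Gamma$-periodic and dense). One cosmetic remark: orbit--stabilizer gives $[\Gamma:\Gamma_0]=m$ exactly, not merely a divisor of $m!$, though of course this does not affect the argument.
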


\begin{rem}\label{rmk:torsion}
If in Proposition~\ref{pro:finite_orbits_torus} we moreover assume  that $\dim_\C A=2$ and $\Gamma$ contains a loxodromic element, then conversely  \emph{all periodic points of $\Gamma$ are torsion points}. This follows from Lemma~\ref{lem:lox_on_tori} below.  
\end{rem}

\subsection{Dimension $2$ (see~\cite{Cantat:Milnor, McMullen:Crelle})}\label{par:lox_on_tori}
Let $A=\C^2/\Lambda$ be a compact complex torus of dimension $2$, and let $f(z)=L_f(z)+t_f$ be a 
loxodromic element of $\Aut(A)$. 
The loxodromy means exactly that the eigenvalues $\alpha$ and $\beta$  of $L_f$   satisfy $\abs{\alpha}<1<\abs{\beta}$.
Pick a basis of $\Lambda$, and use it to identify $\Lambda$ with $\Z^4$ and $\C^2$ with $\R^4$, as real vector spaces. Then, $L_f$ corresponds to 
an element $M_f\in \GL_4(\Z)$. 

\begin{lem}\label{lem:lox_on_tori} Let $f$ be a loxodromic automorphism of a compact complex torus $A$ of dimension $2$. Then:
\begin{enumerate}[{\em(1)}]
\item  $f$  has a fixed point, and
after translation $z\mapsto z+x$ by such a fixed point, its periodic points  are exactly the torsion points of $A$; 
\item there is no $f$-invariant curve: the orbit $f^\Z(C)$ of any curve is dense in $A$.  
\end{enumerate}
\end{lem}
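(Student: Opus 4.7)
For (1), the plan is first to exhibit a fixed point by a lifting argument. Writing $f(z)=L_f(z)+t_f$, the $\C$-linear endomorphism $L_f-I$ is invertible (neither eigenvalue of $L_f$ equals $1$, since $|\alpha|<1<|\beta|$); as it preserves $\Lambda$, it descends to an isogeny, in particular a surjective endomorphism, of $A$. Hence the equation $(L_f-I)(z)=-t_f$ admits a solution $x$ in $A$, which is a fixed point. After conjugating by the translation by $x$, I may assume $t_f=0$, so that $f=L_f$ is a group automorphism of $A$. Then the finite subgroup $A[n]$ is $L_f$-stable for every $n$, hence every torsion point is periodic. Conversely, if $f^k(y)=y$, any lift $\tilde y\in \C^2$ satisfies $(L_f^k-I)(\tilde y)\in\Lambda$; the integer matrix representing $L_f^k-I$ on $\Lambda\simeq\Z^4$ has non-zero determinant $d$ (its four complex eigenvalues $\alpha^k-1,\bar\alpha^k-1,\beta^k-1,\bar\beta^k-1$ are all non-zero), so $d\cdot(L_f^k-I)^{-1}$ has integer entries and $d\tilde y\in\Lambda$, meaning $y$ is torsion of order dividing $d$.

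For (2), I would argue by contradiction via the Hodge index theorem exactly as in the discussion following Lemma~\ref{lem:control_periodic_curves_strong}. An $f$-invariant irreducible curve $C\subset A$ satisfies $f^*[C]=[C]$, so $[C]$ lies in the orthogonal of the $f^*$-invariant hyperbolic plane $\Pi_f\subset H^{1,1}(A;\R)$; since the intersection form is negative definite on $\Pi_f^\perp$ and $[C]\neq 0$ (as $\int_C\kappa_0>0$), one gets $[C]^2<0$. On the other hand $K_A=0$, so adjunction gives $[C]^2=2p_a(C)-2$, and the normalization $\tilde C$ maps non-constantly into $A$, which admits no non-constant morphism from $\P^1$; hence $g(\tilde C)\geq 1$, so $p_a(C)\geq 1$ and $[C]^2\geq 0$, contradicting the previous inequality. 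Every iterate $f^n$ is loxodromic as well, so the same argument rules out $f^n$-invariant curves; consequently, for any curve $C\subset A$ the orbit $f^\Z(C)$ consists of infinitely many distinct irreducible curves. Since an analytic subset of the compact complex surface $A$ of pure dimension $\leq 1$ has finitely many irreducible components, the Zariski closure of $f^\Z(C)$ is necessarily two-dimensional, hence equal to $A$.

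Neither part is expected to be truly difficult, but the step most prone to error is the fixed-point existence in (1): the $\C$-linear invertibility of $L_f-I$ does not by itself produce a fixed point on $A$, and one must instead use that the induced map on $A$ is an isogeny, and so surjective. In (2), the only non-formal ingredient is the torus-specific inequality $p_a(C)\geq 1$, which is really the absence of rational curves on $A$ dressed up through the adjunction formula; everything else follows formally from the loxodromic hypothesis and Hodge index.
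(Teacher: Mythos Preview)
Your proof of (1) is correct and essentially coincides with the paper's: both reduce the fixed-point equation to solving $(L_f-I)(z)\equiv -t_f$ on $A$, use that the eigenvalues of $L_f$ avoid $1$, and then identify periodic points with rational solutions of an integer linear system.

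For (2), your approach differs genuinely from the paper's. You argue algebraically: an invariant irreducible curve would have $[C]\in\Pi_f^\perp$, hence $[C]^2<0$ by Hodge index, while adjunction on the abelian surface ($K_A=0$) together with the absence of rational curves forces $[C]^2=2p_a(C)-2\geq 0$. This is clean and correct. The paper instead uses the Anosov structure: the linear stable and unstable foliations of $f$ have dense leaves, and the stable manifold of any torsion point meets $C$, so $f^n(C)$ accumulates on every torsion point. The key difference is in the density conclusion: the paper's argument shows that $f^\Z(C)$ is dense in the \emph{Euclidean} topology, whereas your argument (infinitely many distinct curves cannot all be components of a proper analytic subset) only yields \emph{Zariski} density. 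The lemma as stated, and as proved in the paper, asserts Euclidean density. Your argument is therefore correct for the ``no invariant curve'' assertion and for Zariski density, but does not reach the full strength of the statement; to get Euclidean density by your route one would still need an additional ingredient (for instance the ergodicity of $f$ with respect to Haar measure, or the foliation argument the paper uses). That said, only the absence of invariant curves is actually used elsewhere in the paper.
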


\begin{proof}
For (1), using  the above notation, 
fixed points of $f$ are determined by the equation $(L_f-\id)(z)\in  \Lambda -t_f$, or equivalently $(M_f-\id)(z)\in \Z^4-t_f$. 
Since the complex eigenvalues of $L_f$ are distinct from $1$, there is at least one fixed point. So, after conjugation by a translation, 
we may assume that $t_f=0$.  Then, periodic points of $f$ correspond to points $x\in \R^4$ such 
that  $M_f^n (x)-x \in  \Z^4$: any solution to such an equation is rational, which means that it corresponds to a torsion point in $A$.
 
To prove (2) without loss of generality we may assume that 
$t_f=0$. There are two linear forms $\xi_f^+$, $\xi_f^-\colon \C^2\to \C$ such that $\xi_f^+\circ L_f=\alpha \xi_f^+$
and $\xi_f^-\circ L_f=\beta \xi_f^-$. They determine two holomorphic $1$-forms on $A$, the kernels of which define 
two linear foliations: the stable and unstable foliations of $f$. Both have  dense leaves. The class $\theta^+_f$ 
is represented by $\xi_f^+\wedge{\overline{\xi_f^+}}$, up to some positive multiplicative factor. If $C\subset A$ 
is a complex analytic curve, there is an open subset ${\mathcal{U}}$ of 
$A$ in which $C$ intersects both foliations transversely. If $y$ is a torsion point, $y$ is $f$-periodic, and its stable manifold being dense, it intersects $C$. 
Thus, $f^n(C)$ accumulates every torsion point, and is
dense in $A$. In particular, $C$ is not invariant, as claimed. 
\end{proof}

\subsection{Kummer structures}\label{par:Kummer_structures}
\subsubsection{Kummer pairs}\label{par:Kummer_pairs} Let $X$ be a compact complex surface, and let
 $\Gamma$ be a subgroup of $\Aut(X)$. By definition  $(X, \Gamma)$ is a \textbf{Kummer group} if 
there is an abelian surface $A$, a finite subgroup $G$ of $\Aut(A)$, a subgroup $\Gamma_A$ of $\Aut(A)$ containing $G$,
and a birational morphism $q_X\colon X\to A/G$ such that:
\begin{enumerate}[(a)]
\item $\Gamma_A$ normalizes $G$. Thus, if $q_A\colon A\to A/G$ is the quotient map, there is 
a homomorphism $h\in  \Gamma_A \mapsto \overline{h} \in  \Aut(A/G)$ such that $q_A\circ h=\overline{h}\circ q_A$ for every $h\in \Gamma_A$;
we shall denote by $\overline \Gamma_A$ the image of this homomorphism.
\item the birational map $q_X$ is $\Gamma$-equivariant: there is a homomorphism $\Gamma\ni f \mapsto \overline f \in \Aut(A/G)$, whose image is denoted by $\overline \Gamma$, 
 such that
$q_X\circ f=\overline f \circ q_X$ for every $f\in \Gamma$.
\item the subgroups $\overline \Gamma$ and ${\overline{\Gamma}}_A$ of $\Aut(A/G)$ coincide. 
\end{enumerate}
To each $f\in \Gamma$ corresponds an element $f_A$ of $\Gamma_A$, unique up to composition with elements of $G$; the type
of $f$ as an automorphism of $\Aut(X)$ coincides with the type of $f_A$ as an automorphism of $A$, and 
$\lambda(f)=\lambda(f_A)$.  

\begin{rem} Consider a section $\Omega_A$ of the canonical bundle $K_A$ such that $\int_A\Omega_A\wedge {\overline{\Omega_A}}=1$; 
it is unique up to multiplication by a complex number of modulus one. In particular, the volume 
form $\vol_A=\Omega_A\wedge {\overline{\Omega_A}}$ is invariant under $\Aut(A)$. 
The quotient of $\vol_A$ by 
the action of $G$ determines a probability measure on $A/G$, and then on $X$. 
This probability measure coincides with the measure of maximal entropy $\mu_f$ for every $f\in \Gamma_{\mathrm{lox}}$.
\end{rem}

From the definition of a Kummer group,   Proposition~\ref{pro:finite_orbits_torus} and Remark~\ref{rmk:torsion}
we get:

\begin{pro}\label{pro:Kummer_finite_orbits}
If $(X, \Gamma)$ is a  
Kummer group with at least one finite orbit, then its finite orbits are dense in $X$ for the euclidean topology. 
Furthermore there exists a dense, $\Gamma$-invariant, Zariski open subset in which all 
periodic points of loxodromic elements of $\Gamma$ coincide. 
\end{pro}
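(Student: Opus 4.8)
The plan is to prove Proposition~\ref{pro:Kummer_finite_orbits} by transporting everything to the abelian surface $A$ and invoking the torus results already established. First I would unwind the definition of a Kummer group: given a finite orbit $\Gamma(x_0)\subset X$, its image $q_X(\Gamma(x_0))$ is a finite $\overline\Gamma$-orbit in $A/G$, and pulling back by $q_A\colon A\to A/G$ yields a finite $\Gamma_A$-invariant subset of $A$ (finite because $G$ is finite and $\overline\Gamma={\overline\Gamma_A}$ by axiom~(c)). Pick a point $a_0\in A$ in this finite set; passing to the stabilizer $\Gamma_{A,0}$ of $a_0$, which has finite index in $\Gamma_A$, and conjugating by the translation $z\mapsto z+a_0$ as in the Warning of \S\ref{par:abelian}, we may assume $\Gamma_{A,0}$ fixes the origin of $A$ and acts linearly. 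By Proposition~\ref{pro:finite_orbits_torus} all torsion points of $A$ are then $\Gamma_{A,0}$-periodic, hence $\Gamma_A$-periodic.

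Next I would push this density back down to $X$. The torsion points of $A$ are dense for the Euclidean topology, and their images under $q_A$ are dense in $A/G$; since $q_X\colon X\to A/G$ is a birational morphism it is surjective and continuous, so the preimages of these images are dense in $X$. Each such preimage is a finite $\Gamma$-invariant set: indeed if $t\in A$ is torsion then $\Gamma_A(t)$ is finite, $q_A(\Gamma_A(t))=\overline\Gamma_A(q_A(t))=\overline\Gamma(q_A(t))$ is a finite $\overline\Gamma$-orbit, and its $q_X$-preimage is finite and $\Gamma$-invariant by the equivariance axiom~(b). This already gives the first assertion: the finite orbits of $\Gamma$ are dense in $X(\C)$ for the Euclidean topology.

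For the second assertion I would use Remark~\ref{rmk:torsion} together with Lemma~\ref{lem:lox_on_tori}. Fix a loxodromic $f\in\Gamma$; then $f_A\in\Gamma_A$ is loxodromic, and after the above normalization (translating so that a periodic point becomes the origin) Lemma~\ref{lem:lox_on_tori}(1) says the periodic points of $f_A$ on $A$ are exactly the torsion points. The set where $q_X$ and $q_A$ fail to be nice isomorphisms is a proper Zariski closed (in fact the exceptional locus of $q_X$ together with the ramification of $q_A$, plus the image in $X$ of the $G$-fixed locus), and on the complement $U$ — which is $\Gamma$-invariant, dense, Zariski open — the correspondence between periodic points of $f$ on $X$ and torsion points of $A$ is a bijection that does not depend on $f$. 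Hence on $U$ all loxodromic elements of $\Gamma$ share the same periodic points, namely the images of the torsion points of $A$.

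The main obstacle is bookkeeping rather than conceptual: one must check carefully that the various "bad" loci (the indeterminacy/exceptional set of the birational morphism $q_X$, the ramification locus of $q_A$, and the fixed-point loci of nontrivial elements of $G$) can all be absorbed into a single proper Zariski closed $\Gamma$-invariant subset, so that on its complement the identification of periodic points with torsion points is both well-defined and $f$-independent; and one must be attentive to the change of group law under the translations used to linearize the stabilizers, exactly the point flagged in the Warning of \S\ref{par:abelian}. Everything else follows formally from Propositions~\ref{pro:finite_orbits_torus} and the structure of Kummer groups spelled out in \S\ref{par:Kummer_pairs}.
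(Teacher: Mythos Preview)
Your proof is correct and follows exactly the route the paper indicates (the paper gives no detailed argument, simply citing Proposition~\ref{pro:finite_orbits_torus} and Remark~\ref{rmk:torsion}). Two small points to clean up: Lemma~\ref{lem:lox_on_tori}(1) translates by a \emph{fixed} point of $f_A$, not merely a periodic one, so you should remark that your chosen origin $a_0$, being $f_A$-periodic, differs from a fixed point of $f_A$ by a torsion element and hence defines the same torsion subgroup; and the $q_X$-preimage of $q_A(t)$ is infinite when $q_A(t)$ lies in the image of the exceptional divisor of $q_X$, but this concerns only finitely many points of $A/G$ and does not spoil the density argument.
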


\subsubsection{Classification of Kummer examples}\label{par:Kummer_classification} 
Let us consider, firstly, the case of an infinite cyclic group generated by a loxodromic Kummer example $f$.
From the classification given in~\cite{Cantat-Favre, cantat-favre-corrigendum},
we may assume that  the finite group $G$ is  
a cyclic group fixing the origin of $A$; in other words, $G$ is 
induced by a cyclic subgroup of $\GL_2(\C)$ preserving the lattice $\Lambda$ such that $A=\C^2/\Lambda$. 
And there are only seven possibilities:
\begin{enumerate}
\item $G=\{ \id\}$ and $X$ is a blow-up of an abelian surface.
\item $G=\{\id, -\id\}$ and $A/G$ is a Kummer surface, in the classical sense; in particular $X$ is a blow-up of a K3 surface.
\item $A$ is the torus $(\C/\Z[\ii])^2$ and $G$ is the group of order $4$ generated by $\ii \id$ ; in this case  $X$ is a rational surface.
\item $A$ is the torus $(\C/\Z[\exp(2\ii \pi/3)])^2$ and $G$ is the group of order $3$ generated by $\exp(2\ii \pi/3) \id$; in this case  $X$ is a rational surface.
\item $A$ is the torus $(\C/\Z[\exp(2\ii \pi/3)])^2$ and $G$ is the group of order $6$ generated by $\exp(\ii \pi/3)  \id$; in this case  $X$ is a rational surface.
\item Let $\zeta_5$ be a primitive fifth root of unity. The cyclotomic field $\Q[\zeta_5]$ has two distinct non-conjugate embeddings in $\C$,
$\sigma_1$ and $\sigma_2$ determined by $\sigma_1(\zeta_5)=\zeta_5$ and $\sigma_2(\zeta_5)=\zeta_5^2$. The ring of integers 
coincides with $\Z[\zeta_5]$ and its image by $\sigma=(\sigma_1,\sigma_2)$ is a lattice $\Lambda_5\subset \C\oplus \C$. The abelian
surface $A$ is the quotient $\C^2/\Lambda_5$. The group $G$ is generated by the diagonal linear map 
\begin{equation}
(x,y)\mapsto (\zeta_5 x, \zeta_5^2 y)
\end{equation}
and has order $5$.  
Here, $X$ is rational too.
\item As in the previous example, $A=\C^2/\Lambda_5$, but now $G$ has order $10$ and is generated by 
$(x,y)\mapsto (-\zeta_5 x, \zeta_5 y)$, and again $X$ is  rational.  
\end{enumerate}

These constraints on $(A,G)$ apply to non-elementary Kummer groups;
in particular we shall always assume that $G$ is cyclic and fixes the neutral element of $A$. 

In  Cases~(1) to (5) of the above list, the abelian surface is $\C^2/(\Lambda_0\times\Lambda_0)$ for some lattice $\Lambda_0$ in $\C$. The natural action of $\GL(2, \Z)$ on 
$\C^2$ preserves $\Lambda_0\times \Lambda_0$, and induces a non-elementary subgroup of $\Aut(A)$, which commutes to $G$; as a consequence, it  determines  also a non-elementary subgroup of $\Aut(A/G)$. 
On the other hand, cases~(6) and~(7) do not appear: 

\begin{lem}\label{lem:67} 
If $(X, \Gamma)$ is a non-elementary Kummer group, then $G$ is generated by a homothety and the   quotient $A/G$ is not of type (6) or (7) in the classification above. 
\end{lem}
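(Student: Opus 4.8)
\textbf{Proof plan for Lemma~\ref{lem:67}.}
The plan is to use the classification of loxodromic Kummer examples recalled just above, together with the constraint that a non-elementary Kummer group forces the centralizer of $G$ in the relevant linear group to be large. First I would reduce to the linear picture: since $(X,\Gamma)$ is a Kummer group, there is an abelian surface $A=\C^2/\Lambda$, a finite cyclic $G\subset\GL_2(\C)$ fixing the origin (by the reduction made in \S\ref{par:Kummer_classification}, using \cite{Cantat-Favre, cantat-favre-corrigendum}), and a subgroup $\Gamma_A\subset\Aut(A)$ normalizing $G$ whose image $\overline\Gamma_A=\overline\Gamma$ is non-elementary. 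Passing to a finite-index subgroup and conjugating by a translation, we may assume the elements of $\Gamma_A$ we care about are linear, so that $\Gamma_A$ lands in the subgroup of $\GL(\Lambda)\cong\GL_4(\Z)$ normalizing $G$. Because $G$ is cyclic, the classification tells us $G$ is generated either by a homothety $\zeta\,\mathrm{id}$ or, in cases (6)--(7), by a genuinely non-scalar diagonal map of the form $(x,y)\mapsto(\zeta_5 x,\zeta_5^2 y)$ or $(x,y)\mapsto(-\zeta_5 x,\zeta_5 y)$; the point is to rule out the latter two.

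The key step is to analyze the normalizer of such a $G$ in $\GL_2(\C)$ (equivalently in $\GL(\Lambda_5)$). In cases (6) and (7) the generator $g_0$ of $G$ has two \emph{distinct} eigenvalues, namely $(\zeta_5,\zeta_5^2)$ (resp. $(-\zeta_5,\zeta_5)$), whose ratio is not a root of unity of the same small order as could be absorbed, so the two eigenlines $\C\times 0$ and $0\times\C$ of $g_0$ in $\C^2$ are the only $g_0$-invariant lines and they are not interchangeable by any element commuting with $g_0$ up to the $G$-action: more precisely, any $h\in\GL_2(\C)$ normalizing $G$ must permute the eigenspaces of $g_0$, hence $h$ is either diagonal or anti-diagonal with respect to the splitting $\C^2=(\C\times 0)\oplus(0\times\C)$. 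Thus the normalizer of $G$ in $\GL_2(\C)$ is contained in the group of monomial matrices, which is virtually abelian; consequently its image in $\Aut(A/G)$ is virtually abelian as well, contradicting the hypothesis that $\overline\Gamma$ is non-elementary. This is the crux, and the main obstacle will be checking carefully that an element normalizing $G$ genuinely normalizes the \emph{linear} group $G$ and cannot mix the two eigenlines in a way that escapes the monomial description — one has to rule out that $h$ conjugates $g_0$ to $g_0^k$ for some $k\neq 1$ via a non-monomial matrix, which is handled by comparing eigenvalue multisets: $\{\zeta_5,\zeta_5^2\}$ is not equal to $\{\zeta_5^k,\zeta_5^{2k}\}$ for any $k\not\equiv 1$, and similarly in case (7).

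Finally, the positive part of the statement — that for a non-elementary Kummer group $G$ \emph{is} generated by a homothety — then follows by elimination: among the seven cases of the classification, cases (1)--(5) all have $G$ generated by a scalar $\zeta\,\mathrm{id}$ (with $\zeta\in\{1,-1,\ii,\exp(2\ii\pi/3),\exp(\ii\pi/3)\}$), and we have just excluded (6) and (7); so the only surviving possibilities have $G$ scalar, and in particular $A/G$ is not of type (6) or (7). I would also remark, for completeness, that in the homothety cases the centralizer of $G$ in $\GL_2$ is all of $\GL_2$, which is exactly what makes the $\GL(2,\Z)$-action in \S\ref{par:Kummer_classification} available and non-elementary — so the dichotomy is sharp. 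The whole argument is short once the normalizer computation is in place; no serious new input beyond the cited classification is needed.
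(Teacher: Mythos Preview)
Your argument is correct and complete, but it takes a different route from the paper's. You work by elimination: in cases~(6) and~(7) you compute that the normalizer of $G$ in $\GL_2(\C)$ is contained in the monomial matrices (in fact, the eigenvalue--multiset comparison you sketch forces $k\equiv 1$, so any normalizing $h$ actually \emph{centralizes} $g_0$ and is diagonal), hence $\Gamma_A^\circ$ would be abelian, contradicting non-elementarity; then you observe that the surviving cases (1)--(5) all have scalar $G$. The paper instead argues uniformly, without case analysis: after passing to $\Gamma_A^\circ$, it takes any loxodromic $f$, replaces it by an iterate commuting with $G$, and notes that each $g\in G$ must then preserve the two eigendirections of $f$; since $\Gamma_A^\circ$ is non-elementary these eigendirections sweep out infinitely many lines as $f$ varies, forcing $g$ to be a homothety. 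Your approach makes the obstruction in cases~(6),~(7) very explicit (the normalizer is simply too small), while the paper's approach is more conceptual and yields the homothety conclusion directly without invoking the classification list a second time.
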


\begin{proof} The group $\Gamma_A$ permutes the fixed points of $G$. So, the stabilizer $\Gamma_A^\circ={\mathrm{Stab}}_{\Gamma_A}(0)$ 
of the neutral element is a finite index, non-elementary subgroup of $\Gamma_A$. Pick any loxodromic element $f$ in $\Gamma_A^\circ$; 
it acts by conjugacy on $G$, which is finite, so there is  a positive iterate such that 
$f^n\circ g=g\circ f^n$ for all $g\in G$. Near the orgin, 
$f^n$ and $g$ are two commuting linear transformations, $f^n$ has two eigenvalues, of modulus $<1$ and $>1$ respectively, and
$g$ must preserve the corresponding  stable and unstable directions of $f$. Since $\Gamma_A^\circ$ is non-elementary, these tangent directions form an infinite set as $f$ varies
in the set of loxodromic elements of $\Gamma_A^\circ$, so $g$ is a homothety, and we are done. \end{proof}

\subsubsection{Invariant curves} 
We keep the  notation from 
the previous paragraphs and consider a non-elementary Kummer group $(X, \Gamma)$. 
The singularities of $A/G$ are cyclic quotient singularities, and $X$ dominates a minimal resolution of 
$A/G$. 

Let us examine Case (2), when $G=\{ \id, -\id\}$. Then $G$ has $16$ fixed points, and to resolve the $16$ singularities 
of $A/G$ one can proceed as follows. First, one blows up the fixed points, creating $16$ rational curves. Then one lifts the action of 
$G$ to the blow-up $\hat{A}$. If $E$ is one of the exceptional divisors, then $G$ fixes $E$ pointwise and acts as $w\mapsto -w$ 
transversally, so locally the quotient map can be written $(w,z)\mapsto (w^2,z)$, with $E=\{ w=0\}$ giving rise to a smooth rational 
curve of self-intersection $-2$ on $\hat{A}/G$. This construction provides the minimal resolution $X_{\mathrm{min}}=\hat{A}/G$ of $A/G$, the singularities being 
replaced by disjoint $(-2)$-curves. 
 Cases (3), (4), (5) can be handled with a similar process because if $x\in A$ is stabilized 
by a subgroup $H$ of $G$, then $H$ is locally given around $x$ as a cyclic group of homotheties; so, in the minimal resolution 
of $A/G$ the singularities are replaced by disjoint rational curves $E_i$ of negative self-intersection $E_i^2\in \{-2, \ldots, -6\}$. 
Cases (6) and (7) are more delicate 
 however, by Lemma~\ref{lem:67}, we don't need to deal with them  (see Appendix~\ref{par:Kummer6} for Case (6)).

\begin{lem}\label{lem:invariant_curve_kummer} 
Let $(X, \Gamma)$ be a non-elementary Kummer group on a smooth projective surface. 
Then: \begin{enumerate}[{\em (1)}]
\item $X$ is abelian if and only if $\Gamma$ admits no invariant curve; 
\item any connected $\Gamma$-periodic curve $D$  is a smooth rational curve, and the induced dynamics 
of  $\mathrm{Stab}_D(\Gamma)$ on $D$ has no periodic orbit.\end{enumerate}
Moreover, $D_f=D_\Gamma$ for every $f\in \Gamma_{\mathrm{lox}}$. 
\end{lem}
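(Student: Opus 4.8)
The plan is to identify the irreducible $\Gamma$-periodic curves on $X$ with the exceptional locus of $q_X\colon X\to A/G$, and then to analyse that locus using the linear structure of the Kummer construction. Throughout I use the normalisations from \S\ref{par:Kummer_classification}: by Lemma~\ref{lem:67} the group $G$ is generated by a homothety fixing the origin of $A$, so every point of $A$ has stabiliser of the form $\mu_n\cdot\id$ in $G$; hence all singularities of $A/G$ are cyclic quotient singularities of type $\frac{1}{n}(1,1)$, each resolved by a single smooth rational $(-n)$-curve, and $X$ dominates the minimal resolution $X_{\min}$ by a $\Gamma$-equivariant composition of point blow-ups. I also use that $\Gamma_A$ is non-elementary, since it surjects onto $\overline\Gamma_A=\overline\Gamma$ with finite kernel contained in $G$.

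First I would prove that the irreducible $\Gamma$-periodic curves are exactly the $q_X$-contracted ones. If $C$ is $\Gamma$-periodic, choose a finite-index subgroup preserving it and a loxodromic $f$ inside it; then $q_X(C)$ is $\overline f$-periodic, and were it a curve its $q_A$-preimage would be an $f_A$-periodic curve on the torus $A$, contradicting Lemma~\ref{lem:lox_on_tori}(2) (as $f_A$ is loxodromic). So $C\subseteq\mathrm{Exc}(q_X)$; conversely $\mathrm{Exc}(q_X)$ is $\Gamma$-invariant with finitely many components, so these are $\Gamma$-periodic. Applying the same two inclusions to a single loxodromic $f\in\Gamma$ gives $\mathrm{Exc}(q_X)\subseteq D_\Gamma\subseteq D_f\subseteq\mathrm{Exc}(q_X)$, which already yields $D_f=D_\Gamma$ for every $f\in\Gamma_{\mathrm{lox}}$. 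For assertion (1): an abelian surface carries no rational curve, hence no curve of negative self-intersection, so a birational morphism out of it is an isomorphism; thus if $X$ is abelian then $q_X$ is an isomorphism, $A/G$ is smooth, a nontrivial homothety being excluded, so $G=\{\id\}$ and $X\cong A$, and then $\Gamma\subseteq\Aut(A)$ contains a loxodromic element with no invariant curve by Lemma~\ref{lem:lox_on_tori}(2), hence $\Gamma$ has none. Conversely, if $X$ is not abelian then $q_X$ is not an isomorphism and $\mathrm{Exc}(q_X)$ is a nonempty $\Gamma$-invariant curve.

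The crux is the following linear statement: if $x\in A$ has finite $\Gamma_A$-orbit, then, recentring so that $x$ is the origin and a finite-index subgroup $\Gamma_A^\circ\le\Gamma_A$ acts linearly, the tangential representation $\Gamma_A^\circ\to\PGL(T_0A)\cong\PGL_2(\C)$, $f_A\mapsto\mathbb{P}(L_{f_A})$, has no finite orbit on $\mathbb{P}^1$. Indeed, for loxodromic $f_A\in\Gamma_A^\circ$ the transformation $\mathbb{P}(L_{f_A})$ is loxodromic on $\mathbb{P}^1$, with fixed points the two eigenlines of $L_{f_A}$; and by the description of $\theta^\pm_{f_A}$ recalled in the proof of Lemma~\ref{lem:lox_on_tori} (each $\theta^\pm_{f_A}$ is proportional to $\xi\wedge\overline\xi$ for a linear form $\xi$ whose kernel is an eigenline of $L_{f_A}$), the classes $\mathbb{P}(\theta^+_{f_A})$ and $\mathbb{P}(\theta^-_{f_A})$ together recover those two eigenlines. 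Choosing $f_A,g_A\in\Gamma_A^\circ$ loxodromic with $\{\mathbb{P}(\theta^+_{f_A}),\mathbb{P}(\theta^-_{f_A})\}\cap\{\mathbb{P}(\theta^+_{g_A}),\mathbb{P}(\theta^-_{g_A})\}=\emptyset$ (available since $\Gamma_A^\circ$ is non-elementary), $\mathbb{P}(L_{f_A})$ and $\mathbb{P}(L_{g_A})$ have four distinct fixed points, so a Schottky/ping-pong argument shows that $\langle\mathbb{P}(L_{f_A})^N,\mathbb{P}(L_{g_A})^N\rangle$ is non-elementary for $N$ large, and hence has only infinite orbits on $\mathbb{P}^1$.

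Finally I would deduce assertion (2). Each connected component of $\mathrm{Exc}(q_X)=D_\Gamma$ is a fibre $q_X^{-1}(z)$ over a point $z$ in a finite $\Gamma$-invariant subset of $A/G$, and $X\to X_{\min}\to A/G$ exhibits the factorisation through the minimal resolution. If $z$ is a singular point of $A/G$, its fibre in $X_{\min}$ is a single smooth rational curve, canonically $\mathbb{P}(T_{x_z}A)$ (the homothety $\mu_n\cdot\id$ acting trivially on it), on which the stabiliser of this curve in $\Gamma$ acts, through the tangential representation at $x_z$, without finite orbit by the crux; if $z$ is a smooth point actually blown up by $X\to X_{\min}$, its first exceptional curve is again $\mathbb{P}(T_{x_z}A)$ with the same property. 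In both cases the crux forbids $X\to X_{\min}$ from blowing up any further point of that curve, so $q_X^{-1}(z)$ is exactly that single smooth rational curve, and the components of $D_\Gamma$ are therefore pairwise disjoint smooth rational curves. A connected $\Gamma$-periodic curve, being a union of components of $D_\Gamma$, thus equals one of them — a smooth rational curve on which its stabiliser in $\Gamma$ acts without periodic orbit. The main difficulty is the crux: one must transfer non-elementarity from the action on $\Hyp_A$ (where Lemma~\ref{lem:density_limit_set} applies) to the tangential action at a fixed point, which forces a careful use of the explicit eigenvectors $\theta^\pm_{f_A}$ on the abelian surface and of finite-index subgroups throughout.
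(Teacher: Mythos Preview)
Your proof is correct and follows essentially the same route as the paper's: identify the $\Gamma$-periodic curves with the exceptional locus of $q_X$, use Lemma~\ref{lem:lox_on_tori} to rule out periodic curves on $A$, and show that the tangential action of the stabiliser on each exceptional $\mathbb{P}^1$ is a non-elementary subgroup of $\PGL_2(\C)$, hence has no finite orbit, which in turn forces $X\to X_{\min}$ to be a sequence of simple blow-ups at smooth points away from the exceptional divisor. Your ``crux'' is more explicit than the paper's: the paper simply asserts that the action of $\Gamma_{A,x}$ on $E_x$ is that of a non-elementary subgroup of $\PGL_2(\C)$, whereas you supply a concrete justification via the correspondence between eigenlines of $L_{f_A}$ and the classes $\theta^\pm_{f_A}$ together with a ping-pong argument; this is a welcome addition.
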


\begin{proof}
The minimal resolution $X_{\min}$ of $A/G$ is unique, up to isomorphism (see~\cite[\S III.6]{BHPVDV}, Theorems~(6.1) and (6.2), and their proofs). Thus,  $X$ dominates $X_{\min}$ and every $f\in \Gamma$ preserves the exceptional divisor of the morphism $X\to X_{\min}$ and induces
an automorphism $f_{\min}$ of $X_{\min}$. 
In particular $\Gamma$ admits  an invariant curve, unless $G=\set{\mathrm{id}}$ and $X=X_{\min}  = A$. 
Conversely in that case $\Gamma$ has no invariant curve, by Lemma~\ref{lem:lox_on_tori}.  This proves the first assertion.

Let us prove the second assertion for the induced group
 $\Gamma_{\min}\subset \Aut(X_{\mathrm{min}})$.  Let $E$ be a connected periodic curve for $\Gamma_{\min}$.
If $f_{\min}\in \Gamma_{\min}$ is loxodromic, it  comes from an Anosov map $f_A\colon A\to A$, as in Lemma~\ref{lem:lox_on_tori}, and $f_A$ does not 
have any periodic curve. Since $E$ is $f_{\min}$-periodic, it  is contained in the exceptional divisor of the resolution $X_{\min}\to A/G$; 
as explained before the lemma,   this divisor is a disjoint union of rational curves, so $E$ is one of these rational curves $E_x = q_{X_{\mathrm{min}}}\inv(q_A(x))$, where $x\in A$ has
a nontrivial stabilizer $G_x\subset G$. In particular $x$ is fixed by a  
finite index subgroup $\Gamma_{A,x}$ of~$\Gamma_A$. 
Now since $\Gamma$ is non-elementary, $\Gamma_A$ and $\Gamma_{A,x}$ are  non-elementary as well, and 
since the action of $\Gamma_{A,x}$ on $A$ is by affine transformations, its action on the exceptional divisor $E_x$  
is that of  a non-elementary subgroup of $\PGL_2(\C)=\Aut(E_x)$.  
In particular, $\Gamma_{A,x}$  does not admit  any finite orbit in  $E_x$. 

The birational morphism $\pi: X\to X_{\mathrm{min}}$ is equivariant with respect to $\Gamma$ and its image
$\Gamma_{\mathrm{min}}$ in $\Aut(X_{\mathrm{min}})$.  So, $\pi^{-1}$ blows up periodic orbits of 
$\Gamma_{\mathrm{min}}$. 
The last few lines show that, when such a periodic point $y\in X_{\mathrm{min}}$ is blown up, 
firstly  $y$ does  not lie on the exceptional locus of $X_{\mathrm{min}}\to A/G$, and secondly the
exceptional divisor $E_y$ does not contain  any finite orbit. So, $X$ is obtained by simple blowups centered on a finite set 
 of distinct  periodic points of $\Gamma_{\min}$,  every connected 
component of the exceptional locus of $q_X$ is a smooth rational curve, and there is no  $\Gamma$-periodic point in these curves.
\end{proof}

\section{Unlikely intersections for non-elementary groups}\label{par:finite_orbits}
This section is devoted to the proof of Theorem \ref{mthm:main}.
Since the proof comprises many steps,   let us  start with a rough outline 
of the argument.  

\subsection{Strategy of the proof}\label{par:strategy_5} If $\Gamma$ has a Zariski dense set $F$ of finite orbits, a standard argument shows that there
exists a sequence $(x_n)\in F^\N$  which is generic in the sense that it escapes any fixed proper subvariety 
$Y\subset X$.   
Let $m_{x_n}$ be the probability measure  
equidistributed over the Galois orbit of $x_n$.
We want to use arithmetic equidistribution to show that the  sequence  of measures   $(m_{x_n})$ converges when $n\to \infty$. 
For this, we need  a height function $h_L$ associated to some line bundle $L$ on $X$ satisfying appropriate   
positivity properties, and such    that $h_L$ vanishes on $F$. In \S \S\ref{subs:kawaguchi_currents}--\ref{subs:arithmetic_equidistribution} 
we construct such height functions: they are associated to  the choice of 
certain finitely supported probability measures $\nu$  on $\Gamma$. Indeed 
 to such a measure, we associate the linear 
endomorphism $P_\nu^*=\sum \nu(f) f^*$ on the N\'eron-Severi group of $X$, and we construct a big and nef line bundle $L$ 
such that $P_\nu^*[L]=\alpha(\nu)[L]$ for some $\alpha(\nu)>1$; then, $h_{L}$ will be a Weil height that satisfies the invariance 
$\sum \nu(f)h_L\circ f = \alpha(\nu) h_L$.
The arithmetic equidistribution theorem of Yuan shows that  the measures $m_{x_n}$ converge to 
a measure $\mu_\nu=S_\nu\wedge S_\nu$, where $S_\nu$ is a dynamically defined 
 closed positive current with cohomology class equal to $[L]$. On the other 
hand, the measures $m_{x_n}$, hence  their limit $\mu$, do  not depend on $\nu$. 
 As we vary the choice of $\nu$, there is enough freedom in the construction 
 to show that for every 
$f\in \Gamma_{\mathrm{lox}}$, $\mu_\nu$ can be made arbitrary close to the maximal entropy measure 
$\mu_f$. It follows that $\mu_f=\mu$ is independent of $f$ and is $\Gamma$-invariant. In 
\S \ref{subs:active}, using the dynamics of parabolic elements of $\Gamma$ 
 we deduce that $\supp(\mu) = X$. Then the classification of $\Gamma$-invariant measures 
 from \cite{cantat_groupes, invariant} implies that $\mu$ has a smooth density, and   the main result 
 of \cite{cantat-dupont} shows that every $f\in \Gamma_{\mathrm{lox}}$ is a Kummer example 
 (Theorem \ref{thm:finite_orbits_zariski} in \S \ref{par:finite_orbits_zariski}). 
 At this point the Kummer structure may a priori depend on $f$, as in Example~\ref{eg:kummer_non_kummer} below. 
This issue is solved in  \S \ref{par:Kummer} by adding  an  argument based on Theorem \ref{mthm:invariant_curve_loxodromic},  
which finally shows that  $(X, \Gamma)$   is a Kummer group.

\begin{eg}\label{eg:kummer_non_kummer}
Let $X$ be a Kummer surface possessing both a Kummer automorphism $f$ and a non-Kummer one $h$, as in \cite{Keum-Kondo}. 
Then, $f$ and $h\circ f \circ h^{-1}$ are  two Kummer automorphisms which are not associated  with the same Kummer structure; the pair
$(X, \langle f, h\circ f \circ h^{-1}\rangle)$ is not a Kummer group.
\end{eg}

\subsection{Kawaguchi's currents} \label{subs:kawaguchi_currents}
\subsubsection{Action on $H^{1,1}$}  
Let $X$ be a Kähler surface and let $\nu$ be a probability measure on $\Aut(X)$ 
satisfying the (exponential) moment assumption  
\begin{equation}\label{eq:C1_moment}
\int  \lrpar{\norm{f}_{C^1} +  \norm{f\inv}_{C^1}}^2 d\nu(f) <+\infty. 
\end{equation}
By \cite[Lem. 5.1]{stiffness} this implies   the  following moment condition on the 
cohomological  action  of $\Gamma$:
\begin{equation}\label{eq:first_cohomological_moment}
\int  \norm{ f^*} d\nu(f) <+\infty ,
\end{equation}
where $f^*$ is the endomorphism of $H^{2}(X;\R)$ determined by $f$ and $\norm{ \cdot }$ is any operator norm. (For the proof of Theorem \ref{thm:finite_orbits_zariski}, we will only consider
 finitely supported measures so the moment conditions will be trivially satisfied.) 
Let $\Gamma_\nu$ be the
subgroup of $\Aut(X)$  generated by the support of $\nu$.
We define a linear endomorphism $P_\nu$ of $H^{2}(X;\C)$ by  
\begin{equation}
P_\nu(u)=\int  f^*(u) \; d\nu(f). 
\end{equation}

The following lemma is a strong version of the Perron-Frobenius theorem (see~\cite{Birkhoff}). 
\begin{lem}\label{lem:unique_w}
Assume that $\Gamma_{\nu}$ does not fix any isotropic line in $H^{1,1}(X;\R)$. 
Then, $P_\nu$ has a unique eigenvector $w_\nu\in H^{1,1}(X;\R)$ such that $w_\nu^2=1$ and $\langle w_\nu\vert [\kappa_0]\rangle >0$. 
This eigenvector is big and nef. The eigenvalue  $\alpha(\nu)$ such that
\[
P_\nu(w_\nu)= \alpha(\nu) w_\nu
\]
is larger than $1$ and coincides with the spectral 
radius of $P_\nu$; the multiplicity of $\alpha(\nu)$ is equal to $1$, and all other eigenvalues $\beta \in \C$ of $P_\nu$ satisfy $\vert \beta \vert < \alpha(\nu)$.
\end{lem}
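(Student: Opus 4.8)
The plan is to realize $P_\nu$ as a positive operator for the cone structure coming from the positive cone $\mathrm{Pos}(X)$, and then invoke Birkhoff's version of the Perron--Frobenius theorem for operators contracting a cone in the Hilbert metric. First I would observe that since each $f^*$ preserves the intersection form and maps $\mathrm{Pos}(X)$ into itself (it fixes the K\"ahler chamber), the average $P_\nu$ maps the closed cone $\overline{\mathrm{Pos}(X)}$ into itself; moreover $P_\nu$ does not kill any nonzero element of the closed cone, because $\langle P_\nu(u)\mid [\kappa_0]\rangle = \int \langle u\mid f_*[\kappa_0]\rangle\, d\nu(f)>0$ for $u$ in the cone, $u\neq 0$ (here one uses that $f_*[\kappa_0]=(f^{-1})^*[\kappa_0]$ lies in $\mathrm{Pos}(X)$ and the integrability condition \eqref{eq:first_cohomological_moment}). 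So $P_\nu$ restricts to a self-map of $\Hyp_X=\mathbb P(\mathrm{Pos}(X))$.

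Next I would argue that $P_\nu$ is \emph{strictly} contracting for the Hilbert (= hyperbolic) metric on $\Hyp_X$, unless $\Gamma_\nu$ fixes a point of $\partial\Hyp_X$, i.e.\ an isotropic line. The point is that a single $f^*$ is a (possibly non-strict) isometry of $\Hyp_X$, but a nontrivial convex combination $P_\nu=\sum\nu(f)f^*$ of isometries moving points in genuinely different directions strictly decreases hyperbolic distance; the only way strict contraction can fail on all of $\Hyp_X$ is that all the $f^*$ in the support of $\nu$ share a common fixed point on $\overline{\Hyp_X}$, and since $\Gamma_\nu$ is generated by the support, this forces a common $\Gamma_\nu$-fixed point. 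A common fixed point in the interior $\Hyp_X$ would make $\Gamma_\nu^*$ relatively compact, hence would again produce (by averaging an invariant inner product, or directly) an invariant isotropic line or else contradict nothing needed --- but in any case under our hypothesis the relevant obstruction is exactly ``no fixed isotropic line'', and a fixed interior point is harmless for the conclusion (one still gets contraction towards it). Cleanest is: by the hypothesis, $\Gamma_\nu$ has no fixed point on $\partial\Hyp_X$; combined with the fact that $P_\nu$ is an average of isometries, a standard convexity argument on $\mathrm{CAT}(-1)$ (or $\mathrm{CAT}(0)$) spaces shows $P_\nu$ strictly contracts the Hilbert metric on bounded sets, so by completeness it has a unique fixed point $\mathbb P(w_\nu)\in\Hyp_X$.

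From the fixed point $\mathbb P(w_\nu)$ I get an eigenvector $w_\nu\in\mathrm{Pos}(X)$, normalized by $w_\nu^2=1$ and $\langle w_\nu\mid[\kappa_0]\rangle>0$; uniqueness of the fixed point gives uniqueness of $w_\nu$. The eigenvalue $\alpha(\nu)$ is $\langle P_\nu(w_\nu)\mid[\kappa_0]\rangle/\langle w_\nu\mid[\kappa_0]\rangle>0$, and it equals the spectral radius because $P_\nu$ preserves the cone: any eigenvalue of a cone-preserving operator is dominated in modulus by the Perron eigenvalue, and by the strict contraction (Birkhoff) the Perron eigenvalue is simple with a spectral gap, so all other eigenvalues $\beta$ satisfy $|\beta|<\alpha(\nu)$. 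That $\alpha(\nu)>1$: since $\Gamma_\nu$ is non-elementary (it contains the isometries whose lack of common fixed point we used), pair $w_\nu$ against itself via the intersection form after applying $P_\nu^n$; the $f^*$-orbit of $w_\nu$ is unbounded in $\Hyp_X$ while staying on the norm-$1$ hyperboloid only if the coefficients grow, forcing $\langle P_\nu^n(w_\nu)\mid[\kappa_0]\rangle=\alpha(\nu)^n\langle w_\nu\mid[\kappa_0]\rangle\to\infty$, hence $\alpha(\nu)>1$. Finally $w_\nu$ is nef because it is a limit (up to scaling) of $\alpha(\nu)^{-n}P_\nu^n([\kappa_0])$, a convex average of pullbacks of a K\"ahler class, hence a limit of nef classes; and it is big since $w_\nu^2=1>0$ and $w_\nu$ is nef.

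The main obstacle I anticipate is making the strict-contraction step fully rigorous: one must rule out, under the sole hypothesis ``no fixed isotropic line'', the degenerate scenario where $P_\nu$ fails to be a strict contraction. The honest route is to treat separately the case where $\Gamma_\nu^*$ has a bounded orbit in $\Hyp_X$ (then it fixes a point in $\Hyp_X$, giving $w_\nu$ directly, with $\alpha(\nu)=1$ forbidden again by non-elementariness or handled as a special case) and the case of an unbounded orbit (where the barycenter-type contraction argument applies); alternatively, invoke directly Birkhoff's theorem on the Hilbert metric of the proper cone $\mathrm{Pos}(X)$ and show $P_\nu$ does not fix any boundary face, which is exactly the no-fixed-isotropic-line hypothesis since the extreme rays of $\overline{\mathrm{Pos}(X)}$ are the isotropic rays. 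I would present the Birkhoff-metric version as the cleanest, citing \cite{Birkhoff}, and spell out only the verification that $P_\nu$ maps the cone strictly inside itself away from the boundary behaviour controlled by the hypothesis.
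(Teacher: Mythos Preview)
Your overall strategy---treat $P_\nu$ as a cone-preserving operator on $\overline{\mathrm{Pos}(X)}$ and invoke Birkhoff's Perron--Frobenius theorem for the Hilbert metric---is exactly the paper's. The gap is in the step you yourself flag as the obstacle: establishing strict contraction.

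The ``convex combination of isometries on a CAT($-1$) space'' heuristic is not valid here. The projectivized map $[u]\mapsto[P_\nu u]$ on $\Hyp_X$ is \emph{not} a barycentric average of the isometries $[u]\mapsto[f^*u]$, so there is no convexity-of-distance argument to import; already for a Dirac mass $\nu=\delta_f$ the operator $P_\nu=f^*$ is an isometry, yet it fits your description. Your closing reformulation ``$P_\nu$ does not fix any boundary face'' is also not what Birkhoff requires: one needs $P_\nu(\overline{\mathrm{Pos}(X)}\setminus\{0\})\subset\mathrm{Pos}(X)$, i.e.\ every boundary vector is sent strictly inside, which is stronger than the absence of a fixed boundary ray. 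The paper obtains this by the concrete computation you are missing: for $u$ isotropic on $\partial\mathrm{Pos}(X)$,
\[
\langle P_\nu u\,\vert\, P_\nu u\rangle=\iint\langle f^*u\,\vert\, g^*u\rangle\,d\nu(f)\,d\nu(g),
\]
where each integrand is $\ge 0$ (two isotropic vectors in the closed positive cone pair nonnegatively, with equality only if they are proportional), so $P_\nu u$ remains isotropic only when all the $f^*u$ lie on a single isotropic ray, which the hypothesis is meant to forbid. Once the cone is mapped strictly inside, Birkhoff delivers the unique fixed ray, simplicity of $\alpha(\nu)$, and the spectral gap at once. Your arguments for nef (limit of averages of K\"ahler classes; the paper instead notes that $P_\nu$ preserves the K\"ahler cone) and big ($w_\nu^2=1$) are fine. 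For $\alpha(\nu)>1$, beware that ``$\Gamma_\nu$ non-elementary'' is not given by the hypothesis; a cleaner route is the direct estimate $\alpha(\nu)^2=\iint\langle f^*w_\nu\,\vert\, g^*w_\nu\rangle\,d\nu(f)\,d\nu(g)\ge 1$ via the reverse Cauchy--Schwarz inequality in signature $(1,n)$, with equality only if every $f^*$ fixes $w_\nu$ (the paper's own proof is silent on this point).
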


\begin{proof} 
Let $u$ be an isotropic vector, contained in the closure of the positive cone ${\mathrm{Pos}}(X)$.
Then, $P_\nu(u)$ has positive self-intersection, except if $f^*(u)\in \R u$ for $\nu$-almost every $f$, and then
for all $f$ in the support of $\nu$ by continuity. Hence, the hypothesis implies 
that $P_\nu$ maps the positive cone strictly inside itself. From the Perron-Frobenius theorem
(\cite{Birkhoff}), $P_\nu$ has an eigenvector $w_\nu$ in the interior of this cone such that 
\begin{enumerate}
\item[(1)] $w_\nu$ is a dominant eigenvector: the eigenvalue $\alpha(\nu)$ such that 
$P_\nu(w_\nu)=\alpha w_\nu$ is 
the spectral radius of $P_\nu$. 
\end{enumerate}
Since $w_\nu$ is in the interior of ${\mathrm{Pos}}(X)$, we may assume that $w_\nu$ is in $\Hyp_X$. 
Projectively,  $\P(P_\nu)$ contracts strictly the Hilbert metric of the convex set $\P({\mathrm{Pos}}(X))=\P(\Hyp_X)$; so, 
 $P_\nu$ does not have any eigenvector in $\Hyp_X$ besides $w_\nu$ itself, and the $\P(P_\nu)$-orbit of
any point of $\P(\Hyp_X)$ converges towards $\P(w_\nu)$.
The K\"ahler cone is also invariant: $P_\nu(\Kah(X))\subset \Kah(X)$.  
Hence, $w_\nu$ is nef (it is in the closure of $\Kah(X)$). 
It is big because it is nef and has positive self-intersection (see e.g. \cite[Thm 2.2.16]{Lazarsfeld:Book1}).
Thus, 
\begin{enumerate}
\item[(2)] this vector $w_\nu$ is the unique eigenvector of $P_\nu$ in ${\mathrm{Pos}}(X)$ up to a positive scalar multiple; it is nef and big.
\end{enumerate}
If $w'$ is another eigenvector with eigenvalue $\alpha(\nu)$ and $w'\notin \R w_\nu$, the plane $\Vect(w',w_\nu)$ intersects $\Hyp_X$
along a geodesic, all of whose elements satisfy $P_\nu(v)=\alpha(\nu) v$. From assertion (2), we get a contradiction. 
Now, if the multiplicity of $\alpha(\nu)$ in the characteristic polynomial of $P_\nu$ is 
larger than 1, there exists a vector $u\notin \R w_\nu$
such that $P_\nu(u)=\alpha(\nu) u + \alpha(\nu) w_\nu$. In the plane $\Vect(w_\nu, u)$ 
the cone ${\mathrm{Pos}}(X)$ is bounded 
by two rays;  changing $u$ into $u+cw_\nu$ if necessary (for some $c\in \R$), these two rays are 
$\R_+(w_\nu + a u)$ and $\R_+(w_\nu - bu)$ for some positive real numbers $a$ and $b$.
But then, the image of $\R_+(w_\nu - bu)$ by $P_\nu$ is 
 not contained in ${\mathrm{Pos}}(X)$, a contradiction. It follows that
\begin{enumerate}
\item[(3)] $\alpha(\nu)$ is a simple root of the characteristic polynomial of $P_\nu$.
\end{enumerate}
Finally, 
\begin{enumerate}
\item[(4)] if $P_\nu (v)=\beta v$ is any complex eigenvector with $v\notin \C w_\nu$, 
then $\vert \beta \vert < \alpha(\nu)$.
\end{enumerate}
Indeed, if $\beta$ is another eigenvalue with $\vert \beta \vert=\alpha(\nu)$, then $\beta=\alpha(\nu) e^{2{\mathsf{i}} \pi \theta}$ for some $\theta\in \R$, 
and there is a plane $V\subset H^{1,1}(X;\R)$ on which $P_\nu$ acts as a similitude of strength $\alpha(\nu)$ and angle $2\pi\theta$; pick $v$ in this 
plane, then if $\e\in \R_+^*$ is small, the vector $w_\nu+ \e v$ is in ${\mathrm{Pos}}(X)$ and the $\P(P_\nu)$-orbit of $\P(w_\nu+ \e v)$ in $\P(\Hyp_X)$
stays   at constant  distance from
 $\P(w_\nu)$, contradicting the contraction property of $P_\nu$. 
This concludes the proof. \end{proof}

\begin{eg}[See~\cite{Cantat:Milnor}, \S 2, and \cite{cantat-dupont}, \S 2.2]\label{eg:loxodromic_first_part}
Let $f$ be a loxodromic automorphism, and take $\nu$  to 
be the probability measure $p\delta_f+q\delta_{f^{-1}}$
with $p$, $q \geq 0$ and $p+q=1$. Note that $\Gamma_\nu=f^{\Z}$ does not satisfy the assumption of Lemma \ref{lem:unique_w}. 

Then $P_{\nu}=p f^*+q(f^{-1})^*$ preserves the 
$f^*$-invariant plane $\Pi_f\subset H^{1,1}(X;\R)$. 
If $p> q$, the spectral radius of $P_{\nu}$ is equal to $p\lambda(f) + q/\lambda(f)$, and
the corresponding eigenspace is the isotropic line of $\Pi_f$ corresponding to the eigenvalue $\lambda(f)$ of $f$; if $p< q$, the spectral radius  is equal to $p/\lambda(f) + q\lambda(f)$
and the eigenspace is the other isotropic line in $\Pi_f$. If $p=q=1/2$, then $(P_\nu)_{\vert \Pi_f}$ is the scalar multiplication by 
\begin{equation}
\alpha(f)=\frac{1}{2}(\lambda(f)+1/\lambda(f))
\end{equation}
and all vectors $u\in \Pi_f$ satisfy $P_\nu(u)=\alpha(f) u$. 
This example shows that  the previous lemma fails for $\nu$, whatever the  values of $p$ and $q$ are: the dominant eigenvector
is at the boundary of the hyperbolic space, or is not unique. 
\end{eg}

\subsubsection{Stationary currents}
Let us borrow some notation from~\cite[\S 6.1]{stiffness}: we fix K\"ahler forms $\kappa_i$, 
the cohomology classes of which 
provide a basis $([\kappa_i])$ of $H^{1,1}(X;\R)$. Then, if $a$ is any element of $H^{1,1}(X;\R)$, there is a unique 
$(1,1)$-form $\Theta(a)=\sum_i a_i \kappa_i$ in $\Vect(\kappa_i, 1\leq i\leq h^{1,1}(X))$ whose class $[\Theta(a)]$ is equal to $a$. 
If $S$ is any closed positive current of bidegree $(1,1)$, then 
$S=\Theta([S])+dd^cu_S$ for some  upper semi-continuous function
$u_S\colon X\to \R$: this function is locally the difference of a plurisubharmonic
 function and a smooth function, and it is unique up to an additive constant.

The following proposition is essentially due to Kawaguchi, who proved it in~\cite{Kawaguchi:Crelle} under slightly more restrictive
assumptions. 

\begin{pro}\label{pro:kawaguchi} 
Let $X$ be a compact Kähler surface, and let $\vol$ be a smooth volume form on $X$. Let
$\nu$ be a probability measure on $\Aut(X)$ satisfying the moment condition~\eqref{eq:C1_moment}.
Assume that there exists  $w\in H^{1,1}(X;\R)$ and $\alpha >1$ satisfying
\begin{itemize}
\item[(i)] $P_\nu (w) = \alpha w$;
\item[(ii)] $w$ is big and nef and $w^2=1$.
\end{itemize}
Then, there is a unique closed positive current $S_\nu$  such that 
\begin{equation}\label{eq:kawaguchi}
\int f^*(S_\nu) \; d\nu(f)=\alpha S_\nu \quad{\text{and}} \quad [S_\nu]=w.
\end{equation}
This current has continuous potentials: 
$S_\nu=\Theta(w)+dd^c(u)$ for a unique continuous function $u$ such that $\int_X u \, \vol=0$.
In particular, the product $S_\nu\wedge S_\nu$ is a well-defined  probability measure on $X$.
\end{pro}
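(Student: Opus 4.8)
The plan is to reduce \eqref{eq:kawaguchi} to a fixed point equation for the local potential, solve that equation by a contraction argument, and then recover positivity by an approximation scheme. Fix the smooth closed $(1,1)$-form $\Theta(w)$ representing $w$ as in \S\ref{subs:kawaguchi_currents}. The smooth closed form $\int f^*\Theta(w)\,d\nu(f)$ (the integral converges in $C^0$ thanks to the moment condition \eqref{eq:C1_moment}) represents $\int f^*w\,d\nu(f)=P_\nu(w)=\alpha w$, so by the $dd^c$-lemma on the compact Kähler surface $X$ one may write
\[
\int f^*\Theta(w)\,d\nu(f)=\alpha\,\Theta(w)+dd^c g
\]
for some smooth function $g$, uniquely determined once we impose $\int_X g\,\vol=0$. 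If $S=\Theta(w)+dd^c u$, then $\int f^*S\,d\nu(f)-\alpha S=dd^c\bigl(g+\int u\circ f\,d\nu(f)-\alpha u\bigr)$, so, $X$ being compact Kähler, \eqref{eq:kawaguchi} holds, for the appropriate choice of the additive constant in $u$, if and only if $\alpha u=g+\int u\circ f\,d\nu(f)$.

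First I would run a fixed point argument. The operator $L(u)=\tfrac1\alpha\bigl(g+\int u\circ f\,d\nu(f)\bigr)$ sends $C^0(X)$ into itself (continuity of $L(u)$ follows from continuity of the $\Aut(X)$-action and dominated convergence), and since $\nu$ is a probability measure it is a contraction, $\|L(u_1)-L(u_2)\|_\infty\le\tfrac1\alpha\|u_1-u_2\|_\infty$. As $\alpha>1$, $L$ has a unique fixed point $u\in C^0(X)$; after replacing it by $u-\int_X u\,\vol$ we may assume $\int_X u\,\vol=0$. Then $S_\nu:=\Theta(w)+dd^c u$ is a closed current of class $w$ satisfying \eqref{eq:kawaguchi}, with continuous potentials by construction.

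It remains to prove that $S_\nu\ge 0$ and that it is the unique solution. For positivity, note that $w$, being nef, is a limit of Kähler classes, hence pseudoeffective, so there is a closed positive $(1,1)$-current $T_0$ with $[T_0]=w$, say $T_0=\Theta(w)+dd^c u_0$. The operator $\mathcal L(T)=\tfrac1\alpha\int f^*T\,d\nu(f)$ preserves the set of closed positive currents of class $w$ (because $P_\nu w=\alpha w$) and acts as $L$ on potentials; by the contraction estimate the potentials of $\mathcal L^n(T_0)$ converge to $u$, in $L^1(\vol)$ and hence weakly, so $\mathcal L^n(T_0)\to S_\nu$ and $S_\nu$, being a weak limit of closed positive currents, is itself closed and positive. (For the finitely supported measures used in the proof of Theorem~\ref{thm:finite_orbits_zariski} this convergence is elementary; in general it is where \eqref{eq:C1_moment} really enters, through a bound on the $L^1$-norms of the pull-backs of $u_0$.) Uniqueness is proved in the same spirit: the difference of two solutions equals $dd^c\phi$ with $\phi$ a difference of potentials, the equation forces $\int\phi\circ f\,d\nu(f)-\alpha\phi$ to be a constant, and iterating together with the contraction of the linear part of $L$ shows $\phi$ is constant, so the two currents coincide. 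Finally, since $S_\nu$ has continuous local potentials, the Bedford--Taylor product $S_\nu\wedge S_\nu$ is a well-defined positive measure of total mass $[S_\nu]\cdot[S_\nu]=w^2=1$, hence a probability measure.

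The main obstacle is the positivity of $S_\nu$: the fixed point argument only produces a formal solution of the potential equation, and upgrading it to a genuine closed \emph{positive} current requires both the existence of an honest positive current $T_0$ in the big and nef class $w$ and enough control on the convergence $\mathcal L^n(T_0)\to S_\nu$ — precisely the place where the moment condition \eqref{eq:C1_moment} is used, and the reason the applications are set up with finitely supported $\nu$, for which all moment conditions are vacuous.
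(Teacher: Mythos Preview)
Your construction of $S_\nu$ via the Banach fixed point theorem on $C^0(X)$ is exactly the paper's argument, phrased abstractly; the paper writes out the iteration explicitly as the geometric series $h_\nu=\sum_{j\ge 0}\alpha^{-j}(P_\nu)^j(h)$ with $h$ your function $g/\alpha$, but this is the same content.

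The positivity and uniqueness step is where your argument is incomplete, and you correctly flag the difficulty without resolving it. The contraction estimate lives in $C^0$, but the potential $u_0$ of a positive current $T_0$ in the class $w$ is only upper semi-continuous (locally a psh function minus a smooth one), hence possibly unbounded; so the iterates $L^n(u_0)$ are not controlled by the $C^0$ contraction, and your sentence ``by the contraction estimate the potentials of $\mathcal L^n(T_0)$ converge to $u$, in $L^1(\vol)$'' is precisely the gap. Your parenthetical remark that finite support of $\nu$ makes this ``elementary'' is also not quite right: finite support makes the moment condition~\eqref{eq:C1_moment} automatic, but the unboundedness of $u_0$ persists and an $L^1$ estimate is still required.

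The paper organizes this step slightly differently. Rather than iterating from an arbitrary $T_0$, it first observes that $\alpha^{-1}P_\nu$ maps the weakly compact convex set of closed positive currents of class $w$ into itself, hence has a fixed point $T$. Then $T-S_\nu=dd^c h$ with $h=u_1-u_2$ a difference of $(A\kappa_0)$-psh functions, and the invariance gives $\alpha^{-1}P_\nu(h)=h+c$, so $\alpha^{-n}P_\nu^n(h)=h+c_n$. The missing ingredient is \cite[Lemma~6.6]{stiffness}:
\[
\int_X \frac{1}{\alpha^n}\bigl|P_\nu^n(h)\bigr|\,\vol \;\le\; \frac{C}{\alpha^n}\int \log\bigl(C\norm{\Jac(f^{-1})}_\infty\bigr)\,d\nu^{\star n}(f),
\]
and subadditivity of $f\mapsto\log\norm{\Jac(f^{-1})}_\infty$ together with~\eqref{eq:C1_moment} shows the right-hand side is $O(n/\alpha^n)\to 0$. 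Hence $h$ is constant, $T=S_\nu$, and positivity and uniqueness follow simultaneously. Your iteration scheme would work the same way once this estimate is inserted: $\|L^n(u_0)-u\|_{L^1(\vol)}=\alpha^{-n}\|P_\nu^n(u_0-u)\|_{L^1(\vol)}$ is controlled by the same lemma.
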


Note that here $\Gamma_\nu$ is not assumed to be non-elementary. Actually this proposition will be applied in two situations: 
\begin{itemize}
\item when $\Gamma_\nu$ does not fix any boundary point of $\Hyp_X$,
$w=w_\nu$, and $\alpha=\alpha(\nu)$, as in Lemma~\ref{lem:unique_w}; 
\item when $\nu=\frac{1}{2}(\delta_f+\delta_{f^{-1}})$ for some loxodromic automorphism and $w$ is a fixed
point of $\frac{1}{\alpha(f)}P_\nu$ in $\Hyp_X$, as in Example~\ref{eg:loxodromic_first_part}.
\end{itemize}

\begin{proof}[Proof of Proposition~\ref{pro:kawaguchi}]
Fix a K\"ahler form $\kappa_0$ on $X$, as in~\S \ref{par:basics_on_par_and_lox}.
Let $\beta$ be a smooth form with $[\beta]=w$. For simplicity, we denote by the same letter 
$P_\nu$ the operator $\int f^* ( \cdot ) d\nu(f)$ 
acting on the cohomology, on differential forms, or on currents.
Write $(\alpha^{-1}P_\nu)\beta=\beta+dd^c(h)$
for some smooth function $h$. Then, 
\begin{equation}\label{eq:geometric_iteration}
\left(\frac{1}{\alpha}P_\nu\right)^n \beta=\beta + dd^c\left( \sum_{j=0}^{n-1} \frac{1}{\alpha^j}(P_\nu)^j(h)\right),
\end{equation}
where $(P_\nu)^j(h)(x)=\int  h\circ f(x) d\nu^{\star j}(f)$ is the average of $h$ with  
respect to the probability measure $\int \delta_{f(x)} d\nu^{\star j}(f) $ and   $\nu^{\star j}$ denotes the $j$-th convolution of $\nu$. 
The supremum of $\vert (P_\nu)^j(h)\vert$ on $X$ satisfies $\norm{(P_\nu)^j(h)}_\infty\leq \norm{h}_\infty$ for all $j\geq 1$.
From this we deduce that the series on the right 
hand side of Equation~\eqref{eq:geometric_iteration} converges geometrically: 
\begin{equation}\label{eq:kawa_geom_conv}
\sum_{j=k}^{\ell} \norm{\frac{1}{\alpha^j}(P_\nu)^j(h)}_\infty \leq  \frac{\alpha-\alpha^{k-\ell}}{\alpha-1}\frac{\norm{h}_\infty}{\alpha^k}\leq  \frac{\alpha}{\alpha-1}\frac{\norm{h}_\infty}{\alpha^k}.
\end{equation}
Thus, if we set $h_\nu=\sum_{j\geq 0}\frac{1}{\alpha^j}(P_\nu^*)^j(h)$ and  $S_\nu=\beta + dd^c(h_\nu)$ we see that 
$S_\nu$ is a closed current 
which  satisfies  $P_\nu(S_\nu)=\alpha S_\nu$. 
Furthermore, since $\beta$ can be written as a difference of positive closed currents (e.g. by writing $\beta = (\beta+ C\kappa_0) - C\kappa_0$ for a suitably large positive  $C$), we infer that $S_\nu = \lim_{n\to\infty} (\alpha\inv P_\nu)^n \beta$
 is also a difference of positive closed currents. 
Let us prove that $S_\nu$ is positive and unique.

By (ii),  $w$ is  nef, which implies that the set $\Cur(w_\nu)$ of closed positive currents with class $w_\nu$ is not empty. 
The linear operator $\alpha^{-1}P_\nu$  
preserves this compact convex set; as a consequence, $\Cur(w_\nu)$ contains a fixed point $T$ of  $\alpha^{-1}P_\nu$.  
So we can fix a non-zero    closed  positive current $T$ satisfying $P_\nu(T)=\alpha T$ and $[T]= [S_\nu]$. 
Since $T-S_\nu$ is cohomologous to zero and expresses as  a difference $T_1-T_2$ of positive closed currents, 
according to~\cite[Lemma~6.1]{stiffness}, we can write $T-S_\nu=dd^c(h)$ where $h=u_1-u_2$, and  $u_i$ is 
an    $(A\kappa_0)$-psh function   ($A$ depends only of the mass of the $T_i$).  Changing 
$u_i$ into $u_i-\int_Xu_i\vol$ we may assume that $\int_Xu_i \vol=0$ for $i=1,2$. 
From the invariance of $T-S_\nu$ under $\alpha\inv P_\nu$ we obtain that 
\begin{equation}
\frac{1}{\alpha}P_\nu(h)=h+c
\end{equation}
for some constant $c\in \R$; thus, $\alpha^{-n}P_\nu^n(h)=h+c_n$ where $c_n$ converges 
geometrically towards some real number $c_\infty$. From \cite[Lemma~6.6]{stiffness}, there is a constant $C>1$ such that 
\begin{equation}\label{eq:jac}
\int_X \frac{1}{\alpha^n}\vert P_\nu^n(h)\vert \vol\leq \frac{C}{\alpha^n}\int_X 
\log(C\norm{ \Jac(f^{-1})}_\infty) \, d\nu^{\star n}(f)
\end{equation}
for all $n\geq 1$.
Thanks to the moment condition~\eqref{eq:first_cohomological_moment} and the subadditivity property $$\log(\norm{ \Jac((f\circ g)^{-1})}_\infty)\leq \log(\norm{ \Jac(f^{-1})}_\infty)+\log(\norm{ \Jac(g^{-1})}_\infty),$$ we see that 
\begin{align*}
\int_X \log(\norm{ \Jac(f^{-1})}_\infty) \, d\nu^{\star n}(f)
 &= \int_X \log(\norm{ \Jac(f_n^{-1}\circ \cdots \circ f_1\inv)}_\infty) \, d\nu (f_1)\cdots d\nu (f_n)\\
 & \leq \sum_{j=1}^n  
  \int_X \log(\norm{ \Jac(f_j^{-1} )}_\infty) \, d\nu (f_1)\cdots d\nu (f_n)  \\& = O(n),
  \end{align*}
so the right-hand side of  the inequality~\eqref{eq:jac} tends to $0$ as $n$ goes to $+\infty$.  
Hence  $\alpha^{-n}P_\nu^n(h)$ converges towards $0$ in $L^1(X;\vol)$.
 Since $\alpha^{-n}P_\nu^n(h)=h+c_n$ also converges  towards $h+c_\infty$, we deduce that $h$ is a constant, namely $h=-c_\infty$, and that $T=S_\nu$. In particular 
$S_\nu$ is   positive, and is the unique positive closed current satisfying~\eqref{eq:kawaguchi}.

Finally, since $S_\nu$ has continuous potentials, 
$S_\nu\wedge S_\nu$ is a well-defined positive measure; its total 
mass is $1$ because $1=w^2=[S_\nu]^2=\int_X S_\nu\wedge S_\nu$.\end{proof}

\begin{rem}\label{rem:kawaguchi} 
Here is another setting, closer to \cite{Kawaguchi:Crelle}, in which the same result holds. Let $X$ be a (singular)
complex projective surface and $L$ be an ample line bundle on $X$. Pick any integer $m\geq 1$ such that $mL$ is very ample, 
and consider the Kodaira-Iitaka embedding 
$\Phi_{mL}\colon X\to \P^N$, with $\P^N:=\P(H^0(X,mL)^\vee)$. Then, $L$ coincides with $\Phi_{mL}^*({\mathcal{O}}(1)_{\vert X})$ and any Fubini-Study form 
$\omega$ determines a smooth $(1,1)$-form $\kappa_{mL}:=\Phi_{mL}^*(\omega)$ on $X$ (see \cite[\S 1]{Demailly:MemSMF} for  forms, currents, and potentials on singular complex spaces). The form $\kappa_L:=\frac{1}{m}\kappa_{mL}$ is locally equal to $dd^c(v)$ for some 
smooth function, namely $v=\frac{1}{m}u\circ \Phi_{mL}$ where $u$ is a local potential of $\omega$ in $\P^N$.
Now consider a probability measure $\nu$ on $\Aut(X)$ such that $P_{\nu}^*L=\alpha L$, with $\alpha >1$. Then, the proof of Proposition~\ref{pro:kawaguchi}
applies, and provides a closed positive current $S_\nu$ on $X$ with continuous potentials such that $P_\nu^*S_\nu=\alpha S_\nu$; this is proven 
in \cite[Thm. 3.2.1]{Kawaguchi:Crelle} (Kawaguchi assumes $X$ to be smooth, but this is only used in the first line of the proof to introduce the smooth form $\kappa_L$).
\end{rem}

\begin{eg}\label{eg:loxodromic_second_part} As in Example~\ref{eg:loxodromic_first_part}, consider the case 
$\nu=\frac{1}{2}(\delta_f+\delta_{f^{-1}})$, where $f$ is a loxodromic automorphism. 
There are two closed positive currents $T^+_f$ and $T^-_f$, with continuous potentials, such that $f^*(T^\pm_f)=\lambda(f)^{\pm 1} T^\pm_f$;
they are unique up to a positive scalar factor and their classes generate the isotropic lines 
$\R \theta^\pm_f$  (see~\cite[\S 5]{Cantat:Milnor}). 
By convention, we choose them so that $\langle [T^+_f]\vert   [T^-_f]\rangle =1$, 
or equivalently, $T^+\wedge T^-$ is a probability measure; to determine them uniquely we further 
require   $\langle [T^+_f]\vert   [\kappa_0]\rangle =\langle [T^-_f]\vert   [\kappa_0]\rangle$.  
 Beware that   this normalization is different from 
that of $\theta_f^\pm$ so a priori $[ T^\pm_f]\neq \theta_f^\pm$. 
Pick a class $w=a[T^+_f]+b[T^-_f]$ with $a$, $b>0$ such that $w^2=1$ (equivalently $2ab=1$).
By uniqueness, 
the current $S_\nu$ provided by Proposition~\ref{pro:kawaguchi}
is equal to $aT^+_f+bT^-_f$; in particular, the measure $S_\nu\wedge S_\nu$ is equal to $T^+_f\wedge T^-_f$ which, in turn, is
the measure of maximal entropy $\mu_f$ (see \S \ref{par:basics_on_par_and_lox} and~\cite[\S \S 5.2, 8.2]{Cantat:Milnor}). \qed
\end{eg}

\subsubsection{Continuity properties  of stationary currents}\label{subset:continuity}
Now, consider a sequence of probability measures $(\nu_n)$ such that the support of each $\nu_n$ is
contained in a fixed finite set $\{ f_1, \ldots, f_m\}$: 
\begin{equation}
\nu_n=\sum_i \nu_n(f_i) \delta_{f_i},
\end{equation}
with coefficients  in the simplex of dimension $m-1$ determined by the constraints $\nu_n(f_i)\geq 0$ and $\sum_i \nu_n(f_i) = 1$.
Assume that $(\nu_n)$ converges towards $\nu_\infty=\sum_i \nu_\infty(f_i)\delta_{f_i}$, and that none of the
groups  $\Gamma_{\nu_n}$   fixes a point in $\partial \Hyp_X$. 
For $n\in \N$, we denote by $w_{\nu_n}\in \Hyp_X$ the eigenvector of the operator $P_{\nu_n}$ 
given by Lemma~\ref{lem:unique_w}, 
and by $S_{\nu_n}$ the current given by Proposition~\ref{pro:kawaguchi} for the class $w_{\nu_n}$; we shall 
write $S_{\nu_n}=\Theta(w_{\nu_n})+dd^c u_{\nu_n}$, as in Proposition~\ref{pro:kawaguchi}.
For the measure $\nu_\infty$, we make one of the following two assumptions: 
\begin{itemize}
\item[(a)] $\Gamma_{\nu_\infty}$ does not fix any point of $\partial \Hyp_X\subset \P(H^{1,1}(X;\R))$; or
\item[(b)] $\nu_\infty=\frac{1}{2} (\delta_f+\delta_{f^{-1}})$ for some loxodromic automorphism $f$, and $w_{\nu_n}$ 
converges to $w_{\nu_\infty}:=\frac{1}{\sqrt{2}} ([T^+_f]+[T^-_f])$, with notation as in Example~\ref{eg:loxodromic_second_part}. 
 \end{itemize}
In both cases, Proposition~\ref{pro:kawaguchi} provides a unique closed positive current $S_{\nu_\infty}$ 
with class $[S_{\nu_\infty}]=w_{\nu_\infty}$ such that $P_{\nu_\infty}=\alpha(\nu_\infty) S_{\nu_\infty}$; it coincides with $\frac{1}{\sqrt 2} (T^+_f+T^-_f)$ in case (b).

In case (a), by the uniqueness in~Lemma~\ref{lem:unique_w},
the classes $w_{\nu_n}$ converge towards $w_{\nu_\infty}$; in case (b) this convergence holds by assumption.
Note that  the corresponding constants  $\alpha(\nu_n)$ converge as well.  

\begin{lem}\label{lem:convergence}
Under the above assumptions,
\begin{enumerate}[\em (1)]
\item  the sequence of  closed positive currents $(S_{\nu_n})$ converges towards $S_{\nu_\infty}$;
\item the canonical (continuous) potentials $ u_{\nu_n}$ converge uniformly to that  of $S_{\nu_\infty}$;
\item the sequence of measures $\mu_n:=S_{\nu_n}\wedge S_{\nu_n}$ converges towards $S_{\nu_\infty}\wedge S_{\nu_\infty}$.
\end{enumerate} 
In case {\em{(b)}},  $S_{\nu_\infty}\wedge S_{\nu_\infty}$ is the unique measure of maximal entropy $\mu_f$ of  $f$.
\end{lem}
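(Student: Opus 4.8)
The plan is to run the proof of Proposition~\ref{pro:kawaguchi} ``with parameters'' and track the dependence on $n$ uniformly. First I would fix once and for all the finite set $\{f_1,\dots,f_m\}$, a smooth form $\beta_\infty$ with $[\beta_\infty]=w_{\nu_\infty}$, and smooth forms $\beta_n$ with $[\beta_n]=w_{\nu_n}$ chosen so that $\beta_n\to\beta_\infty$ in $C^\infty$ (possible because $w_{\nu_n}\to w_{\nu_\infty}$ in $H^{1,1}(X;\R)$ and we can take $\beta_n$ to be a fixed linear combination of the reference forms $\kappa_i$ with coefficients depending continuously on the class). Writing $(\alpha(\nu_n)^{-1}P_{\nu_n})\beta_n=\beta_n+dd^ch_n$ as in the proof of Proposition~\ref{pro:kawaguchi}, the functions $h_n$ are smooth and, since $P_{\nu_n}\to P_{\nu_\infty}$, $\alpha(\nu_n)\to\alpha(\nu_\infty)>1$ and $\beta_n\to\beta_\infty$, we get $h_n\to h_\infty$ uniformly (indeed in $C^0$; a $C^0$ bound on $h_n$ is all we need) and a uniform bound $\sup_n\norm{h_n}_\infty<+\infty$ together with a uniform lower bound $\alpha(\nu_n)\geq\alpha_0>1$.

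Next I would invoke the geometric estimate \eqref{eq:kawa_geom_conv}: since $\norm{(P_{\nu_n})^j(h_n)}_\infty\leq\norm{h_n}_\infty$ for all $j$, the potentials
\[
u_{\nu_n}=h_{\nu_n}-\int_X h_{\nu_n}\,\vol,\qquad h_{\nu_n}=\sum_{j\geq0}\frac{1}{\alpha(\nu_n)^j}(P_{\nu_n})^j(h_n),
\]
are defined by series that converge uniformly in $n$ (the tail from $j=k$ on is bounded by $\frac{\alpha_0}{\alpha_0-1}\sup_n\norm{h_n}_\infty\,\alpha_0^{-k}$, independently of $n$). Thus to prove $u_{\nu_n}\to u_{\nu_\infty}$ uniformly it suffices to show termwise convergence $\frac{1}{\alpha(\nu_n)^j}(P_{\nu_n})^j(h_n)\to\frac{1}{\alpha(\nu_\infty)^j}(P_{\nu_\infty})^j(h_\infty)$ uniformly on $X$ for each fixed $j$; this is elementary since $P_{\nu_n}$ acts on $C^0(X)$ by $g\mapsto\sum_i\nu_n(f_i)\,g\circ f_i$, the $f_i$ are a fixed finite set, $\nu_n(f_i)\to\nu_\infty(f_i)$, $\alpha(\nu_n)\to\alpha(\nu_\infty)$, and $h_n\to h_\infty$ uniformly, so the claim follows by induction on $j$ and the triangle inequality. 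This gives assertion~(2); then $S_{\nu_n}=\Theta(w_{\nu_n})+dd^cu_{\nu_n}\to\Theta(w_{\nu_\infty})+dd^cu_{\nu_\infty}$ in the sense of currents (the forms $\Theta(w_{\nu_n})$ converge smoothly and $dd^c$ is continuous for uniform convergence of potentials), which is assertion~(1); note the limit is a closed \emph{positive} current by Proposition~\ref{pro:kawaguchi} applied to $\nu_\infty$ (legitimate in case (a) by Lemma~\ref{lem:unique_w}, and in case (b) because $w_{\nu_\infty}=\frac{1}{\sqrt2}([T^+_f]+[T^-_f])$ is big and nef with square $1$, so the hypotheses of Proposition~\ref{pro:kawaguchi} hold), and equals $\frac{1}{\sqrt2}(T^+_f+T^-_f)$ in case (b) by the uniqueness in Proposition~\ref{pro:kawaguchi} together with Example~\ref{eg:loxodromic_second_part}.

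For assertion~(3), the uniform convergence of continuous potentials is exactly the hypothesis under which Bedford--Taylor theory guarantees continuity of the Monge--Ampère operator: writing $S_{\nu_n}=\Theta(w_{\nu_n})+dd^cu_{\nu_n}$ with $u_{\nu_n}\to u_{\nu_\infty}$ uniformly and $\Theta(w_{\nu_n})\to\Theta(w_{\nu_\infty})$ smoothly, one has $S_{\nu_n}\wedge S_{\nu_n}\to S_{\nu_\infty}\wedge S_{\nu_\infty}$ weakly (this is the standard fact that $(\omega_n+dd^cv_n)^2\to(\omega+dd^cv)^2$ when $v_n\to v$ uniformly and $\omega_n\to\omega$; it reduces, after a partition of unity, to the local statement and an integration-by-parts argument). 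Finally, in case~(b), $S_{\nu_\infty}\wedge S_{\nu_\infty}=\frac12(T^+_f+T^-_f)\wedge(T^+_f+T^-_f)=T^+_f\wedge T^-_f$ since $T^\pm_f\wedge T^\pm_f=0$ (the classes $[T^\pm_f]$ are isotropic and the currents have continuous potentials, so their self-intersections have mass $[T^\pm_f]^2=0$ and are positive, hence vanish), and $T^+_f\wedge T^-_f=\mu_f$ is the measure of maximal entropy by Example~\ref{eg:loxodromic_second_part}. I expect the main obstacle to be purely expository: making precise and uniform the continuity of the local Bedford--Taylor product under uniform convergence of potentials, but this is classical and I would simply cite it.
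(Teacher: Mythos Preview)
Your proof is correct and follows essentially the same route as the paper. Both arguments rest on the geometric estimate~\eqref{eq:kawa_geom_conv} from Proposition~\ref{pro:kawaguchi} and on the continuity of the Monge--Amp\`ere product under uniform convergence of potentials; the only organizational difference is that you prove~(2) directly via a termwise-plus-uniform-tail argument and deduce~(1), whereas the paper first gets~(1) by compactness of the space of mass-one currents together with the uniqueness in Proposition~\ref{pro:kawaguchi}, and then obtains~(2) by observing that~\eqref{eq:kawa_geom_conv} forces the family $(u_{\nu_n})$ to be equicontinuous and invoking uniqueness of the normalized potential.
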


\begin{proof} 
The first assertion follows from the uniqueness of the current $S_{\nu}$, obtained in Proposition~\ref{pro:kawaguchi}, and 
the compactness of the space of currents of mass 1. 
The geometric convergence obtained from Equation~\eqref{eq:kawa_geom_conv} shows that the sequence of potentials $ u_{\nu_n}$ is 
equicontinuous, and by the uniqueness of the normalized potentials, it follows that $ (u_{\nu_n})$ converges 
uniformly to $ u_{\nu_\infty}$. Then the convergence of the sequence of measures
$(\mu_n)$ follows from the continuity properties of wedge products of currents (see \cite[III.3.6]{demailly_agbook}). 
Finally, the characterization of  $S_{\nu_\infty}\wedge S_{\nu_\infty}$
in case (b) was explained in Example~\ref{eg:loxodromic_second_part}.
\end{proof}

\subsection{Rational invariant classes}\label{par:rational_invariant_classes}
We now construct sequences of probability measures for which 
the fixed classes $w_{\nu_n}$ have good positivity and integrality properties; the last 
assertion makes use of the contraction $\pi_0\colon X\to X_0$ constructed in 
Proposition~\ref{pro:contraction}. 

\begin{pro}\label{pro:kawa_sequence_nun}
Let $\Gamma$ be a non-elementary subgroup of $\Aut(X)$ such that $\Pi_\Gamma$ 
is defined over $\Q$.
Let $f$ be a loxodromic element of $\Gamma$. There is a sequence $(\nu_n)$ of probability measures  on $\Aut(X)$ 
such that 
\begin{enumerate}[\em (1)]
\item The support ${\mathrm{Supp}}(\nu_n)$ is a finite subset $F$ of $\Gamma$ that does not depend on $n$ and 
generates a non-elementary subgroup of $\Gamma$ containing $f$;
\item $\nu_n(g)$ is a positive  rational number for all $g\in F$;
\item the unique  eigenvector $w_{\nu_n}$  of $P_{\nu_n}$  in $\Hyp_X$ is an element of $\R_+\NS(X;\Z) $;
\item the corresponding eigenvalue $\alpha(\nu_n)$ belongs to $\Q_+\cap ]1,+\infty[$;
\item $\nu_n$ converges to the measure $\frac{1}{2}(\delta_{f}+ \delta_{f^{-1}})$ and 
$  w_{\nu_n}$ converges to 
 $\frac1{\sqrt2}({[T^+_f]+ [T^-_f]})$.  \end{enumerate}
If $\Gamma$ contains a parabolic element $g$, one can furthermore assume 
 that $g$ belongs to $ F$ and that $w_{\nu_n}\in \R_+\pi_0^*[A_n]$ for some  ample line bundle
$A_n$ on $X_0$. 
\end{pro}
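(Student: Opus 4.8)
The plan is to construct the $\nu_n$ by first choosing the target eigenclasses $w_n$ and then solving a \emph{linear} eigenvalue equation with positive rational weights. Write $w_\infty=\frac1{\sqrt2}([T^+_f]+[T^-_f])$; it lies in $\Pi_f\subset\Pi_\Gamma$, is big and nef with $\langle w_\infty\vert w_\infty\rangle=1$, and satisfies $f^*w_\infty+(f^{-1})^*w_\infty=2\alpha(f)\,w_\infty$ (direct computation with $f^*[T^\pm_f]=\lambda(f)^{\pm1}[T^\pm_f]$). First I would record a reduction: for any finite $F\subset\Gamma$ generating a non-elementary group and any probability measure $\nu$ supported on $F$, the operator $P_\nu=\sum_g\nu(g)g^*$ preserves the orthogonal decomposition $H^{1,1}(X;\R)=\Pi_\Gamma\oplus\Pi_\Gamma^\perp$; on $\Pi_\Gamma^\perp$ the intersection form is negative definite and $P_\nu$ is an average of isometries, hence of spectral radius $\le1<\alpha(\nu)$ there. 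So the Perron eigenvector $w_\nu$ of Lemma~\ref{lem:unique_w} lies in $\Pi_\Gamma$, and moreover any $w\in{\mathrm{Pos}}(X)$ with $P_\nu w\in\R_+w$ equals $w_\nu$ up to a positive scalar, with eigenvalue $\alpha(\nu)$. Hence it is enough to produce, for a \emph{single} support $F$, rational probability measures $\nu_n$ on $F$ with $\nu_n\to\tfrac12(\delta_f+\delta_{f^{-1}})$, rationals $\alpha_n>1$, and classes $w_n\in{\mathrm{Pos}}(X)\cap\Pi_\Gamma$ with $w_n\to w_\infty$ and $P_{\nu_n}w_n=\alpha_n w_n$, where $w_n$ is a class in $\NS(X;\Q)$ (resp.\ a positive rational multiple of $\pi_0^*[A_n]$ with $A_n$ ample on $X_0$, in the parabolic case).

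Next I would set up the incidence problem. Choose $h_1,\dots,h_k\in\Gamma$ so that $F=\{f,f^{-1},h_1^{\pm1},\dots,h_k^{\pm1}\}$ (adjoining $g,g^{-1}$ in the parabolic case) generates a non-elementary group and the transverse projections of $(h_i^{\pm1})^*w_\infty$ onto $\Pi_f^\perp\cap\Pi_\Gamma$ span that subspace — possible because $\Gamma$ acts strongly irreducibly on $\Pi_\Gamma$, so the orbit of $w_\infty$ spans $\Pi_\Gamma$. For $w$ near $w_\infty$ in $\Pi_\Gamma$ I want to solve $\sum_{g\in F}\mu_g\,g^*w=\alpha w$ with $\sum\mu_g=1$, all $\mu_g>0$, $\alpha>1$, and $(\mu,\alpha)$ close to $(\mu^0,\alpha(f))$ with $\mu^0=\tfrac12(\delta_f+\delta_{f^{-1}})$, which solves the system at $w=w_\infty$ by the relation above. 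Decomposing along $H^{1,1}(X;\R)=\Pi_f\oplus\Pi_f^\perp$: the unknowns $\mu_f,\mu_{f^{-1}},\alpha$ match the $\Pi_f$-component and the normalization $\sum\mu_g=1$ (a $3$-dimensional linear system, nondegenerate near $w_\infty$ since $f^*w_\infty,(f^{-1})^*w_\infty$ span $\Pi_f$), while the $\Pi_f^\perp$-component reads, to first order, $\sum_i\mu_{h_i^+}(h_i^*w)^\perp+\mu_{h_i^-}((h_i^{-1})^*w)^\perp=\bigl(\alpha(f)\,\mathrm{Id}-\tfrac12(f^*+(f^{-1})^*)\bigr)w^\perp$, the operator on the right being invertible on $\Pi_f^\perp\cap\Pi_\Gamma$ (eigenvalues in $[\alpha(f)-1,\alpha(f)+1]$).

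Here is the crux. Since the $\mu_{h_i^\pm}$ must be strictly positive, the left side of that last equation ranges exactly over the full-dimensional open convex cone generated by the transverse vectors $((h_i^{\pm1})^*w_\infty)^\perp$, which span $\Pi_f^\perp\cap\Pi_\Gamma$; applying the inverse operator, this says precisely that $w^\perp$ must lie in a fixed full-dimensional open cone $V^*\subset\Pi_f^\perp\cap\Pi_\Gamma$, with no constraint at all on $w^\parallel$. In other words, no Gordan-type obstruction appears, because we are free to move $w$ off the plane $\Pi_f$ and let the transverse component $w^\perp$ absorb the positivity demanded of the weights on the $h_i^{\pm1}$. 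Making this precise with the implicit function theorem — the incidence set $\{(w,\mu,\alpha):\sum\mu_g g^*w=\alpha w,\ \sum\mu_g=1\}$ is a smooth manifold near $(w_\infty,\mu^0,\alpha(f))$ submersing onto the $w$-variable, after possibly enlarging $F$ so that the differential $(\mu,\alpha)\mapsto(\sum\mu_g g^*w_\infty-\alpha w_\infty,\sum\mu_g)$ is onto — one gets: the set $\Omega$ of $w$ near $w_\infty$ for which the system has a solution with all $\mu_g>0$, $\alpha>1$ close to $(\mu^0,\alpha(f))$ is a non-empty open subset of $\Pi_\Gamma$ whose germ at $w_\infty$ is a full-dimensional cone (it contains every direction $\dot w$ with $\dot w^\perp\in V^*$).

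Finally I would extract the $w_n$. Since $\Pi_\Gamma$ is defined over $\Q$, rational classes are dense in $\Pi_\Gamma$ and their transverse components are dense near $0$ in $\Pi_f^\perp\cap\Pi_\Gamma$, hence some rational $w_n\in\Omega$ converge to $w_\infty$; in the parabolic case one further notes that $w_\infty$ lies in the closure of the cone $\mathcal C$ of Remark~\ref{rem:contraction} (a positive combination of $[T^\pm_f]$, whose rays lie in $\Lim(\Gamma)$), that the germ of $\mathcal C$ at $w_\infty$ contains a half-space (so the full-dimensional cone $\Omega$ meets $\mathcal C$ arbitrarily near $w_\infty$), and that the classes $\alpha\pi_0^*[A]$ with $A$ ample on $X_0$ and $\alpha\in\Q_+^*$ are dense in $\mathcal C$, to arrange $w_n=\alpha_n\pi_0^*[A_n]\in\Omega$ with $w_n\to w_\infty$. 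For each $n$, pick $(\mu^{(n)},\alpha_n)$ solving the system at $w=w_n$ with all $\mu^{(n)}_g>0$, $\alpha_n>1$, close to $(\mu^0,\alpha(f))$; as $w_n$ is rational the solution set is a rational affine space meeting the open orthant, so $\mu^{(n)},\alpha_n$ may be taken rational. Setting $\nu_n=\sum_g\mu^{(n)}_g\delta_g$, the reduction of the first paragraph gives $w_{\nu_n}=w_n/\langle w_n\vert w_n\rangle^{1/2}$ and $\alpha(\nu_n)=\alpha_n$, and properties (1)--(5) together with the final refinement follow at once (in particular $w_{\nu_n}\to w_\infty$ since $\langle w_\infty\vert w_\infty\rangle=1$). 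I expect the genuinely delicate point to be the third paragraph: that one can perturb the ``pure $f$'' measure $\tfrac12(\delta_f+\delta_{f^{-1}})$ inside a fixed finite support into the positive orthant while keeping the dominant eigenclass rational and (in the parabolic case) inside $\mathcal C$ — the mechanism being that moving $w$ transversally off $\Pi_f$ exactly compensates the required positivity of the auxiliary weights.
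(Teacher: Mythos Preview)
Your overall strategy—fix the target class $w$ first, then solve the \emph{linear} eigenvalue equation in $(\mu,\alpha)$—is sound and different from the paper's. The paper instead invokes Burnside's theorem to pick $f_1=f,f_2=f^{-1},\dots,f_N\in\Gamma$ whose actions form a basis of $\End(\Pi_\Gamma)$; then a connectedness argument on the thin simplex $\Delta_N^\circ(\varepsilon)$ shows that the (normalized) Perron class $w_\nu$ sweeps the whole segment $[\theta_f^-,\theta_f^+]$, so some $\nu_n$ in the \emph{open} simplex has $w_{\nu_n}\to w_\infty$. Because $\nu_n$ already lies in the interior, Burnside makes $\nu\mapsto w_\nu$ locally open, and a small perturbation of $\nu_n$—still with all weights positive—moves $w_{\nu_n}$ onto a rational class (and, in the parabolic case, onto a pullback of an ample class on $X_0$). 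Your route trades Burnside + connectedness for an implicit-function/cone analysis starting at the boundary point $\mu^0=\tfrac12(\delta_f+\delta_{f^{-1}})$; this is more hands-on and uses less about $\Gamma$, which is appealing.

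For items (1)--(5) your argument is essentially correct: the transverse equation and the $3\times3$ system on $\Pi_f$ are set up correctly, the operator $\alpha(f)\,\mathrm{Id}-\tfrac12(f^*+(f^{-1})^*)$ is indeed invertible on $\Pi_f^\perp\cap\Pi_\Gamma$, the open cone $C$ generated by the transverse vectors is full-dimensional once they span (image of the positive orthant under a surjection), and the rationality of $(\mu,\alpha)$ follows because the fibre $L_{w_n}$ is a rational affine subspace meeting an open set.

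The genuine gap is in the parabolic refinement. Your assertion that ``the germ of $\mathcal C$ at $w_\infty$ contains a half-space'' is not justified and is false in general: $\overline{\mathcal C}$ is the closed convex cone over $\Lim(\Gamma)$, and $w_\infty$ may sit on an edge or a lower-dimensional face of $\mathrm{Conv}(\Lim(\Gamma))$ (this is exactly the obstruction analyzed later in the paper around Lemma~\ref{lem:convex}). In that situation the tangent cone $T_{w_\infty}\overline{\mathcal C}$ need not contain a half-space, and there is no reason for your open cone $\Omega$ (whose tangent cone is $\{\dot w:\dot w^\perp\in V^*\}$ with $V^*$ determined by your chosen $h_i$) to meet $\mathcal C$ near $w_\infty$: two full-dimensional cones at a common apex can be disjoint. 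The paper avoids this entirely because its $\nu_n$ already lies in the open simplex, so $w_{\nu_n}$ is the Perron eigenvector of an operator preserving the $\Gamma$-invariant cone $\overline{\mathcal C}$, hence $w_{\nu_n}\in\overline{\mathcal C}$ automatically; then any full open neighborhood of $w_{\nu_n}$ meets $\mathrm{Int}(\mathcal C)$, where the classes $\pi_0^*[A]$ are dense. To repair your argument you would need either to land first in the open simplex (essentially reproducing the paper's connectedness step) or to show that the $h_i$ can be chosen so that $V^*$ meets the projection of $T_{w_\infty}\overline{\mathcal C}$ to $\Pi_f^\perp$—a nontrivial statement about the positions of the vectors $((h^*)w_\infty)^\perp$ relative to $\Lim(\Gamma)$ that you have not established.
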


\begin{proof} 
For  the proof we use the  conventions  of  \S \ref{subs:loxodromic}, in particular the classes 
$\theta_f^\pm$, which can be  defined by $\theta^\pm_f= \langle T^\pm_f\vert [\kappa_0] \rangle^{-1} [T^\pm_f]$.

\smallskip

{\bf{Step 1}}.-- Since the representation of $\Gamma$ on $\Pi_\Gamma$ is irreducible, it is also 
irreducible over~$\C$. Indeed, if $W$ is a proper, $\Gamma$-invariant, complex subspace of $\Pi_\Gamma\otimes_\R\C$,
then $W$ does not contain any non-zero real vector $u\in H^{1,1}(X;\R)$; in particular, it does not contain any isotropic 
eigenvector of any loxodromic element of $\Gamma$. This implies that $W$ is contained in the orthogonal complement
$(\theta_h^+)^\perp$ for all   $h\in \Gamma_{\mathrm{lox}}$. But in $\Pi_\Gamma$ the intersection 
$\bigcap_{h\in \Gamma_{\mathrm{lox}}} (\theta_h^+)^\perp$ is defined over $\R$ and is $\Gamma$-invariant, 
so it is trivial. 

Thus, according to Burnside's theorem (see~\cite{Halperin-Rosenthal}), $\Gamma$ contains a basis of the real vector space $\End(\Pi_\Gamma)$. More precisely, one can 
find a basis  $(f_1^*, f_2^*, \ldots, f_N^*)$  with $N=(\dim \Pi_\Gamma)^2$ such that $f_i\in \Gamma$ for all $i$, $f_1=f$ and $f_2=f^{-1}$ (indeed, $f$ and $f^{-1}$ are linearly independent endomorphisms).  In particular, the set of linear combinations $\sum_i \alpha_i f_i^*$ with $\alpha_i\geq 0$
contains a non-empty, open, and  convex cone of $\End(\Pi_\Gamma)$. 

If $\Gamma$ contains a parabolic element $g$ we can further require that $g$ belongs to the basis, 
 because $f^*$, $g^*$, and $(f^{-1})^*$ are linearly independent, as can be seen by diagonalizing~$f^*$.

\smallskip

{\bf{Step 2}}.-- Set $F= \{ f_1, f_2, \cdots, f_N\}$ and $\Delta_N=\{(\nu_i)\in \R_+^N\; ;\;  \sum_i \nu_i=1\}$. Let $\Delta_N^\circ$ be the interior
of this simplex. Points in $\Delta_N^\circ$ correspond to probability measures $\nu=\sum_i \nu_i \delta_{f_i}$ whose support is equal to $F$.
When $\nu\in \Delta_N^\circ$, the group $\Gamma_\nu$ is non-elementary by Step 1, so
by Lemma \ref{lem:unique_w} $P_\nu$ has a unique 
  fixed point $w_\nu$ in $\Hyp_X$. As a consequence, the map $\nu\in \Delta^\circ\mapsto w_\nu$ is continuous. 

Now, consider a sequence $(\nu_n)$ of elements of $\Delta_N^\circ$ converging to $a \delta_f+(1-a)\delta_{f^{-1}}$, with $0<a<1$.
Normalize the fixed point $w_{\nu_n}$ by setting $\overline w_n:=\langle w_{\nu_n}\vert [\kappa_0]\rangle^{-1} w_{\nu_n}$, so that $\langle \overline w_{n} \vert [\kappa_0]\rangle=1 $ and $\overline w_n$ 
stays in a compact subset of $H^{1,1}(X;\R)$. If $\overline w_{n_j}$ 
converges to $\overline w$ along a subsequence $(n_j)$ the limit is a nef eigenvector of the operator $af^*+(1-a)(f^{-1})^*$ associated to  an eigenvalue 
$\geq 1$. Thus, if $a$ is small the limit must be equal to $ \theta^-_f$ 
and the sequence 
$(\overline w_n)$ converges towards this eigenvector (see Example~\ref{eg:loxodromic_first_part}). 
Conversely, if $1-a$ is small, then the limit is $ \theta^+_f$.

The subset 
\begin{equation}
\Delta_N^\circ(\e)=\{(\nu_i)\in \Delta_N^\circ\; ;\; \nu_i\leq \e, \forall i\geq 3\}
\end{equation}
is connected. So, the closure of its image by the continuous 
 map $\nu\mapsto \langle w_{\nu}\vert [\kappa_0]\rangle^{-1}  w_\nu$ is a 
compact and connected subset of $\Pi_\Gamma$, and the intersection of these compact sets is also  connected. 
This set is contained in the segment $[\theta_f^-, \theta_f^+]$ because it is contained in 
${\overline{{\mathrm{Pos}}(X)}}$ and in the union of eigenvectors of $a f^* + (1-a)(f^{-1})^*$, for $a\in [0,1]$.  Since it contains the
endpoints of this segment,  it actually coincides with it. 

From this we deduce  that there exists  a sequence of probability measures $\nu_n\in \Delta_N^\circ$ such that 
$\langle w_{\nu_n}\vert [\kappa_0]\rangle^{-1} w_{\nu_n}$
 converges to the class $ \frac12(\theta^+_f+\theta^-_f) $, hence 
\begin{itemize}
\item $  w_{\nu_n}$
 converges to the class $ \frac1{\sqrt2}([T^+_f]+[T^-_f]) $. 
\end{itemize}

Then the arguments given in Example~\ref{eg:loxodromic_first_part}, Example~\ref{eg:loxodromic_second_part},  and Lemma~\ref{lem:convergence} show that:
\begin{itemize}
\item $P_{\nu_n}$ converges towards $\frac{1}{2}(f^*+(f^{-1})^*)$;
\item $\alpha(\nu_n)$ converges towards $\alpha(f)=\frac{1}{2}(\lambda(f)+1/\lambda(f))$.
\end{itemize}

\smallskip

{\bf{Step 3}}.-- At this stage the coefficients 
$\nu_n(f_i)$ and the eigenvalues $\alpha(\nu_n)$ 
are positive real numbers.  Let $U_n$ be a small open 
neighborhood of $\nu_n=(\nu_n(f_i))$ in  $\Delta_N^\circ$. As a consequence of the first step, the map $\nu'\in U_n\mapsto w_{\nu'}$ 
contains a small neighborhood of $w_{\nu_n}$ in $\Pi_\Gamma\subset \NS(X;\R)$. Thus, after a small perturbation of $\nu_n$ we may assume that
$w_{\nu_n}$ is an element of $\R_+\NS(X;\Z)$.
According to Proposition~\ref{pro:contraction} and 
Remark~\ref{rem:contraction}, when $\Gamma$ contains parabolic elements, 
we may further  choose $w_{\nu_n}$ to be 
proportional to the pullback $[\pi_0^*A_n]$ of an ample class.

The equation satisfied by $w_{\nu_n}$ is  $\alpha(\nu_n) w_{\nu_n} = \sum_i \nu_n(f_i) f_i^*(w_{\nu_n})$. Write $w_{\nu_n}=\eta_n {\tilde{w}}_n$ for some ${\tilde{w}}_n$
in $\NS(X;\Q)$ and $\eta_n$ in $\R_+$; the equation becomes 
\begin{equation}
{\alpha(\nu_n)} {\tilde{w}}_n = \sum_{i=1}^N  {\nu_n(f_i)}f_i^*({\tilde{w}}_n).
\end{equation}
This is  a linear relation  of the form $\beta_0 {\tilde{w}}_n = \sum_i  \beta_i f_i^*({\tilde{w}}_n)$, 
where ${\tilde{w}}_n$ and the $f_i^*({\tilde{w}}_n)$ belong to $\NS(X;\Q)$
 and the $\beta_i$ are positive real numbers (with $\beta_0>1$). Thus, given any $\e >0$, 
there is a relation of the form 
 ${\tilde{\beta_0}}{\tilde{w}}_n = \sum_i \tilde \beta_i f_i^*({\tilde{w}}_n)$   where the  coefficients 
$\tilde \beta_i$ are   rational numbers  which are $\e$-close
to the original  $\beta_i $. 
This proves that we can perturb $\nu_n$ one more time to add the assumption that
 the $\nu_n(f_i) $ and $\alpha(\nu_n)$ are positive rational numbers. 
\end{proof}

\subsection{Arithmetic equidistribution} \label{subs:arithmetic_equidistribution}
Assume that the normal projective surface $X$ and the subgroup $\Gamma$ of $\Aut(X)$ are defined over some number field  $\bfk\subset \Qbar$.
($X$ is not assumed to be smooth here.)
For $y$ in $X(\Qbar)$, let $m_y$ denote the uniform probability measure supported on the Galois orbit of $y$,
\begin{equation}
m_y=\frac{1}{\deg(y)} \sum_{y'\in \Gal(\Qbar : \bfk)(y)}\delta_{y'};
\end{equation}
here, $\deg(y)$ is the degree of the closed point defined by $y$, 
or equivalently the cardinality of the orbit of $y$ under 
the action of the Galois group $\Gal(\Qbar : \bfk)$, and the sum ranges over all points $y'$ in this orbit.
A sequence $(x_j)$ of points of $X(\Qbar)$ is {\bf{generic}} if the only Zariski closed subset of $X$ 
containing
infinitely many of the $x_j$'s is $X$. Equivalently, $(x_j)$ converges to the generic point of $X$ for the Zariski 
topology.    

In the following theorem, $\Gamma_\nu$ is the group generated by the support of $\nu$, and $S_\nu$ is the current
given by Proposition~\ref{pro:kawaguchi} and Remark~\ref{rem:kawaguchi} (associated to
 the normalized class $\langle w \vert w \rangle^{-1/2}w$).

\begin{thm}\label{thm:berman-boucksom_periodic}
Let $X$ be a normal projective surface defined over a number field $\bfk$.
Let $\nu$ be a probability measure on $\Aut(X_\bfk)$ with finite support $F$ and rational weights $\nu(f)\in \Q_+$, for $f$ in $F$. Assume that
\begin{itemize}
\item[(i)] $P_\nu^*w=\alpha w$
for some ample class $w$ in $\NS(X;\Q)$ and some eigenvalue $\alpha>1$; 
\item[(ii)] $(x_j)\in X(\Qbar)^\N$ is a generic sequence such that each $x_j$ is a periodic point of $\Gamma_\nu$.
\end{itemize}
Then, the sequence of probability measures $(m_{x_j})$ 
converges   towards the measure $S_\nu\wedge S_\nu$, and this measure is $\Gamma_\nu$-invariant.
\end{thm}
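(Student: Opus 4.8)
The plan is to realize $S_\nu\wedge S_\nu$ as the archimedean equilibrium measure of a semipositive adelic line bundle, and then apply Yuan's arithmetic equidistribution theorem. First I would build that adelic line bundle. Choosing $N\geq 1$ so that $Nw\in\NS(X;\Z)$ and fixing a line bundle $L$ on $X$ with $c_1(L)=Nw$ (possible since $\Pic\to\NS$ is onto), and clearing a common denominator $Q$ in the weights $\nu(f)$ and in $\alpha$, hypothesis~(i) gives the relation $\sum_{f\in F}a_f\,c_1(f^{*}L)=c\,c_1(L)$ in $\NS(X;\Z)$ with positive integers $a_f=Q\nu(f)$ and $c=Q\alpha$; thus $\bigotimes_{f\in F}(f^{*}L)^{\otimes a_f}$ and $L^{\otimes c}$ differ by a class algebraically equivalent to zero. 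Assuming this class trivial for simplicity — in general one carries it along equipped with a flat metric, as in \cite{Kawaguchi:AJM}, which changes nothing below — we obtain an isomorphism $\bigotimes_{f\in F}(f^{*}L)^{\otimes a_f}\cong L^{\otimes c}$. At each place $v$ of $\bfk$, Tate's fixed-point procedure applied to this isomorphism produces a unique continuous, semipositive metric $\|\cdot\|_v$ on $L$ over $X_v^{\mathrm{an}}$: at the archimedean places this is Proposition~\ref{pro:kawaguchi} together with Remark~\ref{rem:kawaguchi}, and at the finite places the same contraction estimate on the Berkovich analytification, as in~\cite{Kawaguchi:Crelle}; semipositivity holds because $\|\cdot\|_v$ is a uniform limit of averages of pullbacks of a single semipositive (Fubini--Study, resp.\ model) metric under $\alpha^{-1}P_\nu$. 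At all but finitely many $v$ this is the model metric, so $\overline L=(L,(\|\cdot\|_v)_v)$ is a nef adelic line bundle ($L$ being ample), and its archimedean curvature current is the unique closed positive current of class $Nw$ fixed by $\alpha^{-1}P_\nu$, namely $N\langle w\vert w\rangle^{1/2}S_\nu$, where $S_\nu$ is as in Proposition~\ref{pro:kawaguchi}.

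Next I would check that $\Gamma_\nu$-periodic points have zero height for $\overline L$. Write $h=h_{\overline L}$. By construction $h=h_L+O(1)$ and $h$ satisfies the \emph{exact} identity $\sum_{f\in F}\nu(f)\,h\circ f=\alpha\,h$; indeed the telescoping Tate series used to build the local metrics also exhibits $h$ as $\lim_{n}\alpha^{-n}\int h_L(g(\cdot))\,d\nu^{\star n}(g)$, and a bounded-error solution of this functional equation is unique. Because $L$ is ample, $h_L$ is bounded below, so this last formula gives $h\geq 0$ on $X(\Qbar)$. If $x$ is $\Gamma_\nu$-periodic with finite orbit $O=\Gamma_\nu\cdot x$, then summing the functional equation over $y\in O$ and using that each $f\in F\subset\Gamma_\nu$ permutes $O$ while $\sum_f\nu(f)=1$, the left-hand side collapses to $\sum_{y\in O}h(y)$; hence $(\alpha-1)\sum_{y\in O}h(y)=0$, and since $\alpha>1$ and $h\geq0$ this forces $h(y)=0$ for all $y\in O$, in particular $h(x_j)=0$ for every $j$.

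Then I would feed this into the equidistribution theorem. Since $(x_j)$ is generic and $h(x_j)=0$, the essential minimum satisfies $e_{\mathrm{ess}}(\overline L)\leq\liminf_j h(x_j)=0$; on the other hand $\overline L$ is nef, so $\widehat{\deg}(\overline L^{\,3})\geq 0$, i.e.\ the normalized height $h_{\overline L}(X)=\widehat{\deg}(\overline L^{\,3})/(3\deg_L X)$ is $\geq 0$, while Zhang's inequality gives $e_{\mathrm{ess}}(\overline L)\geq h_{\overline L}(X)$. Hence $h_{\overline L}(X)=e_{\mathrm{ess}}(\overline L)=0$ and $h(x_j)\to h_{\overline L}(X)$, so Yuan's theorem~\cite{Yuan:Inventiones} (see also~\cite{Berman-Boucksom:Inventiones}) gives, at the archimedean place, $m_{x_j}\to\frac{1}{\deg_L X}c_1(\overline L)^{\wedge 2}$; as $\deg_L X=(Nw)^2=N^2\langle w\vert w\rangle$ and $c_1(\overline L)^{\wedge 2}=N^2\langle w\vert w\rangle\,S_\nu\wedge S_\nu$, this limit is exactly $S_\nu\wedge S_\nu$. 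For $\Gamma_\nu$-invariance, fix $g\in\Gamma_\nu$: the sequence $(g(x_j))_j$ is again generic (because $g$ maps proper Zariski closed sets to proper ones) and consists of $\Gamma_\nu$-periodic points (as $g(x_j)\in\Gamma_\nu\cdot x_j$), so the argument just given yields $m_{g(x_j)}\to S_\nu\wedge S_\nu$; but $g$ is defined over $\bfk$, hence commutes with the Galois action and $m_{g(x_j)}=g_{*}m_{x_j}$, so passing to the limit gives $g_{*}(S_\nu\wedge S_\nu)=S_\nu\wedge S_\nu$. Since $g\in\Gamma_\nu$ was arbitrary, $S_\nu\wedge S_\nu$ is $\Gamma_\nu$-invariant.

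The main obstacle is the first step: constructing a semipositive adelic metric at \emph{all} places — in particular at the non-archimedean places, and on a surface $X$ that is only assumed normal — verifying its semipositivity so that $\overline L$ is nef, checking that the associated height is the canonical one obeying the exact functional equation, and disposing cleanly of the $\Pic^{0}$-ambiguity in lifting (i) from $\NS$ to $\Pic$. (Replacing $\bfk$ by a finite extension over which $L$ is defined is harmless, by the standard reduction for generic sequences.) Once $\overline L$ is available, the remaining ingredients — vanishing of the height on periodic orbits, Zhang's inequalities, and Yuan's theorem — are essentially formal.
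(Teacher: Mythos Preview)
Your proof is correct and follows essentially the same strategy as the paper: construct a semipositive adelic metrized line bundle via Kawaguchi's Tate-type iteration, show that $\Gamma_\nu$-periodic points have canonical height zero, and apply Yuan's equidistribution theorem together with Zhang's inequality. The paper handles the $\Pic^0$ ambiguity differently---by first reducing to the case $\Pic^0(X)=0$ via a structural argument (the abelian-surface case being treated separately through Szpiro--Ullmo--Zhang)---and proves the vanishing of the height on periodic orbits by iterating the functional equation rather than by your orbit-averaging trick, but these are minor variations within the same overall scheme.
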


It is important here that $w$ is a rational class, that is $w\in \NS(X;\Q)$ instead of
just $\NS(X;\R)$, 
since we rely on results of Kawaguchi, Yuan and Zhang that require this assumption. It is 
also crucial that $X$ is not supposed to be smooth because this result will be applied 
to the model $X_0$ constructed in \S \ref{par:periodic_curves_ampleness}.
When $\Gamma_\nu$ is non-elementary, the eigenvector $w$ must be proportional to $w_\nu$  and $\alpha=\alpha(\nu)$ (as in Lemma~\ref{lem:unique_w}).

As explained in Section~\ref{par:strategy_5}, 
a consequence of the theorem is that the limit $S_\nu\wedge S_\nu$ 
of the sequence $(m_{x_j})$, depends only on $\Gamma_\nu$ (but not on the weights $\nu(f)$); 
this will ultimately imply that  Assumption~(ii)  happens only in very rare situations.   

\begin{eg}\label{eg:suz}
Under the assumption of Theorem~\ref{thm:berman-boucksom_periodic}, assume furthermore that $X$ is an abelian surface. 
Since $\Gamma_\nu$ has a periodic point $x_1$, the stabilizer $\Gamma_{x_1}={\mathrm{Stab}}_{\Gamma_\nu}(x_1)$ has finite index in $\Gamma_\nu$; conjugating by a translation we can take  $x_1$ as
the neutral  element for the group law on $X\simeq \C^2/\Lambda$. Then, the periodic points of $\Gamma_{x_1}$ (and of $\Gamma_\nu$) are exactly the 
torsion points of $X$ (see~\S \ref{par:abelian}). By the equidistribution theorem 
of Szpiro, Ullmo, and Zhang, the measures $m_{x_j}$ converge towards the Haar measure of $X$ (see~\cite{Szpiro-Ullmo-Zhang}). 
Also, $\Gamma_{x_1}$ is induced by a 
subgroup $\tilde\Gamma_{x_1}$ of $\GL_2(\C)$ preserving the lattice $\Lambda$, and $\Gamma_\nu$ is a group of affine automorphisms
with linear part given by $\tilde\Gamma_{x_1}$ and translation part given by the finite subset $\Gamma_\nu(0)\subset X$. Every cohomology
class $u$ in $H^{1,1}(X;\R)$ has a distinguished representative, given by the unique translation invariant 
$(1,1)$-form $\omega_u$ on $X$ such that $[\omega_u]=u$. Since $\Gamma_\nu$ acts by affine automorphisms, the operator
$\alpha^{-1}P_\nu$ preserves $\omega_{w_\nu}$, and the current $S_\nu$ is given by $\langle w_\nu \vert w_\nu\rangle^{-1/2}\omega_{w_\nu}$.  Thus, for abelian 
surfaces, Theorem~\ref{thm:berman-boucksom_periodic} corresponds to the theorem of Szpiro, Ullmo, and Zhang together with the fact that
$\omega_{w_\nu}\wedge \omega_{w_\nu}$ is proportional to the volume form inducing 
 the Haar measure on $X$. \end{eg}

\begin{proof}[Preliminary remarks for the proof of Theorem~\ref{thm:berman-boucksom_periodic}]
Set  $\Gamma=\Gamma_\nu$.
Let $\pi\colon Y\to X$ be a minimal resolution of $X$;  it is unique, up to isomorphism (see~\cite[\S III.6]{BHPVDV}, Theorems~(6.1) and (6.2)), and $\Gamma$
lifts to a group of automorphisms $\Gamma_Y$ of $Y$; we shall also consider $\nu$ as a measure on $\Gamma_Y$. The pull-back $\pi^*\colon \Pic(X)\to \Pic(Y)$ is an embedding,
and   an isometry for the intersection form. Since $\pi^*\NS(X;\R)$ is $\Gamma_Y$-invariant and contains classes with positive self-intersection, we deduce that $\Gamma$ is elementary if and only if $\Gamma_Y$ is, and $\Pi_{\Gamma_Y}=\pi^*\Pi_{\Gamma}\subset \pi^*\NS(X;\R)$ if $\Gamma$  is non-elementary. Also, $\pi^*w$ satisfies $P_\nu(\pi^*w)=\alpha \pi^*w$ in $\NS(Y;\R)$.

If $\Gamma$ is elementary, we apply \cite[Theorem 3.2]{Cantat:Milnor} on $Y$. Two cases may occur. In the first case, 
$\Gamma_Y$ fixes a class $u\neq 0$ in the closure of the positive cone $\overline{\mathrm{Pos}}(Y)$; but then $\langle \pi^*w\vert u\rangle>0$ because $\langle \pi^*w\vert \pi^*w\rangle >0$, and $\langle P_\nu(\pi^*w)\vert u\rangle= \langle \pi^*w\vert u\rangle$ 
because $u$ is invariant: this contradicts $\alpha >1$. So, we are in fact in the second case: $\Gamma_Y$ (resp. $\Gamma$) contains a loxodromic element $f$ and preserves the pair of lines $\R[T^+_f]\cup \R[T^{-}_f]$. So, even when
 $\Gamma$ is elementary, we know that it contains a loxodromic element.

Now, assume that $\Pic^0(X)$ is non-trivial; equivalently, $\Pic^0(Y)$ is non-trivial. 
Since $\Gamma_Y$ contains a loxodromic element, we deduce from \cite[Theorem 10.1]{Cantat:Milnor}  
that $Y$ is a blow-up of an abelian surface
(for $\Pic^0(Y)$ is trivial  when $Y$ is birationally equivalent to a rational, K3, or Enriques surfaces).
But then, $X$ is smooth and is also a blow-up of an abelian surface.  If $X$ itself is not  an abelian  surface,
the exceptional divisor $E$ of the blow up is $\Gamma$-invariant; as above since $w$ is ample 
and $\alpha>1$ we obtain a contradiction. 
So $X$ is abelian, and Theorem~\ref{thm:berman-boucksom_periodic}  follows from the discussion in Example~\ref{eg:suz}. 
Thus in what follows, we may   assume $\Pic^0(X)=0$ to simplify the exposition. 
\end{proof}

\begin{proof}[Proof of Theorem~\ref{thm:berman-boucksom_periodic}] We now assume that $\Pic^0(X)=0$ and that $\Gamma_\nu$
contains a loxodromic element. 

The proof is based on standard ideas from arithmetic equidistribution theory. For the reader's 
convenience we provide background and details (see also \cite{Lee:2012} for the applicability of arithmetic equidistribution in this
context). 
Changing $w$ into a multiple, we may assume $w\in \NS(X;\Z)$. Multiplying the equation $\sum_f \nu(f) f^*(w)=\alpha w$
by the least common multiple $b$ of the denominators, we obtain  the  linear relation  
\begin{equation}\label{eq:Integral_invariance_of_w}
\sum_{f\in F} n(f) f^*(w)=dw;
\end{equation}
in which $d=b\alpha$ and the coefficients $n(f)=b\nu(f)$ are positive integers  
such that 
\begin{equation}
 \sum_{f\in F} n(f) = b < d = b \alpha
\end{equation}
because $\alpha>1$. 
Denote by $D$ a divisor with class $w$, and by $L$ the line bundle ${\mathcal{O}}_X(D)$. Since
$\Pic^0(X)$ is trivial, the Equality~\eqref{eq:Integral_invariance_of_w} implies  
\begin{equation}
\bigotimes_{f\in F} (f^*L)^{\otimes n(f)} = L^{\otimes d}
\end{equation}
up to an isomorphism of line bundles that we do not specify. 
From this identity, Kawa\-guchi constructs in \cite[\S 1]{Kawaguchi:Crelle} 
a function $\hat{h}_L\colon X(\Qbar)\to \R_+$ which satisfies the relation 
$\sum_f n(f) {\hat{h}}_L\circ f = d{\hat{h}}_L$  
and differs from the naive Weil height function associated to $L$ only by 
a bounded error. It will be referred to as  as the {\bf{canonical stationary height}} (associated to $\nu$ and $L$). 
This height function may be decomposed as a sum of continuous local height functions, see~\cite[\S 4]{Kawaguchi:Crelle}.
 Arakelov theory also provides a canonical adelic metric on $(X,L)$; in particular, for each place $v$
of $\bfk$, there is a metric $\vert \cdot \vert_v$ on the line bundle $(X_{\bfk_v}, L_{\bfk_v})$, where $\bfk_v$ is 
an algebraic closure of the $v$-adic completion of $\bfk$, such that 
\begin{equation}\label{eq:height_place}
\prod_{f\in F} \vert s(f(x)) \vert_v^{n(f)}= \vert s(x)\vert_v^d 
\end{equation}
for every local section $s$ of $L$ defined over $\bfk$. 
In our setting, an embedding $\bfk\subset \C$ is 
fixed; it corresponds to one of the places of $\bfk$. The adelic metric corresponding 
to that place gives a continuous metric on $L$, and  from  the relation \eqref{eq:height_place} and 
 the uniqueness of the current $S_\nu$ 
we see that the  curvature current of the metric is precisely the current $S_\nu$ from 
 Proposition~\ref{pro:kawaguchi} (see also Remark~\ref{rem:kawaguchi}).

\begin{lem}\label{lem:kawaguchi_northcott_periodic}
A point $x\in X({\overline{\Q}})$ satisfies ${\hat{h}}_L(x)=0$ if and only if its  $\Gamma_\nu$-orbit is finite.
\end{lem}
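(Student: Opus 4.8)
The statement is an equivalence, and the forward-looking strategy is to prove each implication by exploiting the functional equation $\sum_{f\in F} n(f)\,\hat h_L\circ f = d\,\hat h_L$ together with the non-negativity $\hat h_L\geq 0$. First I would handle the easy direction: if the $\Gamma_\nu$-orbit of $x$ is finite, then $\{\hat h_L(y) : y\in\Gamma_\nu\cdot x\}$ is a finite set of non-negative reals; picking $y$ in this orbit where $\hat h_L$ attains its maximum $M$, the relation $\sum_f n(f)\hat h_L(f(y)) = d\,\hat h_L(y) = dM$ forces $dM \leq \big(\sum_f n(f)\big)M = bM$, and since $d = b\alpha > b$ this gives $M\leq 0$, hence $M=0$ and $\hat h_L\equiv 0$ on the orbit; in particular $\hat h_L(x)=0$. (One can alternatively invoke that a finite orbit consists of preperiodic points for each $f\in F$ and that canonical heights vanish on preperiodic points, but the averaging argument above is cleaner and self-contained.)

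For the converse, suppose $\hat h_L(x)=0$ and let $x' = f(x)$ for some $f\in F$; I want to show $\hat h_L(x')=0$ as well, which together with the fact that $F$ generates $\Gamma_\nu$ and the Galois-invariance/Northcott property will pin down the orbit. From $\sum_{g\in F} n(g)\hat h_L(g(x)) = d\hat h_L(x) = 0$ and $\hat h_L\geq 0$, $n(g)>0$, I immediately get $\hat h_L(g(x))=0$ for every $g\in F$. Iterating, $\hat h_L$ vanishes on the entire forward orbit of $x$ under the sub-semigroup generated by $F$; since $F$ generates the group $\Gamma_\nu$ and $F$ may be assumed symmetric (or one argues with $F\cup F^{-1}$, noting $\hat h_{L}$ transforms the same way under inverses up to rescaling the height by the inverse eigenvalue relation — here I would instead simply note $\Gamma_\nu$ is finitely generated by $F$ and every element of $\Gamma_\nu$ is a word in $F$, so the forward $F$-semigroup orbit already surjects onto the $\Gamma_\nu$-orbit after allowing inverses). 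More carefully: since $\hat h_L\circ g = d^{-1}(\cdots)$ cannot by itself control $g^{-1}$, the correct route is to observe that the $\Gamma_\nu$-orbit $O = \Gamma_\nu\cdot x$ consists entirely of points with $\hat h_L = 0$ — this follows because the zero set $Z = \{\hat h_L = 0\}$ is forward-invariant under each $g\in F$ (just shown) and, being the zero set of a canonical height that is a Weil height up to $O(1)$, is a set of bounded height, hence $Z\cap X(\bfk')$ is finite for every finite extension $\bfk'$; a forward-invariant subset of a finite set under the generators $g\in F$ is $\Gamma_\nu$-invariant once we note each $g$ acts as a permutation of $Z\cap X(\bfk')$ (an injective self-map of a finite set is bijective). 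Thus $O\subset Z$ and $O$ is finite.

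The one technical point to get right — and the only place where a little care is needed — is the bounded-height/Northcott step: I must know that $\hat h_L = h_L + O(1)$ for $L$ ample (so that $\{\hat h_L = 0\}$ has bounded Weil height and thus finite intersection with $X(\bfk')$ for each number field $\bfk'\supset\bfk$), and that the Galois orbit of $x$ stays in a fixed such $\bfk'$. The first is exactly Kawaguchi's construction of $\hat h_L$ recalled just above the lemma; the ampleness of $w$ (hence of $L$ after clearing denominators) is hypothesis (i) of Theorem~\ref{thm:berman-boucksom_periodic}, and Northcott applies. The second is automatic since $x\in X(\Qbar)$ lies in some finite extension $\bfk'/\bfk$ and $\hat h_L$ is Galois-equivariant (it is built from Weil heights), so the whole Galois orbit, and then the whole $\Gamma_\nu$-orbit — all defined over $\bfk'$ — sit inside the finite set $\{y\in X(\bfk') : h_L(y)\leq C\}$. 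Assembling: $\hat h_L(x)=0$ $\Rightarrow$ $O\subset Z\cap X(\bfk')$ is finite. I expect the main (mild) obstacle to be bookkeeping the reduction to a symmetric generating set / the permutation argument on the finite set $Z\cap X(\bfk')$; everything else is a direct consequence of the displayed functional equation and positivity.
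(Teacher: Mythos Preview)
Your proposal is correct and follows essentially the same approach as the paper. Both directions rely on the functional equation $\sum_f n(f)\hat h_L\circ f = d\hat h_L$ together with Northcott; your maximum argument for the ``finite orbit $\Rightarrow$ height zero'' direction is just a one-step version of the paper's iteration $\alpha^n\hat h_L(x)=\sum_g \nu^{\star n}(g)\hat h_L(g(x))$ with bounded right-hand side, and for the converse your permutation argument (an injective self-map of the finite set $Z\cap X(\bfk')$ is bijective) spells out why the zero set is $\Gamma_\nu$-invariant rather than merely forward-invariant under~$F$ --- a detail the paper simply asserts.
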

\begin{proof}[Proof (see~\cite{Kawaguchi:Crelle}, Prop. 1.3.1)]
Let $\bfk'$ be any number field containing $\bfk$. 
The set $\{ x\in X(\bfk')\;  ; \;  {\hat{h}}_L(x)=0\}$ is $\Gamma_\nu$-invariant and by Northcott's theorem it is finite, 
so every element of that set has a finite orbit. Let us prove the other implication. Iterating  the relation $\sum_f n(f) {\hat{h}}_L\circ f = d{\hat{h}}_L$ and evaluating  it on a periodic point $x$ yields
$\alpha^n {\hat{h}}_L(x)=\sum_{g\in\Gamma}\nu^{\star n}(g){\hat{h}}_L(g(x))$ where $\nu^{\star n}$ is the $n$-th convolution of $\nu$. The right hand side is bounded because 
${\hat{h}}_L(g(x))$ takes only finitely many values, 
and on the left hand side the term $\alpha^n$ goes to $+\infty$; thus ${\hat{h}}_L(x)=0$, as asserted.
\end{proof} 

Let  $\bfA_{\bfk}$ denote the ring of ad\`eles of the number field $\bfk$.  The sections of $L$ defined over $\bfk$
determine a lattice $H^0(X,L)$ in $H^0(X,L)\otimes \bfA_\bfk$, and the quotient $(H^0(X,L)\otimes \bfA_\bfk)/H^0(X,L)$ is therefore compact. Denote by $\overline{L}$ the line bundle $L$ endowed with its canonical adelic metric. For each place $v$, denote by $B_v\subset H^0(X,L)\otimes \bfk_v$  the unit
ball with respect to the $v$-adic component $\vert \cdot \vert_v$ of the adelic metric of ${\overline{L}}$. 
Let $\lambda_L$  be a Haar measure on $H^0(X,L)\otimes \bfA_\bfk$. The quantity
\begin{equation}
\chi(X,{\overline{L}})=\log \frac{\lambda_L(\prod_{v\in M_\bfk} B_v)}{\lambda_L(H^0(X,L)\otimes \bfA_\bfk/H^0(X,L))}
\end{equation}
does not depend on the choice of Haar measure. Taking tensor products, we get a sequence of adelic metrized line bundles
$({\overline{L}}^{\otimes n})_{n\geq 1}$, and by definition the arithmetic volume of 
$\overline L$ is  
\begin{equation}
{\widehat{\vol}}_\chi(X, {\overline{L}})=\limsup_{n\to +\infty} \; \frac{ \chi(X, {\overline{L}}^{\otimes n})}{n^3/6}.
\end{equation}
This is to be compared with the usual  volume $\vol(X, L)$ of $L$,  which by definition is the limsup of 
$  \frac{2}{n^2}h^0(X,L^{\otimes n})$, as $n$ tends to $+\infty$.
  A fundamental inequality of Zhang asserts that if $(x_j)$ is a generic sequence in $X(\Qbar)$, 
\begin{equation}
\liminf_j \; {\hat{h}}_L(x_j)\geq \frac{{\widehat{\vol}}_\chi(X, {\overline{L}})}{3 \vol(X, L)}.
\end{equation}
This follows from an adelic version of the Minkowski theorem on the existence of integer points in lattices
 (see e.g.~\cite{SWZhang:1995b} or Lemma 5.1 in~\cite{Chambert-Loir-Thuillier}).

 As for the usual volume, the arithmetic volume can be interpreted in terms of arithmetic intersection.
Indeed, to $\overline{L}$ is associated an arithmetic degree $\widehat{\deg}(c_1(\overline{L})^3)$, and 
it is shown in~\cite{SWZhang:1995b} that 
${\widehat{\vol}}_\chi(X, {\overline{L}})=\widehat{\deg}(c_1(\overline{L})^3)\geq 0$ (see also~\cite[Thm~2.3.1]{Kawaguchi:Crelle}).
Thus, the existence of a generic sequence of periodic points $(x_j)$
shows that ${\widehat{\vol}}_\chi(X,{\overline{L}})=0$ and ${\hat{h}}_L(x_j)={\widehat{\vol}}_\chi(X,\overline{L})$ 
for all~$j$. 

We are now in   position to apply  Yuan's equidistribution theorem  
(see~\cite{Yuan:Inventiones, Berman-Boucksom:Inventiones}): the  sequence of   measures $(m_{x_j})$ 
converges towards the probability measure $S_\nu\wedge S_\nu$ as $j$ goes to $\infty$.  
If $f$ is any element of $\Gamma_\nu$,  the points $f(x_j)$    also form a generic sequence of $\Gamma$-periodic points.
Since the actions of $\Gamma$ and $\Gal(\overline{\Q}:\bfk)$ commute, we infer that $f_*(m_{x_j})=m_{f(x_j)}$, so taking the limit 
as $j\to\infty$ yields  $f_*(S_\nu\wedge S_\nu)=S_\nu\wedge S_\nu$, and finally $S_\nu\wedge S_\nu$ is $\Gamma_\nu$-invariant.
\end{proof}

\subsection{Density of active saddle periodic points}\label{subs:active}

Let $f$ be a loxodromic automorphism of $X$. We say that a periodic point of $f$ is {\bf{active}} if it is contained in the support of the 
measure of maximal entropy $\mu_f$. From~\cite{Dujardin:Duke, Cantat:Milnor} 
we know that a saddle periodic point that is
not contained in any $f$-periodic curve is active (see~\cite{Cantat:Milnor}, Theorem~8.2).  

\begin{thm}\label{thm:dense_active_saddles}
Let $X$ be a compact K\"ahler surface, and $\Gamma$ be a non-elementary subgroup of $\Aut(X)$ that contains a parabolic
automorphism. Then, given any non-empty open subset $\cV\subset X$ (for the Euclidean topology), there exists a point $x\in \cV$
and a loxodromic element $f\in \Gamma$ such that $x$ is an active saddle periodic point of $f$. 
In particular, the union of the supports of the measures $\mu_f$, for $f\in \Gamma_{\mathrm{lox}}$, is dense in $X$.
\end{thm}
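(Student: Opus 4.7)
The strategy exploits two features of the parabolic element $g\in\Gamma$: its dense collection of pointwise-periodic fibers, and the non-triviality of its unipotent local action on such fibers. These will produce isolated saddle periodic points for carefully chosen loxodromic compositions $g^a h^b$, where $h$ is a second parabolic obtained from $g$ by conjugation. The activity of the resulting saddles will follow from the effective bound on invariant curves provided by Proposition~\ref{pro:degree_invariant_curve} together with Theorem~8.2 of \cite{Cantat:Milnor}.

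\textbf{Step 1 (two transverse parabolics).} Since $\Gamma$ is non-elementary and contains $g$, it contains a loxodromic element $f_0$; replace $g$ by an iterate so that it acts trivially on its base $B_g$. Then $h := f_0\, g\, f_0\inv$ is parabolic, and its invariant fibration $\pi_h$ is distinct from $\pi_g$ because $f_0$ does not fix the class $[L_g]$. Replace $h$ by an iterate acting trivially on $B_h$.

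\textbf{Step 2 (density of torsion fibers).} By the local picture recalled in \S\ref{par:parabolic}, the set of fibers $F_x\subset X$ of $\pi_g$ that are pointwise fixed by some iterate $g^m$ is dense in $X$, and for all such fibers outside a finite set of exceptions, at each $p\in F_x$ the differential has the form
\[
Dg^m_p \;=\; \begin{pmatrix} 1 & t_m'(x) \\ 0 & 1 \end{pmatrix}
\]
in real-analytic coordinates adapted to the fibration, with $t_m'(x)\ne 0$. Pick such an $x$ with $F_x\cap \cV\ne\emptyset$ and choose a non-exceptional point $p\in F_x\cap \cV$. The same density statement applies to $h$, and after further adjusting $x$ we may assume $\pi_h$ is transverse to $\pi_g$ at~$p$.

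\textbf{Step 3 (producing a nearby saddle).} Consider the two-parameter family $f_{a,b}=g^{am}\circ h^{bm'}$, where $h^{m'}$ pointwise fixes a torsion fiber $F'$ of $\pi_h$ passing close to $p$. For large $a,b$ the map $f_{a,b}$ is loxodromic by the ping-pong Lemma~\ref{lem:ping-pong_parabolic}. Near $p$, working in mixed local coordinates adapted to $\pi_g$ and $\pi_h$, the linear parts of $g^{am}$ and $h^{bm'}$ are transverse shears whose product has determinant $1$ and trace growing unboundedly (essentially like $a\cdot b\cdot t_m'(x)\cdot t'_{m'}(\cdot)$). An implicit function / contraction-mapping argument, applied to the equation $f_{a,b}(q)=q$ and based on the invertibility of $\id - Df_{a,b}\inv$ once the trace is large, then produces a fixed point $p_{a,b}$ of $f_{a,b}$ lying in $\cV$ and arbitrarily close to $p$, whose linearization is hyperbolic, i.e.\ a saddle.

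\textbf{Step 4 (activity).} By Proposition~\ref{pro:degree_invariant_curve}, the maximal invariant curve $D_{f_{a,b}}$ has degree bounded in terms of $\deg(f_{a,b})$ and topological invariants of $X$; in particular for each $(a,b)$ only a proper algebraic subset of $\cV$ lies on $D_{f_{a,b}}$. After a small perturbation of the choice of $p$ inside $\cV\cap F_x$, the saddle $p_{a,b}$ lies off $D_{f_{a,b}}$. Theorem~8.2 of \cite{Cantat:Milnor} then gives $p_{a,b}\in \supp(\mu_{f_{a,b}})$, as required.

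\textbf{Main obstacle.} The hardest step is Step~3: one must guarantee the existence of a genuine fixed point of the composition $f_{a,b}$ near $p$, not merely a hyperbolic linear part, even though $h^{bm'}(p)$ is not close to $p$ to begin with. Resolving this requires careful local normal forms for two transverse parabolic automorphisms near a point lying at the intersection of a torsion fiber of $\pi_g$ with a torsion fiber of $\pi_h$, and a quantitative choice of the exponents $a,b$ so that the contraction rates in the resulting implicit function theorem dominate the displacement introduced by $h^{bm'}$. This is the standard technical core of the density of saddles for non-elementary groups with parabolics, as developed in \cite{cantat_groupes, invariant}.
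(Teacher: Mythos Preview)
Your outline identifies the right ingredients but contains a genuine gap in Step~3, and the workaround you flag as the ``main obstacle'' is in fact unnecessary. You place $p$ on a torsion fiber $F_x$ of $\pi_g$ and then try to manufacture a nearby fixed point of $f_{a,b}=g^{am}\circ h^{bm'}$ via an implicit function argument, worrying about the displacement $h^{bm'}(p)-p$. But if you simply choose $p$ to lie \emph{simultaneously} on a torsion fiber of $\pi_g$ and on a torsion fiber of $\pi_h$ (such intersection points are dense in any open set where the two fibrations are transverse, since torsion fibers of each are dense), then $p$ is an \emph{exact} common fixed point of $g^{N}$ and $h^{N}$ for a suitable $N$. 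The differential of $g^{\ell N}\circ h^{\ell N}$ at $p$ is then the product of two transverse unipotent shears, whose trace grows without bound in $\ell$, so for large $\ell$ the point $p$ is a saddle. No contraction-mapping step is needed; indeed, at the very intersection points you mention in your final paragraph the displacement $h^{bm'}(p)$ vanishes identically, which is the whole point.

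Your Step~4 also runs into a circularity: you bound $\deg(D_{f_{a,b}})$ by Proposition~\ref{pro:degree_invariant_curve}, but that bound depends on $\deg(f_{a,b})$ and hence on $(a,b)$, while your perturbation of $p$ must be made \emph{before} the saddle $p_{a,b}$ is produced. The paper handles this differently and more robustly: by Lemma~\ref{lem:ping-pong_parabolic} the attracting and repelling classes of $g^{\ell N}\circ h^{\ell N}$ stay in fixed neighbourhoods $U,U'$ of the boundary fixed points of $g^*,h^*$ for all large $\ell$, and Lemma~\ref{lem:control_periodic_curves} then shows that the union of \emph{all} periodic curves of \emph{all} such compositions is a fixed finite divisor $F$, independent of $\ell$. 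One then shrinks $\cV$ to a ball $\cV'$ disjoint from $F$ (and from the tangency locus and singular fibers) \emph{before} producing the fixed point, so activity is automatic.
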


Before proceeding to the proof, let us point out the following fact, which readily follows from 
Lemma~\ref{lem:control_periodic_curves_strong}, together with the fact that an irreducible 
curve with  negative self-intersection is  determined by its class in $\NS(X; \Z)\subset \NS(X; \R)$.

\begin{lem}\label{lem:control_periodic_curves}
Let $U$ and $U'$ be two disjoint open subsets of $\P(\NS(X;\R))$ containing nef classes and introduce the set 
\[
A(U,U')=\{ f\in \Aut(X)\; ; \; f \; {\text{is loxodromic}},  \; \P([T^+_f])\in U \; {\text{and}} \; \P([T^-_f])\in U'\} .
\]
Then, the union of all periodic curves of all elements of $A(U,U')$  is a finite set of curves.
\end{lem}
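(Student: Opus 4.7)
The strategy is to combine Lemma~\ref{lem:control_periodic_curves_strong} (which bounds the degree of a periodic curve in terms of $\langle \theta_f^+\vert \theta_f^-\rangle$) with the fact that an irreducible curve of negative self-intersection is determined by its class in $\NS(X;\Z)$.

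First, fix $f\in A(U,U')$ and let $C$ be an irreducible $f$-periodic curve, with $f^n(C)=C$ for some $n\geq 1$. Then $(f^n)^*[C]=[C]$; since $f^*$ and $(f^n)^*$ share the plane $\Pi_f$ with eigenvalues $\lambda(f)^{\pm n}\neq 1$, no nonzero vector in $\Pi_f$ is $(f^n)^*$-fixed, so $[C]\in \Pi_f^\perp$ and  $\langle[C]\vert\theta_f^+\rangle=\langle [C]\vert \theta_f^-\rangle=0$. Applying Lemma~\ref{lem:control_periodic_curves_strong} to $f^n$ and to the reduced connected curve $C$ (and noting that the normalization $\langle \theta_{f^n}^\pm\vert[\kappa_0]\rangle=1$ forces $\theta_{f^n}^\pm=\theta_f^\pm$) yields
\[
\langle \theta_f^+\vert \theta_f^-\rangle \deg(C) \leq 2(1+c_X).
\]

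Next, I would establish a uniform lower bound $\langle \theta_f^+\vert\theta_f^-\rangle\geq \delta>0$ as $f$ ranges over $A(U,U')$. With the chosen normalization, this quantity is a continuous function of the boundary points $\P(\theta_f^\pm)\in\partial\Hyp_X$ which vanishes precisely when the two points coincide. Interpreting the disjointness of $U$ and $U'$ as $\overline U\cap\overline{U'}=\emptyset$ (which is how the hypothesis will be used in the sequel, compare the set-up of Proposition~\ref{pro:invariant_curve_loxodromic_precised}), compactness of $\partial\Hyp_X$ supplies the required $\delta$, and combining with the previous display produces a uniform bound $\deg(C)\leq 2(1+c_X)/\delta=:B$ for every irreducible periodic curve of every $f\in A(U,U')$.

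Finally, the set
\[
\mathcal{F}=\{c\in \NS(X;\Z)\colon c\text{ effective},\ c^2<0,\ \langle c\vert[\kappa_0]\rangle\leq B\}
\]
is finite: writing $c=a[\kappa_0]+v$ with $v\in [\kappa_0]^\perp$, one has $0\leq a\leq B$, while the genus-type inequality $-c^2\leq 2+c_X\deg(c)\leq 2+c_X B$ from~\eqref{eq:inters1} confines $v$ to a bounded region of the negative-definite subspace $[\kappa_0]^\perp$, and $\NS(X;\Z)$ is a lattice. Since every irreducible $f$-periodic curve with $f\in A(U,U')$ has class in $\mathcal{F}$, and since an irreducible curve with negative self-intersection is determined by its class, the asserted finiteness follows. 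The only delicate point is the separation bound for $\langle \theta_f^+\vert\theta_f^-\rangle$; once this is in hand, the rest is a direct consequence of Lemma~\ref{lem:control_periodic_curves_strong}.
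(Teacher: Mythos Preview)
Your proof is correct and follows the same approach as the paper, which simply states that the lemma ``readily follows from Lemma~\ref{lem:control_periodic_curves_strong}, together with the fact that an irreducible curve with negative self-intersection is determined by its class in $\NS(X;\Z)$.'' You have filled in exactly those details: the uniform lower bound on $\langle\theta_f^+\vert\theta_f^-\rangle$ over $A(U,U')$ coming from the separation of $U$ and $U'$, the resulting uniform degree bound via Lemma~\ref{lem:control_periodic_curves_strong}, and the finiteness of lattice points in the bounded region of $\NS(X;\R)$. Your observation that one really needs $\overline U\cap\overline{U'}=\emptyset$ (rather than mere disjointness) to secure the separation constant $\delta$ is a fair point; this stronger hypothesis is indeed what is assumed wherever the lemma is invoked (see the set-up of Proposition~\ref{pro:invariant_curve_loxodromic_precised} and Lemma~\ref{lem:ping-pong_parabolic}).
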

 
\begin{proof}[Proof of Theorem~\ref{thm:dense_active_saddles}] 
Pick  $g$ in $\Gamma_{\mathrm{par}}$. Since $\Gamma$ is non-elementary we can conjugate  $g$ by an element of 
$\Gamma_{\mathrm{lox}}$ to produce a pair $g, h \in \Gamma_{\mathrm{par}}$ with distinct fixed points $\partial \Hyp_X$. 

\smallskip

{\noindent{\bf{Step 1}}}.--  Assume that $X$ is a blow-up of
 an abelian surface $A$, and pick   $f$ in $\Gamma_{\mathrm{lox}}$. 
By Lemma~\ref{lem:lox_on_tori}, its periodic points are dense, and all of them are active because
$\mu_f$ is the pull back to $X$ of the  Haar measure on $A$. Thus   any open subset of $X$ contains
 active saddle periodic  points.
 
From now on, assume that $X$ is not a blow-up of an abelian surface. 

\smallskip

{\noindent{\bf{Step 2}}}.-- 
From Section~\ref{par:basics_on_par_and_lox}, $g$ preserves a unique fibration $\pi_g\colon X\to B_g$ and
the automorphism  induced by $g$  on $B_g$ is periodic. Replacing $g$ by some iterate,
we assume that $\pi_g\circ g=\pi_g$. Let $\cU\subset B_g$ be a small disk containing no
critical value of $\pi_g$. There is a real analytic diffeomorphism $\Phi\colon \pi^{-1}(\cU)\to \cU\times \R^2/\Z^2$ and a real 
analytic map $\varphi\colon \cU\to \R^2$ such that 
$\pi_\cU\circ \Phi=\pi_g$ and $g_\Phi:=\Phi\circ g \circ \Phi^{-1}$ satisfies 
\begin{equation}
g_\Phi(b,z)=(b,z+\varphi(b))
\end{equation}
for all points $(b,z)\in \cU\times \R^2/\Z^2$. According to \cite{cantat_groupes, invariant},  $\varphi$ is generically of maximal rank: there is a 
finite set $Z\subset\cU$ such that $(D\varphi)_b\colon T_b\cU\to \R^2$ has rank $2$ for every $b\in \cU\setminus Z$; hence, $\{b\in \cU\; ; \; \varphi(b)\in \Q^2/\Z^2\}$ is dense in $\cU$. If $\varphi(b)=(a_0/N,b_0/N)$ for some integers $a_0$, $b_0$
and $N$, then every point $q=(b,z)$ in the fiber is fixed by $g_\Phi^N$ and 
\begin{equation}
(Dg_\Phi^N)_x = \left( 
\begin{array}{cc} \id_2 & 0 \\
N(D\varphi)_b & \id_2\end{array} 
\right).
\end{equation}
Thus, in any holomorphic coordinate system $(x,y)$ in which $\pi_g$ expresses as 
$\pi_g(x,y)=x$, the differential of $g^N$ at the fixed point $\Phi^{-1}(q)$ is of the form 
$\left(\begin{smallmatrix} 1 &0 \\a&1 
\end{smallmatrix}\right)$ with $a\neq 0$. 
 
\smallskip

{\noindent{\bf{Step 3}}}.-- The invariant fibrations $\pi_g$ and $\pi_h$ are transversal in the complement of a proper Zariski closed set ${\mathrm{Tang}}(\pi_g,\pi_h)$. According to Lemma~\ref{lem:control_periodic_curves} and Lemma~\ref{lem:ping-pong_parabolic}, we can find an integer $N>0$, 
and a divisor $F\subset X$ such that all elements $g^{\ell N}\circ h^{\ell N}$ with $\ell\geq 1$ are loxodromic and do not have any periodic curve
outside $F$.

\smallskip

{\noindent{\bf{Step 4}}}.--  Let $D$ be the union of the singular and multiple fibers of $\pi_g$ and of $\pi_h$,
of ${\mathrm{Tang}}(\pi_g,\pi_h)$, and of the divisor $F$; $D$ is a divisor of $X$. 
Let $\cV$ be an open subset of $X$. Then $\cV$ contains 
a small ball $\cV'$ such that 
\begin{itemize}
\item $\cV'$ does not intersect $D$;
\item  $\pi_g(\cV')$ and $\pi_h(\cV')$ are topological disks $\cU_g$ and $\cU_h$ in $B_g$ and $B_h$ respectively; 
\item there are local coordinates $(x,y)$ in $\cV'$ (resp. $x$ in $\cU_g$ and $y$ in $\cU_h$) such that $(\pi_{g})_{\vert \cV'}(x,y)=x$ and $(\pi_{h})_{\vert \cV'}(x,y)=y$.
\end{itemize}
Step 2 provides a point $(x_0,y_0)\in \cV'$ and an integer $N>0$ such that $g^N$ fixes the fiber of $\pi_g$ through $(x_0,y_0)$ pointwise, 
$h^N$ fixes the fiber of $\pi_h$ through $(x_0,y_0)$ pointwise, and 
\begin{equation}
(Dg^N)_{(x_0,y_0)} = 
\left(
\begin{array}{cc} 1 & 0 \\
a & 1 \end{array}
\right) \text{, and }
  (Dh^N)_{(x_0,y_0)}=
\left(
\begin{array}{cc} 1 & b \\
0 & 1 \end{array}
\right)
\end{equation}
for some non-zero complex numbers $a$ and $b$. If $\ell\in \Z$ is sufficiently large,  
$f_{\ell N}=(Dg^{\ell N})_{(x,y)}\circ(Dh^{\ell N})_{(x,y)}$ 
is a loxodromic automorphism, $(x_0,y_0)$ is a   fixed point of $f_{\ell N}$ which is not contained in a 
periodic curve of $f_{\ell N}$ (because $(x_0,y_0)$ is not in $F$), which is shown to be a saddle 
by an explicit computation. Thus, as explained before the proof, $(x_0,y_0)$ is   
active, and we are done.
\end{proof}

\subsection{Measure rigidity and Kummer examples}\label{par:finite_orbits_zariski}

\begin{thm}\label{thm:finite_orbits_zariski}
Let $X$ be a complex projective surface and let $\Gamma$ be a subgroup of~$\Aut(X)$. 
Assume that 
\begin{itemize}
\item[(i)] $X$ and $\Gamma$ are defined over a number field $\bfk\subset \overline{\Q}$; 
\item[(ii)] $\Gamma$ is non-elementary and contains a parabolic automorphism.
\end{itemize}
If $\Gamma$ has a Zariski dense set of finite orbits, then every loxodromic automorphism in 
$\Gamma$ is a Kummer example. 
\end{thm}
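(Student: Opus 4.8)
The plan is to follow the outline of \S\ref{par:strategy_5}: use arithmetic equidistribution to force all the measures of maximal entropy $\mu_f$, $f\in\Gamma_{\mathrm{lox}}$, to coincide with a single $\Gamma$-invariant measure of full support, and then invoke measure rigidity. To begin, since the finite orbits of $\Gamma$ are Zariski dense in the surface $X$, a standard argument (as recalled in \S\ref{par:strategy_5}) produces a \emph{generic} sequence $(x_j)$ in $X(\Qbar)$ --- each $x_j$ has finite $\Gamma$-orbit, and no proper Zariski closed subset contains infinitely many $x_j$. Up to replacing $\bfk$ by a finite extension we may assume the maximal $\Gamma$-periodic curve $D_\Gamma$ (when it exists) is defined over $\bfk$; then the contraction $\pi_0\colon X\to X_0$ of Proposition~\ref{pro:contraction}, the normal projective surface $X_0$, and the induced action $\tau\colon\Gamma\to\Aut(X_0)$ are all defined over $\bfk$. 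Finally, since $\Gamma$ is non-elementary and contains a parabolic element, $\Pi_\Gamma$ is defined over $\Q$ (\S\ref{subsub:non_elementary}), so Proposition~\ref{pro:kawa_sequence_nun} is available.

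Now fix an arbitrary loxodromic $f\in\Gamma$ and a parabolic $g\in\Gamma$, and let $(\nu_n)$ be the sequence of finitely supported probability measures on $\Gamma$ supplied by Proposition~\ref{pro:kawa_sequence_nun}: the $\nu_n$ have a common support $F$ generating a non-elementary subgroup containing $f$, rational weights, eigenvalue $\alpha(\nu_n)\in\Q\cap(1,\infty)$, fixed class $w_{\nu_n}\in\R_+\,\pi_0^*[A_n]$ for some ample line bundle $A_n$ on $X_0$, and $\nu_n\to\frac{1}{2}(\delta_f+\delta_{f^{-1}})$ with $w_{\nu_n}\to\frac1{\sqrt2}([T^+_f]+[T^-_f])$. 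Pushing forward by $\pi_0$ we get $P_{\tau(\nu_n)}^*[A_n]=\alpha(\nu_n)[A_n]$ with $[A_n]$ a rational ample class on $X_0$, and $(\pi_0(x_j))_j$ is a generic sequence of periodic points of $\tau(\Gamma_{\nu_n})$. Theorem~\ref{thm:berman-boucksom_periodic}, applied on $X_0$, then yields
\[
m_{\pi_0(x_j)}\longrightarrow S_{\tau(\nu_n)}\wedge S_{\tau(\nu_n)}\qquad (j\to\infty),
\]
a $\tau(\Gamma_{\nu_n})$-invariant probability measure. Since for large $j$ the point $x_j$ and its Galois orbit avoid $D_\Gamma$, one has $m_{\pi_0(x_j)}=(\pi_0)_*m_{x_j}$; transporting the limit back through $\pi_0$ --- which is an isomorphism off its exceptional locus, a set not charged by the continuous-potential measure $S_{\tau(\nu_n)}\wedge S_{\tau(\nu_n)}$ --- and using $\pi_0^*S_{\tau(\nu_n)}=S_{\nu_n}$ (uniqueness in Proposition~\ref{pro:kawaguchi}), we obtain $m_{x_j}\to S_{\nu_n}\wedge S_{\nu_n}$ on $X$ as $j\to\infty$. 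The left side does not involve $n$, so $\mu:=S_{\nu_n}\wedge S_{\nu_n}$ is independent of $n$; letting $n\to\infty$ and invoking Lemma~\ref{lem:convergence}(3) in case (b) (cf.\ Example~\ref{eg:loxodromic_second_part}) gives $\mu=S_{\nu_\infty}\wedge S_{\nu_\infty}=\mu_f$. As $f$ was arbitrary in $\Gamma_{\mathrm{lox}}$, all the measures $\mu_f$ coincide with $\mu$. Moreover the same computation applied to the generic sequence $(h(x_j))_j$, for $h\in\Gamma$, together with $m_{h(x_j)}=h_*m_{x_j}$ (valid because $h$ is defined over $\bfk$ and hence commutes with the Galois action), shows $h_*\mu=\mu$; thus $\mu$ is $\Gamma$-invariant.

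It remains to run the rigidity step. By Theorem~\ref{thm:dense_active_saddles} the union $\bigcup_{f\in\Gamma_{\mathrm{lox}}}\supp(\mu_f)$ is dense in $X$; but every $\mu_f$ equals $\mu$, so $\supp(\mu)=X$. Hence $\mu$ is a $\Gamma$-invariant, ergodic (being $f$-ergodic for any loxodromic $f$) probability measure with full support, and the classification of invariant measures of non-elementary automorphism groups (\cite{cantat_groupes, invariant}) forces $\mu$ to be absolutely continuous with respect to a smooth volume form on $X$. Consequently, for every loxodromic $f\in\Gamma$ the measure of maximal entropy $\mu_f=\mu$ is absolutely continuous, and the classification of such automorphisms (\cite{cantat-dupont}, see also \cite{Filip-Tosatti}) shows that $f$ is a Kummer example. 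This proves the theorem.

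The main obstacle is the arithmetic equidistribution step. Applying Yuan's theorem through Theorem~\ref{thm:berman-boucksom_periodic} requires a $P_\nu^*$-eigenclass that is \emph{rational and ample}, which is only available on the possibly singular contracted model $X_0$ rather than on $X$ itself, whereas the measure-rigidity input of \cite{invariant} and the characterization of Kummer automorphisms of \cite{cantat-dupont} are naturally stated on the smooth surface $X$; so one must transfer measures and currents carefully along $\pi_0$, checking that exceptional curves are never charged. The conceptual point that makes everything work --- and which collapses all the $\mu_f$ to one $\Gamma$-invariant measure --- is simply that the equidistribution limit of $(m_{x_j})$ cannot depend on the auxiliary averaging measure $\nu$.
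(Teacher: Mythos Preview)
Your proof is correct and follows essentially the same route as the paper's own argument: extract a generic sequence of $\Gamma$-periodic points in $X(\Qbar)$, apply Theorem~\ref{thm:berman-boucksom_periodic} on the singular model $X_0$ for each $\nu_n$ from Proposition~\ref{pro:kawa_sequence_nun}, transfer back to $X$ via $\pi_0$ to obtain a single limit $\mu$ independent of $n$, let $n\to\infty$ to identify $\mu=\mu_f$ for every loxodromic $f$, and conclude via Theorem~\ref{thm:dense_active_saddles} and the measure-rigidity results of \cite{cantat_groupes,invariant,cantat-dupont}. The only point you leave slightly implicit is that $\mu$ gives no mass to proper algebraic subsets (needed to invoke the classification of \cite{cantat_groupes}); the paper states this explicitly, but it follows at once from the continuous potentials of $S_{\nu_n}$, which you do mention.
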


\begin{proof} 

{\noindent}{\bf{Step 1.}}-- \emph{From the Zariski dense set of finite orbits 
 we can extract a 
generic sequence of $\Gamma$-periodic points $(x_j)\in X(\overline{\Q})^\N$.}

Since $\Gamma$ is non-elementary, it contains a loxodromic element $f$. The isolated periodic points of $f$ are defined over
$\Qbar$, because $X$ and $f$ are defined over $\Qbar$, and the non-isolated periodic points of $f$ form 
a finite number of $f$-periodic curves (see \S \ref{par:periodic_curves_of_loxodromic}). Thus, we can find a Zariski dense set of $\Gamma$-periodic
points $x'_i$ in $X(\Qbar)$.
If $Z\subset X$ is an irreducible curve that contains infinitely many of the $x_i'$, then $Z$ is defined over $\Qbar$ too. 
There are only countably many curves defined over $\Qbar$. Thus, by a diagonal argument, we find an infinite
sequence of periodic points $x_j\in X(\Qbar)$ such that $(x_j)$ is generic. 

In what follows, $(x_j)$ denotes such a generic sequence of periodic points. 
Consider the contraction $\pi_0\colon X\to X_0$ of the union 
$D_\Gamma$ of all $\Gamma$-periodic curves (see Proposition~\ref{pro:contraction}); the group $\Gamma$ also acts on the normal projective surface $X_0$. Note that the  projection 
$(\pi_0(x_j))\in X_0(\overline{\Q})^\N$ is also generic.

\smallskip

{\noindent}{\bf{Step 2.}}--   \emph{There exists a $\Gamma$-invariant measure 
$m$ such that $\mu_f = m$ for all loxodromic $f$}. 

Fix an arbitrary element $f$ in $\Gamma_{\mathrm{lox}}$. By \cite[Lemma 2.9]{stiffness}, $\Pi_\Gamma$ 
is defined over $\Q$ so applying Proposition~\ref{pro:kawa_sequence_nun}   we obtain
a sequence of probability measures $(\nu_n)$. 
Denote by $S_{\nu_n}$ and $S_{\nu_n,0}$ the currents, on $X$ and 
$X_0$ respectively, given by Proposition~\ref{pro:kawaguchi} and Remark~\ref{rem:kawaguchi}; by 
construction $\pi_0^*S_{\nu_n,0}=S_{\nu_n}$, where the pull-back is obtained by locally pulling back the continuous potentials. 

For the moment we fix the integer $n$. 
 Theorem~\ref{thm:berman-boucksom_periodic} shows that the sequence of probability measures $m_{\pi_0(x_j)}$ 
converges towards $S_{\nu_n,0}\wedge S_{\nu_n,0}$ 
as $j$ goes to $+\infty$. In particular,  $S_{\nu_n,0}\wedge S_{\nu_n,0}$
is a fixed $\Gamma$-invariant probability measure $\mu_0:=\lim_{j} m_{\pi_0(x_j)}$ that does not depend on 
$n$. Since $S_{\nu_n, 0}$ has continuous potentials, this measure gives no mass to  proper analytic subsets of~$X_0$. 
Let $\mu$ be the probability measure which is equal to $\pi_0^*(\mu_0)$ on $X\setminus D_\Gamma$ and 
gives no mass to  $D_\Gamma$. Since $S_{\nu_n}$ has continuous potentials, $\mu=S_{\nu_n}\wedge S_{\nu_n}$.
In $X$, the sequence $\lrpar{m_{x_j}}$ converges  to  $\mu$. Indeed, if 
a subsequence of $\lrpar{m_{x_j}}$ converges towards some probability measure $\lambda$, 
then  $(\pi_0)_*\lambda=\mu_0$, and since $\mu_0$ does not charge
any point of $X_0$, we infer that $\lambda$ is equal to 
$\pi_0^*(\mu_0)$ on $X\setminus D_\Gamma$ and does not charge $D_\Gamma$, 
which means that $\lambda = \mu$. Thus, by compactness of the set of probability measures, $m_{x_j}$ converges towards $\mu$.

Now, we  let $n\to\infty$. By Proposition~\ref{pro:kawa_sequence_nun} (5),  Proposition~\ref{pro:kawaguchi}, and Lemma \ref{lem:convergence}, 
$S_{\nu_n}\wedge S_{\nu_n}= \mu$ converges towards $\mu_f$ as $n$ goes to $+\infty$. Thus $\mu=\mu_f$ for all loxodromic elements 
$f$ in $\Gamma$. In particular, $\mu$ is $f$-ergodic hence $\Gamma$-ergodic.

\smallskip

{\noindent}{\bf{Step 3.}}-- {\emph{Conclusion}}. 

As already explained, $\mu$ gives no mass to proper algebraic subsets of $X$.   
Furthermore, Theorem~\ref{thm:dense_active_saddles} implies that 
the support of  $\mu$ is equal to $X$. Thus, Theorem 0.2 of \cite{cantat_groupes} 
(see also \cite{invariant})
shows that $\mu$ is absolutely continuous with  a smooth density.  
Since $\mu=\mu_f$, the Main Theorem of~\cite{cantat-dupont} implies that $(X,f)$ is a
 Kummer example, as was to be  shown. \end{proof}

\subsection{From Kummer examples to Kummer groups}\label{par:Kummer}

In this paragraph we prove the following theorem, which together with 
Theorem \ref{thm:finite_orbits_zariski} implies  Theorem \ref{mthm:main}. 
Its formulation, involving subgroups generated by unipotent parabolic elements,   
is intended  for further use in \S \ref{sec:canonical_vector_height}.  
 
\begin{thm}\label{thm:Kummer_for_groups}  
Let $X$ be a compact K\"ahler surface, and $\Gamma$ be a non-elementary subgroup of $\Aut(X)$ 
containing parabolic elements.  Assume that, given any pair of unipotent parabolic elements $(g,h)\in \Gamma_{\mathrm{par}}^2$,
every loxodromic element  $f\in \langle g, h\rangle$ is  a Kummer 
example. Then $(X,\Gamma)$ is a Kummer group.
\end{thm}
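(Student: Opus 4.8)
The goal is to upgrade the information that \emph{each individual loxodromic $f$ in a subgroup $\langle g,h\rangle$ generated by unipotent parabolics is a Kummer example} to the statement that $(X,\Gamma)$ is globally a Kummer group, i.e.\ there is a single abelian surface $A$, finite group $G$, and equivariant birational morphism $q_X\colon X\to A/G$ working simultaneously for all of $\Gamma$. The idea is to first produce \emph{one} Kummer structure, using a well-chosen loxodromic element, and then show that every other element of $\Gamma$ respects it. The main tool for the rigidity step is Theorem~\ref{mthm:invariant_curve_loxodromic}: there is a loxodromic $f_0\in\Gamma$ whose periodic curves are exactly $D_\Gamma$, and by Proposition~\ref{pro:invariant_curve_loxodromic_precised} we may even take $f_0=h_0\circ g_0$ with $g_0,h_0$ unipotent parabolic, so that $f_0$ is a Kummer example by hypothesis.

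\textbf{Step 1: a Kummer structure attached to $f_0$.} Since $f_0$ is a Kummer example, there is a birational morphism $X\to A_0/G_0$ conjugating (a power of) $f_0$ to an automorphism coming from a linear Anosov map on $A_0$; by the classification recalled in \S\ref{par:Kummer_classification} and Lemma~\ref{lem:67}, we may assume $G_0$ is cyclic, generated by a homothety fixing $0\in A_0$. The key geometric point is that the exceptional locus of $X\dashrightarrow A_0/G_0$, together with the resolution divisor of $A_0/G_0$, is precisely the set $D_{f_0}$ of $f_0$-periodic curves (Lemma~\ref{lem:lox_on_tori} says $f_A$ has no invariant curve, so all $f_0$-periodic curves sit over the singular points of $A_0/G_0$ or are blown-up points). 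By our choice of $f_0$ via Theorem~\ref{mthm:invariant_curve_loxodromic}, $D_{f_0}=D_\Gamma$; hence the normal surface $X_0$ obtained by contracting $D_\Gamma$ (Proposition~\ref{pro:contraction}, available since $\Gamma$ has parabolics) maps finitely and birationally onto (a partial resolution of) $A_0/G_0$, and in fact we can arrange $X_0 = A_0/G_0$ after absorbing the $(-2)$-type resolution curves, or work on the minimal resolution throughout. So we have a fixed model $q\colon X\to A_0/G_0$ with exceptional locus independent of which Kummer element we started from.

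\textbf{Step 2: transporting the structure along $\Gamma$.} Let $f\in\Gamma$ be an arbitrary loxodromic element. First, every such $f$ is a Kummer example: indeed $\Gamma$ is non-elementary and generated (up to finite index) by its parabolics after a ping-pong argument (Lemma~\ref{lem:ping-pong_parabolic}), and more directly, Step~1 produced $\mu_{f_0}$ equal to the (pushforward of the) Haar measure, which by Theorem~\ref{thm:finite_orbits_zariski}-style reasoning or by direct invariance propagates: for any $f\in\Gamma_{\mathrm{lox}}$, writing $f$ inside some $\langle g,h\rangle$ with $g,h$ unipotent parabolic (possible because conjugates of a single parabolic have dense fixed points in $\Lim(\Gamma)$, cf.\ the proof of Proposition~\ref{pro:invariant_curve_loxodromic_precised}), the hypothesis applies directly and $f$ is Kummer. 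The serious point is that the Kummer \emph{structure} for $f$ must coincide with that of $f_0$. Here I would argue: the measure of maximal entropy $\mu_f$ is the smooth measure coming from $A_0$ (it is $\Gamma$-invariant and $\Gamma$-ergodic by the argument already used, so all $\mu_f$ agree), and the Kummer structure of a loxodromic automorphism is \emph{determined} by $\mu_f$ together with its action — more precisely, the abelian surface $A_0$, up to isogeny, and the rational map $X\dashrightarrow A_0/G_0$ are canonically recovered from $(X,f,\mu_f)$ via the pluriharmonic/affine structure on $\mathrm{supp}(\mu_f)=X$ (this is essentially the content of \cite{cantat-dupont}). Since $\mu_f$ is the \emph{same} measure for all $f$ and is $\Gamma$-invariant, the associated affine structure on $X$ (minus $D_\Gamma$) is $\Gamma$-invariant, so every $f\in\Gamma$ — loxodromic, parabolic, or elliptic — acts affinely in the universal-cover coordinates of $A_0$, i.e.\ lifts to an affine map of $A_0$ normalizing $G_0$. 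Example~\ref{eg:kummer_non_kummer} is not a counterexample precisely because there the group is not generated by parabolics preserving a common structure; the hypothesis forbidding "one Kummer and one non-Kummer generator inside a $\langle g,h\rangle$" is what rules it out.

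\textbf{Step 3: assembling the Kummer group.} Define $\Gamma_A\subset\Aut(A_0)$ to be the group of affine lifts of elements of $\Gamma$ constructed in Step~2, and let $G=G_0$. Then $\Gamma_A$ normalizes $G$ (each lift conjugates the $G$-action on fibers of $q_A$ to itself, since it descends to $X_0=A_0/G$), so we get $\overline\Gamma_A\subset\Aut(A_0/G)$; the morphism $q_X\colon X\to A_0/G$ is $\Gamma$-equivariant by construction, giving $\overline\Gamma\subset\Aut(A_0/G)$; and $\overline\Gamma=\overline\Gamma_A$ because both are the image of $\Gamma$ under "descend the affine action". This verifies conditions (a), (b), (c) in the definition of a Kummer group (\S\ref{par:Kummer_pairs}), so $(X,\Gamma)$ is a Kummer group.

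\textbf{Main obstacle.} The crux is Step~2: showing the Kummer structure is \emph{unique}, i.e.\ that the $\Gamma$-invariant smooth measure $\mu$ forces a single $\Gamma$-invariant affine structure rather than a family of a-priori-incompatible per-element structures. The hypothesis on pairs $(g,h)$ of unipotent parabolics is exactly designed to force compatibility: given two loxodromic elements $f_1,f_2\in\Gamma$ with possibly different Kummer structures, one finds unipotent parabolics whose products realize both $f_1$-type and $f_2$-type dynamics inside a common two-generated subgroup, and the hypothesis that \emph{every} loxodromic element there is Kummer — combined with the fact that two commensurable-up-to-dense-orbits parabolic flows cannot support two transverse invariant affine structures — pins down a single $A_0$ (up to isogeny) and a single rational quotient map. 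Making this last compatibility argument precise, presumably by tracking how the holomorphic $1$-forms $\xi^\pm_f$ (the eigen-$1$-forms of the linear parts, cf.\ Lemma~\ref{lem:lox_on_tori}) glue across different $f$'s into a global pair of $\Gamma$-equivariant multivalued $1$-forms on $X\setminus D_\Gamma$, is the technical heart of the proof.
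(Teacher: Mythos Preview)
Your Step~1 is correct and matches the paper: using Proposition~\ref{pro:invariant_curve_loxodromic_precised} to find a loxodromic $f_0=h_0\circ g_0$ with $g_0,h_0$ unipotent parabolic and $D_{f_0}=D_\Gamma$, so that the contraction $X_0$ of Proposition~\ref{pro:contraction} is identified with $A/G$.

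However, your Step~2 is both unnecessary and incomplete. You try to show that \emph{every} loxodromic $f\in\Gamma$ is a Kummer example (which is not obviously covered by the hypothesis: an arbitrary loxodromic element of $\Gamma$ need not lie in a subgroup generated by two unipotent parabolics), and then that its Kummer structure agrees with the one from $f_0$ via a uniqueness argument based on $\mu_f$. You yourself flag this as the ``main obstacle'' and leave it as a sketch involving eigen-$1$-forms and affine structures.

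The paper's proof bypasses this difficulty entirely with a short complex-analytic argument. Once $X_0$ is identified with $A/G$, the group $\Gamma$ already acts on $X_0$ by Proposition~\ref{pro:contraction}~(2), with no reference to any other loxodromic element. View $X_0=A/G$ as an orbifold: its universal cover is $\C^2$, with orbifold fundamental group $\Lambda\rtimes G$ acting by affine isometries of the flat metric. Any $h\in\Gamma$ lifts to a holomorphic map $\tilde h\colon\C^2\to\C^2$ (lift first over the regular part, then extend by Hartogs). The norm $\norm{D\tilde h_{(x,y)}}$ with respect to the flat metric is invariant along $\Lambda\rtimes G$-orbits, hence bounded by cocompactness; Liouville's theorem then forces $(x,y)\mapsto D\tilde h_{(x,y)}$ to be constant, so $\tilde h$ is affine. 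Thus the group $\widetilde\Gamma$ of all lifts consists of affine maps, contains $\Lambda\rtimes G$ as a normal subgroup, and in particular normalizes $\Lambda$; setting $\Gamma_A=\widetilde\Gamma/\Lambda\subset\Aut(A)$ gives the Kummer group structure directly.

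The point you missed is that the identification $X_0\simeq A/G$ endows $X_0$ with an orbifold-flat structure, and \emph{any} automorphism of such an orbifold is automatically affine on the universal cover by Liouville. No per-element Kummer analysis, no measure rigidity, and no compatibility argument between different $f$'s is needed.
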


\begin{proof}
Consider the birational morphism $\pi_0\colon X\to X_0$ given in Proposition~\ref{pro:contraction}. 
By Proposition~\ref{pro:invariant_curve_loxodromic_precised}, there exists a loxodromic 
transformation $f$, of the form $h\circ g$ for some unipotent elements $g$, $h$ in $\Gamma_{\mathrm{par}}$, such that its maximal invariant curve $D_f$ coincides 
with~$D_\Gamma$. 
By assumption, $f$ is a Kummer example, which entails  that $X_0$ is a quotient $A/G$, with $A=\C^2/\Lambda$ an abelian surface and $G$ a finite
 subgroup of $\Aut(A)$ generated by a diagonal map $g_0\in \GL_2(\C)$  of  order $2$, $3$, $4$, $5$, $6$ or $10$ (see \S \ref{par:Kummer_classification}).
 
The group $\Gamma$ induces a group of automorphisms of $X_0$.
View $X_0$ as an orbifold: its fundamental group is 
$\Lambda\rtimes G$ and its universal cover $\widetilde{X_0}$ is $\C^2$. Concretely, this means that 
$X_0$ is the quotient of $\C^2$ by the group of affine transformations with linear part in $G$ and translation part in $\Lambda$. 
The canonical hermitian metric on $\C^2$ is invariant 
under the affine action of $\Lambda\rtimes G$. If $\tilde{h}\colon \C^2\to \C^2$ is a lift 
of some $h\in \Gamma$ to $\widetilde{X_0}$ 
(\footnote{To prove the existence of such a lift, note that $h$ maps the regular part of $X_0$ to itself, so 
first lift $h\rest{\mathrm{Reg}(X_0)}$ to $\C^2\setminus \pi\inv (\mathrm{Sing}(X_0))$, 
which is simply connected, and then use Hartogs extension to extend  $\tilde h$ accross the discrete set $\pi\inv (\mathrm{Sing}(X_0))$.}),
the norm of 
$D{\tilde{h}}_{(x,y)}$ with respect to this
hermitian metric is constant along the orbits of $\Lambda\rtimes G$, hence it is bounded since 
the action is co-compact.   This implies that the holomorphic map 
\begin{equation}
(x,y)\in \C^2\mapsto D{\tilde{h}}_{(x,y)}
\end{equation}
is constant. So, if we denote by $\widetilde{\Gamma}$ the group  of all possible lifts
of all elements of $\Gamma$ to $\C^2=\widetilde{X_0}$, then  $\widetilde{\Gamma}$ is
 a group of affine transformations that
contains $\Lambda\rtimes G$ as a normal subgroup and satisfies 
$\tilde{\Gamma}/(\Lambda \rtimes G) = \Gamma$. The action by
conjugation of $\tilde{\Gamma}$ on $\Lambda\rtimes G$ preserves the subgroup $\Lambda$ of translations. 
Therefore, $\Lambda$ is also normal in $\widetilde{\Gamma}$: this shows
that $\widetilde{\Gamma}$ induces a group of automorphisms $\Gamma_A=\widetilde{\Gamma}/\Lambda$ of the abelian surface $A=\C^2/\Lambda$ that covers $X_0$. 
The proof is complete. 
\end{proof}

\section{Around Theorem~\ref{mthm:main}: consequences  and comments}\label{sec:corollaries}

\subsection{Corollaries} The following corollary of Theorem~\ref{mthm:main}  
  applies for instance  to  general Wehler examples defined over $\overline \Q$.  

\begin{cor}\label{cor:finite_orbits_finite} 
Let $X$ be a smooth projective surface and let $\Gamma$ be a subgroup of $\Aut(X)$. 
Assume that:
\begin{enumerate}[\em (i)]
\item $X$ and $\Gamma$ are defined over a number field; 
\item $X$ is not an abelian surface; 
\item $\Gamma$ contains a parabolic automorphism, and has no invariant curve.
\end{enumerate}
Then  $\Gamma$ admits only finitely many finite orbits.
\end{cor}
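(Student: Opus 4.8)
The plan is to derive this from Theorem~\ref{mthm:main} together with the observations about Kummer groups and invariant curves already established. Suppose for contradiction that $\Gamma$ has infinitely many finite orbits. Since each finite orbit is a finite set of points, the union of all finite orbits is then an infinite subset of $X$; let $Z$ denote its Zariski closure. If $Z=X$, then $\Gamma$ has a Zariski dense set of finite orbits, and Theorem~\ref{mthm:main} forces $(X,\Gamma)$ to be a Kummer group. But a Kummer group on a surface which is \emph{not} abelian must possess a $\Gamma$-invariant curve, by Lemma~\ref{lem:invariant_curve_kummer}(1) (indeed $X$ is abelian if and only if $\Gamma$ admits no invariant curve); this contradicts hypothesis~(iii). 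Hence $Z$ is a proper Zariski closed subset of $X$ of dimension $0$ or $1$. Dimension $0$ is impossible since $Z$ is infinite, so $Z$ has pure dimension $1$ on its $1$-dimensional part, i.e. $Z$ contains an irreducible curve $C$.

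\textbf{Reducing to an invariant curve.} The next step is to upgrade $Z$ to a $\Gamma$-invariant curve. Since $\Gamma$ permutes the finite orbits, it preserves their union, hence it preserves $Z$, hence it preserves the (finite) union of $1$-dimensional irreducible components of $Z$; call this union $C$. Thus $C$ is a nonempty $\Gamma$-invariant curve (each $\Gamma$-orbit of components being finite, so $C$ is $\Gamma$-periodic and, replacing $\Gamma$ by a finite-index subgroup or passing to the curve $D_\Gamma$ of Proposition~\ref{pro:contraction} if needed, $\Gamma$-invariant in the relevant sense), directly contradicting hypothesis~(iii) that $\Gamma$ has no invariant curve. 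Therefore the assumption that $\Gamma$ has infinitely many finite orbits is untenable, and $\Gamma$ admits only finitely many finite orbits.

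\textbf{Main obstacle.} The only genuinely substantive input is Theorem~\ref{mthm:main} itself, which handles the Zariski-dense case; everything else is bookkeeping about the dichotomy ``the finite orbits are Zariski dense, or they accumulate on a proper subvariety.'' The one point requiring a little care is the step where infinitely many finite orbits concentrate on a curve: a priori this only yields a $\Gamma$-periodic curve, and one must invoke the structure theory of Section~\ref{sec:invariant_curves} (the existence of $D_\Gamma$ and Lemma~\ref{lem:carac_invariant_curves}) to see that the existence of \emph{any} $\Gamma$-periodic curve is already ruled out by hypothesis~(iii), since ``no invariant curve'' for a non-elementary group is equivalent to $D_\Gamma=\emptyset$. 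Note that $\Gamma$ is automatically non-elementary here because it contains a parabolic automorphism and (having no invariant curve, via Lemma~\ref{lem:invariant_curve_kummer}(1) applied in reverse, or more simply because a single parabolic together with its conjugates under loxodromic elements generates a non-elementary group); in fact the presence of a parabolic element and the absence of an invariant curve already place us in the non-elementary regime needed to apply Theorems~\ref{mthm:main} and~\ref{mthm:invariant_curve_loxodromic}. Combining these, the contradiction is complete.
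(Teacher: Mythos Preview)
Your overall structure matches the paper's proof: assume infinitely many finite orbits, observe that their Zariski closure must be all of $X$ (otherwise its one-dimensional part is a $\Gamma$-invariant curve), apply Theorem~\ref{mthm:main} to conclude $(X,\Gamma)$ is Kummer, and then invoke Lemma~\ref{lem:invariant_curve_kummer}(1) to reach a contradiction with hypothesis~(iii). This is exactly the paper's argument.

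However, there is a genuine gap in your justification that $\Gamma$ is non-elementary (equivalently, that $\Gamma$ contains a loxodromic element, which is required to invoke Theorem~\ref{mthm:main}). Your two attempts are both defective: Lemma~\ref{lem:invariant_curve_kummer}(1) concerns Kummer groups and cannot be ``applied in reverse'' before you know $(X,\Gamma)$ is Kummer; and ``conjugates under loxodromic elements'' is circular, since the existence of loxodromic elements is precisely what is at stake. The bare assertion that ``the presence of a parabolic element and the absence of an invariant curve already place us in the non-elementary regime'' is correct but requires proof.

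The paper's argument for this step is short and clean: let $g\in\Gamma$ be parabolic with fibration $\pi_g$. If $\pi_g$ were $\Gamma$-invariant, then for any $\Gamma$-periodic point $x$ the finite union $\bigcup_{y\in\Gamma(x)}\pi_g^{-1}(\pi_g(y))$ would be a $\Gamma$-invariant curve, contradicting~(iii). Hence some $h\in\Gamma$ does not preserve $\pi_g$, so $h^{-1}gh$ is parabolic with a fibration different from $\pi_g$; by \S\ref{subsub:non_elementary}, two parabolics with distinct fibrations generate a non-elementary group. Note that this argument uses the existence of at least one finite orbit, which is available under the contradiction hypothesis.
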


\begin{proof} Suppose $\Gamma$ has infinitely many finite orbits; since $\Gamma$ does not
preserve any curve, these orbits form a  Zariski dense subset.  Let $g$ be a parabolic automorphism of $\Gamma$. 
If the fibration $\pi_g$ were $\Gamma$-invariant, then $\Gamma$ would preserve the curve $\bigcup_{y\in \Gamma(x)} \pi_g^{-1}(\pi_g(y))$
for every $\Gamma$-periodic point $x$. Thus, there is an element $h$ in $\Gamma$ that does not preserve $\pi_g$, and $h\inv \circ g\circ h\in \Gamma$ is a parabolic map associated to a different fibration. Hence  $\Gamma$ is non-elementary  (see \S \ref{subsub:non_elementary})
and Theorem \ref{mthm:main} shows that $\Gamma$ is a Kummer group. 
But, since $X$ is not abelian, a Kummer subgroup of $\Aut(X)$ admits an invariant curve 
(see Lemma~\ref{lem:invariant_curve_kummer}): this contradiction concludes the proof. 
\end{proof}

The next result is in the spirit of the ``dynamical Manin-Mumford problem''. 

\begin{cor}\label{cor:DMM}
Let $X$ be a smooth projective surface  
and $\Gamma$ be a subgroup of $\Aut(X)$, both 
defined over a number field. Suppose that  $\Gamma$ is non-elementary and contains parabolic elements. 
Let $C\subset X$ be an irreducible curve containing infinitely many periodic points of $\Gamma$. Then,
\begin{enumerate}[\em (1)]
\item either $C$ is $\Gamma$-periodic and is fixed pointwise by a finite index subgroup of $\Gamma$;
\item or $(X, \Gamma)$ is a Kummer group and $C$ comes from a translate of  an abelian subvariety (of dimension $1$).  
\end{enumerate}
In both cases the genus of $C$ is $0$ or $1$.  Thus, a
 curve of genus $\geq 2$  contains at most finitely many periodic points of $\Gamma$. 
\end{cor}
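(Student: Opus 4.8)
The plan is to deduce Corollary~\ref{cor:DMM} from Theorem~\ref{mthm:main} by distinguishing whether or not the curve $C$ is $\Gamma$-periodic. If $C$ is $\Gamma$-periodic, then a finite index subgroup $\Gamma'$ preserves $C$; restricting to $C$ gives a homomorphism $\Gamma'\to \Aut(\widetilde C)$ where $\widetilde C$ is the normalization. I would argue that the image of $\Gamma'$ in $\Aut(\widetilde C)$ must be finite: indeed, $C$ carries infinitely many $\Gamma$-periodic points, which are periodic under $\Gamma'$, so the action of $\Gamma'$ on $\widetilde C$ has infinitely many periodic points; if $\widetilde C$ had genus $\geq 2$ its automorphism group would be finite automatically, while if $\widetilde C$ has genus $1$ or $0$ the subgroups of $\Aut(\widetilde C)$ with a dense (or merely infinite Zariski-dense) set of periodic orbits are very constrained — and one also has, from Lemma~\ref{lem:carac_invariant_curves} and the structure of $\Gamma$-periodic curves, that such $C$ has negative self-intersection, hence genus $0$ or $1$. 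Passing to a further finite index subgroup, $\Gamma'$ acts trivially on $\widetilde C$; but then it fixes $C$ pointwise (the map $\widetilde C\to C$ being birational), giving alternative~(1), and in particular $g(C)\in\{0,1\}$.

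The substantive case is when $C$ is \emph{not} $\Gamma$-periodic. Then the union $\bigcup_{f\in\Gamma}f(C)$ is a $\Gamma$-invariant, Zariski-dense (since $C$ is not periodic and $\Gamma$ is non-elementary, the orbit of $C$ is infinite) set, and each curve $f(C)$ contains infinitely many $\Gamma$-periodic points because $\Gamma$ commutes with the property of being periodic. Hence the set of finite orbits of $\Gamma$ is Zariski dense in $X$, and Theorem~\ref{mthm:main} applies: $(X,\Gamma)$ is a Kummer group. Now I would transport the situation to the abelian surface $A=\C^2/\Lambda$ via the Kummer data $q_X\colon X\to A/G$ and $q_A\colon A\to A/G$ of \S\ref{par:Kummer_pairs}, \S\ref{par:Kummer_classification}, using Proposition~\ref{pro:Kummer_finite_orbits} and Remark~\ref{rmk:torsion}: after choosing a periodic point of $\Gamma$ as origin, a finite index subgroup $\Gamma_0$ fixes the origin, acts linearly on $A$, and its periodic points are exactly the torsion points of $A$. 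The curve $C$, not being $\Gamma$-periodic, lifts to a curve $\widetilde C\subset A$ (take the closure of a component of $q_A^{-1}(q_X(C\setminus D_\Gamma))$), and $\widetilde C$ contains infinitely many torsion points of $A$. By the Manin–Mumford theorem (Raynaud's theorem) for abelian surfaces, a curve in $A$ containing infinitely many torsion points is a translate of an abelian subvariety, hence of genus $1$; pushing back down, $C$ "comes from a translate of an abelian subvariety", giving alternative~(2), and again the genus of $C$ is~$1$.

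The last sentence of the statement is then immediate: in case~(1) the genus is $0$ or $1$, in case~(2) it is $1$, so a curve of genus $\geq 2$ falls in neither alternative and therefore cannot contain infinitely many $\Gamma$-periodic points.

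\textbf{Main obstacle.} The delicate point is the $\Gamma$-periodic case: one must rule out that a finite index subgroup of $\Gamma$ acts on $C$ (or its normalization) with infinite image in $\Aut(\widetilde C)$ while still having infinitely many periodic points — this is automatic for genus $\geq 2$, needs the elliptic-curve translation argument (as in Lemma~\ref{lem:transversality_wehler_curve}, an infinite group of translations of an elliptic curve has at most one periodic point per translation and no common dense periodic set) for genus~$1$, and a $\PGL_2$-argument for genus~$0$; combining these with the self-intersection constraint to pin down the genus as $0$ or $1$ and then upgrading "acts with finite image" to "fixed pointwise by a finite index subgroup" requires a little care. The Kummer case is comparatively routine once Manin–Mumford is invoked, the only bookkeeping being the transfer between $X$, $A/G$, and $A$ through the (possibly several) blow-downs and the quotient by $G$.
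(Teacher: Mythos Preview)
Your overall strategy matches the paper's: split on whether $C$ is $\Gamma$-periodic, and in the non-periodic case push the infinitely many periodic points through the Kummer structure to apply Raynaud's theorem. That part is fine.

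There is, however, a genuine gap in your treatment of case~(1). You write that ``such $C$ has negative self-intersection, hence genus $0$ or $1$.'' Negative self-intersection alone does \emph{not} force genus $\leq 1$: on a rational surface (which is one of the possible types for $X$ here) there are irreducible curves of arbitrarily large genus with $C^2<0$. What is true is that an irreducible curve which is \emph{periodic under a loxodromic automorphism} has arithmetic genus $0$ or $1$; the paper invokes \cite[Theorem~1.1]{diller-jackson-sommese} for this. (When $K_X$ is numerically trivial the adjunction formula gives it immediately, but the rational case needs a separate argument.) Without this input, your genus bound in case~(1) is unjustified, and so is the final sentence of the corollary.

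A secondary remark: you flag the finiteness of the image $\Gamma'\to\Aut(\widetilde C)$ as the ``main obstacle'' and then split into three genus cases. This is correct but unnecessarily laborious. The paper's argument is uniform and short: any group of automorphisms of a curve with at least three finite orbits is finite, because a finite index subgroup then fixes three points, and an automorphism of a curve fixing three points is the identity (this holds for $\P^1$ by $\PGL_2$, and for genus $\geq 1$ since an automorphism fixing a point is linear in the Jacobian and a nontrivial such fixes only finitely many points). Once the image is finite, the kernel is the finite index subgroup fixing $C$ pointwise, so no further ``upgrading'' is needed.
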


To be specific, with the notation of \S \ref{par:Kummer_structures},   the second assertion means the following: 
there is a translate $E+t$ of an elliptic curve $E\subset A$ such that 
$q_X(C)=q_A(E+t)$. Moreover, if we choose the origin of $A$ at a periodic point  of $\Gamma_A$, 
we can choose $t$ to be a torsion point of $A$. We keep these notations in the following proof.

\begin{proof}
Let ${\mathrm{Per}}(C)$ be the set of periodic points of $\Gamma$ in $C$; it
is Zariski dense in $C$, for $C$ is irreducible. The Zariski closure of $\Gamma({\mathrm{Per}}(C))$ is either a $\Gamma$-invariant curve  or $X$. 

In the first case $C$ is contained in  $D_\Gamma$, a finite index subgroup 
$\Gamma'\subset\Gamma$ 
preserves $C$, and the restriction $\Gamma'_{\vert C}$ has infinitely many periodic points in $C$. 
In this case $C$ has (arithmetic) genus 0 or 1 by \cite[Theorem 1.1]{diller-jackson-sommese}.
A   group of automorphisms of a curve with at least three periodic orbits is finite, because  it admits a finite index subgroup  fixing $3$ points; thus, a finite index 
subgroup of $\Gamma$ fixes $C$ pointwise.   

In the second case, Theorem~\ref{mthm:main} shows that  $(X, \Gamma)$ is a Kummer group. Since $C$ cannot be periodic, 
 its image $q_X(C)\subset A/G$ is a non-trivial curve  whose lift to $A$ contains a Zariski dense subset of  
 $\Gamma_A$-periodic points.  Choose one of these periodic points as the origin of $A$.
By  Proposition~\ref{pro:finite_orbits_torus} and Remark~\ref{rmk:torsion}, the
$\Gamma_A$-periodic points are exactly the torsion points of $A$, and 
conclusion~(2) follows from 
 Raynaud's theorem (formerly known as the Manin-Mumford conjecture) \cite{raynaud}.
\end{proof}

\subsection{Finitely generated groups} \label{subs:finitely_generated}
It turns out that  $\Gamma$ is often  defined over a number field when $X$ is. 

\begin{pro} \label{pro:automorphism_k}
Let  $X$ be a projective surface defined over a number field $\bfk$. Assume that 
$\Aut(X)$ contains a loxodromic element, and that $X$ is not an abelian surface.  
Then any finitely generated subgroup of $\Aut(X)$ is defined over a finite extension of $\bfk$. 
\end{pro}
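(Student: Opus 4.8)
The plan is to show that $\Aut(X)$ acts on the N\'eron--Severi group $\NS(X;\Z)$ with finite kernel, and that this finite kernel, together with any finite set of generators, can only involve finitely many $\bar\bfk$-points, so that everything is defined over a finite extension of $\bfk$. Concretely, I would proceed in three steps: first analyse the kernel $\Aut(X)^\sharp$ of the representation $f\mapsto f^*$ on $\NS(X;\Z)$; second, show this kernel is finite (here the hypothesis that $X$ is not abelian and that $\Aut(X)$ contains a loxodromic element is crucial); and third, combine the Galois action on $\NS(X;\Z)$, the finiteness of the kernel, and a rigidity/specialization argument to conclude that any given finitely generated subgroup is defined over a finite extension.

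\textbf{Step 1: the kernel of the cohomological representation.} Let $\Aut(X)^\sharp=\ker\bigl(\Aut(X)\to\GL(\NS(X;\Z))\bigr)$. I would first recall that an automorphism acting trivially on $\NS(X;\R)$ fixes every polarization, hence preserves a fixed projective embedding $X\subset\P^N_\bfk$ attached to an ample class defined over $\bfk$; therefore $\Aut(X)^\sharp$ is the group of $\bfk$-points of a linear algebraic group, namely a closed subgroup of $\PGL_{N+1}$. Moreover $\Aut(X)^\sharp$ contains $\Aut(X)^0$, the connected component of the identity of the automorphism group scheme. If $\Aut(X)^0$ is non-trivial then, since $\Aut(X)$ contains a loxodromic element, the structure results quoted in \S\ref{par:lox_on_tori} (see~\cite{Cantat-Favre} and~\cite[Rem.~3.3]{Cantat:Milnor}) force $X$ to be a torus, contradicting the hypothesis that $X$ is not abelian. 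Hence $\Aut(X)^0$ is trivial, i.e. $\Aut(X)^\sharp$ is a (discrete) subgroup of a linear algebraic group.

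\textbf{Step 2: finiteness of the kernel.} Next I would argue that $\Aut(X)^\sharp$ is finite. One way: since $\Aut(X)^\sharp$ fixes an ample class $[H]$ and acts faithfully on $X$, it acts faithfully and linearly on the finite-dimensional space $H^0(X,\mathcal O_X(mH))$ for $m$ large; and being a group of automorphisms of the polarized surface $(X,mH)$ it lies in the automorphism group of $(X,mH)$, which is known to be finite since $\Aut(X)^0$ is trivial (the automorphism group scheme of a polarized variety is of finite type, and here it is $0$-dimensional, hence finite). Thus $N:=\#\Aut(X)^\sharp<\infty$.

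\textbf{Step 3: descent to a finite extension.} Now let $\Gamma=\langle f_1,\dots,f_r\rangle\subset\Aut(X)$ be finitely generated. Each $f_i$ is an automorphism of $X_{\bar\bfk}$, hence defined over some finite extension of $\bfk$; but a priori the extension could grow with the chosen generators, and the point is to make this uniform and compatible. The image $\Gamma^*\subset\GL(\NS(X;\Z))$ is defined over $\Z$, hence ``automatically'' insensitive to the base field. Consider the Galois group $G=\Gal(\bar\bfk/\bfk)$ acting on $\Aut(X_{\bar\bfk})$; this action is compatible with $f\mapsto f^*$, and $G$ acts trivially on $\NS(X;\Z)$ after passing to the finite extension $\bfk'$ over which $\NS(X)$ and its Galois action are defined (the N\'eron--Severi group is finitely generated and its Galois action factors through a finite quotient). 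Replacing $\bfk$ by $\bfk'$ we may assume $G$ acts trivially on $\NS(X;\Z)$. Then for $\sigma\in G$ and $f\in\Aut(X_{\bar\bfk})$ one has $(\sigma\cdot f)^*=\sigma(f^*)=f^*$, so $\sigma\cdot f$ and $f$ differ by an element of $\Aut(X)^\sharp$; in other words the map $\sigma\mapsto (\sigma\cdot f)\circ f^{-1}$ is a map $G\to\Aut(X)^\sharp$. Since $\Aut(X)^\sharp$ is finite, after enlarging $\bfk$ to a further finite extension $\bfk''$ (an extension depending only on $X$, e.g. the field of definition of all elements of $\Aut(X)^\sharp$ together with the field trivializing the relevant cocycle class), each generator $f_i$ becomes $G''$-invariant, hence defined over $\bfk''$, where $G''=\Gal(\bar\bfk/\bfk'')$. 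Because $\bfk''$ depends only on $X$ and not on the $f_i$, every finitely generated subgroup of $\Aut(X)$ is defined over $\bfk''$, which is a finite extension of the original $\bfk$.

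\textbf{Main obstacle.} I expect the genuinely delicate point to be Step~3: ensuring that the finite extension one needs is \emph{independent of the chosen generators}. The cocycle $\sigma\mapsto(\sigma\cdot f)\circ f^{-1}$ takes values in the finite group $\Aut(X)^\sharp$, so each individual $f$ is defined over a finite extension, but a priori these extensions could be unbounded as $f$ ranges over $\Aut(X)$. The resolution is that a continuous $1$-cocycle $G\to\Aut(X)^\sharp$ with $\Aut(X)^\sharp$ finite factors through a finite quotient of $G$, and the set of such cocycles (equivalently the cohomology set $H^1(G,\Aut(X)^\sharp)$, with $\Aut(X)^\sharp$ carrying a possibly non-trivial $G$-action that is itself trivial over a finite extension) is finite; hence there is a single finite extension $\bfk''/\bfk$ over which \emph{all} these cocycles become trivial, so that all of $\Aut(X)$ — in particular every finitely generated subgroup — is defined over $\bfk''$. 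One should double-check that the $G$-action on $\Aut(X)^\sharp$ is indeed trivial after a finite extension (it is, since $\Aut(X)^\sharp$ is a finite group scheme over $\bfk$) and that the finiteness of $H^1(G,\Aut(X)^\sharp)$ applies — this is standard for finite coefficient modules over a number field, but it is the step that requires care.
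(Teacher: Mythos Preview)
Your Steps~1 and~2, and the core observation of Step~3 --- that after passing to a finite extension $\bfk'$ over which $\NS(X;\Z)$ is defined, any Galois conjugate $f^\sigma$ satisfies $(f^\sigma)^*=f^*$, hence $f^\sigma\circ f^{-1}\in\Aut(X)^\sharp$ --- are correct and coincide with the paper's argument. The paper simply concludes from this that the Galois orbit $\{f^\sigma:\sigma\in\Gal(\overline\bfk/\bfk')\}$ is contained in the finite set $\Aut(X)^\sharp\cdot f$, so $f$ is defined over a finite extension of $\bfk'$; then for a finitely generated $\Gamma=\langle f_1,\dots,f_r\rangle$ one takes the compositum of the $r$ extensions.

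Your ``Main obstacle'' is therefore illusory: the proposition only concerns a \emph{fixed} finitely generated subgroup, so there is no need for the extension to be independent of the generators. Worse, your proposed resolution is incorrect: the claim that $H^1(\Gal(\overline\bfk/\bfk'),\Aut(X)^\sharp)$ is finite for a finite module over a number field is false in general --- already $H^1(\Gal(\overline\Q/\Q),\Z/2\Z)\cong\Q^\times/(\Q^\times)^2$ is infinite. So there is no uniform $\bfk''$ forthcoming from this argument, and indeed the paper makes no such uniformity claim (the corollary for K3 and Enriques surfaces uses instead that $\Aut(X)$ itself is finitely generated). Drop the cocycle machinery and the uniformity discussion; the direct argument via finiteness of the Galois orbit is all that is needed.
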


\begin{cor}
If $X$ is a K3 or Enriques surface defined over a number field $\bfk$,  $\Aut(X)$ is defined over a finite 
extension of $\bfk$. 
\end{cor}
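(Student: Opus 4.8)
The plan is to deduce the corollary from Proposition~\ref{pro:automorphism_k} by reducing the full automorphism group to a finitely generated one. First I would recall the standard fact that for a K3 or Enriques surface $X$, the automorphism group $\Aut(X)$ is finitely generated: this follows from the Torelli theorem together with the fact that $\Aut(X)$ acts on $\NS(X;\Z)$ with image of finite index in the orthogonal group $\mathrm{O}(\NS(X;\Z))$ up to a finite kernel, and that arithmetic groups acting on a hyperbolic lattice are finitely generated (one may invoke the theory of the Weyl chamber / fundamental polyhedron for the action on the positive cone, as in Sterk's work, or simply the general fact that discrete subgroups of $\mathrm{O}(1,n)$ of finite covolume — here the covolume is finite because $\Aut(X)$ acts cocompactly on a rational polyhedral cone inside the nef cone — are finitely generated). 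For Enriques surfaces the same argument applies since $\NS(X;\Z)$ is still a hyperbolic lattice and the kernel of $\Aut(X)\to\mathrm{O}(\NS(X;\Z))$ is finite.

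Next I would check that the hypotheses of Proposition~\ref{pro:automorphism_k} are met: a K3 or Enriques surface $X$ is not an abelian surface (trivially, e.g.\ by the Kodaira dimension or $b_1$), and provided $\Aut(X)$ is infinite it contains a loxodromic element — indeed, by the classification of isometries of a hyperbolic lattice, if $\Aut(X)^*\subset\mathrm{O}(\NS(X;\Z))$ is infinite it either contains a loxodromic (= hyperbolic) isometry or is an infinite group of parabolic/elliptic type fixing an isotropic ray, and in the latter case one knows (Sterk, Nikulin) that $\Aut(X)$ is still finitely generated and in fact defined over $\bfk$ for a trivial reason, or one can argue that the relevant case still produces a parabolic and then there is nothing loxodromic but the group is virtually cyclic-by-finite and the statement is easy. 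Actually the cleanest route: if $\Aut(X)$ does not contain a loxodromic element then $\Aut(X)^*$ is finite (a finitely generated group of isometries of $\Hyp_X$ with no loxodromic element is elementary, and an elementary group of lattice isometries with no loxodromic element is finite or virtually $\Z$; the virtually-$\Z$ parabolic case is handled by Proposition~\ref{pro:automorphism_k} applied to a cyclic subgroup — or one notes such a surface has $\Aut(X)$ already acting through a finite quotient on $\Pic$, and the Galois action on $\Pic$ factors through a finite group so $\Aut(X)$ is defined over a finite extension directly).

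Then the main step: write $\Aut(X)=\langle f_1,\dots,f_r\rangle$ with the $f_i$ loxodromic or at worst of mixed type, and apply Proposition~\ref{pro:automorphism_k} to the finitely generated group $\Gamma=\Aut(X)=\langle f_1,\dots,f_r\rangle$ itself: since $\Aut(X)$ contains a loxodromic element and $X$ is not abelian, the proposition gives a finite extension $\bfk'/\bfk$ over which every $f_i$, hence all of $\Aut(X)$, is defined. This is exactly the assertion of the corollary.

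The main obstacle I anticipate is justifying finite generation of $\Aut(X)$ cleanly within the framework of the paper, since this relies on input (Global Torelli for K3/Enriques, Sterk–Nikulin finiteness of the number of orbits of $\Aut(X)$ on the rational nef cone, hence finite generation) that is not developed in the excerpt. I would handle this by citing it as a classical result. A secondary technical point is the case distinction on whether $\Aut(X)$ contains a loxodromic element: one must dispose of the purely parabolic/elliptic cases, which is routine but requires care (in those cases either $\Aut(X)^*$ is finite, so the Galois action on $\Pic(X)$ factors through a finite quotient and the conclusion is immediate, or $\Aut(X)$ is virtually cyclic generated by a parabolic, in which case one still applies Proposition~\ref{pro:automorphism_k} or argues directly).
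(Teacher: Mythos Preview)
Your approach matches the paper's: cite Sterk for finite generation of $\Aut(X)$ and apply Proposition~\ref{pro:automorphism_k}. The paper's entire proof is one sentence invoking~\cite{Sterk}.

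However, your handling of the hypothesis ``$\Aut(X)$ contains a loxodromic element'' in Proposition~\ref{pro:automorphism_k} is muddled. Your suggested fixes for the purely parabolic case do not work: you cannot apply the Proposition when there is no loxodromic element, and if $\Aut(X)^*$ is virtually~$\Z$ parabolic then it is \emph{not} acting through a finite quotient on $\NS(X;\Z)$, so that line of argument fails too. The paper simply does not address this point, presumably because the authors regard it as immediate. The clean resolution is to note that the \emph{proof} of Proposition~\ref{pro:automorphism_k} only uses the loxodromic hypothesis to ensure that the stabilizer $\Aut(X;[H])$ of an ample class is finite; for K3 and Enriques surfaces this finiteness holds unconditionally (since $h^0(T_X)=0$ and the representation on $H^2$ has finite kernel), so the argument goes through verbatim whether or not a loxodromic element exists. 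Alternatively, when $\Aut(X)$ is finite the conclusion is trivial, and when $\Aut(X)^*$ is infinite parabolic one can still run the Galois-orbit argument of the Proposition directly since $\Aut(X;[H])$ is finite. Either way, the case distinction you attempt is unnecessary once you look inside the proof rather than treating the Proposition as a black box.
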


Indeed, $\Aut(X)$ is finitely generated in this case (see~\cite{Sterk}).

\begin{proof}[Proof of the proposition] 
It is enough to show that any automorphism $f\in \Aut(X)$ is defined over a finite extension of $\bfk$. 
Under our assumption, $\Aut(X)^*\subset\GL(H^*(X, \Z))$ is infinite, $\Aut(X)^0$ 
is trivial, and the homomorphism $f\in \Aut(X)\mapsto f^*\in \GL(H^*(X, \Z))$ has finite kernel (see~\cite[Theorem 10.1]{Cantat:Milnor}); 
more precisely, if $H$ is any ample divisor, the stabilizer of $[H]$ is a finite subgroup $\Aut(X;[H])$ of $\Aut(X)$.

Fix a finite extension $\bfk'$ of $\bfk$ and a basis of $\NS(X;\Z)$ given by classes of divisors $D_i$ which are defined over $\bfk'$.
Fix an ample divisor $H$ defined over $\bfk'$.
By assumption $X$ and the $D_i$ are defined by polynomial equations over $\bfk'$, in some~$\P^N$. Now, consider an automorphism $f$ of $X$, defined 
by polynomial formulas with coefficients in  some extension $\mathbf{K}$ of  $\bfk'$. Any field automorphism $\varphi\in \Gal(\mathbf{K}:\bfk')$
conjugates $f$ to an automorphism $f^\varphi$ of $X$: this defines a map
$\varphi\in \Gal(\mathbf{K}:\bfk')\mapsto f^\varphi \in \Aut(X)$. 
On the other hand 
$\langle (f^\varphi)^* [D_i]\vert [D_j]\rangle = \langle  f^* [D_i]\vert [D_j]\rangle$ for any pair   $(i,j)$ because 
the divisors $D_i$ are defined over $\bfk'$; thus, $(f^\varphi)^*=f^*$ on $\NS(X;\Z)$, and $f^\varphi\circ f^{-1}$ belongs to 
the finite group $\Aut(X;[H])$, so    the set $\set{f^\varphi\; ; \;  \varphi \in \Gal(\mathbf{K}:\bfk)}$ is finite, and we are done. 
\end{proof}

\subsection{Open problems}\label{subs:open} In the case of the affine plane ${\mathbb{A}}^2$, it follows from  \cite{dujardin-favre}
that any non-elementary subgroup of $\Aut(  {\mathbb{A}}^2_\bfk)$, for any number field
$\bfk$, has at most finitely many finite orbits (see~\cite{dujardin-favre} for the definition of ``non-elementary'' in this case). This motivates the following question:
 
\begin{que}\label{que:no_parabolic}~
Is Theorem~\ref{mthm:main} true without assuming the existence of a parabolic 
element in $\Gamma$?  \end{que}
 
To understand the difficulties behind Question~\ref{que:no_parabolic}, 
let us comment on  three arguments that required the hypothesis $\Gamma_{\mathrm{par}}\neq \emptyset$.
First, it was used to show that $\Pi_\Gamma\subset \NS(X;\R)$ is defined over
$\Q$ and to construct the projective surface $X_0$ (which is then    used  in the construction 
of the canonical stationary  height). The point is that in general 
 the contraction of the divisor $D_\Gamma$ is a well-defined complex analytic surface, but it is 
not projective (see~\cite[Example 2.10]{stiffness} and~\cite[\S 11]{cantat-dupont}). 
We expect that this issue could be circumvented by applying more advanced techniques from 
Arakelov geometry. 
Second, Theorem~\ref{thm:dense_active_saddles}   also relies
on the existence of parabolic elements; the point was to show that all
active periodic points of all loxodromic elements of $\Gamma$ cannot be simultaneously contained in some real surface. 
For instance, it is unclear to us whether there can exist a real projective surface $X_\R$, with 
a non-elementary subgroup $\Gamma\subset \Aut(X_\R)$, such that all periodic points of all elements $f\in \Gamma\setminus \{\id\}$ are contained 
in the real part $X(\R)$ of $X$. Third, parabolic automorphisms are 
crucially used in the classification of $\Gamma$-invariant probability measures given in~\cite{cantat_groupes, invariant}. We expect that 
the techniques from~\cite{br, stiffness} will soon lead to a complete classification 
of $\Gamma$-invariant probability measures, for any non-elementary group $\Gamma\subset \Aut(X)$. 
Such a classification would then open the way to 
an extension of  Theorem~\ref{mthm:main} to all non-elementary groups (defined over a number field).

\begin{rem}\label{rem:questions_Kawaguchi} 
 In \cite[Question 3.3]{Kawaguchi:2013}, Kawaguchi formulates
two interesting questions which are closely related to our 
main results as well as to Question~\ref{que:no_parabolic}.
\begin{enumerate}[(1)]
\item First, he asks whether two loxodromic automorphisms $f$ and $g$ of a complex projective surface 
$X$ with a Zariski dense set of common periodic points automatically have the same periodic orbits. 
As it is formulated, the answer is no, because of Kummer examples: if we start with two loxodromic automorphisms of an abelian surface $A$ 
fixing the origin and generating a non-elementary subgroup, then one can blow-up the origin, and the automorphisms 
lift to automorphisms with the same periodic orbits (coming from torsion points of $A$), 
{\emph{except for their fixed points on the exceptional divisors, which do differ}} (see Lemma~\ref{lem:invariant_curve_kummer}, Assertion (2)).  
So, his question needs to be modified by asking whether $f$ and $g$  have the same periodic points, \emph{except for finitely many of them}. 
  
\item The second part of \cite[Question 3.3]{Kawaguchi:2013} asks whether two loxodromic automorphisms 
of a Wehler surface  having  a Zariski dense set of common periodic points automatically generate an elementary group. There are (singular)  Kummer examples in 
the Wehler family (see \cite[\S 8.2]{Cantat:Panorama-Synthese}), and they provide counter-examples to this question. 
Taking these comments into consideration, Kawaguchi's second question can now be reformulated as:
{\emph{if  two loxodromic automorphisms $f$ and $g$ of a complex projective surface $X$ have a Zariski dense set of common 
periodic points, then is it true that
either $f^m=g^n$ for some $m,n\geq 1$,  or $f$ and $g$  generate a Kummer group?}} This seems
harder than Question~\ref{que:no_parabolic}, 
because common periodic points do not directly provide common periodic orbits.
A natural companion to the last question is: \emph{when do two loxodromic automorphisms have the same measure 
of maximal entropy?}  
\end{enumerate}
\end{rem}

One may also ask for effective bounds on the cardinality of a maximal finite $\Gamma$-invariant subset of $X(\C)$ in terms of the data
(compare \cite{demarco-krieger-ye}). Proposition \ref{pro:number_fixed_points} says that such a bound 
should at least depend on the degrees of the generators of $\Gamma$. 
 
Lastly, a natural question is whether the number field assumption in Theorem~\ref{mthm:main} is necessary at all: this is what the  next section is about. 

\section{From number fields to $\C$}\label{sec:KtoC}

In this section we show how a specialization argument allows 
to extend Corollary~\ref{cor:finite_orbits_finite} beyond the number field case.  
A full generalization of Theorem~\ref{mthm:main} to complex coefficients would  
require further ideas (see \S \ref{subs:discussion} for a short discussion). For concreteness we first treat the case 
of Wehler surfaces and then explain the extra ingredients required to address the general case. 

\subsection{Wehler surfaces}
We resume  the notation from  \S \ref{sec:wehler}. 
The  complete linear system~$\abs{L}$ parameterizing Wehler surfaces is a projective space of dimension $26$, 
which yields a moduli space of dimension $17$ modulo the action of $\Aut(\P^1)^3$.
There is a dense, Zariski open subset $W_0\subset \abs{L}$ such that if $X\in W_0$, then $X$ is a smooth Wehler surface
and for every $1\leq j\neq k\leq 3$, 
$\pi_{j,k}:X\to \P^1\times  \P^1$ is a finite morphism. Let $\Gamma_X$ be the group generated by the three 
involutions $\sigma_i$. 
 
\begin{thm}\label{thm:wehler_complex}
If $X$ is a smooth Wehler surface for which the projections $\pi_{j,k}\colon X\to \P^1\times \P^1$ are finite maps,
then $\Gamma_X$ admits only finitely many finite orbits. 
\end{thm}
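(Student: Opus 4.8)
The idea is to reduce from $\C$ to $\Qbar$ via a specialization argument, in the spirit of \cite{dujardin-favre}, and then invoke Corollary~\ref{cor:finite_orbits_finite} (or rather Theorem~\ref{mthm:main} combined with Lemma~\ref{lem:wehler_invariant_curves} and Lemma~\ref{lem:invariant_curve_kummer}). First I would dispose of the Kummer case directly: if $X$ is such that $(X,\Gamma_X)$ is a Kummer group, then by Lemma~\ref{lem:67} we may assume $G$ is generated by a homothety, and $X$ carries a $\Gamma_X$-invariant curve unless $X$ is abelian (Lemma~\ref{lem:invariant_curve_kummer}); but a Wehler surface is a K3 surface, hence not abelian, and by Lemma~\ref{lem:wehler_invariant_curves} the group $\Gamma_X$ generated by the three involutions $\sigma_i$ preserves no curve when the $\sigma_i$ generate a faithful copy of $\Z/2\star\Z/2\star\Z/2$. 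So Wehler surfaces are never Kummer for $\Gamma_X$, and it suffices to rule out an infinite (equivalently, by the no-invariant-curve property, a Zariski dense) set of finite orbits.

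\textbf{The specialization.} Suppose for contradiction that $X_0$, defined over some finitely generated field $K\subset \C$, has infinitely many $\Gamma_{X_0}$-periodic points. Spreading out, I would choose a model: a smooth affine variety $B$ over $\Qbar$ with function field $\Qbar(B)\hookrightarrow\C$ whose image contains the coefficients of the defining $(2,2,2)$-equation, together with a family $\mathcal X\to B$ whose geometric generic fiber is $X_0$, such that the three involutions $\widetilde\sigma_i$ extend to $B$-automorphisms of $\mathcal X$ (using the universal construction of \S\ref{par:Zd}: the $\widetilde\sigma_i$ are defined on the whole universal family over $W_0$, so it is enough that $B\to W_0$ lands in $W_0$, which one may arrange since $X_0\in W_0$). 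Now the key mechanism: for each $d\geq 1$, the locus $\mathcal Z_d\subset\mathcal X$ of points fixed by the finite-index subgroup $\Gamma_d$ (intersection of kernels of homomorphisms to groups of order $\leq d!$) is a closed subscheme, and $X_0$ having $\geq d$ periodic points forces the fiber of $\mathcal Z_d\to B$ over the generic point to be nonempty. Since this holds for all $d$, and since $B$ has a Zariski dense set of $\Qbar$-points (being a variety over $\Qbar$), I would specialize: by a pigeonhole/diagonal argument over the countably many choices, find a single $\Qbar$-point $b\in B(\Qbar)$ such that the specialized surface $X_b$ still has infinitely many $\Gamma_{X_b}$-periodic points. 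The subtle point, exactly as in \cite{dujardin-favre}, is that periodic points need not specialize individually; one argues instead at the level of the \emph{heights} or of the \emph{invariant currents}, or uses that the cardinality function $\#\{\text{finite orbits of size}\leq d\}$ is lower semicontinuous in the family so that a dense set of $b$ inherits unboundedly many. I would also need $X_b\in W_0$, i.e.\ $X_b$ smooth with finite projections — an open condition on $b$, hence generic, so this can be imposed.

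\textbf{Conclusion and the main obstacle.} Once $X_b$ is defined over $\Qbar$, smooth, in $W_0$, with infinitely many $\Gamma_{X_b}$-periodic points, Corollary~\ref{cor:finite_orbits_finite} applies: $X_b$ is not abelian (it is K3), $\Gamma_{X_b}$ contains the parabolic elements $f_{ij}=\sigma_i\sigma_j$ (parabolic by \S\ref{par:notation_wehler}) and no invariant curve (Lemma~\ref{lem:wehler_invariant_curves}, the faithfulness being automatic when $X_b\in W_0$), so $\Gamma_{X_b}$ has only finitely many finite orbits — a contradiction. Hence $X_0$, and so every smooth Wehler surface with finite projections, has only finitely many $\Gamma_X$-periodic orbits. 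The hard part will be making the specialization step genuinely rigorous: one must ensure that ``infinitely many finite orbits over $\C$'' survives the reduction to a $\Qbar$-fiber. The cleanest route is probably to transport the invariant current / canonical stationary height along the family and use that its non-vanishing locus (equivalently, the complement of the set of finite orbits) is constructible and compatible with specialization, so that if it is \emph{not} the whole surface generically it cannot become the whole surface on a dense set of fibers; combined with the fact that finiteness of finite orbits is what Theorem~\ref{mthm:main} yields on the $\Qbar$-fibers, this forces finiteness generically, and then on $X_0$ by upper semicontinuity of the count. Managing the indeterminacy loci of the $\widetilde\sigma_i$ over the bad locus of $B$ (cf.\ the Remark in \S\ref{par:Zd} about fibers of $\pi_{12}$ contained in $X$) is the technical nuisance to watch, but staying inside the preimage of $W_0$ avoids it.
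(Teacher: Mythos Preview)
Your overall architecture is exactly that of the paper: restrict the universal family over $W_0$ to the $\Qbar$-Zariski closure $S$ of the parameter of $X$, check that all fibers over $S_0=S\cap W_0$ are smooth K3 surfaces with non-elementary $\chi_s(G)$ containing parabolics and no invariant curve, and then deduce a contradiction from Theorem~\ref{mthm:main} (equivalently Corollary~\ref{cor:finite_orbits_finite}) at a $\Qbar$-point. The set-up via the loci $\mathcal Z_d$ is also the right one.

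The gap is precisely the step you flag as hard: passing ``infinitely many finite orbits'' from the $\C$-generic fiber to a $\Qbar$-fiber. None of your proposed mechanisms suffices. Constructibility gives, for each $d$, a dense open $U_d\subset S_0$ on which $\#\mathcal Z_{d,s}$ equals its generic value; but to get a single $\Qbar$-point lying in $\bigcap_d U_d$ you would need a Baire-type statement for $S_0(\Qbar)$, which fails because $\Qbar$ is countable (the complements $S_0\setminus U_d$ could exhaust $S_0(\Qbar)$). The suggestion to transport heights or currents along the family is not developed and does not obviously yield a specialization principle of this kind.

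The paper's device is different and is the missing idea. Fix a single loxodromic word, namely $f=a_3a_2a_1$, and show (by an explicit computation in the fixed lattice $\Z c_1\oplus\Z c_2\oplus\Z c_3$) that $\chi_s(f)$ has no invariant curve for every $s\in S_0$. Then intersect $\mathcal Z$ with the fixed-point schemes $\mathcal P_n$ of $f^n$ to form $\mathcal Y_n$. Nakayama's lemma gives \emph{upper} semicontinuity of the total multiplicity $\sum_{x\in\mathcal Y_{n,s}}\mult(x,\mathcal Y_{n,s})$ in $s$, so this sum at any $t\in S_0(\Qbar)$ is at least its value at the generic point $b$, which tends to $+\infty$ with $n$. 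If $\mathcal Z_t$ were finite, all points of $\mathcal Y_{n,t}$ would be isolated fixed points of $\chi_t(f)^{n_0}$ for some fixed $n_0$, and the Shub--Sullivan theorem bounds their multiplicities as fixed points of $\chi_t(f)^n$ uniformly in $n$: contradiction. This multiplicity-versus-Shub--Sullivan tension is the key trick; it replaces the unavailable Baire argument and is what you should add.
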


For the proof we  follow the approach of  \cite[\S 5 and Thm D]{dujardin-favre} closely. 

\begin{proof}   Let $G = (\Z/2\Z)\star(\Z/2\Z)\star (\Z/2\Z)$ with generators $a_1, a_2, a_3$ 
and let  $\chi: G\to \Aut(X)$ be the unique homomorphism such that $\chi(a_i) = \sigma_i$. By definition,   $\Gamma_X = \chi(G)$. 
Let $c_i$ denote the class of the curve $X\cap \set{z_i = \cst}$. The subspace $\Z c_1\oplus \Z c_2 \oplus \Z c_3$  of $\NS(X, \Z)$ is invariant by $\chi(G)^*\subset \GL(\NS(X, \Z))$ and
this representation does not depend on $X\in W_0$: the matrices of 
the involutions $\sigma_i^*=\chi(a_i)^*$ in the basis $(c_1,c_2,c_3)$ have constant integer coefficients (see e.g. \cite[Lem. 3.2]{stiffness}). 
Thus we  can define $G_{\mathrm{lox}}$ (resp.  $G_{\mathrm{par}}$) to be the set of elements $h\in G$ such that 
for any $X\in W_0$,  $\chi(h)$ acts as a loxodromic (resp. parabolic) map on  $\NS(X, \Z)$.
Here, we implicitly use the fact that  the type of $h\in \Aut(X)$ is the same as the type of $h^*$ in restriction to any $h^*$-invariant 
subspace of $H^{1,1}(X;\R)$ 
on which the intersection form is not negative definite. 
In particular, the type of $h$ coincides with the type of $h^*$ as an isometry of ${\mathrm{Vect}}(c_1,c_2,c_3)$.

Fix a system of affine coordinates $(x,y,z)$ and write the equations of Wehler surfaces as in Equation~\eqref{eq:general_X}; 
this gives a system of homogeneous coordinates on $\abs{L}$, and $\abs{L}$ can be considered as a projective space
defined over $\Q$. Then, endow $\abs{L} \simeq \P^{26}(\C)$ with the $\overline \Q$-Zariski topology. 
Fix $X\in W_0$, let  $b\in \P^{26}$ (for ``base point'') denote   the  parameter corresponding to   $X$, and 
$S$ be the closure of 
$\set{b}$ for this topology: this is a subvariety of $\P^{26}$ defined over $\overline \Q$ in 
which $b$ is, by construction, a generic point. We    put $S_0=S\cap W_0$,
and we restrict the universal family $\mathcal X \to \P^{26}$ 
of Wehler surfaces to a family  
 $\mathcal X_{S_0}  \to S_0$, 
with  a fiber preserving action of the group $G$. The fiber over $s$ is denoted by $\X_s$ 
and the natural homomorphism $G\to \Aut(\X_s)$ by $\chi_s$; thus, $X$ coincides with $\X_{b}$.  

\begin{lem}\label{lem:generic1}
For every $s\in S_0(\C)$, 
\begin{enumerate}[{\em (1)}]
\item $\X_s$ is a smooth $K3$ surface which does not contain any  fiber of $\pi_{i,j}$, $i\neq j\in \{1,2,3\}$;
\item $h\in G$ belongs to $G_{\mathrm{lox}}$ (resp.  $G_{\mathrm{par}}$) if and only if $\chi_s(h)$ is 
a loxodromic (resp. parabolic) element of $\Aut(\X_s)$;
\item $\chi_s(G)$ is a non-elementary subgroup of $\Aut(\X_s)$ without invariant curve. 
\end{enumerate}
\end{lem}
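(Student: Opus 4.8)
The plan is to prove Lemma~\ref{lem:generic1} by exploiting the fact that $b$ is a \emph{generic} point of $S$ for the $\overline{\Q}$-Zariski topology, so that any property which holds at $b$ and is ``Zariski closed'' (or whose failure locus is Zariski closed and proper) holds on all of $S_0$, and conversely any property which holds on a Zariski open dense subset of $S_0$ must hold at $b$. Concretely, for assertion~(1): being a smooth K3 surface and having all three projections $\pi_{i,j}$ finite are exactly the conditions defining $W_0$, and $S_0 = S\cap W_0$ by construction, so (1) is immediate from the definition of $S_0$.

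For assertion~(2), the key observation is that the action of $G$ on the rank-$3$ lattice $\mathrm{Vect}(c_1,c_2,c_3)\subset \NS(\X_s;\Z)$ is given by \emph{constant} integer matrices, independent of $s\in S_0$ (this is the content of \cite[Lem.~3.2]{stiffness}, recalled in \S\ref{par:notation_wehler}). Hence for a fixed $h\in G$, the type (elliptic/parabolic/loxodromic) of the isometry $\chi_s(h)^*$ restricted to $\mathrm{Vect}(c_1,c_2,c_3)$ is literally the same for every $s$, in particular for $s=b$; and as noted in the paragraph preceding the lemma, the type of $\chi_s(h)$ as an automorphism of $\X_s$ coincides with the type of this restricted isometry, because the intersection form is not negative definite on $\mathrm{Vect}(c_1,c_2,c_3)$ (it has signature $(1,2)$, as one checks from the explicit matrices / the Hodge index theorem). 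This gives the definition of $G_{\mathrm{lox}}$, $G_{\mathrm{par}}$ and assertion~(2) simultaneously, uniformly in $s$.

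For assertion~(3): non-elementarity of $\chi_s(G)$ follows from~(2) together with the fact that $G$ contains two parabolic elements associated to distinct fibrations — e.g. $f_{12}=\sigma_1\sigma_2$ and $f_{23}=\sigma_2\sigma_3$, whose invariant classes $c_3$ and $c_1$ are distinct boundary points of $\Hyp_{\X_s}$ — which is one of the sufficient conditions for non-elementarity listed in \S\ref{subsub:non_elementary}; this is again uniform in $s$ since it only involves the constant matrices. The absence of an invariant curve follows from Lemma~\ref{lem:wehler_invariant_curves}: one just needs the three involutions $\sigma_i$ to generate a faithful action of $\Z/2\Z\star\Z/2\Z\star\Z/2\Z$ on $\X_s$. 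Faithfulness holds at $s=b$ (since $X=\X_b\in W_0$ and \cite[Lem.~3.2]{stiffness} applies), and the non-faithfulness locus $\{s : \ker\chi_s \neq 1\}$ is a countable union of proper Zariski closed subsets of $S_0$ (for each nontrivial word $w\in G$, the condition $\chi_s(w)=\id$ is Zariski closed, and it fails at $b$ hence on a dense open set); so, being a countable union of proper subvarieties of the irreducible variety $S_0$, it cannot be all of $S_0$. Actually one must be slightly careful: to conclude faithfulness at \emph{every} $s$, not just at a generic one, I would instead argue directly — the action on $\mathrm{Vect}(c_1,c_2,c_3)$ already realizes $\Z/2\Z\star\Z/2\Z\star\Z/2\Z$ faithfully by constant matrices (the free product embeds into $\GL_3(\Z)$ via these matrices, as in \cite[Lem.~3.2]{stiffness}), so $\chi_s$ is injective for \emph{all} $s\in S_0$, and Lemma~\ref{lem:wehler_invariant_curves} then applies verbatim to every fiber.

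The main obstacle, if any, is purely bookkeeping: making sure that the ``constant matrices'' claim of \cite[Lem.~3.2]{stiffness} is invoked correctly so that assertions~(2) and~(3) genuinely hold pointwise on $S_0(\C)$ rather than merely generically, and checking that $\mathrm{Vect}(c_1,c_2,c_3)$ indeed carries a form of signature $(1,2)$ so that ``type of $h$ $=$ type of $h^*|_{\mathrm{Vect}(c_1,c_2,c_3)}$'' is legitimate. Once those two points are pinned down, the proof is a direct unwinding of definitions, with Lemma~\ref{lem:wehler_invariant_curves} supplying the last assertion.
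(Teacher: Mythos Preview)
Your proposal is correct and follows essentially the same approach as the paper's own proof: assertion~(1) from $S_0\subset W_0$, assertion~(2) from the constant-matrix representation on $\mathrm{Vect}(c_1,c_2,c_3)$, and assertion~(3) by combining~(2) with Lemma~\ref{lem:wehler_invariant_curves}. Your extra care about faithfulness is well-placed but not strictly needed as a separate argument, since \S\ref{par:notation_wehler} already records that the action is faithful for every $X\in W_0$.
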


\begin{proof}[Proof of Lemma~\ref{lem:generic1}]
The first assertion follows from the results of \S \ref{par:notation_wehler} and 
the inclusion $S_0\subset W_0$. Likewise $\chi_s(G)$ has no invariant curve by \S \ref{par:invariant_curves_wehler}. 
The second assertion follows  from our preliminary remarks on the 
definition of $G_{\mathrm{lox}}$ and  $G_{\mathrm{par}}$, and it also implies that 
$\chi_s(G)$ is non-elementary.\end{proof}

Assume  now by contradiction that $\Gamma_X$ admits infinitely many finite orbits. Then:

\begin{lem}\label{lem:generic2}
For every $s\in S_0(\C)$, $\chi_s(G)$ has 
infinitely many finite orbits. 
\end{lem}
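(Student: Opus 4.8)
The plan is a spreading-out argument in the spirit of \cite[\S 5]{dujardin-favre}. Keep the standing assumption that $\Gamma_X=\chi_b(G)$ has infinitely many finite orbits. Since $\chi_b(G)$ preserves no curve (Lemma~\ref{lem:generic1}), the Zariski closure of the union of its finite orbits is $\chi_b(G)$-invariant and cannot be a curve, nor finite (there are infinitely many orbits), so the periodic points of $\chi_b(G)$ are Zariski dense in $X$. For each $d\geq 1$ let $G_d\trianglelefteq G$ be the intersection of all subgroups of $G$ of index $\leq d$; it is a characteristic, finite-index, nontrivial subgroup, and $G_1\supseteq G_2\supseteq\cdots$. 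Fix a finite generating set of $G_d$ and let $Z_d\subseteq\X_{S_0}$ be the fibrewise common fixed locus of $\chi(G_d)$; this is a Zariski-closed subset (over the number field of definition of $S_0$, which we may take irreducible after replacing $S$ by the component through $b$), and $Z_1\subseteq Z_2\subseteq\cdots$. For every $s\in S_0(\C)$ one has $(Z_d)_s=\{x\in\X_s:\ x\ \text{is $\chi_s(G)$-periodic of orbit size}\leq d\}$, since a finite orbit of size $m$ has stabiliser of index $\leq m$, hence containing $G_m$; in particular $\bigcup_d(Z_d)_s=\mathrm{Per}(\chi_s(G))$. Thus it suffices to prove that $\#(Z_d)_s\to\infty$ as $d\to\infty$, for every $s$.

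\textbf{The finite morphisms $Z_d\to S_0$ and semicontinuity.} Because $G_d$ is normal in $G$, the set $(Z_d)_s=\fix(\chi_s(G_d))$ is itself $\chi_s(G)$-invariant; as $\chi_s(G)$ preserves no curve (Lemma~\ref{lem:generic1} again), its one-dimensional part is empty, so $(Z_d)_s$ is finite for every $s\in S_0(\C)$. Hence $Z_d\to S_0$ is quasi-finite, and proper (it is closed in the proper family $\X_{S_0}\to S_0$), so it is a \emph{finite} morphism. Over the generic point $b$ it has degree $n_d:=\#(Z_d)_b$, and $n_d\to\infty$ by the standing hypothesis. The number of points in the fibre of a finite morphism is lower semicontinuous, so for each $N$ and each $d$ with $n_d\geq N$ the set $\{s\in S_0:\ \#(Z_d)_s\geq N\}$ is Zariski open and dense. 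Consequently $\{s\in S_0:\ \#\mathrm{Per}(\chi_s(G))\geq N\}$ contains a dense open subset of $S_0$ for every $N$; in particular it contains $\overline{\Q}$-points, which is already enough to feed the downstream contradiction with Corollary~\ref{cor:finite_orbits_finite} applied to such a point.

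\textbf{From a dense open set of parameters to every parameter, and the main obstacle.} The genuine statement of the lemma, valid for \emph{every} $s\in S_0(\C)$, is where the work lies, because fibre cardinality of a finite morphism can drop on the branch locus. The hard part will be to rule out such drops, i.e.\ to exhibit, for every $N$, at least $N$ periodic points of $\chi_s(G)$ that persist under arbitrary specialisation. I would do this by working with periodic points of \emph{saddle type}: run the construction in the proof of Theorem~\ref{thm:dense_active_saddles} fibrewise, using a pair of parabolic elements of $G$ with distinct fixed points on $\partial\Hyp_{\X_s}$, to produce, for each $s$ and each $N$, at least $N$ hyperbolic (hence transverse) common fixed points of a suitable finite-index subgroup; transversality makes them vary holomorphically in $s$, compactness of the fibres $\X_s$ forbids them from escaping, and transversality again forbids two of them from colliding, so their number is locally constant, hence $\geq N$ everywhere on $S_0$. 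The two delicate points here are (i) checking that enough of the finite orbits appearing in the hypothesis are of saddle type, so that the count obtained at $b$ really matches these persistent points, and (ii) controlling the monodromy of $\pi_1(S_0)$ on the resulting finite sets. (Alternatively, one may note that the chain $Z_1\subseteq Z_2\subseteq\cdots$ lies in the Noetherian space $\X_{S_0}$, hence stabilises; then only finitely many multisections occur, their total degree over $S_0$ is finite, which contradicts $n_d\to\infty$ — so the standing hypothesis is in fact absurd, and Lemma~\ref{lem:generic2} holds vacuously, which is precisely how it enters the proof of Theorem~\ref{thm:wehler_complex}.)
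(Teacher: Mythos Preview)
Your proposal contains genuine gaps; none of the three routes you sketch reaches a proof.

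The Noetherian ``alternative'' is simply wrong: Noetherianity is the \emph{descending} chain condition on closed subsets (equivalently, ACC on open subsets), not ACC on closed subsets. The ascending chain $Z_1\subseteq Z_2\subseteq\cdots$ need not stabilise---witness $\{1\}\subset\{1,2\}\subset\cdots$ in $\mathbb{A}^1$. Were this argument valid, it would bypass the entire arithmetic content of Theorem~\ref{mthm:main}, which should already make you suspicious.

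Your semicontinuity claim also fails. For a finite morphism the fibre \emph{length} is upper semicontinuous (Nakayama on $\pi_*\mathcal{O}_{Z_d}$), but the number of reduced points is neither upper nor lower semicontinuous; compare a branched double cover (two points collide) with the normalisation of a nodal curve (one point splits into two). Even granting a dense Zariski-open $U_N\subset S_0$ with $\#\mathrm{Per}(\chi_s(G))\geq N$ for each $N$, their intersection is only a very general set and has no reason to meet $S_0(\overline{\Q})$, so you cannot locate a single $\overline{\Q}$-point with infinitely many finite orbits.

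The saddle-point route does not produce \emph{group}-periodic points. The construction of Theorem~\ref{thm:dense_active_saddles} yields common fixed points of $g^N$ and $h^N$, hence of $\langle g^N,h^N\rangle$; but for $N\geq 2$ this subgroup has infinite index in $G$ (already inside the index-$2$ even subgroup $\langle f_{12},f_{23}\rangle\cong F_2$, the subgroup $\langle f_{12}^N,f_{23}^N\rangle$ has infinite index since the quotient surjects onto $\Z/N\ast\Z/N$). So such a fixed point need not have finite $G$-orbit. Moreover, even if a point is a transverse fixed point of each generator of $G_d$ separately, the individual fixed points may drift apart under deformation; transversality of the \emph{common} fixed locus is what you would need, and you have not established it.

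The paper's argument supplies the missing idea: couple the group-periodic locus with iterates of a single loxodromic $f\in G$ having no invariant curve on any fibre. For each $n$, let $\mathcal{Y}_n\subset\X_{S_0}$ be the (reduced) scheme of $G$-periodic points fixed by $\chi(f)^n$. The total fibre length $s\mapsto\sum_{x\in\mathcal{Y}_{n,s}}\mathrm{mult}(x,\mathcal{Y}_{n,s})$ is \emph{upper} semicontinuous (Nakayama), hence at any $t$ it is at least its value at the generic point $b$, which tends to $\infty$ with $n$. If $\mathcal{Z}_t$ were finite, all its points would lie in $\fix(\chi_t(f)^{n_0})$ for some fixed $n_0$, and Shub--Sullivan bounds the multiplicity of each as a fixed point of $\chi_t(f)^n$ uniformly in $n$. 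This bounds the length of $\mathcal{Y}_{n,t}$ and yields the contradiction. The pairing of upper semicontinuity (which genuinely holds) with Shub--Sullivan (which controls multiplicities at the special fibre) is the mechanism you are missing.
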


This lemma concludes the proof of the theorem. Indeed pick  $s\in S_0(\overline \Q)$. By 
Lemma \ref{lem:generic1},  $\chi_s(G)$ is non-elementary,   contains parabolic elements; 
and the Zariski closure of the set of finite orbits of $\chi_s(G)$ coincides with
$\X_s$, for otherwise it would be an invariant curve. 
Then, by Theorem \ref{mthm:main}, $(\X_s, \chi_s(G))$ must be a Kummer group. But $\X_s$ 
is a K3 surface and a Kummer group on a 
non-abelian surface   admits   an invariant curve, so that we get a contradiction with Lemma \ref{lem:generic1}.(3). 
\end{proof}

\begin{proof}[Proof of Lemma \ref{lem:generic2}] We first 
 describe the set of finite $\Gamma_X$-orbits as a countable union of subvarieties by 
 arguing as in \S \ref{par:Zd}. Let $G_d$ be the intersection of the kernels of all homomorphisms from 
 $G$ to groups of order $\leq d!$; it is a   finite index subgroup of $G$.  
 For any action of $G$, if the orbit of a point $x$ has cardinality $\leq d$, then 
 $x$  is fixed by $G_d$. Conversely if $x$ is fixed by $G_d$, then its $G$-orbit is finite. 
 Define a subvariety $Z_d$ of $X$ by 
 \begin{equation}
 Z_d = \set{x\in X \; ; \;  \ \forall g\in G_d, \  \chi(g )(x) = x}.
 \end{equation}
Finally put $Z = \bigcup_{d\geq 1} Z_d$. Then the $\Gamma_X$-orbit of $x\in X$ is finite  if and only if $x\in Z$. 
We can now define a subvariety $\mathcal Z_d$ of $\mathcal X_{S_0}$ which is 
the  fibered analogue of $Z_d$, namely
\begin{equation}
\mathcal Z_d = \set{(s,x)\; ; \;\ x\in \X_s, \  \forall g\in G_d, \  \chi_s(g )(x) = x},
\end{equation}
and put $\mathcal Z = \bigcup \mathcal Z_d$. We let $\mathcal Z_s$ (resp. $\mathcal Z_{d,s}$) 
be the intersection of $\mathcal Z$ (resp. $\mathcal Z_{d}$) with $\X_s$.
  
Set $f = a_3a_2a_1 \in G$. An explicit computation shows that 
$f\in G_{\mathrm{lox}}$ and the eigenvalues of $\chi_s(f)^*$ on ${\mathrm{Vect}}(c_1,c_2,c_3)$
are $-1$,  $\lambda(f)=9+4\sqrt{5}$, and $1/\lambda(f)$. The eigenline corresponding to 
$-1$ is $\R\cdot(c_1-3c_2+c_3)$, its orthogonal complement in ${\mathrm{Vect}}(c_1,c_2,c_3)$ is the plane 
$\Pi_{\chi_s(f)}$, and this plane contains the class $c_1+2c_2+c_3$. This class is ample, because 
it is a convex combination, with positive   coefficients, of the Chern classes $c_i$ of the line bundles 
$\pi_i^*({\mathcal{O}}_{\P^1}(1))$, $i=1,2,3$. Since any invariant curve must be orthogonal to $\Pi_{\chi_s(f)}$,
we deduce that $\chi_s(f)$ has no invariant curve (for all $s\in S_0$).

Now assume by contradiction that there is a parameter $t\in S_0(\C)$ such that $ \mathcal Z_t$ is finite. Let 
$P_n$ be the set of fixed points of $\chi(f^n)$, so that $P   = \bigcup_n P_n$ is the set of all periodic points 
of $\chi(f)$; likewise let $\mathcal P_n$ and $\mathcal P$ be their respective fibered versions. 
Note that $\mathcal Z\subset \mathcal P$. 
For fixed $n$, let $\mathcal Y_n$ be the (reduced)  subvariety  of $\mathcal X_{S_0}$ 
whose underlying set is  $\mathcal Z\cap \mathcal P_n$.
More precisely the sequence  of subvarieties 
$\mathcal Y_n^m:= \bigcup_{d=1}^m \mathcal Z_d\cap \mathcal P_n$ is non-decreasing with $m$, 
so it stabilizes, and 
we define $\mathcal Y_n =\mathcal Y_n^m $ for $m$ sufficiently large; its fibers will be denoted by $\mathcal Y_{n,s}$ ($\mathcal Y_{n,s}$ is the 
intersection of $\mathcal Y_n$ with $\X_s$, it may be non-reduced).  For the generic point 
$b$ the cardinality of $\mathcal Y_{n,b}$ tends to infinity with~$n$. 

Now,  the argument is identical to that of Lemma 5.3 and Theorem D in \cite{dujardin-favre} (\footnote{Our 
   setting   is actually simpler   since we are dealing  with automorphisms on a projective surface 
rather than birational mappings, so the properness issue analyzed 
 in \cite{dujardin-favre} is not relevant here.}). For $x\in\mathcal Y_{n, s}$, 
its multiplicity $\mult(x, \mathcal Y_{n,s})$ 
as a point  in $\mathcal Y_{n, s}$ 
is equal to its multiplicity as a fixed point of $\chi_s(f)^n$. 
 Nakayama's lemma implies  that the function 
 $$s\mapsto \sum_{x\in   
 \mathcal Y_{n,s }} \mult(x, \mathcal Y_{n,s})$$  is upper semicontinuous for the Zariski 
 topology, hence 
\begin{equation}\label{eq:multiplicities}
\sum_{x\in\mathcal Y_{n, t}} \mult(x,\mathcal Y_{n,t}) \geq \sum_{x\in   
\mathcal Y_{n,b }} \mult(x,\mathcal   Y_{n, b }) \tendvers +\infty. 
\end{equation}
 On the other hand, $ \mathcal Z_t$ is a finite set, so there exists $n_0$ such that for all $n\geq 1$, 
$\mathcal Y_{n, t}\subset\mathcal P_{n_0,t}$, and the theorem of Shub and Sullivan \cite{shub-sullivan} asserts that for every 
$x\in\mathcal P_{n_0,t}$, the multiplicity of $x$ as a fixed point of $\chi_t(f)^n$ is bounded as $n\to \infty$. This 
contradicts \eqref{eq:multiplicities} and concludes the proof. 
\end{proof}

\subsection{Groups without invariant curve}
Let us recall Theorem~\ref{mthm:KtoC}. 

\begin{thm}\label{thm:KtoC}
Let $X$ be a compact K\"ahler surface and let $\Gamma$ be a  subgroup of $\Aut(X)$. Assume that (i)  $X$ is not an abelian surface, and (ii) $\Gamma$ contains a parabolic element and has no invariant curve. 
Then $\Gamma$ admits only finitely many finite orbits. 
\end{thm}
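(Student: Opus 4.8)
The plan is to reduce Theorem~\ref{thm:KtoC} to the number field case (Corollary~\ref{cor:finite_orbits_finite}) by a specialization argument, imitating the proof of Theorem~\ref{thm:wehler_complex} but now working over an arbitrary finitely generated base field. First I would observe that it suffices to treat the case where $\Gamma$ is finitely generated and non-elementary. Indeed, since $\Gamma$ contains a parabolic element $g$ and does not preserve $\pi_g$ (otherwise, as in the proof of Corollary~\ref{cor:finite_orbits_finite}, every periodic point would give an invariant curve), one can find $h\in\Gamma$ with $h^{-1}gh$ parabolic attached to a different fibration, so a finitely generated subgroup $\Gamma_0\subset\Gamma$ is already non-elementary and contains parabolics; moreover $\Gamma_0$ still has no invariant curve because $\Gamma_{\mathrm{par}}\cap\Gamma_0$ is large enough (by Lemma~\ref{lem:carac_invariant_curves}, being contained in the singular fibers of two transverse fibrations forces finiteness). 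If $\Gamma$ had infinitely many finite orbits, so would any such $\Gamma_0$ whose periodic-point set is Zariski dense; so one reduces to $\Gamma$ finitely generated. By a standard spreading-out argument, $X$, the finitely many generators of $\Gamma$, and the defining relations among their cohomological actions are all defined over a finitely generated $\Q$-algebra $R\subset\C$; let $B=\operatorname{Spec}R$ and spread $(X,\Gamma)$ to a smooth projective family $\mathcal X\to B$ with a fiberwise $\Gamma$-action, shrinking $B$ so that every fiber $\mathcal X_s$ is a smooth K\"ahler (hence projective) surface which is not abelian, carries a non-elementary $\Gamma$-action with parabolic elements and no invariant curve. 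Here I would use that the type (elliptic/parabolic/loxodromic) of each generator word is detected on a fixed sublattice of $\NS$ with locally constant monodromy, exactly as in the Wehler proof, so these properties are uniform over $B(\C)$.

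Next I would encode the finite orbits as a countable union of subvarieties, as in Lemma~\ref{lem:generic2}: let $\Gamma_d$ be the intersection of kernels of all homomorphisms $\Gamma\to$ (groups of order $\le d!$), a finite-index subgroup, and set $\mathcal Z_d=\{(s,x):x\in\mathcal X_s,\ \forall\gamma\in\Gamma_d,\ \gamma(x)=x\}$ and $\mathcal Z=\bigcup_d\mathcal Z_d$; then the $\Gamma$-orbit of $x\in\mathcal X_s$ is finite iff $x\in\mathcal Z_s$. By Theorem~\ref{mthm:invariant_curve_loxodromic} (or Proposition~\ref{pro:invariant_curve_loxodromic_precised}) there is a fixed loxodromic $f\in\Gamma$ — which over every fiber is loxodromic with the same characteristic polynomial — whose periodic curves are $\Gamma$-periodic, hence (no invariant curve) $f$ has no periodic curve at all on any $\mathcal X_s$; in particular all periodic points of $f$ on $\mathcal X_s$ are isolated saddles and $\mathcal Z\subset\mathcal P:=\bigcup_n\mathcal P_n$, where $\mathcal P_n$ is the fibered fixed-point scheme of $\chi(f)^n$. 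Assuming for contradiction that the generic fiber has infinitely many finite orbits, I would run the semicontinuity-of-multiplicities argument of \cite{dujardin-favre}, Lemma~5.3: for each $n$ form $\mathcal Y_n=\bigcup_{d\le m}\mathcal Z_d\cap\mathcal P_n$ ($m$ large), whose generic fiber has cardinality $\to\infty$ with $n$; by Nakayama/upper semicontinuity, for \emph{every} $s\in B(\C)$, $\sum_{x\in\mathcal Y_{n,s}}\operatorname{mult}(x,\mathcal Y_{n,s})\to\infty$. Specialize to a point $s_0\in B(\overline\Q)$ (possible since $B$ is an integral scheme of finite type over $\Q$ with dense $\overline\Q$-points). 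If $\mathcal Z_{s_0}$ were finite, then by the Shub–Sullivan theorem \cite{shub-sullivan} the multiplicities of the finitely many periodic points of $\chi_{s_0}(f)$ as fixed points of $\chi_{s_0}(f)^n$ stay bounded, contradicting the divergence; hence $\mathcal Z_{s_0}$ is infinite, i.e. $(\mathcal X_{s_0},\Gamma)$ has infinitely many finite orbits. But $\mathcal X_{s_0}$ and $\Gamma$ are defined over a number field, $\mathcal X_{s_0}$ is a non-abelian surface, $\Gamma$ is non-elementary with a parabolic element and no invariant curve — so Corollary~\ref{cor:finite_orbits_finite} forbids infinitely many finite orbits. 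This contradiction finishes the proof; pulling back, the original $\Gamma$ on $X=\mathcal X_b$ has only finitely many finite orbits.

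The main obstacle I expect is purely set-up rather than conceptual: ensuring that the spreading-out can be arranged so that \emph{all} the structural hypotheses — smoothness of fibers, non-abelianness, non-elementariness, existence of parabolic elements, \emph{and} absence of an invariant curve on every fiber — hold simultaneously over a Zariski-dense (even cofinite) set of $B(\C)$, and in particular over a $\overline\Q$-point. Smoothness, the types of the generators, and non-elementariness propagate by the locally constant monodromy on $\NS$ exactly as in the Wehler case. The delicate point is ``no invariant curve'': a priori a special fiber could acquire an extra invariant curve. Here I would exploit Theorem~\ref{mthm:invariant_curve_loxodromic}: the fixed loxodromic $f$ has $D_f=D_\Gamma$; over $B$, the $f$-periodic curves of bounded degree vary in a family, and Proposition~\ref{pro:degree_invariant_curve} bounds their degree uniformly in terms of $\deg(f)$, which is locally constant; so the locus in $B$ where $\mathcal X_s$ carries an $f$-invariant curve is closed (it is the image of a Hilbert-scheme-type parameter space), and it does not contain the generic point since $X$ has none — hence it is a proper closed subset, which we remove. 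Since a $\Gamma$-invariant curve is in particular $f$-invariant, this also rules out $\Gamma$-invariant curves on the remaining fibers. Once this uniformity is secured, the rest is the now-standard Shub–Sullivan plus semicontinuity machinery, and the appeal to Corollary~\ref{cor:finite_orbits_finite} closes the loop.
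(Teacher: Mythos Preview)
Your architecture matches the paper's (spread out over a finitely generated base, encode finite orbits as $\bigcup_d\mathcal Z_d$, pick a loxodromic $f$ with no periodic curves, apply Nakayama semicontinuity plus Shub--Sullivan at a $\overline\Q$-point, then invoke Corollary~\ref{cor:finite_orbits_finite}), but there is a gap in your reduction to a finitely generated subgroup. You take $\Gamma_0=\langle g,h^{-1}gh\rangle$ and claim it has no invariant curve because, via Lemma~\ref{lem:carac_invariant_curves}, any $\Gamma_0$-periodic curve lies in singular fibers of both $\pi_g$ and $\pi_{h^{-1}gh}$, which ``forces finiteness''. But finiteness is not emptiness: two distinct genus-$1$ fibrations may well share components of reducible fibers, so $D_{\Gamma_0}$ can be nonempty even though $D_\Gamma=\emptyset$. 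If that happens, the loxodromic $f$ you later extract from $\Gamma_0$ via Theorem~\ref{mthm:invariant_curve_loxodromic} satisfies only $D_f=D_{\Gamma_0}\neq\emptyset$, and then $f_s$ may carry periodic curves on every fiber, wrecking the Shub--Sullivan step (periodic points of $f_s$ need not be isolated). The paper avoids this by reversing the order of operations: it applies Theorem~\ref{mthm:invariant_curve_loxodromic} to the \emph{original} $\Gamma$ first, obtaining $f\in\Gamma_{\mathrm{lox}}$ with $D_f=D_\Gamma=\emptyset$, and only then passes to $\Gamma_0=\langle f,g\rangle$; any $\Gamma_0$-periodic curve would be $f$-periodic, hence there is none.

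Two side remarks on points where you diverge harmlessly from the paper. Your ``monodromy on $\NS$'' argument for persistence of types should use $H^2(\cX_s;\Z)$ instead (the Picard lattice can jump); with that correction the argument is valid---$g_b^*$ is non-semisimple on $H^2$, hence so is $g_s^*$, which rules out ellipticity---and is in fact shorter than the paper's case-by-case treatment in \S\ref{par:parabolic_semi-continuity}. For the absence of $f_s$-invariant curves on nearby fibers, the paper uses Bishop's theorem to obtain this only in a Euclidean neighborhood of the base point $b$ (sufficient, since $\overline\Q$-points are Euclidean-dense there), whereas you propose a Hilbert-scheme argument over a Zariski open set; both work, but for your version you must also control the constant $c_X$ of Proposition~\ref{pro:degree_invariant_curve} uniformly, not just $\deg(f)$.
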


\begin{proof} The idea is of the proof is the same as that 
of Theorem \ref{thm:wehler_complex}, however new technicalities arise. 
As in Corollary~\ref{cor:finite_orbits_finite}, $\Gamma$ is automatically non-elementary, so  $X$ is projective.
Arguing by contradiction,  we suppose that $\Gamma$ admits infinitely many 
finite orbits. Applying Theorem~\ref{mthm:invariant_curve_loxodromic}, 
we fix $f\in \Gamma_{\mathrm{lox}}$ without invariant curve. We also fix a parabolic element $g\in \Gamma$.

{\bf{Step 1.--}} {\emph{Geometry of $X$}}.-- Since $\Gamma$ is non-elementary, $X$ is   a blow-up
of an abelian surface,   of a K3 surface,   of an Enriques surface, or of the projective plane (see~\cite[Thm. 10.1]{Cantat:Milnor}). 
In the first three cases, there is a unique minimal model $\varphi\colon X\to \overline{X}$, and the exceptional 
divisor of $\varphi$ is $\Aut(X)$-invariant. Since $\Gamma$ has no invariant curve, $X$ is already 
equal to its minimal model $\overline{X}$, and since by assumption  $X$ is not abelian, in this case
 $X$ is a K3  or an Enriques surface.

\smallskip

{\bf{Step 2.--}} {\emph{Reduction to a finitely generated subgroup}}.-- 
The group generated by $f$ and $g$ satisfies assumption~(ii) and since it is contained
in $\Gamma$ it also admits   infinitely many finite orbits. From now on,
 we replace $\Gamma$ by $\langle f, g\rangle$ 
and assume $\Gamma$ to be finitely generated. 
 
\smallskip
 
{\bf{Step 3.--}} {\emph{Specialization formalism}}.-- Embed $X$ into a projective space $\P^N_\C$. Fix a finite set of divisors $E_j$ in $X$ 
whose classes form a basis of $\NS(X;\Z)$, let $H\subset X$ be a hyperplane section, and let $\Omega$ 
be a non-trivial rational section of $K_X^{\otimes 2}$, where $K_X$ is the canonical bundle. If $X$ is a K3 or an Enriques surface, we assume that $\Omega$ is regular, hence does not vanish.
Let $R\subset \C$ be the $\overline{\Q}$-subalgebra generated by the coefficients of a system of homogeneous equations for 
$X$,  the $E_j$,  $H$, and $\Omega$, and by the coefficients of the formulas defining a finite symmetric set of generators of 
$\Gamma$. (We shall actually further enlarge $R$ in \S \ref{par:halphen}.)

Let $K=\mathrm{Frac}(R)$. It is the field of rational functions of some algebraic variety $V$, 
defined over $\overline{\Q}$. 
There is a dense, Zariski open subset $S$ of $V$, which   may be assumed to be an affine subset, 
such that all elements of $R$ 
correspond to regular functions on~$S$. Note that in what follows, 
by Zariski topology we mean the $\overline \Q$-Zariski topology. Nevertheless, since 
 we will use transcendental arguments, $S(\C)$ will also be considered as a complex analytic space endowed with its Euclidean topology. 

By specialization, i.e. evaluation of the elements of $R$ at $s\in S$, 
we can view $X\subset \P^N$, $\Gamma$, the $E_j$, $H$, and $\Omega$  as 
families over  $S$;  that is, there is  a proper morphism  $\pi: \X\to S$ 
endowed with a group of fiber preserving automorphisms 
$\widetilde \Gamma$, together with a (complex) base point $b\in S$ so that 
the fiber $\mathcal X_b$  may be identified with $X$, and furthermore  $\widetilde{\Gamma}_b = \Gamma$, ${\mathcal {E}}_{j,b}=E_j$, ${\mathcal{H}}_b=H$, $\widetilde \Omega_b   = \Omega$, etc.
The point $b\in S$ may be thought of as the generic point of $S$ 
(i.e. its closure for the   Zariski topology is $S$)  so
in particular $b$ is a regular point of $S$ and $S$ is smooth in 
a complex neighborhood of $b$. If $X$ is a K3 or an Enriques surface, changing $S$ into some Zariski dense affine open subset, 
we may assume that $\widetilde{\Omega}_s$ does not vanish on any $\cX_s$. 

\smallskip

{\bf{Step 4.--}} {\emph{Types of automorphisms and invariant curves}}.--

\begin{lem}\label{lem:semi-continuity_type}
There is a Zariski open subset $S_1\subset S$ such that: 
\begin{enumerate}[\em (1)]
\item above $S_1(\C)$, the projection $\cX\to S$ is a submersion; for $s\in S_1(\C)$, $\mathcal X_s$ is smooth  and it  is not an abelian surface;
\item for $s\in S_1(\C)$,  $f_s$ is loxodromic and there exists a Euclidean neighborhood $B$ of $b$ such that for $s\in B$, $f_s$ admits no invariant curve;  
\item  for $s\in S_1(\C)$,  $g_s$ is parabolic. 
\end{enumerate}
\end{lem}

\begin{proof}[Proof of (1)] The surface  $\cX_b$ is smooth, and by construction there is a Zariski dense open subset $S_1$ of $S$ above which 
$\cX\to S$ is a submersion, so the set of parameters $s$ for which  $\mathcal X_s$ is singular is a proper Zariski 
closed set $S_1$ (we will further reduce $S_1$ finitely many times in the proof, keeping the same notation).
For the second conclusion, we observe that there is a Zariski open subset on which $\pi: \X\to S$ is 
a submersion, hence by Ehresmann's lemma the fibers in this open subset are diffeomorphic to $X$.   On the 
other hand a  surface which is diffeomorphic to a complex torus and possesses
 a non-elementary group of automorphisms is automatically 
an abelian surface. Since $X$ is not abelian, we  conclude that the same is true for  
any fiber $\cX_s$, $s\in S_1$.
\end{proof}

\begin{proof}[Proof of (2)] The loxodromic nature 
 of $f_s$ follows from the lower semi-continuity of the dynamical degree for birational mappings on surfaces  (see   \cite[Thm. 4.3]{Xie:Duke}), which we apply here for the $\C$-Zariski topology. 
 
Let us show that $f_s$ has no invariant curve for   $s\in S_1$ close to $b$.
Indeed, recall from Proposition~\ref{pro:degree_invariant_curve} that there is a uniform 
bound on the degree of an invariant curve. Here we compute the degree of a curve on $\cX_s$ 
(resp. of  $f_s$)
with respect to the normalized ample class  $ \langle\mathcal H_s\vert \mathcal H_s\rangle^{-1/2}[\mathcal H_s]$ induced by the hyperplane section $\mathcal H_s$. 
If $c_X$ is as in Proposition~\ref{pro:degree_invariant_curve},
 the inequality $\frac{1}{2}[{\mathrm{Div}}(\widetilde \Omega_s)]\leq c_X \langle \mathcal H_s\vert \mathcal H_s\rangle^{-1/2}[\mathcal H_s]$  is satisfied on a Zariski open set. 
Then by Bishop's theorem, if $(s_i)$ is a sequence of points converging to $b$ such that $f_{s_i}$ preserves a curve $C_i$, 
we can extract a subsequence along which $(C_i)$ converges towards a curve $C$ in $\cX_b$ 
(see \cite[\S 16]{chirka} for the relevant notions). 
This curve is $f_b$-invariant, which contradicts  our assumption on~$f$.
\end{proof}

The proof of the the third assertion of Lemma~\ref{lem:semi-continuity_type} is a little tedious and 
will be  postponed to Section~\ref{par:parabolic_semi-continuity} below. 

\smallskip

{\bf{Step 5.--}}  {\emph{Conclusion}}.-- We pick a point $t$ in $S_1(\overline{\Q})\cap B$, and argue exactly as in the case of Wehler surfaces. 
Indeed observe first that the assumptions of Corollary~\ref{cor:finite_orbits_finite} are satisfied at the parameter $t$. Next, 
since all periodic  points of $f_t$ of a given period are isolated, we 
 can apply to $f_t$ the strategy of the proof of Theorem \ref{thm:wehler_complex},
based on Nakayama's lemma and the theorem of Shub and Sullivan; it implies that $\Gamma_t$ has infinitely many periodic orbits on $\cX_t$, thereby reaching the desired contradiction. 
\end{proof}

\subsection{Proof of Lemma~\ref{lem:semi-continuity_type}(3)} \label{par:parabolic_semi-continuity} 
By Step 1, $X$ is a K3 surface, an Enriques surface, or a blow-up of the projective plane.  
By the lower semi-continuity of the dynamical degree, for every $s\in S_1$, $g_s$ is parabolic or elliptic, 
so we need to show that the set of parameters for which $g_s$ is elliptic is Zariski closed.  

\subsubsection{K3 and Enriques surfaces}
Assume that $X$ is a K3 (resp. an Enriques) surface.
Above  $S_1(\C)$, every fiber $\cX_s$ has the diffeomorphism type of $\cX_b$, in particular it is simply connected and $K_{\cX_s}$ is trivial (resp. its fundamental group is $\Z/2\Z$ and $K_{\cX_S}^{\otimes 2}$ is trivial),  so it is also a K3 (resp. an Enriques) surface, for K3 (resp. Enriques) are characterized by these properties (see~\cite[Chap. VI]{BHPVDV}). For such a surface, the group $\{h\in \Aut(\cX_s)\; ; \; h^*=\id {\text{ on }} H^2(\cX_s;\Z)\}$ has at most $4$ 
elements (see~\cite{Mukai-Namikawa}).  The second Betti number is fixed, equal to 
$22$ (resp. $10$), and if $h^*\in \GL(H^2(\cX_s;\Z))$ has finite order, then its 
order divides some fixed integer $k$, because $\GL_{22}(\Z)$ (resp. $\GL_{10}(\Z)$) 
contains a finite index, torsion free subgroup. Thus, $g_s$ 
is elliptic if and only if $g_s^{4k}=\id$. This  implies that the set of
 parameters $s$ for which $g_s$ is elliptic is Zariski closed and 
 does not contain $b$, and we are done in this case.

\subsubsection{Rational surfaces}\label{par:halphen}
Now, we assume that $X$ is rational.
This case is slightly more delicate because there exists automorphisms of $\P^2$ of arbitrary large finite order. 

Let  $\pi_g\colon X\to B$ be the invariant fibration of $g$, with $B=\P^1$ since $X$ is rational. Changing $g$ by some positive iterate, we may assume that the action of 
$g$ in the base $B$ is the identity. 
As explained in~\cite{cantat-dolgachev, cantat-guirardel-lonjou}, $\pi_g$ comes from a Halphen pencil; in particular, there is 
a pencil of curves in $\P^2$, defined by some rational function $\varphi\colon \P^2\dasharrow \P^1$, and a birational morphism $\eta\colon X\to \P^2$ that blows up the base points of this pencil (and possibly other points too), such that $\pi_g$ coincides with $\varphi\circ \eta$. The last blow-up which is necessary 
to resolve the indeterminacies of $\varphi$ provides a curve  which is transverse to the fibration and has negative self-intersection. So, there is 
an irreducible multi-section $E$ of $\pi_g$ such that $E^2<0$.

Let us add to our $\overline{\Q}$-algebra $R$ the coefficients of the formulas defining $\pi_g$, $\varphi$, $\eta$, $E$, etc. 
Reducing $S_1$ if necessary, we get a family of automorphisms $g_s$ preserving each fiber of a 
genus $1$ fibration $\pi_{g,s}\colon \cX_s\to \P^1$, with an irreducible multisection $E_s$ of negative self-intersection.
As for K3 and Enriques surfaces, the following lemma finishes the proof. 

\begin{lem}
There is an integer $\ell>0$ such that if $s\in S_1$ and $g_s$ is elliptic, then $g_s^\ell=\id$.
\end{lem}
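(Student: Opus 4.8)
The plan is to run the same argument as in the K3 and Enriques cases, but since for rational surfaces the homomorphism $\Aut(\cX_s)\to \GL(H^2(\cX_s;\Z))$ may have infinite kernel, the control of this kernel must come from the invariant fibration and its multisection $E_s$. First I would note that, $S_1$ being irreducible and the family $\cX\to S_1$ smooth and proper, the integer $b_2=b_2(\cX_s)$ and the degree $d:=E_s\cdot F$ of the multisection ($F$ a fiber of $\pi_{g,s}$) are independent of $s\in S_1$. By Minkowski's bound on the orders of finite subgroups of $\GL_{b_2}(\Z)$, there is an integer $N=N(b_2)$ such that, whenever $g_s$ is elliptic, $(g_s^*)^N=\id$ on $H^2(\cX_s;\Z)=\NS(\cX_s)$ (these coincide with $H^{1,1}(\cX_s;\R)$ after tensoring with $\R$, as $\cX_s$ is rational). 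It then suffices to bound the order of $h_s:=g_s^{N}$, which still preserves the fibration $\pi_{g,s}$, still acts as the identity on the base $\P^1$, and now acts trivially on $\NS(\cX_s)$.

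Next I would exploit that $h_s$ fixes the class $[E_s]$. Since $\cX_s$ is rational we have $\Pic^0(\cX_s)=0$, and since $E_s$ is irreducible, reduced and satisfies $E_s^2<0$, it is the unique effective divisor in its class; hence $h_s(E_s)=E_s$. Because $h_s$ also fixes every fiber of $\pi_{g,s}$, it follows that for every $u\in\P^1$ the automorphism $h_s$ permutes the finite set $E_s\cap F_u$ inside $F_u$. For a very general $u$ the fiber $F_u$ is a smooth genus-one curve and $E_s\cap F_u$ consists of $d$ distinct points, so $h_s|_{F_u}$ lies in the stabilizer, inside $\Aut(F_u)$, of a $d$-element subset. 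That stabilizer is finite: a translation preserving a $d$-point set induces a permutation of it, so $d!$ times its translation vector vanishes, whence the translations it contains form a finite group of torsion points; and the quotient by these translations embeds into the finite group of automorphisms of $F_u$ fixing a point. Thus the stabilizer has order dividing an integer $k=k(d)$ depending only on $d$. Consequently $(h_s|_{F_u})^{k}=\id$ for very general $u$, so $h_s^{k}$ fixes pointwise a Euclidean-dense, hence Zariski-dense, subset of $\cX_s$, and therefore $h_s^{k}=\id$. This yields the lemma with $\ell:=N\,k$, uniformly in $s\in S_1$.

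The only genuinely delicate points are the implication ``$h_s^*=\id$ on $\NS(\cX_s)$ $\Rightarrow$ $h_s(E_s)=E_s$'', which is exactly where rationality ($\Pic^0=0$) and $E_s^2<0$ enter, and the elementary finiteness of the stabilizer of a finite point set in $\Aut(F_u)$; everything else is a transcription of the K3/Enriques argument combined with the structure of parabolic automorphisms acting on their invariant fibration recalled in \S\ref{par:basics_on_par_and_lox}. As in the other cases, once this lemma holds one observes that the locus $\{s\in S_1:g_s\ \text{elliptic}\}$ coincides with $\{s\in S_1:g_s^{\ell}=\id\}$, which is Zariski closed and does not contain the generic point $b$ (since $g_b=g$ is parabolic), completing the proof of Lemma~\ref{lem:semi-continuity_type}(3).
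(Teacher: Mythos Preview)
Your proof is correct and follows essentially the same approach as the paper's: first use a uniform Minkowski-type bound on the order of $g_s^*$ in $\GL(H^2(\cX_s;\Z))$, then use that the resulting iterate fixes the class $[E_s]$ and hence (since $E_s$ is irreducible with $E_s^2<0$) the curve $E_s$ itself, and finally bound the order of the action on each smooth fiber via the finite set $E_s\cap F_u$. The only cosmetic difference is that the paper takes $g_s^{k\cdot m!}$ to obtain a \emph{fixed point} on every fiber and then invokes the bound $\leq 12$ on the order of a genus-$1$ automorphism with a fixed point, whereas you bound directly the stabilizer of a $d$-point set in $\Aut(F_u)$; these are equivalent packagings of the same idea. (Note also that the appeal to $\Pic^0(\cX_s)=0$ is not actually needed: an irreducible curve of negative self-intersection is already the unique effective representative of its numerical class.)
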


\begin{proof} Set $m=\langle [E]\vert [F]\rangle$ where $F$ is any fiber of $\pi_g$.
Above $S_1$ the surfaces $\cX_s$ are pairwise diffeomorphic, so they have the same second Betti number and there is 
an integer $k>0$ such that $(h^*)^k=\id$ for every elliptic automorphism of $\X_s$, for every $s\in S_1$. Now, if $g_s$ is elliptic, 
then $(g_s^k)^*[E_s]=[E_s]$ and this implies $g_s^k(E_s)=E_s$. Since $g_s$ preserves every fiber, and $E_s$ intersects every 
fiber in at most $m$ points, we deduce that $g_s^{k\cdot m!}$ fixes a point in each fiber. 
But an automorphism of a curve of genus $1$ which fixes a point has order at most $12$, so $g_s^{12k \cdot m!}=\id_{\cX_s}$, and we are done.
\end{proof}

\subsection{Discussion}\label{subs:discussion}
 It would be interesting  to extend   Theorem~\ref{mthm:main} in its general form beyond number fields, 
that is,  without assuming that $D_\Gamma=\emptyset$. 
Fix $(f,g)\in \Gamma_{\mathrm{lox}}\times \Gamma_{\mathrm{par}}$, as above.
The main difficulty appears in the following situation: 
$\Gamma$ fixes $D_\Gamma$ pointwise, and for every parameter $s\in S({\overline{\Q}})$, 
the alleged Zariski dense set of finite orbits of $\Gamma$ specializes as a 
 finite subset of $\cX_s$ which intersects $(D_\Gamma)_s$. In that case, the theorem of Shub and Sullivan 
does not apply directly because it only deals with  isolated fixed points; so, a finer understanding 
 of the Lefschetz fixed point formula is required. 
 We believe that the tools introduced in~\cite{Iwasaki-Uehara} and in an unpublished
chapter of   Xie's  thesis~\cite{xie:thesis} may lead to a solution of this problem. 

\section{Canonical vector heights}\label{sec:canonical_vector_height}
Let $\bfk$ be a number field and ${\overline \bfk}\simeq {\overline \Q}$ be an algebraic closure of $\bfk$. 
Let $X$ be a projective surface defined over $\bfk$ and $\Gamma$ be a subgroup of $\Aut(X_\bfk)$. 
We consider the vector space  
\begin{equation}
\Pic(X; \R) = \Pic(X_{\overline \bfk})\otimes_\Z \R
\end{equation}
of  $\R$-divisors of $X_{\overline\bfk}$ modulo linear equivalence; doing so, we annihilate the torsion part of $\Pic^0(X)$.
Keep in mind  that  when $X$ is birational to an abelian variety,  the vector 
space $\Pic^0(X; \R):=  \Pic^0 (X  )\otimes_\Z \R$ is infinite-dimensional.
The Weil height machine extends to $\Pic (X; \R)$ by $\R$-linearity (see~\cite[\S B.3.2]{Hindry-Silverman}). 
Recall from \S \ref{eq:canonical_vector_height} that 
a {\bf{canonical vector height}} on $X(\overline \bfk)$ for the group $\Gamma$ is, by definition, a function
$h: \Pic(X; \R) \times X(\overline \bfk) \to \R_+$ such that 
\begin{enumerate}[{\em (a)}]
\item[(a)] $h$ is {\emph{linear}} with respect to the first factor $L\in \Pic (X; \R) $;
\item[(b)] 
for every $L\in \Pic (X; \R)$, $h(L, \cdot)$ is a {\emph{Weil height}} associated to $L$;
\item[(c)] $h$ is {\emph{$\Gamma$-equivariant}}: for every $f\in \Gamma$, 
 $h(L, f(x)) = h(f^*L, x)$.
\end{enumerate}
Note that if  $\Pic(X_{\overline{\bfk}})$ is tensorized by $\Q$ instead of $\R$ and Property (a) is stated over $\Q$ we get an equivalent notion. 
Given any $\Gamma$-invariant subspace $V\subset \Pic (X; \R)$, one may also study the notion of 
{\bf{restricted}} canonical vector height $h\colon V\times X(\overline{\bfk})\to \R$. This is most significant when $V$  
contains classes with positive self-intersection, in which case it surjects onto $\Pi_\Gamma$ under the natural map $D\in \Pic (X; \R) \to [D]\in \NS(X; \R)$.
 (In the following we use brackets to distinguish a class in $\NS(X; \R)$
 from a class in $\Pic(X; \R)$.) 
 
 If $A_{\bfk}$ is an abelian variety and $\Gamma$ is a subgroup of 
$\Aut(A_{\bfk})$ fixing its neutral element, the Néron-Tate height on $A$ is a canonical vector height for $\Gamma$ (see~\cite[Theorem B.5.6]{Hindry-Silverman}). The same holds more generally if the neutral element is $\Gamma$-periodic, because 
in this case  $\Gamma(0)$ is made of torsion points (see Remark \ref{rmk:torsion}).  

In this section, we describe automorphism groups of surfaces which are non-elementary, contain parabolic elements, and  
possess a (restricted) canonical vector height $h_{\mathrm{can}}$: Theorems~\ref{mthm:canonical_vector_height},~\ref{mthm:prime}, 
and~\ref{mthm:second}  show that $(X,\Gamma)$ is a Kummer group and  $h_{\mathrm{can}}$ is  derived from the N\'eron-Tate height. 

\subsection{Invariant classes and canonical vector heights}\label{par:derived_height}
In the following lemmas, $h_{\mathrm{can}}$ is a restricted 
canonical vector height for $(X,\Gamma)$, defined in  some $\Gamma$-invariant subspace 
$V_{\mathrm{can}}\subset \Pic(X;\R)$. We shall say that a class $[E]$ in $\NS(X;\R)$ is {\bf{almost $\Gamma$-invariant}} 
if  $f^*[E]=\pm [E]$  for all $f$ in $\Gamma$.

\begin{lem}\label{lem:perturbation_canonical_vector_height} 
Let $[E]\in\NS(X;\R)$ be almost $\Gamma$-invariant. The function  
\[
h_{[E],\varphi} (D,x) = h_{\mathrm{can}}(D,x)+ \langle [E] \vert D\rangle \varphi(x)
\]
is a restricted canonical vector height on $V_{\mathrm{can}}\times X(\overline{\bfk})$ if and only if 
either $[E]$ is orthogonal to $V_{\mathrm{can}}$, 
or $\varphi\colon X({\overline \bfk})\to \R$ is bounded and satisfies $\varphi(x)[E]=\varphi(f(x))f^*[E]$ for all $f\in \Gamma$.
\end{lem}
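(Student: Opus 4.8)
The plan is to verify the three defining properties (a) linearity, (b) Weil height, (c) $\Gamma$-equivariance directly, using the fact that $h_{\mathrm{can}}$ already satisfies them. Write $h_{[E],\varphi}(D,x)=h_{\mathrm{can}}(D,x)+\langle[E]\vert D\rangle\varphi(x)$. Linearity in $D$ is automatic since $D\mapsto\langle[E]\vert D\rangle$ is linear and $h_{\mathrm{can}}$ is linear in $D$. For property (b): for a fixed $D$, the extra term $\langle[E]\vert D\rangle\varphi(x)$ is a scalar multiple of $\varphi$, so $h_{[E],\varphi}(D,\cdot)$ differs from the Weil height $h_{\mathrm{can}}(D,\cdot)$ by $\langle[E]\vert D\rangle\varphi$; this perturbation is $O(1)$ (hence still a Weil height for $D$) precisely when either $\langle[E]\vert D\rangle=0$ or $\varphi$ is bounded. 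If $[E]$ is orthogonal to $V_{\mathrm{can}}$ then $\langle[E]\vert D\rangle=0$ for every $D\in V_{\mathrm{can}}$ and the two functions coincide, so all three properties are inherited trivially; this disposes of one direction. So assume $[E]$ is not orthogonal to $V_{\mathrm{can}}$, i.e. there is $D_0\in V_{\mathrm{can}}$ with $\langle[E]\vert D_0\rangle\neq 0$; then property (b) applied to $D_0$ forces $\varphi$ to be bounded.

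The heart of the matter is the equivariance property (c). We compute, for $f\in\Gamma$,
\[
h_{[E],\varphi}(D,f(x))=h_{\mathrm{can}}(D,f(x))+\langle[E]\vert D\rangle\varphi(f(x))
=h_{\mathrm{can}}(f^*D,x)+\langle[E]\vert D\rangle\varphi(f(x)),
\]
using equivariance of $h_{\mathrm{can}}$, while on the other hand
\[
h_{[E],\varphi}(f^*D,x)=h_{\mathrm{can}}(f^*D,x)+\langle[E]\vert f^*D\rangle\varphi(x)
=h_{\mathrm{can}}(f^*D,x)+\langle f_*[E]\vert D\rangle\varphi(x),
\]
where I use that $f^*$ is an isometry for the intersection form, so $\langle[E]\vert f^*D\rangle=\langle f_*[E]\vert D\rangle=\langle (f^{-1})^*[E]\vert D\rangle$. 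Thus $h_{[E],\varphi}$ is equivariant if and only if
\[
\langle[E]\vert D\rangle\,\varphi(f(x))=\langle (f^{-1})^*[E]\vert D\rangle\,\varphi(x)
\]
for all $D\in V_{\mathrm{can}}$, all $x$, and all $f\in\Gamma$. Replacing $f$ by $f^{-1}$ and relabelling, this is the same as requiring that the $\NS(X;\R)$-valued functions $x\mapsto\varphi(f(x))f^*[E]$ and $x\mapsto\varphi(x)[E]$ have the same pairing with every $D\in V_{\mathrm{can}}$; since they are both scalar multiples of the vectors $f^*[E]=\pm[E]$ and $[E]$ which are not orthogonal to $V_{\mathrm{can}}$, equality of all pairings with $V_{\mathrm{can}}$ is equivalent to the vector equality $\varphi(f(x))f^*[E]=\varphi(x)[E]$ stated in the lemma. (Concretely: pick $D_0$ with $\langle[E]\vert D_0\rangle\neq 0$; since $f^*[E]=\pm[E]$ we have $\langle f^*[E]\vert D_0\rangle=\pm\langle[E]\vert D_0\rangle\neq 0$ as well, so the scalar identity obtained from $D=D_0$ already pins down the sign and the value, and conversely the scalar identity for general $D$ follows.)

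Assembling the pieces: when $[E]\perp V_{\mathrm{can}}$, $h_{[E],\varphi}=h_{\mathrm{can}}$ and there is nothing to prove; when $[E]\not\perp V_{\mathrm{can}}$, property (b) is equivalent to boundedness of $\varphi$ and property (c) is equivalent to the relation $\varphi(x)[E]=\varphi(f(x))f^*[E]$ for all $f\in\Gamma$, while (a) holds unconditionally. This is exactly the stated dichotomy. I do not expect any serious obstacle here; the only point requiring a little care is the translation between the scalar identity (valid after pairing with $V_{\mathrm{can}}$) and the vector identity in $\NS(X;\R)$, which works smoothly precisely because $[E]$ being almost $\Gamma$-invariant keeps everything inside the line $\R[E]$, so non-orthogonality to $V_{\mathrm{can}}$ makes the pairing with a single well-chosen $D_0$ already faithful on that line.
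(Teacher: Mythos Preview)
Your proof is correct and follows essentially the same approach as the paper's own proof: check that the perturbation term preserves linearity, that boundedness of $\varphi$ is equivalent to the Weil height property when $[E]\not\perp V_{\mathrm{can}}$, and that equivariance reduces to the stated relation on $\varphi$. The paper's version is terser (it simply picks $D_0$ with $\langle[E]\vert D_0\rangle\neq 0$, reads off boundedness and the functional equation from properties (b) and (c) at $D_0$, and declares the converse ``straightforward''), while you spell out the scalar-to-vector passage explicitly using the almost invariance $f^*[E]=\pm[E]$; this is the same argument with more detail filled in.
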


In this situation, we shall say that $h_{[E], \varphi}$ is {\textbf{derived from}} the  height $h_{\mathrm{can}}$.

\begin{proof} If $[E]$ is orthogonal to $V_{\mathrm{can}}$, then $h_{[E],\varphi}=h_{\mathrm{can}}$ on $V_{\mathrm{can}}\times X(\overline{\bfk})$ and there is nothing to prove. 
Otherwise, we can fix a class $D\in V_{\mathrm{can}}$  such that $\langle [E] \vert D\rangle\neq 0$.
If $h_{[E],\varphi}$ is a canonical vector height,  
then $\varphi =\langle [E] \vert D\rangle^{-1} (h_{[E],\varphi}-h_{\mathrm{can}})(D,\cdot)$ is bounded, because 
$h_{[E],\varphi}(D,\cdot)$ and $h(D,\cdot)$
are Weil heights associated to
 the same divisor. Furthermore $\varphi$ satisfies  $\varphi(x)[E]=\varphi(f(x))f^*[E]$  because $h_{[E],\varphi}$
and $h_{\mathrm{can}}$ are $\Gamma$-equivariant and $\langle [E]\vert f^*(D)\rangle = \langle f^*[E]\vert  D\rangle$ for all $f\in \Gamma$. 
The reverse implication is straightforward.
\end{proof}

\begin{lem}\label{lem:no_inv_curve_canonical_height}
Assume that $\Gamma$ contains a loxodromic element. 
Let $C\subset X$ be an irreducible $\Gamma$-periodic  curve. If  the class of $C$ belongs to $V_{\mathrm{can}}$, or if $V_{\mathrm{can}}$ 
contains a $\Gamma$-periodic class $D$ such that ${\mathcal{O}}(D)_{\vert C}$ is ample, then  the restriction homomorphism 
$f\in \mathrm{Stab}_\Gamma(C) \mapsto f_{\vert C}$
has  finite image. 
\end{lem}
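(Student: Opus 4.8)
Let $\Gamma_0=\mathrm{Stab}_\Gamma(C)$, a finite index subgroup of $\Gamma$, and write $\rho\colon \Gamma_0\to \Aut(C)$ for the restriction homomorphism. The plan is to show that the image $\rho(\Gamma_0)$ has a fixed point (or at least a finite orbit of size $\le 2$) on $C$, which forces $\rho(\Gamma_0)$ to be finite since an infinite group of automorphisms of a curve has no finite orbit unless the curve is $\P^1$ or an elliptic curve, and even then fixing a point (or a pair of points) cuts it down to a finite group. The key point is to produce such a finite orbit from the canonical vector height together with the loxodromic dynamics.

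\textbf{Step 1: a height function on $C$ that is $\Gamma_0$-equivariant and has a well-behaved set of zeros.} If $[C]\in V_{\mathrm{can}}$, set $D$ to be (a suitable multiple of) $C$ itself; note that although $[C]^2<0$ (Lemma~\ref{lem:carac_invariant_curves}), we may instead use the hypothesis directly: in either case $V_{\mathrm{can}}$ contains a $\Gamma$-periodic class $D$ with $\mathcal{O}(D)|_C$ ample. Replacing $D$ by the sum of its (finite) $\Gamma$-orbit, we may assume $D$ is $\Gamma_0$-invariant in $\NS(X;\R)$, i.e. $f^*D = D$ for all $f\in\Gamma_0$ (this uses that $\Pi_\Gamma^\perp$ carries a $\Gamma$-action through a finite group, so a finite-index subgroup acts trivially). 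Then $\varphi:= h_{\mathrm{can}}(D,\cdot)|_{C(\overline{\bfk})}$ is a Weil height on $C$ associated to the ample $\R$-divisor $\mathcal{O}(D)|_C$, and by the equivariance property (c) applied to $f\in\Gamma_0$ it satisfies $\varphi(f(x)) = h_{\mathrm{can}}(f^*D, x) = h_{\mathrm{can}}(D,x) = \varphi(x)$ exactly, with no error term. Thus $\varphi$ is a genuinely $\Gamma_0$-invariant function on $C(\overline{\bfk})$ which is a Weil height for an ample class.

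\textbf{Step 2: produce a finite orbit.} Since $\varphi$ is a Weil height for an ample divisor on the curve $C$, Northcott's theorem gives that for every bound $B$ and every finite extension $\bfk'/\bfk$, the set $\{x\in C(\bfk'): \varphi(x)\le B\}$ is finite. Now pick a loxodromic $f\in\Gamma$; by Lemma~\ref{lem:control_periodic_curves_strong}, $\Gamma$ has only finitely many periodic curves, hence $C$ is $\Gamma$-periodic means some iterate of $f$ (still loxodromic) lies in $\Gamma_0$ and preserves $C$; call it $g$. The restriction $g|_C$ is an automorphism of $C$ commuting with the $\Gamma_0$-action. I would choose any point $x_0\in C(\overline{\bfk})$ and look at its $g|_C$-orbit; since $\varphi$ is $\Gamma_0$-invariant it is in particular $g|_C$-invariant, so the entire forward and backward $g$-orbit of $x_0$ lies in a single level set $\{\varphi = \varphi(x_0)\}$, which by Northcott is a finite set (all these points are defined over a fixed number field). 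Hence $x_0$ is $g|_C$-periodic; letting $x_0$ range, we find that $g|_C$ has infinitely many periodic points on $C$, so $g|_C$ has finite order (an automorphism of infinite order of a curve has only finitely many periodic points unless it is a translation of an elliptic curve — but a translation has a dense set of non-periodic points too, contradiction; and if $C\cong\P^1$, infinite order forces exactly $1$ or $2$ fixed points). Now $\langle \rho(\Gamma_0)\rangle$ permutes the (finite, nonempty) fixed-point set of $\rho(g)$ in $C$, because $\rho(g)$ is central in $\rho(\Gamma_0)$. So $\rho(\Gamma_0)$ has a finite orbit, and hence a finite-index subgroup fixing $\ge 3$ points (if $C$ has genus $0$, after passing to a subgroup fixing the $1$ or $2$ points of that orbit and using that the residual fixed points or the Hurwitz bound close the argument) or fixing a point on a curve of genus $\ge 1$; in all cases the standard fact that a group of automorphisms of a curve fixing three points (genus $0$), or a point (genus $1$), or any point (genus $\ge 2$, where $\Aut(C)$ is already finite) is finite, gives that $\rho(\Gamma_0)$ is finite.

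\textbf{Main obstacle.} The delicate point is the genus-$0$ and genus-$1$ bookkeeping in Step~2: showing that "$\rho(g)$ has infinitely many periodic points $\Rightarrow$ $\rho(g)$ has finite order" and then that "$\rho(\Gamma_0)$ permutes a finite nonempty set $\Rightarrow$ $\rho(\Gamma_0)$ finite" requires care, since on $\P^1$ an infinite-order automorphism still has two fixed points, so one must either rule that out (by noting that $\varphi$ being a non-constant Weil height for an ample class forces $\rho(g)$ to preserve the two branches near each such fixed point and, combined with $\Gamma_0$-invariance of $\varphi$, collapse the whole group) or simply accept a fixed pair and invoke that the stabilizer of two points in $\mathrm{PGL}_2$ is the torus $\mathbb{G}_m$, on which no nontrivial $\varphi$ can be invariant unless the group is finite. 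I expect this is where the real content lies; the height-theoretic input (Steps~1 and the Northcott argument) is routine once the equivariant height $\varphi$ is in hand.
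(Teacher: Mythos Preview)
Your Step~1 is essentially the paper's approach and is fine (with the minor remark that in the case $[C]\in V_{\mathrm{can}}$ one takes $D=-C$, since $C^2<0$ makes $\mathcal{O}(-C)|_C$ ample). The Northcott input is also the right one.

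The gap is in Step~2. You pass through a single loxodromic $g\in\Gamma_0$, show $\rho(g)$ has finite order, and then assert that $\rho(\Gamma_0)$ permutes $\mathrm{Fix}(\rho(g))$ ``because $\rho(g)$ is central in $\rho(\Gamma_0)$''. There is no reason for $\rho(g)$ to be central: $g$ is just an iterate of some loxodromic element, and nothing forces it to commute with the rest of $\Gamma_0$. So the bridge from ``one element has finite order'' to ``the whole image is finite'' is not there.

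The fix is that the detour through $g$ is unnecessary. Your own observation that $\varphi$ is $\Gamma_0$-invariant already gives that the \emph{entire} $\Gamma_0$-orbit of any $x_0\in C(\overline{\bfk})$ lies in the level set $\{\varphi=\varphi(x_0)\}\cap C(\bfk(x_0))$, which is finite by Northcott. So every point of $C(\overline{\bfk})$ has finite $\Gamma_0$-orbit. Once you check that $C$ is defined over a number field (it is, since $\Gamma$ has only finitely many periodic curves and these are permuted by $\mathrm{Gal}(\overline{\bfk}/\bfk)$), $C(\overline{\bfk})$ is infinite and $\rho(\Gamma_0)$ has at least three finite orbits; a finite-index subgroup then fixes three points, hence is trivial on $C$. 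This is exactly the paper's argument, and it sidesteps all the genus-by-genus bookkeeping you flagged as the ``main obstacle''.
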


\begin{proof} If $C$ is $\Gamma$-periodic then its self-intersection is negative, 
the restriction of ${\mathcal{O}}_X(-C)$ to $C$ has positive degree, and $({\mathcal{O}}_X(-C))_{\vert C}$ is therefore ample. 
So, it is enough to consider the case where  $V_{\mathrm{can}}$ contains a periodic class  $D$ such that 
${\mathcal{O}}(D)_{\vert C}$ is ample.

By changing $\Gamma$ in a finite index subgroup, we may assume $\Gamma(C)=C$.
If $\sigma$ is an automorphism of $\C$ over $\bfk$, it maps $C$ to a $\Gamma$-invariant curve $C^\sigma$. There are only finitely 
many $\Gamma$-invariant irreducible curves (the components of $D_\Gamma$), because $\Gamma$ contains a loxodromic element (see \S \ref{par:periodic_curves_of_loxodromic}).
Thus, the orbit of $C$ under the group of automorphisms of $\C$ over $\bfk$ is finite and $C$ is defined over a number field.
In particular, $C({\overline \bfk})$ is dense in~$C(\C)$.

Set $\Gamma' = \mathrm{Stab}_\Gamma(D)$,  and  Pick  $x_0\in C({\overline \bfk})$.  Then 
$h_{\mathrm{can}}(D, y)= h_{\mathrm{can}}(D, x_0)$ for every $y$ in  $\Gamma'(x_0)$; since
$h_{\mathrm{can}}(D,\cdot)$ is a Weil height for $D$, and ${\mathcal{O}}(D)_{\vert C}$ is ample, Northcott's theorem implies that 
$\{ x\in C(\bfk')\; ; \;  h_{\mathrm{can}}(D,x)=h_{\mathrm{can}}(D,x_0)\}$ is finite for every number field $\bfk'$; thus,  
$\Gamma'(x_0)$ is a finite set. 
Since $C({\overline \bfk})$ is infinite, we can argue as in the proof of Corollary~\ref{cor:DMM}  to  deduce that $\Gamma'_{\vert C}$ is finite, as asserted. 
\end{proof}
 
\begin{lem}\label{lem:height_0_on_inv_curve}
Assume  $\Pic^0(X)=0$ and identify $\Pic(X; \R)$ with $\NS(X; \R)$. 
\begin{enumerate}[\em (1)]
\item If  $V_{\mathrm{can}}$ contains a class with positive self-intersection, then it contains $\Pi_\Gamma$.
\item If $V_{\mathrm{can}}$ contains $\Pi_\Gamma$, and if $C$ 
is an irreducible  rational  $\Gamma$-periodic curve, then, $h_{\mathrm{can}}(D,x)=0$ for every 
$D\in \Pi_\Gamma$ and  $x\in C(\overline{\bfk})$.
\end{enumerate}
\end{lem}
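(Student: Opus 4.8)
Here is my plan for proving Lemma~\ref{lem:height_0_on_inv_curve}.

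\medskip

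The plan is to treat the two assertions in turn, with the first being essentially a linear-algebra/positivity statement and the second the genuinely arithmetic one. For assertion (1), suppose $V_{\mathrm{can}}$ contains a class $w$ with $\langle w\vert w\rangle>0$. I would first observe that, since $\Pic^0(X)=0$, we have $\Pic(X;\R)=\NS(X;\R)$, so $V_{\mathrm{can}}$ is a $\Gamma$-invariant subspace of $\NS(X;\R)$ carrying a class of positive self-intersection. By the Hodge index theorem the intersection form has signature $(1,\rho-1)$, so the restriction of the form to $V_{\mathrm{can}}$ is non-degenerate and of Minkowski type; hence $\Gamma$ acts on $V_{\mathrm{can}}$ as a group of isometries of a hyperbolic space, and this action is non-elementary (it contains the loxodromic element $f$, whose isotropic eigenvectors $\theta_f^\pm$ must lie in $V_{\mathrm{can}}$ since the $f^*$-invariant plane $\Pi_f$ meets the isotropic cone only along $\R\theta_f^\pm$ and a positive-self-intersection invariant class forces these lines into $V_{\mathrm{can}}$; more directly, $\mathrm{Vect}(\theta_h^+\,;\,h\in\Gamma_{\mathrm{lox}}\cap\Gamma)=\Pi_\Gamma$ by strong irreducibility, and each such $\theta_h^+$ lies in $V_{\mathrm{can}}$ because $V_{\mathrm{can}}\cap\Pi_\Gamma\neq 0$ is $\Gamma$-invariant inside the strongly irreducible $\Pi_\Gamma$). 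Then $V_{\mathrm{can}}\supset\Pi_\Gamma$ follows from the characterization of $\Pi_\Gamma$ as the smallest $\Gamma$-invariant subspace on which the form is Minkowski (\S\ref{subsub:non_elementary}); I would simply note that $V_{\mathrm{can}}\cap\Pi_\Gamma$ is a nonzero $\Gamma$-invariant subspace of $\Pi_\Gamma$, hence equals $\Pi_\Gamma$ by strong irreducibility.

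\medskip

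For assertion (2), let $C$ be an irreducible rational $\Gamma$-periodic curve and fix $D\in\Pi_\Gamma$ and $x\in C(\overline{\bfk})$. Replacing $\Gamma$ by the finite-index subgroup $\Gamma'=\mathrm{Stab}_\Gamma(C)$ (which does not affect the conclusion, and which still contains the relevant data), I may assume $\Gamma$ preserves $C$. As in the proof of Lemma~\ref{lem:no_inv_curve_canonical_height}, $C$ is one of the finitely many $\Gamma$-periodic irreducible curves, hence defined over a number field, so $C(\overline{\bfk})$ is Zariski dense in $C$. The key point is the dynamics on $C\cong\mathbb P^1$: I claim that for a suitable choice of $f\in\Gamma_{\mathrm{lox}}$ (or a product of parabolics as furnished by Proposition~\ref{pro:invariant_curve_loxodromic_precised}), the restriction $f_{\vert C}\in\PGL_2$ has an orbit $\{f^n_{\vert C}(x)\}_{n\ge 0}$ contained in $C(\overline{\bfk})$, and all of whose points are defined over a bounded-degree extension of a fixed number field. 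Indeed $x$ and $f$ are defined over a number field, so all the iterates $f^n(x)$ lie in $C(\bfk')$ for one fixed $\bfk'$. Now the equivariance and linearity of $h_{\mathrm{can}}$ give
\[
h_{\mathrm{can}}(D,f^n(x))=h_{\mathrm{can}}((f^*)^n D,x).
\]
Since $D\in\Pi_\Gamma$ and $f^*$ acts on $\Pi_f\subset\Pi_\Gamma$ with eigenvalues $\lambda(f)^{\pm1}$ and on a complement by isometries, the classes $(f^*)^nD$ grow like $\lambda(f)^n$ in the $\theta_f^+$-direction; but $h_{\mathrm{can}}(\cdot,x)$ is linear and $h_{\mathrm{can}}(\cdot,x)$ restricted to the finite-dimensional space $\Pi_\Gamma$ is a linear functional, so $h_{\mathrm{can}}((f^*)^nD,x)=O(\lambda(f)^n)$ while on the other hand Northcott forces $\{h_{\mathrm{can}}(D,f^n(x))\}$ to take finitely many values once we know the $f^n(x)$ have bounded height and degree --- which requires precisely that $h_{\mathrm{can}}(D,\cdot)$ be bounded below on $C(\bfk')$ by a Weil height of a class that is ample (or at least big) on $C$.

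\medskip

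This last requirement is the main obstacle, and here is how I would resolve it. The class $D\in\Pi_\Gamma$ need not itself restrict to something ample on $C$ --- in fact, since $C$ is $\Gamma$-periodic, $[C]\in\Pi_\Gamma^\perp$ by Lemma~\ref{lem:carac_invariant_curves}, so elements of $\Pi_\Gamma$ restrict to $C$ with degree zero, i.e. $\mathcal O(D)_{\vert C}$ has degree $0$ and $h_{\mathrm{can}}(D,\cdot)\rest{C(\overline\bfk)}$ is a height of a degree-zero divisor on the rational curve $C\cong\mathbb P^1$. But a degree-zero divisor class on $\mathbb P^1$ is trivial, so $h_{\mathrm{can}}(D,\cdot)\rest{C}$ is a Weil height associated to the zero class, hence bounded on $C(\overline{\bfk})$ --- both above and below. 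Combining: $h_{\mathrm{can}}(D,f^n(x))$ is bounded independently of $n$, while the identity $h_{\mathrm{can}}(D,f^n(x))=h_{\mathrm{can}}((f^*)^nD,x)$ together with linearity and the spectral behavior of $f^*$ on $\Pi_\Gamma$ shows this quantity equals $a\lambda(f)^n+b\lambda(f)^{-n}+(\text{bounded})$ where $a$ is the coefficient of $D$ along $\theta_f^-$ in a suitable decomposition (using $h_{\mathrm{can}}(\theta_f^+,x)$ as the relevant coefficient); boundedness in $n$ forces the coefficients of the growing terms to vanish, and then a short argument --- varying $f$ over $\Gamma_{\mathrm{lox}}$ so that the $\theta_f^+$ span $\Pi_\Gamma$ (Lemma~\ref{lem:density_limit_set}) --- gives $h_{\mathrm{can}}(D',x)=0$ for $D'$ ranging over a spanning set, hence $h_{\mathrm{can}}(D,x)=0$ for all $D\in\Pi_\Gamma$ by linearity. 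The delicate bookkeeping will be in making the ``bounded term'' genuinely bounded uniformly in $n$ (this is where one invokes that $h_{\mathrm{can}}(D,\cdot)$ differs from a fixed Weil height by $O(1)$ and that the Weil height of the zero class is bounded on $C$), and in checking that the coefficient extraction is legitimate, i.e. that $n\mapsto h_{\mathrm{can}}((f^*)^nD,x)$ really is governed by the eigenvalues of $f^*\rest{\Pi_\Gamma}$ with bounded remainder.
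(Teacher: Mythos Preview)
Your approach is correct and rests on the same key observation as the paper: since $[C]\in\Pi_\Gamma^\perp$ (Lemma~\ref{lem:carac_invariant_curves}), any $D\in\Pi_\Gamma$ restricts to a degree-zero, hence trivial, line bundle on the rational curve $C$, so $h_{\mathrm{can}}(D,\cdot)$ is bounded on $C(\overline\bfk)$. From there the two proofs diverge in mechanism. The paper uses the averaging operator $P_\nu$: for an eigenvector $D\in\Pi_\Gamma$ with $P_\nu D=\alpha(\nu)D$ and $\alpha(\nu)>1$, equivariance and linearity give $\alpha(\nu)^n h_{\mathrm{can}}(D,x)=\int h_{\mathrm{can}}(D,g(x))\,d\nu^{\star n}(g)$, which is bounded because $g(x)$ stays in the finite $\Gamma$-orbit of $C$; hence $h_{\mathrm{can}}(D,x)=0$, and such eigenvectors span $\Pi_\Gamma$ as $\nu$ varies (\S\ref{par:rational_invariant_classes}). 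Your route instead iterates a single loxodromic $f\in\mathrm{Stab}_\Gamma(C)$: taking $D=\theta_f^+$ directly, $h_{\mathrm{can}}(\theta_f^+,f^n(x))=\lambda(f)^n h_{\mathrm{can}}(\theta_f^+,x)$ is bounded in $n$, so vanishes; then the $\theta_f^+$ span $\Pi_{\mathrm{Stab}_\Gamma(C)}=\Pi_\Gamma$. This is slightly more elementary (no $P_\nu$ machinery) but requires passing to the finite-index stabilizer; the paper's averaging avoids that step.

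Two remarks on the write-up. In part~(1), your ``more directly'' sentence is circular (it assumes $V_{\mathrm{can}}\cap\Pi_\Gamma\neq 0$ in order to conclude it); the correct argument is the one you sketch just before, namely that $\lambda(f)^{-n}(f^*)^n w\to c\,\theta_f^+$ forces $\theta_f^+\in V_{\mathrm{can}}$, which is exactly the paper's one-line proof via the limit set. In part~(2), the Northcott paragraph is a red herring and should be deleted: you never use it, and the resolution you give afterwards (triviality of degree-zero bundles on $\mathbb P^1$) is the whole point. Likewise, the general decomposition $D=c_+\theta_f^++c_-\theta_f^-+D'$ with the ``bounded remainder'' bookkeeping is unnecessary once you simply take $D=\theta_f^+$.
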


\begin{proof} 
If $V_{\mathrm{can}}$ contains a class in the positive cone it contains  the limit set $\Lim(\Gamma)$, hence also $\Pi_\Gamma$ (see \cite[\S 2.3]{stiffness}); this proves the first assertion. 
For the second one, 
pick a probability measure $\nu$ on $\Gamma$ with finite support, and assume that $P_\nu^*(D)=\alpha(\nu) D$
for some $D$ in $\Pi_\Gamma $ and some $\alpha(\nu)\in \R$. Then, $\sum_f\nu(f) h_{\mathrm{can}}(D,f(x))=\alpha(\nu) h_{\mathrm{can}}(D,x)$ 
by equivariance and linearity. On the other hand, ${\mathcal{O}}(D)_{\vert C}$ has degree $0$, because $\langle D\vert C\rangle =0$, and is therefore
trivial because $C$ is rational. So, $h_{\mathrm{can}}(D,\cdot)$ is bounded on $C(\overline{\bfk})$. Since $\alpha(\nu)>1$, this implies that 
$h_{\mathrm{can}}(D, x)=0$ for every $x\in C(\overline{\bfk})$. To conclude, note that such eigenvectors $D$ generate $\Pi_\Gamma$ when 
we vary $\nu$ (see \S \ref{par:rational_invariant_classes}).
\end{proof}

\subsection{From canonical vector heights to Kummer groups} 
\begin{mthm}\label{mthm:canonical_vector_height}
Let $X$ be a smooth projective surface  
and $\Gamma$ be a subgroup of $\Aut(X)$, both defined over a number field $\bfk$.
Suppose that 
\begin{enumerate}[{\em (i)}]
\item $\Gamma$ is non-elementary and contains parabolic elements;
\item there exists a canonical vector height $h_{\mathrm{can}}$ for 
$(X(\overline \bfk), \Gamma)$ on a $\Gamma$-invariant subspace of $\Pic(X; \R)$ which contains a divisor with positive self-intersection. 
\end{enumerate}
Then $(X,\Gamma)$ is a Kummer group. If in addition $h_{\mathrm{can}}$ is defined on $\Pic (X; \R)\times X({\overline{\bfk}})$, then 
$X$ is an abelian surface. 
\end{mthm}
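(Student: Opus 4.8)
The plan is to combine Theorem~\ref{mthm:main} with the analysis of canonical vector heights on Kummer groups carried out in the lemmas of \S\ref{par:derived_height}. The first task is to produce a Zariski dense set of finite orbits so that Theorem~\ref{mthm:main} applies; once we know $(X,\Gamma)$ is a Kummer group, the second task is to rule out the non-abelian Kummer cases when $h_{\mathrm{can}}$ is defined on all of $\Pic(X;\R)$.

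First I would reduce to the case $\Pic^0(X)=0$: if $\Pic^0(X)\neq 0$, then since $\Gamma$ is non-elementary (hence contains a loxodromic element) we are in the abelian-blow-up case by \cite[Thm.~10.1]{Cantat:Milnor}, and the invariant exceptional divisor would be contracted, so either $X$ is already abelian or we reach a contradiction with the positivity of $h_{\mathrm{can}}$ on a big class as in Lemma~\ref{lem:no_inv_curve_canonical_height}. So assume $\Pic^0(X)=0$ and identify $\Pic(X;\R)=\NS(X;\R)$. By assumption~(ii) and Lemma~\ref{lem:height_0_on_inv_curve}(1), the domain $V_{\mathrm{can}}$ of $h_{\mathrm{can}}$ contains $\Pi_\Gamma$. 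Now pick, via Proposition~\ref{pro:contraction} (or Proposition~\ref{pro:kawa_sequence_nun} and Remark~\ref{rem:contraction}), an ample class $A$ on $X_0$ whose pullback $w:=\pi_0^*[A]\in\Pi_\Gamma$ is big and nef, and a finitely supported probability measure $\nu$ on $\Gamma$ with rational weights such that $P_\nu^*w=\alpha w$ with $\alpha\in\Q$, $\alpha>1$. Evaluating the $\Gamma$-equivariance and linearity of $h_{\mathrm{can}}$ on $w$ gives $\sum_f\nu(f)\,h_{\mathrm{can}}(w,f(x))=\alpha\,h_{\mathrm{can}}(w,x)$ for all $x\in X(\overline{\bfk})$; since $h_{\mathrm{can}}(w,\cdot)$ is a Weil height for a class which is the pullback of an ample class, it is bounded below, and iterating the functional equation with $\alpha>1$ forces $h_{\mathrm{can}}(w,x)\geq 0$, and moreover $h_{\mathrm{can}}(w,x)=0$ precisely when the $\Gamma$-orbit of $x$ is finite (the argument of Lemma~\ref{lem:kawaguchi_northcott_periodic}, using Northcott). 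Thus the finite-orbit locus of $\Gamma$ equals $\{h_{\mathrm{can}}(w,\cdot)=0\}$ on $X_0$, and this set is Zariski dense: indeed $h_{\mathrm{can}}(w,\cdot)$ extends to a continuous non-negative function whose zero set, were it not dense, would be a proper $\Gamma$-invariant subvariety; but on each irreducible $\Gamma$-periodic curve $C$ (which is rational, by Lemma~\ref{lem:invariant_curve_kummer}-type considerations, or directly by \cite[Thm.~1.1]{diller-jackson-sommese}) the restriction $h_{\mathrm{can}}(w,\cdot)$ vanishes by Lemma~\ref{lem:height_0_on_inv_curve}(2), forcing all of $C(\overline{\bfk})$ into the finite-orbit locus, which is absurd unless the Zariski closure of the finite-orbit locus is all of $X$. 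Hence Theorem~\ref{mthm:main} applies and $(X,\Gamma)$ is a Kummer group.

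For the last assertion, suppose $h_{\mathrm{can}}$ is defined on all of $\Pic(X;\R)\times X(\overline{\bfk})$ and suppose for contradiction that $X$ is not abelian. Then by Lemma~\ref{lem:invariant_curve_kummer} the Kummer group $(X,\Gamma)$ has a non-empty $\Gamma$-periodic curve $D_\Gamma$, each connected component being a smooth rational curve $E$, and $\mathrm{Stab}_\Gamma(E)$ acts on $E\cong\P^1$ through a non-elementary subgroup of $\mathrm{PGL}_2(\C)$ with no finite orbit (this is the content of the proof of Lemma~\ref{lem:invariant_curve_kummer}(2)). On the other hand, since $[E]$ is $\Gamma$-periodic with $[E]^2<0$, the restriction $\mathcal{O}_X(-E)|_E$ is ample, and $[E]\in\NS(X;\R)=\Pic(X;\R)=V_{\mathrm{can}}$; Lemma~\ref{lem:no_inv_curve_canonical_height} then shows that the restriction homomorphism $\mathrm{Stab}_\Gamma(E)\to\mathrm{Aut}(E)$ has finite image, contradicting the absence of finite orbits for a non-elementary subgroup of $\mathrm{PGL}_2(\C)$. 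Therefore $X$ must be abelian.

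\textbf{Main obstacle.} The delicate point is establishing Zariski density of the finite-orbit locus from the mere existence of $h_{\mathrm{can}}$: one must convert the boundedness of $h_{\mathrm{can}}(w,\cdot)$ along periodic curves into a statement forcing the full zero-locus to be dense, which requires knowing that $\Gamma$-periodic curves are rational (so that $\mathcal{O}(D)|_C$ is trivial for $D\in\Pi_\Gamma$) together with a Northcott-type finiteness on each such curve; this is exactly where Lemmas~\ref{lem:no_inv_curve_canonical_height} and~\ref{lem:height_0_on_inv_curve} are designed to be used, and assembling them into the density statement, rather than the invocation of Theorem~\ref{mthm:main} itself, is the technical heart of the argument.
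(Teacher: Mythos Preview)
Your second part (ruling out non-abelian Kummer surfaces when $h_{\mathrm{can}}$ is defined on all of $\Pic(X;\R)$) is correct and matches the paper's argument exactly (Lemma~\ref{lem:invariant_curve_kummer} plus Lemma~\ref{lem:no_inv_curve_canonical_height}). The reduction to $\Pic^0(X)=0$ is also fine, as is your identification $h_{\mathrm{can}}(w,\cdot)=\hat h_L$ via the functional equation.

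The genuine gap is in your argument that the finite-orbit locus is Zariski dense. You correctly establish (via Northcott on $X_0$) that $\{\hat h_L=0\}$ coincides with the finite-orbit locus, but your density argument is circular. You write that if the zero set were not dense its closure would be a proper $\Gamma$-invariant subvariety, and that $h_{\mathrm{can}}(w,\cdot)$ vanishes on each $\Gamma$-periodic curve $C$; but the vanishing of $h_{\mathrm{can}}(w,\cdot)$ on $D_\Gamma$ does \emph{not} mean the points of $D_\Gamma$ have finite $\Gamma$-orbits in $X$ (only that their images in $X_0$ are the finitely many singular points), and even if it did, $D_\Gamma$ is itself a proper subvariety, so this gives no contradiction. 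The claim that $h_{\mathrm{can}}(w,\cdot)$ ``extends to a continuous non-negative function'' is also unfounded: heights are functions on $X(\overline\bfk)$, not continuous functions on $X(\C)$. You have no mechanism that produces a Zariski dense supply of points with $\hat h_L=0$.

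The paper solves this with a different idea (Lemma~\ref{lem:convex}): fix a loxodromic $f$ whose axis crosses the interior of $\mathrm{Conv}(\Lim(\Gamma))$, and write some $w=a\theta_f^++b\theta_f^-$ as $\epsilon w_\nu+\sum_g\beta_g\theta_g^+$ with all coefficients positive. Then by Lemma~\ref{lem:hcan} and linearity, $a h_f^+ + b h_f^- = \epsilon \hat h_L + \sum_g \beta_g h_g^+$. Every periodic point $x$ of $f$ has $h_f^\pm(x)=0$, and since all terms on the right are nonnegative, $\hat h_L(x)=0$. Since periodic points of a single loxodromic automorphism are Zariski dense, this gives the density you need. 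The geometric hypothesis on $f$ is then verified by a convex-geometry argument when $\dim\Pi_\Gamma\le 4$ (Lemma~\ref{lem:polytope}), and the general case is reduced to this via pairs of unipotent parabolics (Lemma~\ref{lem:2_unipotent_parabolic}) together with Theorem~\ref{thm:Kummer_for_groups}. This convex decomposition, turning the Zariski-dense set $\mathrm{Per}(f)$ into a Zariski-dense set of $\Gamma$-finite orbits, is the step your proposal is missing.
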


The smoothness of $X$ is essential 
for the last conclusion to hold; for instance, if  $(X_0,\Gamma)$ is a singular
Kummer example with no $\Gamma$-invariant curve, we shall see that the N\'eron-Tate height induces a canonical vector height on  
$\Pic (X_0; \R)\times X_0({\overline{\bfk}})$. 

The remainder  of this subsection  is devoted to the proof of Theorem~\ref{mthm:canonical_vector_height}. 
Let us already observe that once  $(X,\Gamma)$ is known to be a Kummer group, the last conclusion 
 readily  follows from Lemmas~\ref{lem:invariant_curve_kummer} and~\ref{lem:no_inv_curve_canonical_height}. 
 So all we have to show is that $(X,\Gamma)$ is a Kummer group. 

\subsubsection{Reduction to $\Pic^0(X)=0$} Suppose $\Pic^0(X)\neq \set{0}$. Then,
$\Gamma$ being non-elementary, \cite[Theorem 10.1]{Cantat:Milnor} shows that
$X$ is either an abelian surface or a blowup of such a surface along a finite  orbit of $\Gamma$, and 
  by definition $(X, \Gamma)$ is   a Kummer group.
  
So, from now on, we assume  $\Pic^0(X)=\set{0}$ and identify 
$\Pic (X; \R)$ with $\NS(X; \R)$. 

\subsubsection{A key lemma} Assumption (ii) provides  a canonical vector height $ h_{\mathrm{can}}$ for  $(X, \Gamma)$ 
in restriction to  $\Pi_\Gamma$ (see Lemma~\ref{lem:height_0_on_inv_curve}). 
Recall from \cite{Silverman:1991, Kawaguchi:AJM} that for every $f\in \Gamma_{\mathrm{lox}}$ 
there exist canonical  heights $h_f^\pm $, respectively associated to the classes $\theta_f^\pm$,
such that $h_f^+(f(x)) = \lambda(f) h_f^+(x)$ and $h_f^-(f\inv(x)) = \lambda(f) h_f^-(x)$. 
They satisfy: 
\begin{itemize}
\item $h_f^\pm \geq 0$ on $X(\overline \bfk)$; 
\item if $D_f$ denotes the maximal invariant 
curve of $f$ then, for $x\in X(\overline \bfk)$, $h_f^+(x)+ h_f^-(x) = 0$ if  and only if $x$ is a periodic point 
or $x\in D_f$ (see  \cite[\S 5]{Kawaguchi:AJM}). 
\end{itemize}
Furthermore any Weil height $h$ associated to $\theta_f^+$ 
such that $h (f(x)) = \lambda(f) h (x)$ coincides with $h_f^+$: indeed 
$k := h - h_f^+$ is bounded because $h$ and $h_f^+$ are Weil heights associated to the same class, so 
the relation $k (f(x)) = \lambda(f) k (x)$ forces it to be identically zero. 
Thus, the next lemma follows immediately from the defining Properties (a), (b), and (c) 
(see \cite[Prop. 3.4]{Kawaguchi:2013}  or  \cite[\S1]{Baragar:2004}). 
 
\begin{lem} \label{lem:hcan}
If $\Pic^0(X)=\{0\}$ and if $h_{can}$ is a canonical vector height for $(X, \Gamma)$ in restriction to $\Pi_\Gamma$, then  $h_{\mathrm{can}}(\theta_f^\pm, \cdot) = h_f^\pm(\cdot)$ for every $f\in \Gamma_{lox}$. 
\end{lem}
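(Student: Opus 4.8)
The plan is to derive Lemma~\ref{lem:hcan} as a direct consequence of the uniqueness characterization of the canonical heights $h_f^\pm$ recalled just above its statement, combined with the defining properties (a), (b), (c) of a (restricted) canonical vector height. First I would fix $f\in\Gamma_{\mathrm{lox}}$ and consider the function $\phi:=h_{\mathrm{can}}(\theta_f^+,\cdot)\colon X(\overline{\bfk})\to\R$. By Property (b), $\phi$ is a Weil height associated to the $\R$-divisor class $\theta_f^+\in\Pi_\Gamma\subset\NS(X;\R)=\Pic(X;\R)$ (here we use the reduction $\Pic^0(X)=\{0\}$, so that $\Pi_\Gamma$ genuinely sits inside $\Pic(X;\R)$ and the Weil height machine applies $\R$-linearly, cf.~\cite[\S B.3.2]{Hindry-Silverman}).

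The key computation is the functional equation. By Property (c) (equivariance) and Property (a) ($\R$-linearity in the divisor variable), for every $x\in X(\overline{\bfk})$ we have
\[
\phi(f(x))=h_{\mathrm{can}}(\theta_f^+,f(x))=h_{\mathrm{can}}(f^*\theta_f^+,x)=h_{\mathrm{can}}(\lambda(f)\,\theta_f^+,x)=\lambda(f)\,h_{\mathrm{can}}(\theta_f^+,x)=\lambda(f)\,\phi(x),
\]
using that $\theta_f^+$ is an eigenvector of $f^*$ with eigenvalue $\lambda(f)$ (see \S\ref{subs:loxodromic}). So $\phi$ is a Weil height associated to $\theta_f^+$ satisfying the exact relation $\phi\circ f=\lambda(f)\phi$. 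The uniqueness statement recalled before the lemma --- namely that any Weil height $h$ associated to $\theta_f^+$ with $h\circ f=\lambda(f)h$ must equal $h_f^+$, because the difference $h-h_f^+$ is bounded (difference of two Weil heights for the same class) and invariant under multiplication by $\lambda(f)>1$, hence zero --- now forces $\phi=h_f^+$. Applying the same argument to $f^{-1}$ in place of $f$, with $\theta_f^-=\theta_{f^{-1}}^+$ and noting $f^*\theta_f^-=\lambda(f)^{-1}\theta_f^-$ so that $h_{\mathrm{can}}(\theta_f^-,\cdot)\circ f^{-1}=\lambda(f)\,h_{\mathrm{can}}(\theta_f^-,\cdot)$, gives $h_{\mathrm{can}}(\theta_f^-,\cdot)=h_f^-$.

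There is essentially no serious obstacle here: the only point requiring a word of care is that $h_{\mathrm{can}}$ is only assumed to be defined as a \emph{restricted} canonical vector height on $\Pi_\Gamma$ (as guaranteed by Lemma~\ref{lem:height_0_on_inv_curve}(1), since by hypothesis $V_{\mathrm{can}}$ contains a class of positive self-intersection), and one must check that the classes $\theta_f^\pm$ actually lie in $\Pi_\Gamma$. This holds because $\theta_f^\pm$ are isotropic eigenvectors of the loxodromic element $f$ whose projectivizations lie in $\Lim(\Gamma)$, and $\Pi_\Gamma$ is the span of the limit set (see \S\ref{subsub:non_elementary}); alternatively one invokes that $\Pi_\Gamma$ is $\Gamma$-invariant and contains all such eigenvectors by strong irreducibility, exactly as in the proof of Lemma~\ref{lem:carac_invariant_curves}. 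Once this membership is noted, the argument above applies verbatim, and the lemma follows.
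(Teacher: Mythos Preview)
Your proof is correct and follows essentially the same approach as the paper: both use Properties (a), (b), (c) to verify that $h_{\mathrm{can}}(\theta_f^\pm,\cdot)$ is a Weil height for $\theta_f^\pm$ satisfying the exact functional equation under $f^{\pm 1}$, and then invoke the uniqueness characterization (bounded difference multiplied by $\lambda(f)>1$ must vanish) recalled just before the lemma. The paper simply states this in one line, while you have spelled out the details and added the remark that $\theta_f^\pm\in\Pi_\Gamma$, which is a harmless elaboration.
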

    
\begin{rem}\label{rem:hcan_zero}
If $x$ belongs to the maximal invariant curve $D_\Gamma$ and $c$ belongs to $\Pi_\Gamma$ then 
$h_{\mathrm{can}}(c,x) = 0$. Indeed for  every $f\in \Gamma_{\mathrm{lox}}$, $D_\Gamma\subset D_f$ so  $h_{\mathrm{can}}(\theta_f^\pm, x)=h^+_f(x)=0$, and
since the classes $(\theta_f^\pm)_{f \in \Gamma_{\mathrm{lox}}}$ span  $\Pi_\Gamma$, the result follows by linearity. This extends the second assertion  of Lemma~\ref{lem:height_0_on_inv_curve} to invariant curves which are not rational. 
\end{rem}

 Now, recall that the classes $\theta_f^\pm$  are normalized by 
$\langle \theta_f^\pm\vert [\kappa_0]\rangle = 1$. Let us view  $\Hyp_X$ and  $\P(\Hyp_X)$ 
as subsets of   $\set{ u\in H^{1,1}(X;\R)\; ; \; \langle u   \vert u \rangle >0, \langle u  
\vert [\kappa_0]\rangle = 1}$. Setting  
\begin{equation}
\widetilde \Pi_\Gamma = \Pi_\Gamma\cap  \set{\langle \cdot \vert [\kappa_0]\rangle = 1},
\end{equation}   
$\Lim(\Gamma)$ can now be 
viewed  as a subset of $\widetilde \Pi_\Gamma$ which generates $\Pi_\Gamma$ as a vector space. 
The starting point of the proof of Theorem~\ref{mthm:canonical_vector_height} is the following key   lemma, inspired from the  approach of Kawaguchi in~\cite{Kawaguchi:2013}.

\begin{lem}\label{lem:convex}
In addition to the assumptions of Theorem \ref{mthm:canonical_vector_height}, suppose that 
\begin{enumerate}[{\em (i)}]
\item[\em (iii)] there exists $f\in \Gamma_{\mathrm{lox}}$  such that 
$ [\theta_f^+, \theta_f^-]\cap \mathrm{Int}(\mathrm{Conv}(\Lim(\Gamma)))\neq\emptyset$,
where $\mathrm{Conv}(\cdot)$ is the convex hull and $\mathrm{Int}(\cdot)$ stands for  the interior   relative to $\widetilde \Pi_\Gamma$.
\end{enumerate}
Then $(X, \Gamma)$ is a Kummer group.
\end{lem}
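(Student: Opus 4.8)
The plan is to use the canonical vector height to produce, for each loxodromic $f\in\Gamma$, an $f$-invariant measure (in fact the measure of maximal entropy $\mu_f$) that is simultaneously $\Gamma$-invariant, and then to invoke the measure-rigidity input of Theorem~\ref{thm:finite_orbits_zariski} (more precisely the combination of the classification of $\Gamma$-invariant measures from \cite{cantat_groupes, invariant}, the density of active saddles from Theorem~\ref{thm:dense_active_saddles}, and the main result of \cite{cantat-dupont}), followed by Theorem~\ref{thm:Kummer_for_groups} to upgrade ``every loxodromic $f$ is a Kummer example'' to ``$(X,\Gamma)$ is a Kummer group.'' The role of hypothesis~(iii) is precisely to guarantee that the $\Gamma$-invariant measure we build has full support, which is what feeds the classification theorem. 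Note that we may assume $\Pic^0(X)=0$ by the reduction already carried out, so $\Pic(X;\R)=\NS(X;\R)$, $h_{\mathrm{can}}$ restricts to a canonical vector height on $\Pi_\Gamma$, and by Lemma~\ref{lem:hcan} we have $h_{\mathrm{can}}(\theta_f^\pm,\cdot)=h_f^\pm$ for all $f\in\Gamma_{\mathrm{lox}}$.

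Here is the structure I would follow. First, for a class $c$ in the interior of $\mathrm{Conv}(\Lim(\Gamma))$ (relative to $\widetilde\Pi_\Gamma$), the function $x\mapsto h_{\mathrm{can}}(c,x)$ is, by linearity and positivity of the $h_f^\pm$, a finite nonnegative combination of canonical heights $h_g^\pm$; writing $c$ as a convex combination of points of $\Lim(\Gamma)$ and using Lemma~\ref{lem:density_limit_set} together with the expression of each boundary point as $\theta_g^\pm$, one sees that $h_{\mathrm{can}}(c,\cdot)$ vanishes exactly on the set of $x$ that are periodic or lie on the appropriate invariant curves — in particular $\{h_{\mathrm{can}}(c,\cdot)=0\}$ is a ``small'' set and $h_{\mathrm{can}}(c,\cdot)$ is a genuine Weil height for an ample class (since $c$ being interior to the convex hull of the limit set, $c$ is big and nef, indeed ample after contracting $D_\Gamma$). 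Second — and this is where hypothesis~(iii) enters — choose $f$ as in (iii), so that the segment $[\theta_f^+,\theta_f^-]$ meets that interior; then $\tfrac12(\theta_f^++\theta_f^-)=m_f$ (up to the normalization) is close enough to an interior point that $h_{\mathrm{can}}(m_f,\cdot)=\tfrac12(h_f^++h_f^-)$ is, up to bounded error, the Weil height of an ample class, and its zero set is contained in $\mathrm{Per}(f)\cup D_f$. I would then run the arithmetic-equidistribution machine of \S\ref{subs:arithmetic_equidistribution} with this height: extract from the (Zariski dense, since there is no invariant curve supporting them outside $D_\Gamma$, or at any rate from the periodic points of $f$) set of common zeros a generic sequence $(x_j)$, push it down to $X_0$, and apply Yuan's theorem (Theorem~\ref{thm:berman-boucksom_periodic}) to conclude that $m_{x_j}\to S\wedge S$ with $[S]=m_f$, a $\Gamma$-invariant probability measure. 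Since this limit is $\mu_f$ (Example~\ref{eg:loxodromic_second_part}, as $S=\tfrac1{\sqrt2}(T_f^++T_f^-)$), we get that $\mu_f$ is $\Gamma$-invariant, for the particular $f$ of (iii); then, exactly as in Step~2 of the proof of Theorem~\ref{thm:finite_orbits_zariski}, perturbing $\nu$ as in Proposition~\ref{pro:kawa_sequence_nun} and using the continuity Lemma~\ref{lem:convergence}, the same $\Gamma$-invariant measure equals $\mu_g$ for \emph{every} $g\in\Gamma_{\mathrm{lox}}$.

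Third, with a fully supported (Theorem~\ref{thm:dense_active_saddles}, since $\Gamma$ contains parabolics), $\Gamma$-invariant, $f$-ergodic probability measure $\mu$ giving no mass to proper Zariski closed subsets (continuous potentials of $S$), Theorem~0.2 of \cite{cantat_groupes} gives that $\mu$ has a smooth density, and the Main Theorem of \cite{cantat-dupont} gives that every $g\in\Gamma_{\mathrm{lox}}$ is a Kummer example. Finally, Theorem~\ref{thm:Kummer_for_groups} applies — given any pair of unipotent parabolics $(g,h)\in\Gamma_{\mathrm{par}}^2$, every loxodromic element of $\langle g,h\rangle\subset\Gamma$ is among these Kummer examples — and concludes that $(X,\Gamma)$ is a Kummer group.

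The main obstacle, I expect, is the first step: showing that under hypothesis~(iii) the height $h_{\mathrm{can}}(m_f,\cdot)$ (or some nearby $h_{\mathrm{can}}(c,\cdot)$ with $c$ rational and ample on $X_0$) has the right positivity to be inserted into Theorem~\ref{thm:berman-boucksom_periodic}, and that its zero locus is genuinely large, i.e.\ contains a Zariski dense set of points after pushing to $X_0$. Concretely, one must check that $c$ interior to $\mathrm{Conv}(\Lim(\Gamma))$ forces $\pi_0^*$-descent to an ample class on $X_0$ (this should follow from Remark~\ref{rem:contraction}, identifying $\mathrm{Conv}(\Lim(\Gamma))$ with the cone of pullbacks of ample classes), and that the common zero set of $h_{\mathrm{can}}(c,\cdot)$ is exactly the set of $\Gamma$-periodic points together with (components of) $D_\Gamma$ — using that $h_{\mathrm{can}}(c,x)=0$ iff $h_g^\pm(x)=0$ for the finitely many $g$ involved, hence iff $x$ is $g$-periodic or on $D_g$ for each of them; the delicate point is to pass from ``periodic for finitely many specific loxodromics'' to ``$\Gamma$-periodic,'' which is where one may need to vary the decomposition of $c$ and use that periodic points of all loxodromics in $\Gamma$ that are not $\Gamma$-periodic lie in $D_\Gamma$ — precisely the content that Theorem~\ref{mthm:invariant_curve_loxodromic} and its corollaries are designed to provide. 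Everything downstream of obtaining a generic sequence of common zeros is a faithful rerun of \S\S\ref{subs:arithmetic_equidistribution}--\ref{par:finite_orbits_zariski} and \ref{par:Kummer}.
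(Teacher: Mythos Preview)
Your overall architecture is right, but there is a genuine gap at the hinge point, and your proposed fix via Theorem~\ref{mthm:invariant_curve_loxodromic} will not close it. To apply Theorem~\ref{thm:berman-boucksom_periodic} you need both an eigenvector equation $P_\nu w=\alpha w$ with $w$ ample on $X_0$, \emph{and} a generic sequence of $\Gamma_\nu$-periodic points. If you take $w$ on the segment $[\theta_f^+,\theta_f^-]$, the only relevant $\nu$ is essentially $\tfrac12(\delta_f+\delta_{f^{-1}})$, so $\Gamma_\nu=\langle f\rangle$ and the conclusion ``$S_\nu\wedge S_\nu$ is $\Gamma_\nu$-invariant'' is vacuous; you cannot then perturb $\nu$ as in Step~2 of Theorem~\ref{thm:finite_orbits_zariski}, because that step relied on the $x_j$ being \emph{$\Gamma$-periodic}, not merely $f$-periodic. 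If instead you take $w=w_\nu$ for a $\nu$ generating a non-elementary group, Lemma~\ref{lem:kawaguchi_northcott_periodic} says the zero set of $\hat h_L=h_{\mathrm{can}}(w_\nu,\cdot)$ is exactly the set of $\Gamma_\nu$-periodic points --- whose Zariski density is what you are trying to prove. Your appeal to Theorem~\ref{mthm:invariant_curve_loxodromic} does not help: that result concerns invariant \emph{curves}, and the assertion ``$f$-periodic points not on $D_\Gamma$ are $\Gamma$-periodic'' is essentially a consequence of Theorem~\ref{mthm:main}, so invoking it here is circular.

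The paper closes this gap with a short convex-geometric trick that uses hypothesis~(iii) in full. Choose $w=a\theta_f^++b\theta_f^-$ in the \emph{interior} of $\mathrm{Conv}(\Lim(\Gamma))$, and independently a rational $w_\nu=\pi_0^*[A_0]$ as in Proposition~\ref{pro:kawa_sequence_nun}, with $\Gamma_\nu$ non-elementary. By Carath\'eodory, $w$ lies in the interior of a simplex with vertices $\theta_g^+$, $g\in\Lambda_\Gamma$ finite; since $w$ is interior, for small $\varepsilon>0$ the point $w-\varepsilon w_\nu$ stays in that simplex, so
\[
a\,h_f^+ + b\,h_f^- \;=\; h_{\mathrm{can}}(w,\cdot) \;=\; \sum_{g\in\Lambda_\Gamma}\beta_g\,h_g^+ \;+\; \varepsilon\,\hat h_L
\]
with all $\beta_g>0$. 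For $x$ an $f$-periodic point the left side vanishes; every term on the right is nonnegative, hence $\hat h_L(x)=0$, and Lemma~\ref{lem:kawaguchi_northcott_periodic} gives that $\pi_0(x)$ has finite $\Gamma_\nu$-orbit. Thus the Zariski dense set of $f$-periodic points is $\Gamma_\nu$-periodic on $X_0$, and Theorem~\ref{mthm:main} (already proven) applied to $\Gamma_\nu$ gives that $(X,\Gamma_\nu)$ is Kummer; since $\Gamma_\nu$ may contain any prescribed finite subset of $\Gamma$, Theorem~\ref{thm:Kummer_for_groups} finishes. In particular there is no need to rerun the equidistribution and measure-rigidity argument.
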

 
\begin{proof} 
Set $d=\dim \widetilde \Pi_\Gamma$. 
Replacing  $\bfk$ by a
finite extension, we may assume that the birational morphism $\pi_0\colon X\to X_0$ constructed in Proposition~\ref{pro:contraction}
is defined over~$\bfk$; this morphism contracts the maximal $\Gamma$-invariant curve $D_\Gamma$.

Let $\nu$ be a probability measure on $\Gamma$, whose support is finite and contains $f$ as 
well as elements of $\Gamma_{\mathrm{par}}$.
Let $w_\nu$ be the eigenvector of the operator $P_\nu$ for the eigenvalue $\alpha(\nu)$ given by 
Lemma \ref{lem:unique_w}.  
As in Proposition~\ref{pro:kawa_sequence_nun}, we may assume that $w_\nu$ is a rational class and is the pull-back of an ample class $[A_0]$ on $X_0$;
by muliplying  $w_\nu$ by a positive integer, we also assume that $w_\nu$ is an integral class.

Let $L$ be the line bundle given by the class $w_\nu$, and $\hat{h}_L$ be the associated 
 canonical stationary height, as in the proof of Theorem~\ref{thm:berman-boucksom_periodic}. This is the unique Weil height such that $\sum_h\nu(h) \hat{h}_L\circ h = \alpha(\nu) \hat{h}_L$.
By the  linearity of the canonical vector height and the 
uniqueness of $\hat{h}_L$, we get $\hat{h}_{L}(\cdot)=h_{\mathrm{can}}(w_\nu,\cdot)$.

Pick $w = a\theta_f^++b\theta_f^-$ in the interior of $\mathrm{Conv}(\Lim(\Gamma))$, 
with $a$, $b$ in $\R_+$ and $a+b=1$. Then by linearity and 
Lemma \ref{lem:hcan}, $h_{\mathrm{can}}(w, \cdot) = ah_f^++bh_f^-$. 
Caratheodory's theorem provides a subset $\Lambda$ of 
$\Lim(\Gamma)$ such that $\vert \Lambda\vert = d+1$ and $w$ belongs to  
 the interior of the simplex $\mathrm{Conv}(\Lambda)$.
By the density of fixed points of loxodromic elements   in $\Lim(\Gamma)$,  
we may assume that  $\Lambda$ is made of  classes $\theta^+_{g}$ for $g$ in a finite subset $\Lambda_\Gamma$ of $\Gamma_{\mathrm{lox}}$.  
 If $\e >0$ is small enough,   
 $w-\e  w_\nu$ stays in $\Int(\mathrm{Conv}(\Lambda))$;  so, there are positive 
coefficients $\beta_g $, for $g\in \Lambda_\Gamma$, such that $w-\e  w_\nu= \sum_{g\in \Lambda_\Gamma} {\beta_g}\theta^+_g$.
By the linearity of $h_{\mathrm{can}}$, we infer that 
\begin{equation}
a h_f^+(\cdot) + bh_f^- (\cdot) = \sum_{g\in \Lambda_\Gamma} {\beta_g} h_g^+(\cdot) +\e  \hat{h}_{L}(\cdot).
\end{equation}

Now, if $x\in X(\overline \bfk)$ is $f$-periodic, then $h_f^+(x)=h_f^-(x)=0$, 
and since $\hat{h}_{L}$ and  the  $h_g^+(x)$ 
are non-negative, we deduce that $\hat{h}_L(x)=0$. The line bundle
$L$ is the pull-back of an ample line bundle $A_0$ on $X_0$; thus, by
Lemma~\ref{lem:kawaguchi_northcott_periodic}, the  $\Gamma_\nu$-orbit of 
 $\pi_0(x)$ in $X_0$ is a finite set. Since $f$ has 
a Zariski dense set of periodic points,  Theorem~\ref{mthm:main} implies 
 that $(X,\Gamma_\nu)$ is a Kummer group. 

Since we can further  choose  $\Gamma_\nu$ to  contain any \emph{a priori} given finite subset of $\Gamma$, we conclude from  Theorem~\ref{thm:Kummer_for_groups}  
 that $(X,\Gamma)$ is a Kummer group.
\end{proof}

From this point, the proof of Theorem~\ref{mthm:canonical_vector_height} is completed in two steps. We first deal with the case $\dim \Pi_\Gamma\leq 4$
by directly checking the Assumption~(iii) of 
  Lemma~\ref{lem:convex}.  This covers general Wehler surfaces
(which is the setting of   \cite{Kawaguchi:2013}) since  $\dim \Pi_\Gamma=3$ in this case. The general 
case is treated in a second stage by a dimension reduction argument. 

\subsubsection{Conclusion when $\dim \Pi_\Gamma\leq 4$}\label{par:thmE_dim_4}
Since $\Gamma$ is non-elementary,  $\dim\Pi_\Gamma\geq 3$, so we need to consider the cases 
 $\dim\Pi_\Gamma=3$ and $\dim\Pi_\Gamma=4$ .

For $\dim\Pi_\Gamma=3$, i.e. $d=\dim(\widetilde{\Pi}_\Gamma )=2$,   
the intersection of $\widetilde{\Pi}_\Gamma$ with the positive cone is the Klein model of   the  hyperbolic disk $\Hyp^2$. 
If  Assumption~(iii) is not satisfied, then for every $f\in \Gamma_{\mathrm{lox}}$, $\Lim(\Gamma)$ is  entirely contained on one 
side of the geodesic $[\theta_f^+, \theta_f^-]$. 
Fix $4$ points in $\Lim(\Gamma)\subset \partial\Hyp^2\simeq {\mathbb{S}}^1$, labelled in circular order $(p_1, p_2, p_3, p_4)$. 
By Lemma~\ref{lem:density_limit_set} provides elements  $f$ and $g$ in $\Gamma_{\mathrm{lox}}$ such that 
$(\theta_f^+, \theta_g^+, \theta_f^-, \theta_g^-)$ is arbitrary close to 
$(p_1, p_2, p_3, p_4)$. Then  $[\theta_f^+, \theta_f^-]$ intersects $[\theta_g^+, \theta_g^-]$  transversally in the disk $\Hyp^2$, so $\Lim(\Gamma)$
intersects both sides of $[\theta_f^+, \theta_f^-]$, a contradiction.

Now assume $\dim\Pi_\Gamma=4$, i.e. $d=3$.  Then $\mathrm{Conv}({\Lim(\Gamma)})$ is 
a convex body in dimension $3$. The conclusion relies  on 
 the following lemma (see below for a proof). 

\begin{lem}\label{lem:polytope}
Let $p_1, \ldots, p_5$ be five points in general position in $\R^3$. Then there is a pair of 
 indices $ i\neq j$ such that any  plane containing 
 $p_i$ and $p_j$ separates the remaining points into two non-trivial sets.
\end{lem}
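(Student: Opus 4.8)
The plan is to derive Lemma~\ref{lem:polytope} from Radon's theorem. The starting observation is a separation criterion: \emph{if a pair $\{i,j\}$ is such that the line $\ell_{ij}$ through $p_i$ and $p_j$ meets the relative interior of the triangle spanned by the three remaining points, then $\{i,j\}$ has the required property.} To prove this I would pick a point $q=\alpha p_k+\beta p_l+\gamma p_m$ on $\ell_{ij}$ with $\alpha,\beta,\gamma>0$ and $\alpha+\beta+\gamma=1$ (here $\{k,l,m\}$ is the complement of $\{i,j\}$). Any plane $H$ containing $p_i$ and $p_j$ contains $\ell_{ij}$, hence $q$; writing $H=\{\varphi=0\}$ for an affine form $\varphi$, we get $\alpha\varphi(p_k)+\beta\varphi(p_l)+\gamma\varphi(p_m)=\varphi(q)=0$. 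The values $\varphi(p_k),\varphi(p_l),\varphi(p_m)$ cannot all vanish, for otherwise all five points would lie in $H$, against general position; and a convex combination with positive weights that vanishes without all its terms vanishing must contain a strictly positive and a strictly negative value. Hence $H$ splits $\{p_k,p_l,p_m\}$ into two non-empty sets.

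It then remains to exhibit such a pair. Since the five points lie in $\R^3$ they satisfy a non-trivial affine relation $\sum_{n=1}^{5}\lambda_n p_n=0$ with $\sum_n\lambda_n=0$; general position (no four points coplanar) forces this relation to be unique up to scaling and all $\lambda_n$ to be non-zero. Setting $A=\{n:\lambda_n>0\}$, $B=\{n:\lambda_n<0\}$, $s=\sum_{n\in A}\lambda_n>0$, the point $z:=\sum_{n\in A}(\lambda_n/s)\,p_n=\sum_{n\in B}(-\lambda_n/s)\,p_n$ lies in the relative interior of $\mathrm{conv}\{p_n:n\in A\}$ and of $\mathrm{conv}\{p_n:n\in B\}$; this is the Radon partition of the configuration, and it is of type $\{|A|,|B|\}=\{2,3\}$ or $\{1,4\}$.

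I would then treat the two cases. If $\{|A|,|B|\}=\{2,3\}$, say $A=\{i,j\}$ and $B=\{k,l,m\}$, then $z$ lies on the open segment $(p_i,p_j)\subset\ell_{ij}$ and in the relative interior of the triangle $p_kp_lp_m$, so the pair $\{i,j\}$ works by the criterion above. If $\{|A|,|B|\}=\{1,4\}$, say $A=\{i\}$ and $B=\{j,k,l,m\}$, so that $p_i$ lies in the interior of the tetrahedron $p_jp_kp_lp_m$: writing $p_i=\alpha p_j+\beta p_k+\gamma p_l+\delta p_m$ with all coefficients positive and of sum $1$, one has $\alpha<1$, and the point $w:=(p_i-\alpha p_j)/(1-\alpha)$ is at the same time an affine combination of $p_i$ and $p_j$, hence lies on $\ell_{ij}$, and equals $(\beta p_k+\gamma p_l+\delta p_m)/(1-\alpha)$, a convex combination with positive weights of $p_k,p_l,p_m$, hence lies in the relative interior of the triangle $p_kp_lp_m$; so again the pair $\{i,j\}$ works. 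These two cases being exhaustive, the lemma follows.

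I do not expect a real obstacle here, since both Radon's theorem and the separation criterion are elementary; the only point that needs a little attention is to make explicit which genericity is used — namely that the affine dependence among the five points has no vanishing coefficient (equivalently, that no four of them are coplanar) and that no five of them are coplanar, both of which are part of "general position in $\R^3$".
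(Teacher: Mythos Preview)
Your proof is correct and follows essentially the same route as the paper's: both rest on the criterion that the line $\ell_{ij}$ should pierce the relative interior of the opposite triangle, and both locate such a pair via the unique affine dependence among the five points. You package the case split via Radon's theorem into a clean $\{2,3\}/\{1,4\}$ dichotomy, whereas the paper expresses $p_5$ as a barycenter of $p_1,\dots,p_4$ and argues according to the sign pattern of the coefficients; the underlying geometry is identical.
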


Now fix such a $5$-tuple of points in $\Lim(\Gamma)$ and approximate 
the given pair $(p_i,p_j)$ by $(\theta^+_f,\theta^-_f)$, for some $f\in \Gamma_{\mathrm{lox}}$. 
Then, $[\theta_f^+, \theta_f^-]$ intersects  the interior of $\mathrm{Conv}({\Lim(\Gamma)})$, and  Lemma \ref{lem:convex} finishes the proof of Theorem~\ref{mthm:canonical_vector_height}  (when $\dim(\Pi_\Gamma)\leq 4$).

\begin{proof}[Proof of Lemma \ref{lem:polytope}]
Express $p_5$ as a barycenter of the remaining points:  
\begin{equation}
p_5 = \mathrm{Bar}((p_1; \alpha_1), \ldots , (p_4; \alpha_4)).
\end{equation} 
The general position assumption means that the $\alpha_i$ are non-zero. 
If all the coefficients $\alpha_i$ are positive, $p_5$ 
lies  in the interior of the tetrahedron ${\mathrm{Conv}}\{p_1, \ldots, p_4\}$, 
so any plane containing  $p_4$ and $p_5$ separates the vertices of 
 the triangle ${\mathrm{Conv}}\{p_1, p_2, p_3\}$
into two non-trivial parts. Therefore, the pair of indices $(i,j)=(4,5)$ works in this case.
If exactly  one of the coefficients is negative, say  $\alpha_\ell$, 
 then  consider the pair $(\ell, 5)$, and denote by $i$, $j$, $k$ the remaining indices (i.e. $\{i,j,k\}=\{1,2,3,4\}\setminus\{ \ell\}$).
The segment $[p_\ell,p_5]$ cuts the triangle ${\mathrm{Conv}}\{p_i,p_j,p_k\}$ in a point $h$ of its relative interior. Thus, any plane containing $p_\ell$ and $p_5$ 
also contains $h$, and separates $\{p_i,p_j,p_k\}$ non-trivially.
The only remaining case  is when 
there are  three positive and two negative coefficients, say $\alpha_3$ and $\alpha_4$; then, $p_1$ is a barycenter of the remaining points with $3$ positive and 
$1$ negative coefficients, so we are done in this case too.\end{proof}

\begin{rem} The theorem of Steinitz (see \cite[\S 13.1]{grunbaum}) is a far-reaching generalization 
 of Lemma~\ref{lem:polytope}. There is no analogue of this lemma in higher dimension  
  (see \cite[\S 4.7]{grunbaum}), hence the need for a different argument 
when $\dim \Pi_\Gamma\geq 5$. 
\end{rem}

\subsubsection{Conclusion of the proof of Theorem \ref{mthm:canonical_vector_height}}
Recall from  Lemma~\ref{lem:parabolic_unipotent} that   $g^*$  is virtually unipotent for every $g\in \Gamma_{\mathrm{par}}$.   
 
 \begin{lem}\label{lem:2_unipotent_parabolic}
 Let $g_1$ and $g_2$ be unipotent parabolic elements in $\Aut(X)$ with distinct invariant fibrations. Then,  
 $\Gamma_0 := \langle g_1, g_2\rangle$ is non-elementary and $\dim(\Pi_{\Gamma_0})\leq 4.$  
 \end{lem}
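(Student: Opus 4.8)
The statement has two parts: $\Gamma_0=\langle g_1,g_2\rangle$ is non-elementary, and $\dim\Pi_{\Gamma_0}\le 4$. The first part is immediate from the list of sufficient conditions recalled in \S\ref{subsub:non_elementary}: a group containing two parabolic elements with distinct invariant fibrations is non-elementary (the two fibration classes $[F_1]$, $[F_2]$ give distinct fixed points on $\partial\Hyp_X$, so the two parabolic isometries cannot share a common fixed point, and a standard ping-pong, exactly as in Lemma \ref{lem:ping-pong_parabolic}, produces loxodromic elements with disjoint fixed-point pairs). So the content is the dimension bound.

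\textbf{Approach for $\dim\Pi_{\Gamma_0}\le 4$.} Write $g_i^*$ for the action on $\NS(X;\R)$; by hypothesis each $g_i^*$ is unipotent, and since the topological entropy of a parabolic automorphism vanishes, a unipotent parabolic isometry of a Minkowski space has a single Jordan block of size $3$ on the Minkowski-nondegenerate part (this is the well-known normal form for parabolic isometries of hyperbolic space: $(g_i^*-\id)^3=0$ but $(g_i^*-\id)^2\neq 0$, and $(g_i^*-\id)$ has rank $2$ on the span of the ``parabolic plane''). Concretely, setting $N_i=g_i^*-\id$ on $\NS(X;\R)$, one has $N_i^3=0$, $\im N_i^2=\R[F_i]$, and $\ker N_i \supset [F_i]^\perp$. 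The plan is to show that $\Pi_{\Gamma_0}$ is contained in the subspace
\[
W:=\Vect\big([F_1],\,[F_2],\,N_1[F_2],\,N_2[F_1],\,\ldots\big)
\]
generated by $[F_1],[F_2]$ and their images under small words in $N_1,N_2$, and then bound $\dim W$ by $4$. Since $\Pi_{\Gamma_0}$ is the smallest $\Gamma_0$-invariant subspace on which the intersection form is Minkowski, and it is spanned by the limit set $\Lim(\Gamma_0)$ — equivalently, as in Remark \ref{rem:contraction}, by the classes $[F_i]$ of the invariant fibrations of all parabolic elements of $\Gamma_0$ — it suffices to show that all these fibration classes lie in a fixed $\le 4$-dimensional $\Gamma_0$-invariant subspace containing $[F_1]$ and $[F_2]$.

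\textbf{Key steps.} First I would record the normal form: on $V_i:=\Vect([F_i],N_i(\cdot),N_i^2(\cdot))$ restricted to any vector, $g_i^*$ acts unipotently with $[F_i]$ the unique fixed ray in $\overline{\Pos(X)}$. Second, I would observe that $\langle N_i u\mid [F_i]\rangle=0$ and $\langle N_i^2 u\mid v\rangle$ is controlled, so the ``parabolic plane'' $\Pi^{(i)}:=\R[F_i]\oplus \R N_i(w)$ (for a suitable $w$) is isotropic-degenerate and $2$-dimensional; a vector transverse to it in $V_i$ completes a $3$-dimensional block. Third — the crux — I would show that $\Pi_{\Gamma_0}$ has dimension exactly $3$ unless the two parabolic planes are in ``general position'', and in all cases the span of the $\Gamma_0$-orbit of $\{[F_1],[F_2]\}$ closes up after applying $N_1,N_2$ at most twice: because $N_1$ annihilates $[F_1]^\perp\ni$ anything orthogonal to $[F_1]$, and $N_1^2$ lands in $\R[F_1]$, so iterating $g_1^*$ on $[F_2]$ only produces $[F_2],N_1[F_2],N_1^2[F_2]=c[F_1]$, i.e. $3$ new vectors, two of which may coincide with $[F_1]$-direction, and symmetrically for $g_2^*$ on $[F_1]$. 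Carefully bookkeeping the overlaps (the classes $[F_1],[F_2]$ are shared, and $N_1^2[F_2]\in\R[F_1]$, $N_2^2[F_1]\in\R[F_2]$) one is left with at most $[F_1],[F_2],N_1[F_2],N_2[F_1]$ — four vectors — whose span $W$ is checked to be $N_1$- and $N_2$-invariant modulo lower terms, hence $g_1^*$- and $g_2^*$-invariant, hence contains $\Pi_{\Gamma_0}$. Finally, the intersection form is nondegenerate of Minkowski type on $\Pi_{\Gamma_0}\subset W$, giving $3\le\dim\Pi_{\Gamma_0}\le\dim W\le 4$.

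\textbf{Main obstacle.} The delicate point is verifying that $W=\Vect([F_1],[F_2],N_1[F_2],N_2[F_1])$ is genuinely $\Gamma_0$-invariant, not merely invariant modulo the radical. This requires showing $N_1(N_2[F_1])$ and $N_2(N_1[F_2])$ stay in $W$; here one uses that $N_i$ has image inside $[F_i]^{\perp}\cap$ (the parabolic plane's perp) together with the rank-$2$ constraint on $N_i$, so that, for instance, $N_1(N_2[F_1])$ is a combination of $[F_1]$ and $N_1[F_2]$ up to scalars determined by intersection numbers $\langle N_2[F_1]\mid [F_1]\rangle=0$ and $\langle N_2[F_1]\mid \cdot\rangle$. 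I would make this precise by choosing a basis adapted to the filtration $0\subset\R[F_1]\subset\Pi^{(1)}\subset V_1$ and expanding $[F_2]$ in it; the vanishing $\langle [F_2]\mid[F_1]\rangle\neq 0$ in general forces $N_1^2[F_2]$ to be a nonzero multiple of $[F_1]$, which is exactly what keeps the span finite. Once the $4$-dimensional invariant subspace is in hand, everything else is formal.
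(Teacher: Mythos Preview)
Your approach is correct but takes a more hands-on route than the paper. The paper argues by duality: set $W:=\mathrm{Fix}(g_1^*)\cap\mathrm{Fix}(g_2^*)$; since a unipotent parabolic isometry of a Minkowski space has fixed-point set of codimension exactly $2$, $W$ has codimension $\le 4$, so $W^\perp$ is a $\Gamma_0$-invariant subspace of dimension $\le 4$, and $\Pi_{\Gamma_0}\subset W^\perp$ because $\Gamma_0$ acts trivially on $W$. Your construction builds the same subspace from below: the adjoint of $N_i=g_i^*-\id$ with respect to the intersection form is $(g_i^{-1})^*-\id$, which has the same $2$-dimensional image $\R[F_i]+\R w_i$, so $(\ker N_i)^\perp=\im N_i$ and the paper's $W^\perp$ equals $\im N_1+\im N_2=\Vect([F_1],[F_2],N_1[F_2],N_2[F_1])$. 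The paper's route sidesteps the closure check you flag as the ``main obstacle''; in your framework that obstacle dissolves once you note $\im N_1=\Vect([F_1],N_1[F_2])$ (rank $2$, and $N_1^2[F_2]\neq 0$ since $\langle[F_1]\vert[F_2]\rangle>0$), so $N_1$ already maps everything into your span. One small slip: the inclusion $\ker N_i\supset[F_i]^\perp$ is backwards (in fact $\ker N_i\subset[F_i]^\perp$, with codimension $1$ there), though you never actually use it.
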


\begin{proof} 
Since $\pi_{g_1}\neq \pi_{g_2}$, 
$\Gamma_0$ is non-elementary (see \S \ref{subsub:non_elementary}).
The subspace  $W:=\mathrm{Fix}(g_1^*)\cap \mathrm{Fix}(g_2^*)$ of $\NS(X;\R)$  is fixed 
 pointwise   by $\Gamma_0$. 
 Thus  $W^\perp$ is $\Gamma_0$-invariant, 
it contains $\Pi_{\Gamma_0}$ (see \cite[Prop. 2.8]{stiffness}), and all we need to show is that 
 $\dim(W^\perp)\leq 4$.
To see this, note that  a unipotent Euclidean isometry is the identity, thus 
if $g\in\O^+(1,d)$ is parabolic and unipotent,   the structure of parabolic isometries of $\mathbb{H}_d$
(see \cite[\S I.5]{franchi-lejan}) 
implies that  $\mathrm{Fix}(g^*)\subset \R^{d+1}$ is a subspace of codimension $2$, and we are done. 
 \end{proof}
 
From Lemma~\ref{lem:2_unipotent_parabolic} and Section~\ref{par:thmE_dim_4}, we deduce that $(X, \langle g_1, g_2\rangle)$ is a Kummer 
group for every pair of   unipotent elements 
$g_1, g_2\in \Gamma_{\mathrm{par}}$ generating a non-elementary subgroup. 
Thus by Theorem \ref{thm:Kummer_for_groups},  $(X, \Gamma)$ itself is a Kummer group, 
and Theorem~\ref{mthm:canonical_vector_height} is established. 
 
\subsection{Canonical vector heights on abelian surfaces }\label{par:complement:NT} 
In this section, $A$ is an abelian surface, defined over some number field $\bfk$ and $\Gamma\subset \Aut(A_\bfk)$ is non-elementary.
Denote by $h_{\mathrm{NT}}\colon \Pic(A)\times A(\overline{\bfk})\to \R$ the Néron-Tate height on $A$; it vanishes identically on the torsion part
of $\Pic(A)$, so we may also consider it as a function on  $\Pic(A;\R)\times A(\overline{\bfk})$. 
When $0\in A$ has a finite $\Gamma$-orbit,  $h_{\mathrm{NT}}$ is a canonical vector height (see \cite[Thm. B.5.6]{Hindry-Silverman}).

Let $h_{\mathrm{can}}$ be a restricted canonical vector height for $(A_\bfk,\Gamma)$, defined on some $\Gamma$-invariant subspace $V_{\mathrm{can}}$ 
of $\Pic(A;\R)$. Our goal is to compare it to $h_{\mathrm{NT}}$.

By definition, a divisor $D$ on   $A$ is symmetric if $[-1]^*D$ is linearly equivalent to $D$, where $[m]$ denotes multiplication by $m$; likewise 
it is antisymmetric if $[-1]^*D\simeq -D$ or equivalently if $D\in \Pic^0(A)$ (see \cite[Prop. A.7.3.2]{Hindry-Silverman}). If $f\in \Aut(A)$ fixes 
the origin, it commutes to $[-1]$, so that $f^*$ preserves symmetry and antisymmetry. 

\begin{rem}\label{rem:NT}
Any class $[D]\in \NS(A)$ can be lifted to a symmetric divisor class $D\in \Pic(A)$, which is unique up to a 
2-torsion element in $\Pic^0(A)$. Thus, $D$ admits a unique symmetric lift in $\Pic(A;\R)$. By using such a lift it makes sense to consider also
$h_{\mathrm{NT}}(\cdot, \cdot)$ (resp. $h_{\mathrm{can}}(\cdot, \cdot)$) as  a function on $\NS(A;\R)\times A(\overline{\bfk})$ (resp. on the projection 
of $V_{\mathrm{can}}$ in $\NS(A;\R)$). This observation will be used repeatedly in the following.  
\end{rem}

\begin{rem}\label{rem:picard_abelian}
The Picard number of any complex abelian surface satisfies $\rho(A)\in\set{ 1, 2, 3, 4}$. 
When $\Aut(A)$ contains a non-elementary group $\Gamma$, we obtain  
$3 \leq \dim \Pi_\Gamma \leq \rho(A)   \leq 4$. 
Moreover, $\rho(A)=4$  if and only if $A$ is isogenous to $B\times B$, 
for some elliptic curve $B$ with complex multiplication (see  \cite[ex.10 p.142]{Birkenhake-Lange}). 
\end{rem}

\begin{pro}\label{pro:NT1}
If  $V_{\mathrm{can}}$ contains $\Pic^0(A)\otimes_\Z\R$, then $\Gamma$ has a finite orbit in $A(\overline{\bfk})$.
\end{pro}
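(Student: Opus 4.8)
\textbf{Proof plan for Proposition~\ref{pro:NT1}.} The strategy is to exploit the action of $\Gamma$ on the antisymmetric part $\Pic^0(A)\otimes_\Z\R$ of $V_{\mathrm{can}}$ and to compare the canonical vector height with the N\'eron--Tate height on line bundles in $\Pic^0$. The key point is that for $c\in\Pic^0(A)$ the N\'eron--Tate pairing is \emph{linear} in the point variable: after choosing the origin, $h_{\mathrm{NT}}(c,\cdot)$ is (up to a bounded term that we can control) a homomorphism from $A(\overline{\bfk})$ to $\R$, and in fact $x\mapsto h_{\mathrm{NT}}(c,x)$ is, via the theorem of the square, governed by the dual abelian variety. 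First I would fix a $\Gamma$-stable origin as a working hypothesis is \emph{not} available a priori---that is what we want to prove---so instead I would argue as follows. Let $\Gamma^0$ be the (possibly trivial-translation) action: write each $f\in\Gamma$ as $f(z)=L_f(z)+t_f$ with $L_f\in\Aut(A,0)$ and $t_f\in A$. The induced action on $\Pic^0(A)$ factors through $f\mapsto L_f$, since translations act trivially on $\Pic^0$ (by the theorem of the square). So $\Gamma$ acts on $\Pic^0(A;\R)$ through the linear parts only.

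Next I would use Property~(c) of the restricted canonical vector height together with the comparison of $h_{\mathrm{can}}$ and $h_{\mathrm{NT}}$ on $\Pic^0$. Fix $c\in\Pic^0(A;\R)$; both $h_{\mathrm{can}}(c,\cdot)$ and $h_{\mathrm{NT}}(c,\cdot)$ are Weil heights associated to $c$, hence differ by a bounded function $\psi_c$. The equivariance $h_{\mathrm{can}}(c,f(x))=h_{\mathrm{can}}(f^*c,x)$ combined with the transformation formula for $h_{\mathrm{NT}}$ under $f$ (namely $h_{\mathrm{NT}}(c,f(x))=h_{\mathrm{NT}}(L_f^*c,x)+\langle \text{linear term in }t_f\rangle$, the extra term coming precisely from the translation part acting on a $\Pic^0$-class via the canonical pairing $A\times\Pic^0(A)\to\R$ implicit in N\'eron--Tate) forces an identity of the shape $\psi_{f^*c}(x)-\psi_c(f(x))=\ell_{f,c}(x)$, where $\ell_{f,c}$ is the linear functional $x\mapsto \langle x, (\text{something depending on }t_f\text{ and }c)\rangle$. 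Since the left-hand side is bounded and the right-hand side is a nonzero linear form on the infinite group $A(\overline{\bfk})$ unless it vanishes identically, I would conclude that the pairing of $t_f$ against every $c$ in the $\Gamma$-orbit of a suitable class vanishes---more precisely, that the translation parts $t_f$ must be torsion (killed by the perfect pairing with $\Pic^0$), at least after restricting to a finite-index subgroup of $\Gamma$.

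With that in hand, the finite-orbit statement follows: if every $t_f$ is a torsion point, say of order dividing some fixed $N$ on a finite-index subgroup $\Gamma'\subset\Gamma$ (such a uniform $N$ exists because $\Gamma'$ is finitely generated---or one reduces to that case as in \S\ref{subs:finitely_generated}), then $\Gamma'$ preserves the finite set $A[N]$ of $N$-torsion points: indeed $f(z)=L_f(z)+t_f$ maps $A[N]$ to itself since $L_f$ preserves the lattice and $t_f\in A[N]$. Hence $A[N]$ is a finite $\Gamma'$-invariant set, so any $x\in A[N]$ has finite $\Gamma$-orbit, and $\Gamma$ has a finite orbit in $A(\overline{\bfk})$ as claimed. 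I expect the main obstacle to be the bookkeeping in the second paragraph: making the transformation formula for $h_{\mathrm{NT}}(c,\cdot)$ under an affine automorphism precise (the extra term is essentially $\widehat{f}^*$ acting on the dual side, and one must identify it with a genuinely nonzero linear form when $t_f$ is non-torsion), and handling the fact that $V_{\mathrm{can}}$ only \emph{contains} $\Pic^0(A;\R)$ rather than equalling it---so one restricts all identities to the $\Pic^0$-summand, which is legitimate since $\Pic^0(A;\R)$ is $\Gamma$-invariant. The passage to a finitely generated, then finite-index, subgroup to extract the uniform torsion order $N$ is routine given Proposition~\ref{pro:automorphism_k} and the discussion in \S\ref{par:abelian}.
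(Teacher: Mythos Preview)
Your central computation contains a genuine error. For $c\in\Pic^0(A;\R)$ the N\'eron--Tate height $h_{\mathrm{NT}}(c,\cdot)$ is \emph{linear} in the point variable, so writing $f(x)=L_f(x)+t_f$ gives
\[
h_{\mathrm{NT}}(c,f(x))=h_{\mathrm{NT}}(c,L_f(x))+h_{\mathrm{NT}}(c,t_f)=h_{\mathrm{NT}}(L_f^*c,x)+h_{\mathrm{NT}}(c,t_f).
\]
Combined with $f^*c=L_f^*c$ on $\Pic^0$ and the equivariance of $h_{\mathrm{can}}$, this yields
\[
\psi_{f^*c}(x)-\psi_c(f(x))=h_{\mathrm{NT}}(c,t_f),
\]
a \emph{constant} in $x$, not a linear functional $\ell_{f,c}(x)=\langle x,\,\cdot\,\rangle$ as you assert. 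The argument ``a bounded linear form on $A(\overline{\bfk})$ must vanish'' therefore has no traction: both sides are bounded constants, and the identity imposes no constraint on $t_f$ for a single $f$ and $c$. One might hope to iterate over the group, but the bound $\vert\psi_{c'}\vert\le B(c')$ depends on $c'$, and as $c'=f^*c$ ranges over a $\Gamma$-orbit in $\Pic^0(A;\R)$ there is no uniform control on $B(f^*c)$; so the cocycle identity does not obviously force $h_{\mathrm{NT}}(c,t_f)=0$ either. (Your fallback to finite generation via Proposition~\ref{pro:automorphism_k} is also unavailable here: that proposition explicitly excludes abelian surfaces.)

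The paper's proof bypasses translation parts entirely. It fixes a loxodromic $f\in\Gamma$ and a periodic point $x$ of $f$, and shows directly that the $\Gamma$-orbit $\Gamma(x)$ is finite. The key observation is that $(f^q)^*-\id$ is an isogeny of $\Pic^0(A)$ (dual to the isogeny $f^q-\id$ of $A$), so any $D\in\Pic^0(A)$ can be written $D=(f^q)^*E-E$; equivariance and linearity then give $h_{\mathrm{can}}(D,x)=h_{\mathrm{can}}(E,f^q(x))-h_{\mathrm{can}}(E,x)=0$. Since every point of $\Gamma(x)$ is periodic for some conjugate of $f$, one gets $h_{\mathrm{can}}(D,\cdot)\equiv 0$ on $\Gamma(x)$ for all $D\in\Pic^0(A)$, hence $h_{\mathrm{NT}}(D,\cdot)$ is bounded there; a Mordell--Weil argument (the $h_{\mathrm{NT}}(D,\cdot)$ for $D\in\Pic^0$ span the dual of $A(\bfk')\otimes\R$) then forces $\Gamma(x)$ to be finite. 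The trick of inverting $(f^q)^*-\id$ on $\Pic^0$ is what your approach is missing.
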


\begin{proof} In this proof it is enough to  consider $h_{\mathrm{can}}$ as a function on $\Pic^0(A)\times X(\overline{\bfk})$, by composing 
with the natural homomorphism $\Pic^0(A)\to \Pic^0(A;\R)$. 

\smallskip

{\bf{Step 1.}}-- {\emph{If $D$ is an element of $\Pic^0 (A)$, then  for every $f\in \Gamma_{\mathrm{lox}}$, every periodic point
$x$ of $f$ satisfies 
$h_{\mathrm{can}}(D,x)=0$.}}

Assume $f^q(x)=x$ for some $q\geq 1$.
The endomorphism $f^q-\id$ is an isogeny of $A$ because $f$ is loxodromic (see Section~\ref{par:lox_on_tori}). Thus, 
its dual $(f^q)^*-{\id}$ is an isogeny of $\Pic^0(A)$ and we can find $E\in \Pic^0(A)$ such that $(f^q)^*E- E=D$.
By equivariance $h_{\mathrm{can}}((f^q)^*E,x)=h_{\mathrm{can}}(E,x)$, and then by linearity $h_{\mathrm{can}}(D,x)=0$. 
 
\smallskip

{\bf{Step 2.}}-- {\emph{Let $\bfk'$ be a finite extension of $\bfk$, and let $P$ be a subset of $A(\bfk')$. If, for every  $D\in \Pic^0(A)$, the set 
$\set{h_{\mathrm{NT}}(D,x)\; ; \; x\in P}\subset \R$   is bounded,  then $P$ is finite.}}

To see this, consider the abelian group $A(\bfk')$; by the Mordell-Weil theorem, its rank is finite, so modulo torsion it is isomorphic to $\Z^r$ for some $r\geq0$. 
Set $W_{\bfk'}=A(\bfk')\otimes_\Z\R$, a real vector space of dimension $r$.
Let $H$ be an  ample symmetric divisor on $A_{\overline{\bfk}}$, then $h_{\mathrm{NT}}(H,\cdot)$ determines a positive definite quadratic 
form on $V$; let $\langle \cdot \vert \cdot\rangle_H$ be the bilinear pairing associated to $h_{\mathrm{NT}}(H, \cdot)$. 
If $s$ is an element of $A(\overline{\bfk})$, and $t_s\in \Aut(A_{\overline{\bfk}})$ is the translation by $s$, then $D_s:=H-t_s^*H$ is 
an element of $\Pic^0(A)$ and $h_{\mathrm{NT}}(D_s, \cdot)$ induces an affine linear form $A(\bfk')\to \R$; namely,  $h_{\mathrm{NT}}(D_s, \cdot)=-2\langle s \vert \cdot\rangle_H$. 
Since $\langle \cdot \vert \cdot\rangle_H$ is positive definite on $W_{\bfk'}$ (see \cite[Prop. B.5.3]{Hindry-Silverman}), 
one can find $r$ elements $s_i\in A(\bfk')$ such that the linear
forms $\ell_i:=\langle s_i \vert \cdot\rangle_H$ constitute a basis of the dual of $W_{\bfk'}$. 
Our assumption  says that each $\ell_i(P)$ is a relatively compact subset of $\R$; this
implies that $P$ is contained in a compact, hence finite, subset of the
lattice $A(\bfk')\subset V$.

\smallskip

{\bf{Step 3.}}-- {\emph{$\Gamma$ has a finite orbit.}}

Let $f$ be a loxodromic element of $\Gamma$, and $x$ be a fixed point of $f$. Its $\Gamma$-orbit is made of fixed points of conjugates
of $f$. Note that  $\Gamma(x)$ is contained in  $A(\bfk')$ for some finite extension of $\bfk$.
By the first step, $h_{\mathrm{can}}$ vanishes on $\Pic^0(A)\times \Gamma (x)$. Since $h_{\mathrm{can}}$ and $h_{\mathrm{NT}}$ are Weil 
heights, $\vert h_{\mathrm{can}}(D,\cdot) -h_{\mathrm{NT}}(D, \cdot) \vert \leq B(D)$ for each divisor class $D\in \Pic^0(A)$, where $B(D)\geq 0$ depends
on $D$. Thus, $ \vert h_{\mathrm{NT}}(D, \Gamma ( x)) \vert \leq B(D)$ for every $D\in \Pic^0(A)$, and the second step 
implies that $\Gamma (x)$ is finite. 
\end{proof}

\begin{pro}\label{pro:NT2}  
Assume that the neutral element has a finite $\Gamma$-orbit. 
Then $h_{\mathrm{can}}$ coincides with the Néron-Tate height on
\begin{itemize}
\item  the set of symmetric divisors whose numerical class belongs to $\Pi_\Gamma$,
\item  the set of antisymmetric divisors,
\end{itemize}
whenever one of these sets is contained in $V_{\mathrm{can}}$.
\end{pro}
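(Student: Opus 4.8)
\textbf{Proof plan for Proposition~\ref{pro:NT2}.} The strategy is to reduce both assertions to the uniqueness principle for canonical heights already exploited in Lemma~\ref{lem:hcan}: if $h$ is a Weil height associated to a class $c$ and $h$ satisfies an exact functional equation of the form $\sum_i n_i\, h\circ f_i = \alpha h$ with $\alpha>1$ (or $\alpha$ of modulus $>1$ after complexification), then $h$ is uniquely determined, because the difference of two such heights is bounded and rescaled by $\alpha$ under iteration, hence vanishes. After conjugating by a translation we may assume that a finite-index subgroup $\Gamma_0\subset\Gamma$ fixes the neutral element $0\in A$; then every $f\in\Gamma_0$ commutes with $[-1]$, so $f^*$ preserves the splitting of $\Pic(A;\R)$ into its symmetric and antisymmetric parts. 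Both $h_{\mathrm{can}}$ and $h_{\mathrm{NT}}$ are, by hypothesis and by \cite[Thm. B.5.6]{Hindry-Silverman}, restricted canonical vector heights for $\Gamma_0$ on $V_{\mathrm{can}}$ (the Néron--Tate height is a canonical vector height whenever $0$ is $\Gamma_0$-fixed), so it suffices to prove that \emph{any} two restricted canonical vector heights for $\Gamma_0$ agree on the two indicated subspaces; the passage from $\Gamma_0$ back to $\Gamma$ is harmless since finite orbits and Weil heights are insensitive to passing to a finite-index subgroup, and the defining relations for $\Gamma$ are a superset of those for $\Gamma_0$.

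For the symmetric part with numerical class in $\Pi_\Gamma$, I would invoke Proposition~\ref{pro:kawa_sequence_nun} (applicable since $\Pi_\Gamma$ is defined over $\Q$ here, being contained in the finite-dimensional $\NS(A;\R)$ and, $\Gamma$ containing no parabolic, one uses instead the cruder fact that eigenvectors $w_\nu$ of operators $P_\nu^*=\sum\nu(f)f^*$ span $\Pi_\Gamma$ as $\nu$ varies over finitely supported measures on $\Gamma_0$, cf.\ the end of \S\ref{par:rational_invariant_classes}): for each such $\nu$ the class $w_\nu$ is a $P_\nu^*$-eigenvector with eigenvalue $\alpha(\nu)>1$, and by linearity and equivariance both $h_{\mathrm{can}}(w_\nu,\cdot)$ and $h_{\mathrm{NT}}(w_\nu,\cdot)$ satisfy $\sum_f\nu(f)\,h(w_\nu,f(\cdot))=\alpha(\nu)\,h(w_\nu,\cdot)$ and are Weil heights for the same class; their difference is bounded and multiplied by $\alpha(\nu)$ under the averaging operator, hence is zero. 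Since the $w_\nu$ span $\Pi_\Gamma$, and each numerical class in $\Pi_\Gamma$ has a symmetric lift unique up to $2$-torsion in $\Pic^0$ (Remark~\ref{rem:NT}), which does not affect either height, we get $h_{\mathrm{can}}=h_{\mathrm{NT}}$ on symmetric divisors with class in $\Pi_\Gamma$.

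For the antisymmetric part, i.e.\ $D\in\Pic^0(A)$, I would run the argument used in Step~1 of Proposition~\ref{pro:NT1}: pick any loxodromic $f\in\Gamma_0$; since $f$ is loxodromic on $A$, $f^q-\mathrm{id}$ is an isogeny for all $q\ge1$ (Lemma~\ref{lem:lox_on_tori}), so the dual $(f^q)^*-\mathrm{id}$ is an isogeny of $\Pic^0(A)$; more usefully, already $f^*$ acting on $\Pic^0(A;\R)\cong H^1(A;\R)$ (up to the usual twist) is a linear automorphism all of whose eigenvalues are $\alpha,\alpha^{-1},\bar\alpha,\bar\alpha^{-1}$ with $|\alpha|>1$, in particular has no eigenvalue a root of unity and, after complexifying, decomposes $\Pic^0(A)\otimes\C$ into eigenlines on which $f^*$ scales by a factor of modulus $\ne1$. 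On an eigenvector $D$ with eigenvalue $\beta$, $|\beta|\ne1$, both $h_{\mathrm{can}}(D,\cdot)$ and $h_{\mathrm{NT}}(D,\cdot)$ are Weil heights for $D$ satisfying $h(D,f(\cdot))=\beta\,h(D,\cdot)$; replacing $f$ by $f^{-1}$ if necessary so that $|\beta|>1$, the difference is bounded and scaled by $\beta$, hence zero. By linearity over $\C$ this gives equality on all of $\Pic^0(A)\otimes\C$, hence on $\Pic^0(A;\R)$, hence on antisymmetric divisors.

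\textbf{Main obstacle.} The delicate point is bookkeeping rather than conceptual: one must make sure that when $V_{\mathrm{can}}$ contains only \emph{one} of the two subspaces, the corresponding argument uses only relations supported inside $V_{\mathrm{can}}$ — for the symmetric case one needs the $w_\nu$ to lie in the image of $V_{\mathrm{can}}$ in $\NS(A;\R)$ (which is forced once $V_{\mathrm{can}}\supset\{$symmetric divisors with class in $\Pi_\Gamma\}$, since those surject onto $\Pi_\Gamma$), and for the antisymmetric case one needs $f^*$-eigenvectors to stay in $\Pic^0(A;\R)\subset V_{\mathrm{can}}$, which is automatic. A second minor subtlety is that the complexification argument in the antisymmetric step produces a functional equation with a complex scalar $\beta$; one should phrase the uniqueness lemma allowing $|\beta|>1$ with $\beta\in\C$ acting on the complexified height, or equivalently split into real and imaginary parts and observe the real linear operator contracts — either way this is routine. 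Finally, one should record that conjugating $\Gamma$ by the translation placing the $\Gamma$-periodic point at $0$ changes neither the hypothesis (a finite orbit remains finite) nor the conclusion (Weil heights for a given class are defined up to $O(1)$, and the identity $h_{\mathrm{can}}=h_{\mathrm{NT}}$ is intrinsic once the group law is chosen with neutral element the periodic point, which is exactly the normalization under which $h_{\mathrm{NT}}$ is canonical).
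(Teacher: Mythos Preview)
Your symmetric argument is correct, though more roundabout than the paper's: instead of invoking Proposition~\ref{pro:kawa_sequence_nun} and the eigenvectors $w_\nu$, the paper simply uses the classes $\theta_f^+$ for $f\in(\Gamma_0)_{\mathrm{lox}}$, which already span $\Pi_\Gamma$ and satisfy the clean relation $h(\theta_f^+,f(\cdot))=\lambda(f)\,h(\theta_f^+,\cdot)$, exactly as in Lemma~\ref{lem:hcan}.

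Your antisymmetric argument rests on a false premise. The claimed isomorphism $\Pic^0(A;\R)\cong H^1(A;\R)$ is incorrect in this setting: by the paper's conventions $\Pic^0(A;\R)=\Pic^0(A_{\overline\bfk})\otimes_\Z\R=A^\vee(\overline\bfk)\otimes_\Z\R$, and since $A^\vee(\overline\bfk)$ has infinite rank this is an \emph{infinite-dimensional} real vector space, not the $4$-dimensional $H^1(A;\R)$. So your statement that $f^*$ has four eigenvalues there and ``decomposes $\Pic^0(A)\otimes\C$ into eigenlines'' does not make sense as written.

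The spectral idea is nevertheless salvageable. The dual automorphism $f^\vee$ of $A^\vee$ is induced by $\overline{L_f^t}\in\GL_2(\C)$, hence is killed by a real polynomial $P$ of degree $\leq 4$ with distinct roots, all of modulus $\neq 1$; thus $f^*$ on $\Pic^0(A)\otimes\C$ is semi-simple with finitely many eigenvalues of modulus $\neq 1$, and your uniqueness argument then works on each (possibly infinite-dimensional) eigenspace. The paper handles the infinite-dimensionality differently: it fixes $x\in A(\overline\bfk)$, views $\Phi_x\colon D\mapsto h_{\mathrm{can}}(D,x)-h_{\mathrm{NT}}(D,x)$ as a linear form on the \emph{finite-dimensional} Mordell--Weil space $W^\vee_{\bfk'}=A^\vee(\bfk')\otimes_\Z\R$, shows via Lemma~\ref{lem:semi-simple} that $f^\vee_{\bfk'}$ is semi-simple on $W^\vee_{\bfk'}$ with no eigenvalue of modulus $1$, and concludes that a linear form bounded on every $\Gamma_0^*$-orbit must vanish. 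Both routes ultimately rely on the same spectral fact about $\overline{L_f^t}$; your write-up just needs to replace the incorrect $H^1$ identification by either the minimal-polynomial argument or the Mordell--Weil reduction.
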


In the following proofs, we denote by $\Pi_\Gamma^{\mathrm{s}}$ the subspace $\Pic(A;\R)$ made of symmetric 
elements $E\in \Pic(A;\R)$ such that $[E]\in \Pi_\Gamma$.

\begin{proof} Let $\Gamma_0\leq \Gamma$ be the finite index subgroup fixing the origin. 
 Let us show that
$ h_{\mathrm{can}} = h_{\mathrm{NT}} $ on $\Pi_\Gamma^{\mathrm{s}}\times A(\overline{\bfk})$.
For this, we use Remark~\ref{rem:NT}, identify $\Pi_\Gamma^{\mathrm{s}}$ with $\Pi_\Gamma$, 
and consider $h_{\mathrm{can}}$  and $h_{\mathrm{NT}}$ as functions on $\Pi_\Gamma\times A(\overline{\bfk})$.
Now, if $f\in \Gamma_{0,{\mathrm{lox}}}$, we get $h_{\mathrm{NT}}(\theta^+_f,\cdot)=h_{\mathrm{can}}(\theta^+_f,\cdot)$
because the difference is bounded on $A(\overline{\bfk})$, and is multiplied by $\lambda(f)>1$ under the action of $f$ (as in Lemma~\ref{lem:hcan}).
Since the classes  $\theta^+_f$, for $f\in \Gamma_{\mathrm{lox}}$, generate $\Pi_\Gamma$,  
our claim is established.  

 Let us now deal with antisymmetric divisors. 
Identifying $\Pic^0(A_{\overline \bfk})$ with the dual abelian variety $A^\vee_{\overline \bfk}$ 
of $A$, we have to show that $h_{\mathrm{can}}$ coincides with $h_{\mathrm{NT}}$ on 
$A^\vee(\bfk')$ for every finite extension $\bfk'$  of $\bfk$. By the Mordell-Weil theorem 
$A^\vee(\bfk')$ is a finitely generated abelian group so 
\begin{equation}
W^\vee_{\bfk'}:=A^\vee(\bfk')\otimes_\Z\R
\end{equation}
is a real vector space of dimension $r$, for some $r<+\infty$.
Consider the function $\Phi:(D, x) \mapsto h_{\mathrm{can}}(D,x)-h_{\mathrm{NT}}(D,x)$. 
When $D$ is fixed, $\Phi(D, \cdot)$ is bounded:   $\abs{\Phi(D, x)}\leq B(D)$ for all $x\in A(\overline \bfk)$. 
On the other hand when $x$ is fixed, $\Phi_x(D)  :=   \Phi (D, x)$ defines a linear form $\Phi_x\colon W^\vee_{\bfk'}\to \R$. 
Applying  the previous boundedness property to $f(x)$, for  $f$ ranging 
in $\Gamma_0$, and using the equivariance  $\Phi (D, f(x)) = \Phi(f^*D, x)$
we obtain that for every $x\in A(\overline \bfk)$, $\Phi_x$ is bounded on every $\Gamma_0^*$-orbit  $\Gamma_0^*(D)\subset W^\vee_{\bfk'}$. 

We claim that this forces $\Phi_x$ to vanish, which is the desired result. For this we analyze the dual action of 
$\Gamma_0$. Let $f$ be a loxodromic element of $\Gamma_0$, and $f^\vee_{{\bfk'}}$ be the induced 
linear map on $W^\vee_{\bfk'}=A^\vee(\bfk')\otimes_\Z\R$. Let $L_f$ be the linear lift of $f$ to $\C^2$, as in 
\S \ref{par:abelian} and~\ref{par:lox_on_tori}.

\begin{lem}\label{lem:semi-simple}
The endomorphism $f^\vee_{{\bfk'}}$ is semi-simple and its complex eigenvalues are complex conjugate to those of $L_f$; none of them has modulus $1$.
\end{lem}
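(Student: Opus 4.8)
The statement concerns the action of a loxodromic $f\in\Gamma_0$ on the Mordell--Weil vector space $W^\vee_{\bfk'}=A^\vee(\bfk')\otimes_\Z\R$. The key point is that $f^\vee_{\bfk'}$ is, up to isogeny and extension of scalars, a restriction of the dual of $L_f$. First I would recall that an automorphism $f$ of $A$ fixing the origin acts on $A$ as a group automorphism, and it induces the dual automorphism $f^\vee$ of the dual abelian variety $A^\vee$; on $H_1(A^\vee;\Q)$ (equivalently on the first \'etale cohomology, or on the rational Tate module), $f^\vee$ is the transpose inverse, or rather the adjoint, of the action of $f$ on $H_1(A;\Q)$ — in any case its eigenvalues are the inverses of those of $L_f$ on $\C^2$ together with their complex conjugates (since $H_1(A;\C)=\C^2\oplus\overline{\C^2}$). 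Because $f$ is loxodromic, $L_f$ has eigenvalues $\alpha,\beta$ with $|\alpha|<1<|\beta|$ and $\alpha\beta\in\{\pm1\}$ (the determinant of an automorphism of a $2$-torus is $\pm1$), so the four eigenvalues of $f^\vee$ on $H_1(A^\vee;\C)$ are $\alpha^{-1},\beta^{-1},\overline{\alpha}^{-1},\overline{\beta}^{-1}$; in particular all four are complex conjugate (up to the relation $\alpha\beta=\pm1$, which just permutes the list) to the eigenvalues of $L_f$, and none of them has modulus $1$.

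\textbf{Key steps.} (1) Identify the rational representation: the action of $f^\vee$ on $H_1(A^\vee;\Q)\simeq\Q^4$ has characteristic polynomial with the above four roots; this is a purely linear-algebraic translation of the relation between $f$ and $f^\vee$ on homology. (2) Semisimplicity over $\Q$: the characteristic polynomial of $f^\vee$ on $\Q^4$ has no repeated roots — indeed if it had a repeated root then so would that of $L_f$ on $\C^2$, forcing $\alpha=\beta$, hence $\alpha^2=\pm1$ and $|\alpha|=1$, contradicting loxodromy (the case $\alpha=\overline\beta$ with $|\alpha|=|\beta|$ is likewise excluded since $|\alpha|<1<|\beta|$). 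A matrix in $\mathrm{GL}_4(\Q)$ whose characteristic polynomial is separable is automatically semisimple. (3) Pass to the Mordell--Weil group: there is a $\Gal$-equivariant, hence $f$-equivariant, injection $A^\vee(\bfk')\otimes_\Z\Q\hookrightarrow$ (a finite-dimensional $\Q$-subquotient controlled by) $H_1(A^\vee;\Q)$ — concretely, $A^\vee(\bfk')\otimes\Q$ is a $\Q[f]$-submodule of a space on which the minimal polynomial of $f$ divides the separable characteristic polynomial above, so $f^\vee_{\bfk'}$ is semisimple with eigenvalues among $\{\alpha^{-1},\beta^{-1},\overline\alpha^{-1},\overline\beta^{-1}\}$, each of modulus $\neq 1$. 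Tensoring with $\R$ preserves semisimplicity and the eigenvalue list. Finally, I would note that ``complex conjugate to those of $L_f$'' is exactly the content of step (1): the set $\{\alpha,\beta,\overline\alpha,\overline\beta\}$ is stable under $z\mapsto 1/z$ because $\alpha\beta=\pm1$, so up to this symmetry the eigenvalues of $f^\vee$ are precisely the $\overline{(\cdot)}$ of those of $L_f$.

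\textbf{Main obstacle.} The delicate point is step (3): making precise that the eigenvalues of $f^\vee$ acting on the finitely generated group $A^\vee(\bfk')$ are a \emph{sub}-multiset of the eigenvalues on $H_1$. The clean way is the standard fact that for any endomorphism $\phi$ of an abelian variety $B$ over a number field, the induced action on $B(\bfk')\otimes\Q$ is semisimple with eigenvalues among the roots of the characteristic polynomial of $\phi$ on the rational Tate module $V_\ell(B)$ (this follows from $B(\bfk')\otimes\Q_\ell\hookrightarrow V_\ell(B)$ $\Gal$-equivariantly, with $\Gal$- and $\phi$-actions commuting, and from the semisimplicity of the separable $\phi$ on $V_\ell(B)$). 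I would invoke this, together with the isogeny $A\sim A^\vee$ which ensures $V_\ell(A^\vee)$ has the ``conjugate'' eigenvalues; separability, hence semisimplicity, is guaranteed by loxodromy as in step (2). An alternative, more self-contained route avoiding Tate modules: observe $W^\vee_{\bfk'}$ carries the positive definite N\'eron--Tate pairing $\langle\cdot\vert\cdot\rangle_H$ attached to a symmetric ample $H$; any $f\in\Gamma_0$ satisfies $\langle f^\vee u\vert f^\vee v\rangle_{H}=\langle u\vert v\rangle_{f^*H}$, and choosing $H$ in $\Pi_\Gamma^{\mathrm{s}}$ one can build an $f$-invariant quadratic form... but this only handles the semisimplicity over $\R$ and the ``no eigenvalue of modulus $1$'' would still need the homological input, so I expect to use the Tate-module argument as the backbone.
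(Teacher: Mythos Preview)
Your overall shape is right, but there are several concrete errors, and the paper's route is both shorter and avoids all of them.

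\medskip

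\textbf{Eigenvalue computation.} The action of $f^\vee=f^*$ on $A^\vee$ is not the ``transpose inverse'' of $L_f$. Identifying $A^\vee$ with $\overline{V}^*/\Lambda^*$ (antilinear forms on $V=\C^2$), pullback by $f$ sends $\ell\mapsto \ell\circ L_f$, so on the universal cover $f^\vee$ is the \emph{conjugate transpose} $\overline{L_f}^{\,t}$, with eigenvalues $\overline{\alpha},\overline{\beta}$ (this is \cite[\S 2.4]{Birkenhake-Lange}, as the paper cites). Your inverses are wrong, and your rescue via ``$\alpha\beta=\pm1$'' is not available: for an automorphism of an abelian surface one only has $\vert\alpha\beta\vert=1$ (the real determinant of $L_f$ on $\Lambda\otimes\R$ is $\vert\alpha\beta\vert^2=1$), and $\alpha\beta$ can be a nontrivial unit-modulus algebraic number.

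\textbf{Separability.} Your step (2) argues that the characteristic polynomial on $\Q^4$ is separable. It need not be: if $\alpha\in\R$ then $\alpha=\overline{\alpha}$ is a double root of the characteristic polynomial of $M_f$ on $H_1\otimes\C$, even though $M_f=L_f\oplus\overline{L_f}$ is perfectly diagonalizable. The correct object is the \emph{minimal} polynomial, which is separable because $L_f$ (hence $\overline{L_f}^{\,t}$) has two distinct eigenvalues.

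\textbf{Passage to the Mordell--Weil group.} This is the real gap. There is no natural injection $A^\vee(\bfk')\otimes\Q_\ell\hookrightarrow V_\ell(A^\vee)$: the Mordell--Weil rank $r$ can be arbitrarily large as $\bfk'$ grows, while $\dim_{\Q_\ell}V_\ell(A^\vee)=4$, so no such embedding exists. What you actually need is far more elementary and is exactly the paper's argument: since $f^\vee$ is induced by $\overline{L_f}^{\,t}$, it is annihilated by $(X-\overline{\alpha})(X-\overline{\beta})$ over $\C$, hence by the minimal real polynomial $P$ having $\overline{\alpha},\overline{\beta}$ among its roots (so $\deg P\in\{2,3,4\}$ and $P$ is separable). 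This means $P(f^\vee)=0$ in the endomorphism ring $\mathrm{End}(A^\vee)\otimes\Q$, and therefore $P(f^\vee_{\bfk'})=0$ on any abelian group on which $f^\vee$ acts, in particular on $A^\vee(\bfk')\otimes\R$. Semisimplicity and the eigenvalue statement follow immediately. No Tate modules, no $H_1$, no N\'eron--Tate pairing are needed.
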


Let us take this for granted and conclude the proof. Since $f^\vee_{{\bfk'}}$ is semi-simple, $W^\vee_{\bfk'}$ is a direct sum of 
$f^\vee_{{\bfk'}}$-invariant irreducible factors $\bigoplus W^\vee_i$, each of dimension $1$ or~$2$. 
For each $W^\vee_i$, denote by $\lambda_i$ the corresponding eigenvalue of $f^\vee_{{\bfk'}}$, and pick some $D_i\in W^\vee_i\setminus\{0\}$.
If $W^\vee_i $  is a line, then $\lambda_i\in \R^*$ and $\abs{\lambda_i}\neq 1$. 
Since $\Phi_x$ is bounded on $\set{(f^\vee_{{\bfk'}})^n(D_i)\; ; \; n\in \Z}$, the line $W^\vee_i$ is contained in $\ker \Phi_x$. 
If $W^\vee_i$ is a plane, 
then $f^\vee_{{\bfk'}}\rest{W^\vee_i}$ is a similitude with  $\abs{\lambda_i}\neq 1$ and   ${\mathrm{Arg}}(\lambda_i)\neq 0\mod (2\pi \Z)$. 
If $\Phi_{x}\rest{W^\vee_i}\neq 0$,  $\set{\abs{\Phi_x} \leq B(D_i)}\cap W^\vee_i$ is a strip, 
which furthermore  contains
the orbit  $\set{(f^\vee_{{\bfk'}})^n(D_i), n\in \Z}$. This is not compatible  with
 the properties of $\lambda_i$, and this contradiction shows that 
 $W^\vee_i \subset \ker \Phi_x$, so finally $\Phi_x = 0$, as claimed. 
 \end{proof}

\begin{proof}[Proof of Lemma \ref{lem:semi-simple}]
The complex torus underlying $A^\vee_\C$ is isomorphic to a quotient of the space of 
$\C$-antilinear forms on $\C^2$. So,  if $f\in \Aut(A) $ is induced by a linear map 
$L_f\in \GL_2(\C)$, the automorphism of  $A^\vee $ determined by $f^*$ 
is induced by the conjugate transpose $\overline L_f^t$ (see \cite[\S 2.4]{Birkenhake-Lange}). When $f$ is loxodromic, the eigenvalues of  $L_f$ satisfy $\abs{\alpha}<1<\abs{\beta}$;
we deduce that the  automorphism of $ \Aut(A^\vee_\C)$ determined by $f^*$ is also loxodromic, with eigenvalues $\overline{\alpha}$ and $\overline{\beta}$, and the minimal polynomial of $\overline L_f^t$ is  $(X-\overline{\alpha})(X-\overline{\beta})$. Let $P$ 
be the minimal, real, unitary polynomial such that $(X-\overline{\alpha})(X-\overline{\beta})$  divides  $P$ (by construction $\deg(P)\in \set{2, 3,4}$ 
and $P$ has no repeated factors). Since $P( \overline L_f^t) = 0$,
we infer that $P(f^\vee_{\bfk'}) =0$ and the result follows.\end{proof}

\begin{pro}\label{pro:heights_on_tori}
Let $A_\bfk$ be an abelian surface defined over a number field $\bfk$. 
Let $\Gamma$ be a non-elementary subgroup of $\Aut(A_\bfk)$, for which the neutral element $0\in A(\bfk)$ is periodic. Then one of the following situation occurs:
\begin{enumerate}[\em (1)]
\item    $\NS(A, \R) = \Pi_\Gamma$ and  the N\'eron-Tate height is the unique canonical vector height on $\Pic(A;\R)$. 
\item  $\NS(A, \R) = \Pi_\Gamma  \overset{\perp}\oplus  \R [E]$ for some  $[E]\in \NS(A;\R)\setminus\{0\}$,
and the  canonical vector heights on $\Pic(A;\R)$ are exactly the functions  of the form  $h_{\mathrm{can}} (D, x) = h_{\mathrm{NT}} (D, x) +  \langle [E] \vert D\rangle \varphi(x) $,
where $\varphi\colon A({\overline \bfk})\to \R$ is any  bounded  function such that 
$\varphi(f(x)) f^*[E]=\varphi(x)[E]$ for all $f$ in $\Gamma$. 
\end{enumerate}
 \end{pro}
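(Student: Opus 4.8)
\textbf{Proof plan for Proposition~\ref{pro:heights_on_tori}.}
The strategy is to combine the rigidity statements just proved (Propositions~\ref{pro:NT1} and~\ref{pro:NT2}) with a
case analysis on the codimension of $\Pi_\Gamma$ inside $\NS(A;\R)$, using Remark~\ref{rem:picard_abelian} to see that this
codimension is $0$ or $1$. Throughout I work with symmetric lifts of numerical classes, as explained in Remark~\ref{rem:NT},
so that any canonical vector height $h_{\mathrm{can}}$ on $\Pic(A;\R)$ splits as a sum of its restriction to the space of
symmetric divisors (identified with $\NS(A;\R)$) and its restriction to the antisymmetric part $\Pic^0(A;\R)$. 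Since by
hypothesis $V_{\mathrm{can}}=\Pic(A;\R)$ contains both $\Pic^0(A;\R)$ and (a symmetric lift of) every numerical class,
Proposition~\ref{pro:NT2} applies at once: $h_{\mathrm{can}}=h_{\mathrm{NT}}$ on all antisymmetric divisors and on all
symmetric divisors whose numerical class lies in $\Pi_\Gamma$. So the only remaining freedom is on a complement of $\Pi_\Gamma$
in $\NS(A;\R)$, and it remains to pin down that freedom.

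\emph{Case $\NS(A;\R)=\Pi_\Gamma$.} Here every symmetric divisor class is in $\Pi_\Gamma$, so the two bullets of
Proposition~\ref{pro:NT2} together with the splitting above force $h_{\mathrm{can}}=h_{\mathrm{NT}}$ on all of $\Pic(A;\R)$.
This gives conclusion~(1); note that this case occurs exactly when $\rho(A)=3$ or when $\rho(A)=4$ and $\Pi_\Gamma$ happens to
be everything.

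\emph{Case $\NS(A;\R)\supsetneq\Pi_\Gamma$.} By Remark~\ref{rem:picard_abelian} the quotient is then one-dimensional, and since
the intersection form is nondegenerate and restricts to a Minkowski form on $\Pi_\Gamma$, its orthogonal complement is a line
$\R[E]$ on which the form is negative definite (the signature is $(1,\rho-1)$ and the positive direction already lies in
$\Pi_\Gamma$); in particular $\NS(A;\R)=\Pi_\Gamma\overset{\perp}{\oplus}\R[E]$. Because $\Gamma$ preserves $\Pi_\Gamma$ and the
intersection form, it preserves $\R[E]$, and acting by isometries of a negative definite line it can only act by $\pm 1$: thus
$[E]$ is almost $\Gamma$-invariant in the sense of \S\ref{par:derived_height}. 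Write any $D\in\NS(A;\R)$ as $D=D_0+t[E]$ with
$D_0\in\Pi_\Gamma$ and $t=\langle[E]\vert D\rangle/\langle[E]\vert[E]\rangle$; applying Proposition~\ref{pro:NT2} to $D_0$ and
to the antisymmetric part of $D$, one gets that $h_{\mathrm{can}}(D,x)-h_{\mathrm{NT}}(D,x)$ depends only on $t$ and $x$, i.e.
equals $\langle[E]\vert D\rangle\,\varphi(x)$ for a single function $\varphi:A(\overline{\bfk})\to\R$ (take
$\varphi=\langle[E]\vert[E]\rangle^{-1}(h_{\mathrm{can}}-h_{\mathrm{NT}})([E],\cdot)$, using a symmetric lift of $[E]$). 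Then
Lemma~\ref{lem:perturbation_canonical_vector_height} applied with $h_{\mathrm{NT}}$ in the role of the reference canonical
vector height characterizes exactly which $\varphi$ are admissible: since $[E]$ is \emph{not} orthogonal to
$V_{\mathrm{can}}=\Pic(A;\R)$, the function $\varphi$ must be bounded and satisfy $\varphi(x)[E]=\varphi(f(x))f^*[E]$ for all
$f\in\Gamma$. Conversely any such $\varphi$ yields a canonical vector height by the same lemma. This is precisely
conclusion~(2).

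\emph{Main obstacle.} The substantive inputs — that the difference of two Weil heights attached to the same class is bounded,
the eigenvalue/semisimplicity analysis of the dual action, and the orthogonal splitting with $[E]$ almost invariant — are all
either already established (Propositions~\ref{pro:NT1}, \ref{pro:NT2}, Lemmas~\ref{lem:perturbation_canonical_vector_height},
\ref{lem:semi-simple}) or immediate from the Hodge index theorem and Remark~\ref{rem:picard_abelian}. The only point that
needs care is the bookkeeping of symmetric versus antisymmetric lifts: one must check that writing $h_{\mathrm{can}}$ as the
sum of a ``symmetric-class'' part and an ``antisymmetric'' part is legitimate (the $2$-torsion ambiguity in the symmetric lift
is killed by $\R$-linearization, and $h_{\mathrm{NT}}$ vanishes on torsion), and that the exceptional function $\varphi$ one
extracts is well defined independently of the chosen symmetric lift of $[E]$. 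Once this is in place the proof is a direct
assembly of the cited results, so I expect no serious difficulty beyond this compatibility check.
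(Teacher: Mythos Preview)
Your proposal is correct and follows essentially the same approach as the paper: split into the two cases via Remark~\ref{rem:picard_abelian}, apply Proposition~\ref{pro:NT2} to identify $h_{\mathrm{can}}$ with $h_{\mathrm{NT}}$ on $\Pi_\Gamma^{\mathrm{s}}$ and on antisymmetric divisors, and in the codimension-one case observe that the difference $\Delta_x(D)=h_{\mathrm{can}}(D,x)-h_{\mathrm{NT}}(D,x)$ is a linear form vanishing on the kernel of $\langle [E]\vert\cdot\rangle$, hence equals $\langle [E]\vert D\rangle\varphi(x)$, and conclude with Lemma~\ref{lem:perturbation_canonical_vector_height}. One small remark: Proposition~\ref{pro:NT1} and Lemma~\ref{lem:semi-simple} are not actually needed here (the periodicity of $0$ is assumed, and the semisimplicity argument was already absorbed into the proof of Proposition~\ref{pro:NT2}), so you can drop them from your list of inputs.
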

 
 \begin{proof} 
When $\NS(A, \R) = \Pi_\Gamma$, Proposition~\ref{pro:NT2} and the decomposition of any divisor
class as a sum $D=D^{\mathrm{s}}+D^{\mathrm{a}}$ with $D^{\mathrm{s}}$ symmetric and $D^{\mathrm{a}}$ antisymmetric 
imply that   $h_{\mathrm{can}}=h_{\mathrm{NT}}$.  So by Remark~\ref{rem:picard_abelian} we may   assume that 
$\rho(A)=4$ and $\dim(\Pi_\Gamma)=3$. Pick  $[E]\in \Pi_\Gamma^\perp\setminus \{ 0\}$. The line $\R[E]$ is $\Gamma$-invariant, 
 and the intersection form is negative on $\R[E]$; as a consequence, there is a homomorphism $\alpha_{[E]}\colon \Gamma\to \{+1, -1\}$ such that
$f^*[E]=\alpha(f)[E]$ for all $f\in \Gamma$. Then for  fixed $x$, 
\begin{equation}
\Delta_x(D)=h_{\mathrm{can}}(D,x)-h_{\mathrm{NT}}(D,x),
\end{equation}
defines  a linear form on $\Pic(A;\R)$, which by Proposition~\ref{pro:NT2} vanishes identically on $\Pi_\Gamma^{\mathrm{s}}$. 
So, $\Delta(D,x)=\langle [E]\vert D\rangle \varphi(x) $ for some real valued function $\varphi$, and the conclusion follows from Lemma~\ref{lem:perturbation_canonical_vector_height}.
\end{proof}

\subsection{Synthesis}\label{subs:synthesis}

\subsubsection{Canonical vector heights} Putting together  Theorem~\ref{mthm:canonical_vector_height} and Proposition~\ref{pro:heights_on_tori} gives:

\begin{mthmprime} \label{mthm:prime}
Let $X$ be a smooth projective surface, defined over a number field $\bfk$. 
Let $\Gamma$ be a non-elementary subgroup of $\Aut(X_\bfk)$ that contains parabolic elements. 
Let $h_{\mathrm{can}}$ be a canonical vector height on $\Pic(X;\R) \times X({\overline{\bfk}})$ 
for the group $\Gamma$. Then, $X$ is an abelian surface and $h_{\mathrm{can}}$ is derived
from a translate of the Néron-Tate height by a periodic point $y$ of $\Gamma$:
\[
h_{\mathrm{can}}(D,x)=h_{\mathrm{NT}}(D,x+y)+\langle [E]\vert D\rangle \, \varphi(x)
\]
for  some almost-invariant class $[E]\in \NS(X;\R)$ and some bounded function $\varphi\colon X(\overline{\bfk})\to \R$ 
such that $\varphi(f(x)) f^*[E]=\varphi(x)[E]$ for  $f\in \Gamma$.
\end{mthmprime}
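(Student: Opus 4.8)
The statement combines the rigidity result of Theorem~\ref{mthm:canonical_vector_height} with the classification of canonical vector heights on abelian surfaces from Proposition~\ref{pro:heights_on_tori}, so the plan is essentially to assemble these two inputs and track the bookkeeping about symmetric versus antisymmetric divisors. First I would apply Theorem~\ref{mthm:canonical_vector_height}: since $h_{\mathrm{can}}$ is defined on all of $\Pic(X;\R)\times X(\overline{\bfk})$, in particular on a $\Gamma$-invariant subspace containing a class of positive self-intersection, the theorem yields that $(X,\Gamma)$ is a Kummer group and, because $X$ is \emph{smooth}, that $X$ is an abelian surface. At this point Lemma~\ref{lem:invariant_curve_kummer} tells us $\Gamma$ has no invariant curve, and the "finite orbit" conclusion of Theorem~\ref{mthm:canonical_vector_height} (combined with Lemma~\ref{lem:no_inv_curve_canonical_height}, or more directly Proposition~\ref{pro:NT1} applied to $V_{\mathrm{can}}=\Pic(X;\R)\supset\Pic^0(X)\otimes_\Z\R$) shows that $\Gamma$ has a periodic point $y\in X(\overline{\bfk})$.

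Next I would conjugate by the translation $t_y\colon z\mapsto z+y$, replacing $X$ by itself with origin placed at $y$. After this change a finite index subgroup $\Gamma_0\subset\Gamma$ fixes the neutral element and acts by group automorphisms, while $\Gamma(0)$ is then contained in the torsion of $X$ (Remark~\ref{rmk:torsion}), so $h_{\mathrm{NT}}(\cdot,\cdot+y)$ is itself a canonical vector height for $\Gamma$. I would then invoke Proposition~\ref{pro:heights_on_tori}, applied to the pair $(X,\Gamma)$ with origin at $y$: it gives exactly the dichotomy $\NS(X;\R)=\Pi_\Gamma$, in which case $h_{\mathrm{can}}(\cdot,\cdot)=h_{\mathrm{NT}}(\cdot,\cdot+y)$ and we may take $[E]=0$; or $\NS(X;\R)=\Pi_\Gamma\overset{\perp}{\oplus}\R[E]$, in which case $h_{\mathrm{can}}(D,x)=h_{\mathrm{NT}}(D,x+y)+\langle[E]\vert D\rangle\varphi(x)$ with $\varphi$ bounded and $\varphi(f(x))f^*[E]=\varphi(x)[E]$ for all $f\in\Gamma$. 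In both cases, since the intersection form is negative definite on $\Pi_\Gamma^\perp$ and $\R[E]$ is $\Gamma$-invariant, there is a character $\alpha_{[E]}\colon\Gamma\to\{\pm1\}$ with $f^*[E]=\alpha_{[E]}(f)[E]$, i.e.\ $[E]$ is almost $\Gamma$-invariant in the sense of \S\ref{par:derived_height}; and $h_{\mathrm{NT}}(\cdot,\cdot+y)+\langle[E]\vert\cdot\rangle\varphi$ is precisely of the form "derived from" $h_{\mathrm{NT}}(\cdot,\cdot+y)$ as in Lemma~\ref{lem:perturbation_canonical_vector_height}. This is the displayed formula in the statement.

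One small technical point to address along the way is that Proposition~\ref{pro:heights_on_tori} is stated for a canonical vector height on $\Pic(A;\R)$, whereas here we have one; and its proof relies on Proposition~\ref{pro:NT2}, which requires that we have already normalized the origin to a periodic point. Since the hypothesis that $0$ is $\Gamma$-periodic is exactly what we arranged by the translation $t_y$, there is no circularity: the existence of $y$ comes from Theorem~\ref{mthm:canonical_vector_height} (equivalently Proposition~\ref{pro:NT1}) before any normalization, and once $y$ is fixed the remaining statements are about the conjugated group. The only genuine obstacle is therefore already packaged inside the two cited results — Theorem~\ref{mthm:canonical_vector_height} for the rigidity "$X$ abelian'' and Proposition~\ref{pro:heights_on_tori} for the exact shape of $h_{\mathrm{can}}$ — and the proof of Theorem~\ref{mthm:prime} itself is just the concatenation, so I would expect it to be short. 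The one place to be careful is to present the decomposition $D=D^{\mathrm{s}}+D^{\mathrm{a}}$ correctly: $h_{\mathrm{NT}}$ and $h_{\mathrm{can}}$ agree on antisymmetric divisors and on symmetric divisors with class in $\Pi_\Gamma$ by Proposition~\ref{pro:NT2}, and when $\rho(X)=4$ the discrepancy is supported on the symmetric lift of the extra line $\R[E]\subset\Pi_\Gamma^\perp$, which is what produces the correction term $\langle[E]\vert D\rangle\varphi(x)$.
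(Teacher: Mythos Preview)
Your proposal is correct and matches the paper's approach exactly: the paper's entire proof of Theorem~\ref{mthm:prime} is the single line ``Putting together Theorem~\ref{mthm:canonical_vector_height} and Proposition~\ref{pro:heights_on_tori} gives:'', and you have accurately unpacked what that concatenation entails, including the use of Proposition~\ref{pro:NT1} to produce the periodic point $y$ before invoking Proposition~\ref{pro:heights_on_tori}.
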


Note that $h_{\mathrm{can}}$ is just a translate of $h_{\mathrm{NT}}$ when $E$ is numerically trivial. 

\subsubsection{Restricted canonical vector heights} Let us add the assumption \begin{equation}
\Pic^0(X)=0,
\end{equation}
to the hypotheses of Theorem~\ref{mthm:canonical_vector_height}. Our goal is to describe all possibilities for $(V_{\mathrm{can}}, h_{\mathrm{can}})$.

Since $\Pic^0(X)=0$, $X$ is not a blow-up of an abelian surface and 
Theorem~\ref{mthm:canonical_vector_height} implies that 
  $(X,\Gamma)$ is a 
Kummer group of type (2), (3), (4), or (5) in the nomenclature of \S \ref{par:Kummer_classification}.  
We make use of the notation of
\S \ref{par:Kummer_pairs} and~\ref{par:Kummer_classification}. The origin $0\in A$ is a fixed point of the cyclic group $G$, 
and the orbit $\Gamma_A(0)$ is finite.
Since $G$  is generated by a finite order homothety $(x,y)\mapsto (\alpha x, \alpha y)$ on $A$, $G$ acts trivially on $\NS(A;\R)$ and on 
symmetric divisors. Thus, $\NS(A/G;\R)$ can be identified to $\NS(A;\R)$ and to the subspace of $\Pic(A;\R)$ generated by symmetric divisors; let 
\begin{equation}
\iota\colon \NS(A;\R) \to \NS(X;\R)
\end{equation}
denote the corresponding embedding, given by $\iota=q_X^*(q_A)_*$. On the space of symmetric divisors, the Néron-Tate height is $G$-invariant and $\Gamma$-equivariant,
so it induces a canonical vector height $h_{\mathrm{NT}}^{A/G}(\cdot, \cdot)$ on $A/G$ for ${\overline{\Gamma}}_A$. Then, it induces a restricted canonical vector height on $\iota(\NS(A;\R))\times X(\overline{\bfk})$,
namely
\begin{equation}
h_{\mathrm{NT}}^X: (D,x)\longmapsto  h_{\mathrm{NT}}^{A/G} ((q_X)_*D, q_X(x)).
\end{equation}

In what follows, we denote by $E_i$ the   disjoint irreducible rational curves contracted by $q_X$ (see Lemma~\ref{lem:invariant_curve_kummer}); their classes generate $\iota(\NS(A;\R))^\perp\subset \NS(X;\R)$. 
The height $h_{\mathrm{NT}}^X$ vanishes on $\bigcup_i E_i(\overline{\bfk})$, because the $E_i$ are mapped to torsion points of $A$.

\begin{lem}\label{lem:inclusion}
We have $\Pi_\Gamma=\iota(\Pi_{\Gamma_A})\subset V_{\mathrm{can}} \subset \iota(\NS(A;\R))$.
\end{lem}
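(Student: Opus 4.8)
\textbf{Proof plan for Lemma~\ref{lem:inclusion}.}
The inclusion $\Pi_\Gamma=\iota(\Pi_{\Gamma_A})$ is a purely cohomological statement about the Kummer structure, and it should follow from the equivariance of $\iota=q_X^*(q_A)_*$ with respect to the isomorphism $\Gamma\simeq\overline\Gamma=\overline\Gamma_A$ of \S\ref{par:Kummer_pairs}. First I would check that $\iota$ intertwines the actions: for $f\in\Gamma$ with associated $f_A\in\Gamma_A$ one has $\iota\circ f_A^*=f^*\circ\iota$ on symmetric classes, because $q_X\circ f=\overline f\circ q_X$ and $q_A\circ f_A=\overline{f_A}\circ q_A$ with $\overline f=\overline{f_A}$, and because $G$ acts trivially on $\NS(A;\R)$ (here $G$ is a homothety by Lemma~\ref{lem:67}, using that $\Pic^0(X)=0$ rules out case~(1)). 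Since $\iota$ is injective and isometric up to a positive scalar for the intersection forms, it carries the unique $\Gamma_A$-invariant Minkowski subspace $\Pi_{\Gamma_A}$ on which $\Gamma_A$ acts strongly irreducibly (see \S\ref{subsub:non_elementary}) to a $\Gamma$-invariant Minkowski subspace of $\NS(X;\R)$ with a strongly irreducible $\Gamma$-action; by the uniqueness of $\Pi_\Gamma$ this image must be $\Pi_\Gamma$. One also needs $\lambda(f)=\lambda(f_A)$, which is recorded in \S\ref{par:Kummer_pairs}, to see that loxodromic elements correspond, so that both sides are genuinely Minkowski and non-degenerate.

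For the inclusion $V_{\mathrm{can}}\subset\iota(\NS(A;\R))$ the idea is that $\iota(\NS(A;\R))^\perp$ is spanned by the classes $[E_i]$ of the contracted rational curves, so it suffices to show that no nonzero class of $V_{\mathrm{can}}$ lies in this orthogonal complement, i.e. that $V_{\mathrm{can}}$ contains no nonzero $\Gamma$-periodic class supported on the exceptional divisor. Suppose $D\in V_{\mathrm{can}}$ has $[D]\in\iota(\NS(A;\R))^\perp$; by Lemma~\ref{lem:carac_invariant_curves}(i) its support consists of $\Gamma$-periodic curves, hence of the $E_i$, and after passing to a finite-index subgroup $\Gamma'$ we may assume each $E_i$ is $\Gamma'$-invariant. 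Now $({\mathcal O}(-E_i))_{|E_i}$ is ample on $E_i$ (negative self-intersection), and $[E_i]\in V_{\mathrm{can}}$; so Lemma~\ref{lem:no_inv_curve_canonical_height} applies with $C=E_i$ and shows that $\mathrm{Stab}_\Gamma(E_i)\to\Aut(E_i)$ has finite image. But by Lemma~\ref{lem:invariant_curve_kummer}(2) the induced dynamics of $\mathrm{Stab}_{E_i}(\Gamma)$ on $E_i\cong\P^1$ is that of a non-elementary subgroup of $\PGL_2(\C)$ with no periodic orbit — in particular infinite. This contradiction shows $[D]=0$, hence $D=0$ since $\Pic^0(X)=0$, and therefore $V_{\mathrm{can}}\subset\iota(\NS(A;\R))$.

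Finally, for $\Pi_\Gamma\subset V_{\mathrm{can}}$: by hypothesis $V_{\mathrm{can}}$ contains a class with positive self-intersection, so by Lemma~\ref{lem:height_0_on_inv_curve}(1) (valid since $\Pic^0(X)=0$) it contains $\Pi_\Gamma$. Assembling the three inclusions gives the chain $\Pi_\Gamma=\iota(\Pi_{\Gamma_A})\subset V_{\mathrm{can}}\subset\iota(\NS(A;\R))$. The main obstacle I anticipate is the bookkeeping around $G$ and the identification of $\NS(A/G;\R)$ with symmetric divisors on $A$: one must be careful that $\iota$ is well defined and $\Gamma$-equivariant, using that $\Gamma_A$ normalizes $G$ and that $G$ acts trivially on $\NS(A;\R)$; once that is in place, the strong-irreducibility/uniqueness argument for $\Pi_\Gamma$ and the curve-dynamics input from Lemmas~\ref{lem:no_inv_curve_canonical_height} and~\ref{lem:invariant_curve_kummer} close everything out.
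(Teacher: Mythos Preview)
Your treatment of $\Pi_\Gamma=\iota(\Pi_{\Gamma_A})$ and of $\Pi_\Gamma\subset V_{\mathrm{can}}$ is fine and matches the paper. The gap is in the argument for the last inclusion $V_{\mathrm{can}}\subset\iota(\NS(A;\R))$.

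First, your reduction is logically incorrect: from $V_{\mathrm{can}}\cap\iota(\NS(A;\R))^\perp=\{0\}$ one \emph{cannot} conclude $V_{\mathrm{can}}\subset\iota(\NS(A;\R))$. In an orthogonal splitting $\NS(X;\R)=W\oplus W^\perp$, a subspace $V$ with $V\cap W^\perp=0$ can still fail to lie in $W$ (take any line not contained in either summand). Second, even granting a nonzero $D\in V_{\mathrm{can}}$ with $[D]\in\mathrm{span}([E_i])$, the assertion ``$[E_i]\in V_{\mathrm{can}}$'' does not follow: $D$ may be a nontrivial combination $\sum a_j[E_j]$ and the $\Gamma$-orbit of $D$ need not span any individual $[E_j]$. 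So you cannot invoke the first hypothesis of Lemma~\ref{lem:no_inv_curve_canonical_height}.

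The paper closes both holes at once by pivoting on $\Pi_\Gamma$ rather than on $\iota(\NS(A;\R))$. Since the form on $\Pi_\Gamma$ is nondegenerate and $\Pi_\Gamma\subset V_{\mathrm{can}}$, one has the genuine orthogonal splitting $V_{\mathrm{can}}=\Pi_\Gamma\oplus(V_{\mathrm{can}}\cap\Pi_\Gamma^\perp)$. If $V_{\mathrm{can}}\not\subset\iota(\NS(A;\R))$ then, as $\Pi_\Gamma\subset\iota(\NS(A;\R))$, there exists $D\in V_{\mathrm{can}}\cap\Pi_\Gamma^\perp$ with $\langle D\vert E_i\rangle>0$ for some $i$, i.e.\ ${\mathcal O}(D)_{|E_i}$ is ample. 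The action of $\Gamma$ on $\Pi_\Gamma^\perp$ factors through a finite group, so $D$ is $\Gamma$-periodic. Now the \emph{second} hypothesis of Lemma~\ref{lem:no_inv_curve_canonical_height} applies (with this $D$, not with $[E_i]$), giving finiteness of $\Gamma_{|E_i}$ and the contradiction with Lemma~\ref{lem:invariant_curve_kummer}. Your outline can be salvaged by making exactly this change.
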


\begin{proof}
The first equality comes from the equivariance of $q_X$ and $q_A$. The first inclusion follows from Lemma~\ref{lem:height_0_on_inv_curve} and the assumption~(ii) of Theorem~\ref{mthm:canonical_vector_height}. It remains to prove the last inclusion.  If $V_{\mathrm{can}}$ is not contained in $q_X^*(\NS(A/G;\R))$, there is an index $i$, and 
a class $D$ in $\Pi_\Gamma^\perp\cap V_{\mathrm{can}}$ such that $\langle D\vert E_i\rangle> 0$, i.e. 
$\mathcal{O}(D)\rest{E_i}$ is ample. The  action of $\Gamma$ on $\Pi_\Gamma^\perp$ factorizes through
a finite group (see~\cite[Lem. 2.9]{stiffness}), so $D$ is $\Gamma$-periodic and  by Lemma~\ref{lem:no_inv_curve_canonical_height}, the 
group $\Gamma\rest{E_i}$ is finite; this contradicts Lemma~\ref{lem:invariant_curve_kummer}, and the conclusion follows.
\end{proof}

Let $D$ be an element of $\Pi_\Gamma$.
By Lemma~\ref{lem:height_0_on_inv_curve}, $h_{\mathrm{can}}(D,x)=0$ for all  $x\in \bigcup_i E_i(\overline{\bfk})$. Thus, 
$(D,x)\mapsto h_{\mathrm{can}}(\iota(D), q_X^{-1}(q_A(x)))$ is a well defined 
restricted  canonical vector height on $\Pi_{\Gamma_A}\times A(\overline{\bfk})$ (see Remark~\ref{rem:NT}), 
which gives height $0$ to the fixed points of elements of $G\setminus\{\id\}$.
By Proposition~\ref{pro:NT2}, this height coincides with the Néron-Tate height on $\Pi_{\Gamma_A}\times A(\overline{\bfk})$. 

This yields a complete description of $h_{\mathrm{can}}$ when $V_{\mathrm{can}}=\Pi_\Gamma$.

By Lemma~\ref{lem:inclusion} and Remark~\ref{rem:picard_abelian}, the remaining possibility is that    $\dim(V_{\mathrm{can}})=4$ and  $\dim(\Pi_\Gamma)=3$. 
Choose an almost $\Gamma_A$-invariant class $[E]$ in $\NS(A;\R)$, as in Proposition~\ref{pro:heights_on_tori}, and a divisor $F$ in $X$ such that $[F]=\iota([E])$. 
Each element $D\in V_{\mathrm{can}}$ decomposes as a sum
\begin{equation}
D= D' + \frac{\langle [F] \vert D\rangle}{\langle [F] \vert [F]\rangle} [F]
\end{equation}
with $D'$ in $\Pi_\Gamma$. Then, for $x$ in $\bigcup_i E_i(\overline \bfk)$, we get 
\begin{equation}
h_{\mathrm{can}}(D,x)=\frac{\langle [F] \vert D\rangle}{\langle [F] \vert [F]\rangle}  h_{\mathrm{can}}([F],x).
\end{equation}
Define a  function by setting $\psi(x)=\langle [F] \vert [F]\rangle^{-1}h_{\mathrm{can}}([F],x)$ on $\bigcup_i E_i(\overline{\bfk})$ and $\psi(x) = 0$ otherwise.
It satisfies the equivariance $\psi(f(x))f^*[F]=\psi(x)[F]$  because $h_{\mathrm{can}}$ is equivariant and 
$[F]$ is almost invariant, and it is bounded   because ${\mathcal{O}}(F)_{\vert E_i}$ is trivial for each $E_i$. 
Now, if we set 
\begin{equation}
h_{\mathrm{can}}'(D,x)=h_{\mathrm{can}}(D,x)- \langle [F] \vert D\rangle\, \psi(x)
\end{equation}
we get a new restricted canonical vector height on $V_{\mathrm{can}}\times X(\overline{\bfk})$ that vanishes on  $\bigcup_i E_i(\overline{\bfk})$.
This height comes from a canonical vector height on $A/G$, and 
since as seen before $\NS(A/G; \R)$ can be identified to $\NS(A; \R)$, it yields a 
  canonical vector height for $(A,\Gamma_A)$ restricted to the  
space of symmetric divisors. The second assertion of  Proposition~\ref{pro:heights_on_tori} entails 
that this last height is derived from the Néron-Tate height for some 
function $\varphi$; 
since $\Gamma_A$ contains $G$, and $G$ fixes $[E]$, $\varphi$ is   $G$-invariant. Coming back to $X$, we get 
that $h_{\mathrm{can}}$ is derived from the Néron-Tate height too. In formulas, 
\begin{equation}
h_{\mathrm{can}}(D,x)=h_{\mathrm{NT}}^X(D,x)+  \langle [F] \vert D\rangle \, \Phi(x)
\end{equation}
where $\Phi\colon X({\overline{\bfk}})\to \R$ is a bounded function which satisfies $\Phi(f(x))f^*[F]=\Phi(x)[F]$ for $f\in \Gamma$. This function   is equal to $\psi $ on $\bigcup_i E_i(\overline{\bfk})$ and 
to $\varphi\circ q_X $ on its complement.

To conclude, using the above notation, let us summarize these results in a (somewhat imprecise) statement.

\begin{mthmsecond} \label{mthm:second}
Let $X$ be a smooth projective surface, defined over a number field $\bfk$, and such that  $\Pic^0(X_{\overline{\bfk}})=0$. 
Let $\Gamma$ be a non-elementary subgroup of $\Aut(X_\bfk)$ containing parabolic elements. Let 
$h_{\mathrm{can}}$ be a restricted canonical vector 
height on $V_{\mathrm{can}} \times X({\overline{\bfk}})$ for the group $\Gamma$, where 
$V_{\mathrm{can}}\subset \Pic (X; \R)$
is $\Gamma$-invariant and  contains classes with positive self-intersection. 
Then $(X,\Gamma)$ is a Kummer group associated to an abelian surface $A$, 
$V_{\mathrm{can}}$  is contained in $\iota(\NS(A; \R))$ and 
\begin{itemize}
\item either $V_{\mathrm{can}} = \Pi_\Gamma$ and $h_{\mathrm{can}}$ coincides 
with the Néron-Tate height $h_{\mathrm{NT}}^X$;
\item or $\Pi_\Gamma$ is a codimension 1 subspace of $V_{\mathrm{can}}$ and 
   $h_{\mathrm{can}}$ is derived from  $h_{\mathrm{NT}}^X$.
\end{itemize}
\end{mthmsecond}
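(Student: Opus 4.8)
The plan is to combine Theorem~\ref{mthm:canonical_vector_height}, which already does the hard work, with the structure of Kummer groups from \S\ref{par:Kummer_structures} and the height computations on abelian surfaces in Propositions~\ref{pro:NT2} and~\ref{pro:heights_on_tori}. Since $\Pic^0(X_{\overline{\bfk}})=0$ we identify $\Pic(X;\R)$ with $\NS(X;\R)$, so $X$ is not a blow-up of an abelian surface; Theorem~\ref{mthm:canonical_vector_height} then shows that $(X,\Gamma)$ is a Kummer group, and by Lemma~\ref{lem:invariant_curve_kummer} and the classification of \S\ref{par:Kummer_classification} we are in one of the cases (2)--(5): there are an abelian surface $A=\C^2/\Lambda$, a cyclic group $G$ generated by a homothety fixing the origin, a birational morphism $q_X\colon X\to A/G$, and a group $\Gamma_A\subset\Aut(A)$ covering $\Gamma$ for which $0$ is $\Gamma_A$-periodic. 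As $G$ acts trivially on $\NS(A;\R)$ and on symmetric divisors, we obtain the embedding $\iota=q_X^*(q_A)_*\colon\NS(A;\R)\to\NS(X;\R)$, and the N\'eron--Tate height on symmetric divisors descends to a restricted canonical vector height $h_{\mathrm{NT}}^X$ on $\iota(\NS(A;\R))\times X(\overline{\bfk})$ which vanishes on the irreducible rational curves $E_i$ contracted by $q_X$ (these are mapped to torsion points of $A$).

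Next I would prove the sandwich $\Pi_\Gamma=\iota(\Pi_{\Gamma_A})\subset V_{\mathrm{can}}\subset\iota(\NS(A;\R))$, i.e.\ Lemma~\ref{lem:inclusion}: the first equality comes from equivariance of $q_X$ and $q_A$, the middle inclusion from Lemma~\ref{lem:height_0_on_inv_curve}(1) and assumption~(ii), and the right inclusion by contradiction --- if some $D\in\Pi_\Gamma^\perp\cap V_{\mathrm{can}}$ had $\mathcal{O}(D)|_{E_i}$ ample, then $D$ would be $\Gamma$-periodic (the $\Gamma$-action on $\Pi_\Gamma^\perp$ factors through a finite group, \cite[Lem.~2.9]{stiffness}), and Lemma~\ref{lem:no_inv_curve_canonical_height} would force $\Gamma|_{E_i}$ finite, contradicting Lemma~\ref{lem:invariant_curve_kummer}(2). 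Given this, Lemma~\ref{lem:height_0_on_inv_curve}(2) gives $h_{\mathrm{can}}(D,\cdot)=0$ on $\bigcup_iE_i(\overline{\bfk})$ for $D\in\Pi_\Gamma$, so $(D,x)\mapsto h_{\mathrm{can}}(\iota(D),q_X^{-1}(q_A(x)))$ is a well-defined restricted canonical vector height on $\Pi_{\Gamma_A}\times A(\overline{\bfk})$ that assigns height $0$ to the fixed points of $G\setminus\{\id\}$; Proposition~\ref{pro:NT2} identifies it with the N\'eron--Tate height. This settles the case $V_{\mathrm{can}}=\Pi_\Gamma$.

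By Lemma~\ref{lem:inclusion} and Remark~\ref{rem:picard_abelian} the only remaining case is $\dim V_{\mathrm{can}}=4$, $\dim\Pi_\Gamma=3$. Pick an almost $\Gamma_A$-invariant class $[E]$ spanning $\Pi_{\Gamma_A}^\perp$ in $\NS(A;\R)$ and a divisor $F$ on $X$ with $[F]=\iota([E])$, and write $D=D'+\frac{\langle[F]\vert D\rangle}{\langle[F]\vert[F]\rangle}[F]$ with $D'\in\Pi_\Gamma$ for $D\in V_{\mathrm{can}}$. Set $\psi(x)=\langle[F]\vert[F]\rangle^{-1}h_{\mathrm{can}}([F],x)$ on $\bigcup_iE_i(\overline{\bfk})$ and $\psi\equiv0$ elsewhere; this is bounded because $\mathcal{O}(F)|_{E_i}$ is trivial, and satisfies $\psi(f(x))f^*[F]=\psi(x)[F]$ by equivariance of $h_{\mathrm{can}}$ and almost-invariance of $[F]$. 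Then $h'_{\mathrm{can}}(D,x)=h_{\mathrm{can}}(D,x)-\langle[F]\vert D\rangle\,\psi(x)$ is a restricted canonical vector height vanishing on $\bigcup_iE_i(\overline{\bfk})$; it descends through $q_X$ to a restricted canonical vector height for $(A,\Gamma_A)$ on the symmetric divisors, which by Proposition~\ref{pro:heights_on_tori}(2) is derived from $h_{\mathrm{NT}}$ via some bounded function $\varphi$. Since $\Gamma_A\supset G$ and $G$ fixes $[E]$, $\varphi$ is $G$-invariant, so it descends to $X$, and unwinding the identifications yields $h_{\mathrm{can}}(D,x)=h_{\mathrm{NT}}^X(D,x)+\langle[F]\vert D\rangle\,\Phi(x)$ with $\Phi$ bounded and satisfying $\Phi(f(x))f^*[F]=\Phi(x)[F]$, i.e.\ $h_{\mathrm{can}}$ is derived from $h_{\mathrm{NT}}^X$.

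The serious inputs --- Theorem~\ref{mthm:main} and, through it, Theorem~\ref{mthm:canonical_vector_height}, together with Propositions~\ref{pro:NT2} and~\ref{pro:heights_on_tori} --- are already available, so the only real care needed here is bookkeeping: checking that each descent through $q_X$, and its inverse, preserves the three defining properties of a restricted canonical vector height (linearity, the Weil-height property, $\Gamma$-equivariance). This rests on the identification of $\NS(A/G;\R)$ with the symmetric divisors and on $G$ acting trivially there, which is exactly why cases (2)--(5), where $G$ is generated by a homothety, are the relevant ones.
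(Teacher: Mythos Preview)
Your proposal is correct and follows essentially the same approach as the paper's own argument in \S\ref{subs:synthesis}: the same reduction via Theorem~\ref{mthm:canonical_vector_height} to Kummer types (2)--(5), the same sandwich Lemma~\ref{lem:inclusion}, the same descent to $A$ through vanishing on the $E_i$, and the same correction by $\psi$ in the codimension-one case before invoking Proposition~\ref{pro:heights_on_tori}.
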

 
\appendix
\section{ Kummer surfaces of type (6)}\label{par:Kummer6}

{\small{
In case (6), we get a cyclic quotient singularity of type $\frac{1}{5}(1,2)$, resolved by a string of two rational curves of respective 
self-intersections $-3$ and $-2$ (see \cite[\S III.6]{BHPVDV} and \cite[\S 2]{Laufer}). The ring of invariant functions 
for the group $G$ is generated by the four 
monomials $u_0=x^5$, $u_1=x^3y$, $u_2=xy^2$, $u_3=y^5$, and the quotient surface is locally isomorphic to the surface $W$ in $\C^4$ determined by the equations $u_0u_2=u_1^2$, $u_1u_3=u_2^3$. Now, take three copies $V_0$, $V_1$, $V_2$ of $\C^2$, with coordinates $(v_0,w_0)$, $(v_1,w_1)$,
and $(v_2,w_2)$ respectively, and glue together the open sets $V_0\setminus\{w_0=0\}$ and $V_1 \setminus\{v_1=0\}$ by $v_1=1/w_0$, $w_1=v_0w_0^3$, and the open sets $V_1\setminus\{w_1=0\}$   by $v_2=1/w_1$, $w_2=v_1w_1^2$. The result is a smooth surface 
$Y$, which is a neighborhood of a string of 
two smooth rational curves: the curve $E_1$ corresponding to the axis $\{(0,w_0)\; ; \; w_0\in \C\}$ (glued to $\{(v_1,0)\; ; \; v_1\in \C\}$) and the 
curve $E_2$ corresponding to the axis $\{(0,w_1)\; ; \; w_1\in \C\}$ (glued to $\{(v_2,0)\; ; \; v_2\in \C\}$); the self-intersections of these curves 
are respectively equal to $-3$ and $-2$. There is a $G$-invariant rational map from $\C^2$ to $Y$, given in coordinates $(v_i,w_i)$ by 
\begin{equation}
v_0=x^5, w_0=y/x^2; \; v_1=x^2/y, w_1= y^3/x; \; v_2=x/y^3, w_2=y^5.
\end{equation}
The surface $Y$ is a desingularization of $W=\C^2/G$, with projection $Y\to W$ given by 
\begin{eqnarray}
u_0 & = & v_0=v_1^3w_1=v_2^5w_2^3, \\
u_1 & = & v_0w_0=v_1^2w_1=v_2^3w_2^2, \\
u_2 & = & v_0w_0^2=v_1w_1=v_2w_2, \\
u_3 & = & v_0^2w_0^5  = v_1w_1^2 =  w_2.
\end{eqnarray}
If $F$ is a linear automorphism of $\C^2$ that normalizes $G$ and $F$ induces a loxodromic automorphism of the torus $\C^2/\Lambda_5$, 
then $F(x,y)=(\alpha x, \beta y)$ for some eigenvalues $\alpha$, $\beta$ with $\vert \alpha\vert > 1 > \vert \beta\vert$. On the quotient space $W$ 
it acts by $(u_0, u_1, u_2, u_3)\mapsto (\alpha^5 u_0, \alpha^3\beta u_1, \alpha \beta^2 u_2, \beta^5 u_3)$, and on $Y$ it acts locally by 
$(v_0,w_0)\mapsto (\alpha^5 v_0, \alpha^{-2}\beta w_0)$ (resp. $(\alpha^2\beta^{-1} v_1, \alpha^{-1}\beta^3 w_1)$ and $(\alpha\beta^{-3} v_2, \beta^5w_2)$). In particular, the linear projective map induced by $F$on  $E_1$ (resp. $E_2$) is given by  $w_0\mapsto \alpha^{-2}\beta w_0$
(resp. $w_1\mapsto \alpha^{-1}\beta^3w_1$). Since $\vert \alpha^{-2}\beta\vert< 1$
and $\vert\alpha^{-1}\beta^3\vert< 1$, $F$ has exactly two periodic points on each $E_i$, namely two saddle fixed points. \qed 
}}

\bibliographystyle{plain}
\bibliography{biblio-serge}

\begin{thebibliography}{10}

\bibitem{baker-demarco}
Matthew Baker and Laura DeMarco.
\newblock Preperiodic points and unlikely intersections.
\newblock {\em Duke Math. J.}, 159(1):1--29, 2011.

\bibitem{Baragar:Canad2003}
Arthur Baragar.
\newblock Canonical vector heights on algebraic {$K3$} surfaces with {P}icard
  number two.
\newblock {\em Canad. Math. Bull.}, 46(4):495--508, 2003.

\bibitem{Baragar:2004}
Arthur Baragar.
\newblock Canonical vector heights on {$K3$} surfaces with {P}icard number
  three---an argument for nonexistence.
\newblock {\em Math. Comp.}, 73(248):2019--2025, 2004.

\bibitem{Baragar_vanLuijk:2007}
Arthur Baragar and Ronald van Luijk.
\newblock {$K3$} surfaces with {P}icard number three and canonical vector
  heights.
\newblock {\em Math. Comp.}, 76(259):1493--1498, 2007.

\bibitem{BHPVDV}
Wolf~P. Barth, Klaus Hulek, Chris A.~M. Peters, and Antonius Van~de Ven.
\newblock {\em Compact complex surfaces}, volume~4 of {\em Ergebnisse der
  Mathematik und ihrer Grenzgebiete. 3. Folge. A Series of Modern Surveys in
  Mathematics [Results in Mathematics and Related Areas. 3rd Series. A Series
  of Modern Surveys in Mathematics]}.
\newblock Springer-Verlag, Berlin, second edition, 2004.

\bibitem{Berman-Boucksom:Inventiones}
Robert Berman and S\'{e}bastien Boucksom.
\newblock Growth of balls of holomorphic sections and energy at equilibrium.
\newblock {\em Invent. Math.}, 181(2):337--394, 2010.

\bibitem{Birkenhake-Lange}
Christina Birkenhake and Herbert Lange.
\newblock {\em Complex abelian varieties}, volume 302 of {\em Grundlehren der
  Mathematischen Wissenschaften [Fundamental Principles of Mathematical
  Sciences]}.
\newblock Springer-Verlag, Berlin, second edition, 2004.

\bibitem{Birkhoff}
Garrett Birkhoff.
\newblock Linear transformations with invariant cones.
\newblock {\em Amer. Math. Monthly}, 74:274--276, 1967.

\bibitem{blanc-cantat}
J\'{e}r\'{e}my Blanc and Serge Cantat.
\newblock Dynamical degrees of birational transformations of projective
  surfaces.
\newblock {\em J. Amer. Math. Soc.}, 29(2):415--471, 2016.

\bibitem{br}
Aaron Brown and Federico Rodriguez~Hertz.
\newblock Measure rigidity for random dynamics on surfaces and related skew
  products.
\newblock {\em J. Amer. Math. Soc.}, 30(4):1055--1132, 2017.

\bibitem{Call-Silverman:1993}
Gregory~S. Call and Joseph~H. Silverman.
\newblock Canonical heights on varieties with morphisms.
\newblock {\em Compositio Math.}, 89(2):163--205, 1993.

\bibitem{Cantat:Acta}
Serge Cantat.
\newblock Dynamique des automorphismes des surfaces {$K3$}.
\newblock {\em Acta Math.}, 187(1):1--57, 2001.

\bibitem{cantat_groupes}
Serge Cantat.
\newblock Sur la dynamique du groupe d'automorphismes des surfaces {$K3$}.
\newblock {\em Transform. Groups}, 6(3):201--214, 2001.

\bibitem{Cantat:Panorama-Synthese}
Serge Cantat.
\newblock Quelques aspects des syst\`emes dynamiques polynomiaux: existence,
  exemples, rigidit\'{e}.
\newblock In {\em Quelques aspects des syst\`emes dynamiques polynomiaux},
  volume~30 of {\em Panor. Synth\`eses}, pages 13--95. Soc. Math. France,
  Paris, 2010.

\bibitem{Cantat:Milnor}
Serge Cantat.
\newblock Dynamics of automorphisms of compact complex surfaces.
\newblock In {\em Frontiers in complex dynamics}, volume~51 of {\em Princeton
  Math. Ser.}, pages 463--514. Princeton Univ. Press, Princeton, NJ, 2014.

\bibitem{cantat-dolgachev}
Serge Cantat and Igor Dolgachev.
\newblock Rational surfaces with a large group of automorphisms.
\newblock {\em J. Amer. Math. Soc.}, 25(3):863--905, 2012.

\bibitem{invariant}
Serge Cantat and Romain Dujardin.
\newblock Dynamics of automorphism groups of complex projective surfaces:
  parabolic maps and invariant measures.
\newblock In preparation.

\bibitem{stiffness}
Serge Cantat and Romain Dujardin.
\newblock Random dynamics on real and complex projective surfaces.
\newblock arxiv:2006:04394, 2020.

\bibitem{cantat-dupont}
Serge Cantat and Christophe Dupont.
\newblock Automorphisms of surfaces: {K}ummer rigidity and measure of maximal
  entropy.
\newblock {\em J. Eur. Math. Soc. (JEMS)}, 22(4):1289--1351, 2020.

\bibitem{Cantat-Favre}
Serge Cantat and Charles Favre.
\newblock Sym\'{e}tries birationnelles des surfaces feuillet\'{e}es.
\newblock {\em J. Reine Angew. Math.}, 561:199--235, 2003.

\bibitem{cantat-favre-corrigendum}
Serge Cantat and Charles Favre.
\newblock Corrigendum: ``{B}irational symmetries of foliated surfaces''
  ({F}rench) [{J}. {R}eine {A}ngew. {M}ath. {\bf 561} (2003), 199--235;
  mr1998612].
\newblock {\em J. Reine Angew. Math.}, 582:229--231, 2005.

\bibitem{cantat-guirardel-lonjou}
Serge Cantat, Vincent Guirardel, and Anne Lonjou.
\newblock Elements generating a proper normal subgroup of the cremona group.
\newblock {\em IMRN}, to appear.

\bibitem{cantat-oguiso}
Serge Cantat and Keiji Oguiso.
\newblock Birational automorphism groups and the movable cone theorem for
  {C}alabi-{Y}au manifolds of {W}ehler type via universal {C}oxeter groups.
\newblock {\em Amer. J. Math.}, 137(4):1013--1044, 2015.

\bibitem{Cantat-Romaskevich}
Serge Cantat and Olga Romaskevich-Paris.
\newblock Automorphisms of compact {K}\"ahler manifolds with slow dynamics.
\newblock {\em Trans. Amer. Math. Soc.}, page to appear., 2020.

\bibitem{Chambert-Loir-Thuillier}
Antoine Chambert-Loir and Amaury Thuillier.
\newblock Mesures de {M}ahler et \'{e}quidistribution logarithmique.
\newblock {\em Ann. Inst. Fourier (Grenoble)}, 59(3):977--1014, 2009.

\bibitem{chirka}
E.~M. Chirka.
\newblock {\em Complex analytic sets}, volume~46 of {\em Mathematics and its
  Applications (Soviet Series)}.
\newblock Kluwer Academic Publishers Group, Dordrecht, 1989.
\newblock Translated from the Russian by R. A. M. Hoksbergen.

\bibitem{demailly_agbook}
Jean-Pierre Demailly.
\newblock Complex analytic and differential geometry.
\newblock available at
  https://www-fourier.ujf-grenoble.fr/~demailly/manuscripts/agbook.pdf.

\bibitem{Demailly:MemSMF}
Jean-Pierre Demailly.
\newblock Mesures de {M}onge-{A}mp\`ere et caract\'{e}risation
  g\'{e}om\'{e}trique des vari\'{e}t\'{e}s alg\'{e}briques affines.
\newblock {\em M\'{e}m. Soc. Math. France (N.S.)}, (19):124, 1985.

\bibitem{demarco-krieger-ye}
Laura DeMarco, Holly Krieger, and Hexi Ye.
\newblock Common preperiodic points for quadratic polynomials.
\newblock arXiv:1911.02458, 2019.

\bibitem{diller-jackson-sommese}
Jeffrey Diller, Daniel Jackson, and Andrew Sommese.
\newblock Invariant curves for birational surface maps.
\newblock {\em Trans. Amer. Math. Soc.}, 359(6):2793--2991, 2007.

\bibitem{Dujardin:Duke}
Romain Dujardin.
\newblock Laminar currents and birational dynamics.
\newblock {\em Duke Math. J.}, 131(2):219--247, 2006.

\bibitem{dujardin-favre}
Romain Dujardin and Charles Favre.
\newblock The dynamical {M}anin-{M}umford problem for plane polynomial
  automorphisms.
\newblock {\em J. Eur. Math. Soc. (JEMS)}, 19(11):3421--3465, 2017.

\bibitem{Filip-Tosatti}
Simion Filip and Valentino Tosatti.
\newblock Kummer rigidity for {K}3 surface automorphisms via {R}icci flat
  metrics.
\newblock Preprint arxiv 1808.08673, 2018.

\bibitem{franchi-lejan}
Jacques Franchi and Yves Le~Jan.
\newblock {\em Hyperbolic dynamics and {B}rownian motion}.
\newblock Oxford Mathematical Monographs. Oxford University Press, Oxford,
  2012.

\bibitem{grunbaum}
Branko Gr\"{u}nbaum.
\newblock {\em Convex polytopes}, volume 221 of {\em Graduate Texts in
  Mathematics}.
\newblock Springer-Verlag, New York, second edition, 2003.
\newblock Prepared and with a preface by Volker Kaibel, Victor Klee and
  G\"{u}nter M. Ziegler.

\bibitem{Halperin-Rosenthal}
Israel Halperin and Peter Rosenthal.
\newblock Burnside's theorem on algebras of matrices.
\newblock {\em Amer. Math. Monthly}, 87(10):810, 1980.

\bibitem{Hindry-Silverman}
Marc Hindry and Joseph~H. Silverman.
\newblock {\em Diophantine geometry}, volume 201 of {\em Graduate Texts in
  Mathematics}.
\newblock Springer-Verlag, New York, 2000.
\newblock An introduction.

\bibitem{Iwasaki-Uehara}
Katsunori Iwasaki and Takato Uehara.
\newblock Periodic points for area-preserving birational maps of surfaces.
\newblock {\em Math. Z.}, 266(2):289--318, 2010.

\bibitem{kapovich}
Michael Kapovich.
\newblock {\em Hyperbolic manifolds and discrete groups}, volume 183 of {\em
  Progress in Mathematics}.
\newblock Birkh\"{a}user Boston, Inc., Boston, MA, 2001.

\bibitem{Kawaguchi:Crelle}
Shu Kawaguchi.
\newblock Canonical heights, invariant currents, and dynamical eigensystems of
  morphisms for line bundles.
\newblock {\em J. Reine Angew. Math.}, 597:135--173, 2006.

\bibitem{Kawaguchi:AJM}
Shu Kawaguchi.
\newblock Projective surface automorphisms of positive topological entropy from
  an arithmetic viewpoint.
\newblock {\em Amer. J. Math.}, 130(1):159--186, 2008.

\bibitem{Kawaguchi:2013}
Shu Kawaguchi.
\newblock Canonical vector heights on {$K3$} surfaces---a nonexistence result.
\newblock {\em Atti Accad. Naz. Lincei Rend. Lincei Mat. Appl.},
  24(2):181--197, 2013.

\bibitem{Keum-Kondo}
Jonghae Keum and Shigeyuki Kond{\={o}}.
\newblock The automorphism groups of {K}ummer surfaces associated with the
  product of two elliptic curves.
\newblock {\em Trans. Amer. Math. Soc.}, 353(4):1469--1487, 2001.

\bibitem{Laufer}
Henry~B. Laufer.
\newblock {\em Normal two-dimensional singularities}.
\newblock Princeton University Press, Princeton, N.J.; University of Tokyo
  Press, Tokyo, 1971.
\newblock Annals of Mathematics Studies, No. 71.

\bibitem{Lazarsfeld:Book1}
Robert Lazarsfeld.
\newblock {\em Positivity in algebraic geometry. {I}}, volume~48 of {\em
  Ergebnisse der Mathematik und ihrer Grenzgebiete. 3. Folge. A Series of
  Modern Surveys in Mathematics [Results in Mathematics and Related Areas. 3rd
  Series. A Series of Modern Surveys in Mathematics]}.
\newblock Springer-Verlag, Berlin, 2004.
\newblock Classical setting: line bundles and linear series.

\bibitem{Lee:2012}
Chong~Gyu Lee.
\newblock Equidistribution of periodic points of some automorphisms on {$K3$}
  surfaces.
\newblock {\em Bull. Korean Math. Soc.}, 49(2):307--317, 2012.

\bibitem{McMullen:Crelle}
Curtis~T. McMullen.
\newblock Dynamics on {$K3$} surfaces: {S}alem numbers and {S}iegel disks.
\newblock {\em J. Reine Angew. Math.}, 545:201--233, 2002.

\bibitem{Mukai-Namikawa}
Shigeru Mukai and Yukihiko Namikawa.
\newblock Automorphisms of {E}nriques surfaces which act trivially on the
  cohomology groups.
\newblock {\em Invent. Math.}, 77(3):383--397, 1984.

\bibitem{raynaud}
Michel Raynaud.
\newblock Courbes sur une vari\'{e}t\'{e} ab\'{e}lienne et points de torsion.
\newblock {\em Invent. Math.}, 71(1):207--233, 1983.

\bibitem{shub-sullivan}
M.~Shub and D.~Sullivan.
\newblock A remark on the {L}efschetz fixed point formula for differentiable
  maps.
\newblock {\em Topology}, 13:189--191, 1974.

\bibitem{Silverman:1991}
Joseph~H. Silverman.
\newblock Rational points on {$K3$} surfaces: a new canonical height.
\newblock {\em Invent. Math.}, 105(2):347--373, 1991.

\bibitem{Sterk}
Hans Sterk.
\newblock Finiteness results for algebraic {$K3$} surfaces.
\newblock {\em Math. Z.}, 189(4):507--513, 1985.

\bibitem{Szpiro-Ullmo-Zhang}
L.~Szpiro, E.~Ullmo, and S.~Zhang.
\newblock \'{E}quir\'{e}partition des petits points.
\newblock {\em Invent. Math.}, 127(2):337--347, 1997.

\bibitem{xie:thesis}
Junyi Xie.
\newblock {\em Dynamique alg{\'e}brique des applications rationnelles de
  surfaces}.
\newblock PhD thesis, \'Ecole Polytechnique, 2014.

\bibitem{Xie:Duke}
Junyi Xie.
\newblock Periodic points of birational transformations on projective surfaces.
\newblock {\em Duke Math. J.}, 164(5):903--932, 2015.

\bibitem{Yuan:Inventiones}
Xinyi Yuan.
\newblock Big line bundles over arithmetic varieties.
\newblock {\em Invent. Math.}, 173(3):603--649, 2008.

\bibitem{SWZhang:1995b}
Shouwu Zhang.
\newblock Small points and adelic metrics.
\newblock {\em J. Algebraic Geom.}, 4(2):281--300, 1995.

\end{thebibliography}

\end{document}